\newtheorem{example}{Example}
\newtheorem*{theorem*}{Theorem}
\newtheorem{theorem}{Theorem}
\newtheorem{condition}{Condition}
\newtheorem{proposition}{Proposition}
\newtheorem{definition}{Definition}
\newtheorem{corollary}{Corollary}
\newtheorem*{corollary*}{Corollary}
\newtheorem{lemma}{Lemma}
\def\apprsim{\overset{.}{\sim}}
\def\bm{\boldsymbol}
\def\coef{\bm{b}}
\def\obs{\textup{obs}}
\def\Cov{\textup{Cov}}
\def\orthocoef{\bm{c}}
\def\covcoef{\tilde{\bm{B}}}
\def\E{\mathbb{E}}
\newcommand{\myparallel}{{\mkern3mu\vphantom{\perp}\vrule depth 0pt\mkern2mu\vrule depth 0pt\mkern3mu}}
\def\myperp{\perp}
\def\Tau{{T}}
\def\Var{\textup{Var}}
\numberwithin{equation}{section}
\theoremstyle{plain}
\begin{document}

\begin{frontmatter}
\title{Rerandomization in $\boldsymbol{2^K}$ Factorial Experiments
}
\runtitle{Rerandomization in $\boldsymbol{2^K}$ Factorial Experiments}

\begin{aug}
\author{\fnms{Xinran} \snm{Li}\thanksref{t1}\ead[label=e1]{lixinran@wharton.upenn.edu}},
\author{\fnms{Peng} \snm{Ding}\thanksref{t2}\ead[label=e2]{pengdingpku@berkeley.edu}}
\and
\author{\fnms{Donald B.} \snm{Rubin}\thanksref{t3}
\ead[label=e3]{dbrubin@me.com}
}

\runauthor{X. Li, P. Ding and D. B. Rubin}

\affiliation{University of Pennsylvania,\thanksmark{t1} University of California, Berkeley\thanksmark{t2},
Tsinghua University and Temple University\thanksmark{t3}}

\address{Department of Statistics\\
University of Pennsylvania\\
Philadelphia, Pennsylvania 19104\\
USA\\
\printead{e1}
}

\address{Department of Statistics\\
University of California, Berkeley\\
Berkeley, CA 94720\\
USA\\
\printead{e2}
}

\address{Department of Statistical Science\\
	Temple University\\
	Philadelphia, Pennsylvania 19122\\
	USA\\
	\printead{e3}
}
\end{aug}

\begin{abstract}
With many pretreatment covariates and treatment factors, the classical factorial experiment often fails to balance covariates across
multiple factorial effects simultaneously. Therefore, it is intuitive to restrict the randomization of the treatment factors to satisfy certain covariate balance criteria, possibly conforming to the tiers of factorial effects and covariates based on their relative importances. This is rerandomization in factorial experiments. We study the asymptotic properties of this experimental design under the randomization inference framework without imposing any distributional or modeling assumptions of the covariates and outcomes. We derive the joint asymptotic sampling distribution of the usual estimators of the factorial effects, and show that it is symmetric, unimodal, and more ``concentrated'' at the true factorial effects under rerandomization than under the classical factorial experiment. We quantify this advantage of rerandomization using the notions of ``central convex unimodality'' and ``peakedness'' of the joint asymptotic sampling distribution. We also construct conservative large-sample confidence sets for the factorial effects.
\end{abstract}


\begin{keyword}
\kwd{Covariate balance}
\kwd{Tiers of covariates}
\kwd{Tiers of factorial effects}
\end{keyword}

\end{frontmatter}

\section{Introduction}

Factorial experiments, initially proposed by \citet{Fisher:1935} and \citet{yates1937design}, have been widely used in the agricultural science (see textbooks by \citealt{cochran1950experimental, kempthorne1952design, Hinkelmann2007, cox2000theory}) and engineering (see textbooks by \citealt{box2005statistics, wu2011experiments}). Recently, factorial experiments also become popular in social sciences \citep[e.g.,][]{angrist2009incentives, dasgupta2014causal, branson2016improving}. 
The completely randomized factorial experiment (CRFE) balances covariates under different treatment combinations on average. However, with increasing numbers of pretreatment covariates and treatment factors, there will be an increasing chance to have unbalanced covariates with respect to multiple factorial effects. Many researchers have recognized this issue in different experimental designs  \citep[e.g.,][]{fisher1926, student:1938, hansen2008covariate, Bruhn:2009}. To avoid this, we can force a treatment allocation to have covariate balance, which is sometimes called rerandomization \citep[e.g.,][]{cox:1982, cox:2009, morgan2012rerandomization}, restricted or constrained randomization \citep[e.g.,][]{yates1948comment, grundy1950restricted, youden1972randomization, bailey1983restricted}.

Extending \citet{morgan2012rerandomization}'s proposal for treatment-control experiments, \citet{branson2016improving} proposed to use rerandomization in factorial experiments to improve covariate balance, and studied finite sample properties of this design under the assumptions of equal sample sizes of all treatment combinations, Gaussianity of covariate and outcome means, and additive factorial effects. Without requiring any of these assumptions, 
we propose more general covariate balance criteria for rerandomization in $2^K$ factorial experiments, extend their theory
with an asymptotic analysis of the sampling distributions of the usual factorial effect estimators, and provide large-sample confidence sets for the average factorial effects.

Rerandomization in factorial experiments have two salient features that differ from rerandomization in treatment-control experiments. First, the factorial effects can have different levels of importance a priori. Many factorial experimental design principles hinge on the belief that main effects are often more important than two-way interactions, and two-way interactions are often more important than higher-order interactions \citep[e.g.,][]{finney1943fractional, bose1947mathematical, cochran1950experimental, cox2000theory, wu2015post}. Consequently, we need to impose different stringencies for balancing covariates with respect to factorial effects of different importance. Second, covariates may also vary in importance based on prior knowledge about their associations with the outcome. We establish a general theory that can accommodate rerandomization with tiers of both factorial effects and covariates.

Second, in treatment-control experiments, we are often interested in a single treatment effect. In factorial experiments, however, multiple factorial effects are simultaneously of interest, motivating the asymptotic theory about the joint sampling distribution of the usual factorial effect estimators. In particular, for the joint sampling distribution, we use ``central convex unimodality'' \citep{dharmadhikari1976,kanter1977} to describe its unimodal property, and ``peakedness'' \citep{sherman1955} to quantify the intuition that it is more ``concentrated'' at the true factorial effects under rerandomization than the CRFE. These two mathematical notions for multivariate distributions extend unimodality and narrower quantile ranges for univariate distributions \citep{asymrerand2016}, and they are also crucial for constructing large-sample 
confidence sets for factorial effects.

In sum, our asymptotic analysis further demonstrates the benefits of rerandomization in factorial experiments compared to the classical CRFE \citep{branson2016improving}. The proposed large-sample confidence sets for factorial effects will facilitate the practical use of rerandomization in factorial experiments and the associated repeated sampling inference.

The paper proceeds as follows. 
Section \ref{sec:notation} introduces the notation.
Section \ref{sec:CRFE} discusses sampling properties and linear projections under the CRFE.
Section \ref{sec:ReFM} studies rerandomization using the Mahalanobis distance
criterion.
Section \ref{sec:ReFTF} studies rerandomization with tiers of factorial effects. 
Section \ref{sec:edu_example} contains an application to an education dataset.
Section \ref{sec:extension} concludes with possible extensions.
The online Supplementary Material \citep{rerandfacsupp} contains all technical details.
We do not make any attempt to review the extensive literature of confounding and fractional replication in the context of factorial experiments. Instead, we focus on the repeated sampling properties of estimators under rerandomization in $2^K$ factorial experiments.

\section{Notation for a $\boldsymbol{2^K}$ factorial experiment}\label{sec:notation}

\subsection{Potential outcomes and causal estimands}
Consider a factorial experiment with $n$ units and $K$ treatment factors, where each factor has two levels, $-1$ and $+1$. In total there are $Q=2^K$ treatment combinations, and for each treatment combination $1\leq q\leq Q$, let $\bm{\iota}(q)=(\iota_1(q), \iota_2(q),\ldots, \iota_K(q))\in \{-1, +1\} ^K$ be the levels of the $K$ factors. 
We use potential outcomes to define causal effects in factorial experiments \citep{Neyman:1923, dasgupta2014causal, branson2016improving}. For unit $i$, 
let
$Y_i(q)$ be the potential outcome under treatment combination $q$, 
and 
$\bm{Y}_i=\left(
Y_i(1), Y_i(2), \ldots, Y_i(Q)
\right)$ 
be the $Q$ dimensional row vector of all potential outcomes. 
Let $\bar{Y}(q)=\sum_{i=1}^n Y_i(q)/n$ be the average potential outcome under treatment combination $q$, 
and $\bar{\bm{Y}} = (\bar{Y}(1), \bar{Y}(2), \ldots, \bar{Y}(Q))$ be the $Q$ dimensional row vector of all average potential outcomes. 
\citet{dasgupta2014causal} characterized each factorial effect by a $Q$ dimensional column vector with half of its elements being $-1$ and the other half being $+1$. For example, the average main effect of factor $k$ is
\begin{align*}
\tau_k & = \frac{2}{Q}\sum_{q=1}^{Q} 1\{\iota_{k}(q)=1\} \bar{Y}(q) - \frac{2}{Q}\sum_{q=1}^{Q} 1\{\iota_{k}(q)=-1\} \bar{Y}(q) 
\\ & = \frac{1}{2^{K-1}}\sum_{q=1}^{Q} \iota_{k}(q)\bar{Y}(q) 
= \frac{1}{2^{K-1}}\bar{\bm{Y}}\bm{g}_k, \quad (1\leq k\leq K)
\end{align*}
where $\bm{g}_k=(g_{k1}, \ldots, g_{kQ})' = (\iota_{k}(1), \iota_{k}(2)\ldots, \iota_{k}(Q))'$ is called the \textit{generating vector} for the main effect of factor $k$. 
For an interaction effect among several factors, the $\bm{g}$-vector is an element-wise multiplication of the $\bm{g}$-vectors for the main effects of the corresponding factors.
There are in total $F=2^K-1=Q-1$ factorial effects. 
Let $\bm{g}_f=(g_{f1}, \ldots, g_{fQ})' \in \{-1, +1\}^Q $ be the generating vector for the $f$th factorial effect ($1\leq f\leq F$). 
For unit $i$, 
$\tau_{if}=2^{-(K-1)}\bm{Y}_i\bm{g}_f
$ is the $f$th individual factorial effect,
and 
$\bm{\tau}_i = (\tau_{i1}, \ldots, \tau_{iF})'$ is the $F$ dimensional column vector of all individual factorial effects.  
Let 
$
\tau_f =  2^{-(K-1)}\bar{\bm{Y}}\bm{g}_f
$
be the $f$th average factorial effect, 
and 
$\bm{\tau} =   (\tau_{1}, \ldots, \tau_{F})'$ be the $F$ dimensional column vector of all average factorial effects. 
The definitions of the factorial effects imply 
$\bm{\tau}_i = 2^{-(K-1)}\sum_{q=1}^{Q}  \coef_q Y_i(q)$ and 
$\bm{\tau} = 2^{-(K-1)}\sum_{q=1}^{Q}  \coef_q \bar{Y}(q),$ 
with coefficient vectors   
\begin{align}\label{eq:coefficient_b}
\coef_1 =
\begin{pmatrix}
g_{11}\\
g_{21}\\
\vdots\\
g_{F1}
\end{pmatrix}, 
\quad 
\coef_2 = 
\begin{pmatrix}
g_{12}\\
g_{22}\\
\vdots\\
g_{F2}
\end{pmatrix},
\quad 
\ldots
\quad 
\coef_{Q} = 
\begin{pmatrix}
g_{1Q}\\
g_{2Q}\\
\vdots\\
g_{FQ}
\end{pmatrix}. 
\end{align}
Intuitively, the $k$th main effect compares the average potential outcomes when factor $k$ is at $+1$ and $-1$ levels, 
and 
the interaction effect among two factors compares the average potential outcomes when both factors are at the same level and different levels. We can view a higher order interaction as the difference between two conditional lower order interactions. For example, the interaction among factors 1--3 equals the difference between the interactions of factors 1 and 2 given factor 3 at $+1$ and $-1$ levels. See \citet{dasgupta2014causal} for more details. Below we use an example to illustrate the definitions.

\begin{example}
	We consider factorial experiments with $K=3$ factors, and use $(1,2,3)$ to denote these three factors. 
	Table \ref{tab:matrix_K3} shows the 
	definitions of the $\bm{g}_f$'s and the $\coef_q$'s. 
	Specifically, the first three columns $(\bm{g}_1, \bm{g}_2,\bm{g}_3)$ represent the levels of three factors in all treatment combinations, and they generate the main effects of factors $(1, 2, 3)$. The remaining columns $(\bm{g}_4, \ldots, \bm{g}_7)$ are the element-wise multiplications of subsets of $(\bm{g}_1, \bm{g}_2, \bm{g}_3)$ that generate the interaction effects. 
	The coefficient vector $\coef_q$ consists of all the elements in the $q$th row of Table \ref{tab:matrix_K3}. $\hfill\Box$
	\begin{table}
		\centering
		\caption{$\bm{g}_f$'s and $\coef_q$'s for $2^3$ factorial experiments}\label{tab:matrix_K3}
		\begin{tabular}{|c|c|c|c|c|c|c|c|}
			\hline
			\ $1$\ \   & \ \ $2$ \    & \ \ $3$\ \    & \ $12$ \    & \ $13$ \ & \ $23$ \ & $123$ & \\[0.5pt]
			\hline
			$-1$ & $-1$ & $-1$ & $+1$ & $+1$ & $+1$ & $-1$ & $\coef_1'$\\
			$-1$ & $-1$ & $+1$ & $+1$ & $-1$ & $-1$ & $+1$ & $\coef_2'$\\
			$-1$ & $+1$ & $-1$ & $-1$ & $+1$ & $-1$ & $+1$ & $\coef_3'$\\
			$-1$ & $+1$ & $+1$ & $-1$ & $-1$ & $+1$ & $-1$ & $\coef_4'$\\
			$+1$ & $-1$ & $-1$ & $-1$ & $-1$ & $+1$ & $+1$ & $\coef_5'$\\
			$+1$ & $-1$ & $+1$ & $-1$ & $+1$ & $-1$ & $-1$ & $\coef_6'$\\
			$+1$ & $+1$ & $-1$ & $+1$ & $-1$ & $-1$ & $-1$ & $\coef_7'$\\
			$+1$ & $+1$ & $+1$ & $+1$ & $+1$ & $+1$ & $+1$ & $\coef_8'$\\
			\hline
			$\bm{g}_1$ & $\bm{g}_2$ & $\bm{g}_3$ & $\bm{g}_4$ & $\bm{g}_5$ & $\bm{g}_6$ & $\bm{g}_7$ & \\
			\hline
		\end{tabular}
	\end{table}
\end{example}

\subsection{Treatment assignment, covariate imbalance and rerandomization}
For each unit $i$, $\bm{x}_i$ represents the $L$ dimensional column vector of pretreatment covariates.
For instance, in the education example in Section \ref{sec:edu_example}, college freshmen receive different academic services and incentives
after entering the university, and their pretreatment covariates include high school GPA, gender, age, and etc. 
Let 
$Z_i$ be the treatment assignment, where $Z_i=q$ if unit $i$ receives treatment combination $q$. 
Let $n_q$ be the number of units under treatment combination $q$, and $\bm{Z}=(Z_1, \ldots, Z_n)$ be the treatment assignment vector for all units.
In the CRFE, the probability that $\bm{Z}$ takes a particular value $\bm{z}=(z_1, \ldots, z_n)$ is $n_1!\cdots n_{Q}!/n!$, where $\sum_{i=1}^n 1\{z_i=q\}=n_q$ for all $q$. 
Let $\bar{\bm{x}} = n^{-1} \sum_{i=1}^n \bm{x}_i$ be the finite population covariate mean vector; 
for $1\leq q\leq Q$, let $\hat{\bar{{\bm{x}}}}(q) = n_q^{-1}\sum_{i:z_i=q}\bm{x}_i$ be the covariate mean vector for units that receive treatment combination $q$. 
For $1\leq f\leq F$, 
the $L$ dimensional difference-in-means vector of covariates with respect to the $f$th factorial effect is 
\begin{align}\label{eq:diff_in_mean_X_f}
\hat{\bm{\tau}}_{\bm{x},f} & = 
\frac{2}{Q} \sum_{q=1}^{Q} g_{fq} \hat{\bar{{\bm{x}}}}(q) = 
\frac{1}{2^{K-1}}
\sum_{q:g_{fq}=1}\hat{\bar{{\bm{x}}}}(q) - 
\frac{1}{2^{K-1}}\sum_{q:g_{fq}=-1}\hat{\bar{{\bm{x}}}}(q).
\end{align}
Let $\hat{\bm{\tau}}_{\bm{x}} = 
(\hat{\bm{\tau}}_{\bm{x},1}', \ldots, \hat{\bm{\tau}}_{\bm{x},F}' )'$ be the $LF$ dimensional column vector of the difference-in-means of covariates with respect to all factorial effects. 
Although $\hat{\bm{\tau}}_{\bm{x}}$ has mean zero under the CRFE, for a realized value of $\bm{Z}$, 
covariate distributions are often imbalanced among 
different treatment combinations.  For example, we consider a CRFE with $K=2$ factors, $L=4$ uncorrelated covariates, and equal treatment group sizes $n_q = n/Q$.  In this case, with asymptotic probability $1-(1-5\%)^{4(2^2-1)} \approx 46.0\%$, at least one of the difference-in-means in \eqref{eq:diff_in_mean_X_f} with respect to a covariate and a factorial effect standardized by its standard deviation is larger than 1.96, the 0.975-quantile of $\mathcal{N}(0,1)$.
This holds due to the asymptotic Gaussianity of $\hat{\bm{\tau}}_{\bm{x}}  $ with zero mean and diagonal covariance matrix, implied by Proposition \ref{prop:mean_var_cre} discussed shortly.

Rerandomization is a design to prevent undesirable treatment allocations. When covariate imbalance occurs for a realized randomization under a certain criterion, we discard this unlucky realization and rerandomize the treatment assignment until this criterion is satisfied. Generally, rerandomization proceeds as follows \citep{morgan2012rerandomization}: first, we collect covariate data and specify a covariate balance criterion; second, we continue randomizing the units into different treatment groups until the balance criterion is satisfied; third, we conduct the physical experiment using the accepted randomization. A major goal of this paper is to discuss the statistical analysis of the data from a rerandomized factorial experiment.

There are three additional issues on covariates. First, covariates are attributes of the units that are fixed before the experiment.
The experimenter may observe some covariates, but s/he can not change their values. The treatments do not affect the covariates. Second, the covariates can be general (discrete or continuous). We can use binary indicators to represent discrete covariates. 
Third, the covariates can include transformations of the basic covariates and their interactions. This enables us to balance the marginal and joint distributions of the basic covariates. See \citet{covadapt2011} for a related discussion in the treatment-control experiment.

\subsection{Notation}
To facilitate the discussion, for a positive semi-definite matrix $\bm{A}\in \mathbb{R}^{m\times m}$ with rank $p_0$, and a positive integer $p\geq p_0$, we use $\bm{A}^{1/2}_p \in \mathbb{R}^{m\times p}$ to denote a matrix such that 
$\bm{A}^{1/2}_p (\bm{A}^{1/2}_p)'= \bm{A}$. Specifically, if $\bm{A}=\bm{\Gamma}\bm{\bm{\Lambda}}^2\bm{\Gamma}'$ is the eigen-decomposition of $\bm{A}$ where $\bm{\Gamma}\in \mathbb{R}^{m\times p_0}$, $\bm{\Gamma}'\bm{\Gamma}=\bm{I}_{p_0}$ and $\bm{\Lambda}=\textup{diag}(\lambda_1, \ldots, \lambda_{p_0})$,
then we can choose 
$\bm{A}^{1/2}_p = (\bm{\Gamma}\bm{\Lambda}, \bm{0}_{m\times(p-p_0)})$. 
The choice of $\bm{A}^{1/2}_p$ is generally not unique. 
In the special case with $p=m$, we use $\bm{A}^{1/2}$ to denote the unique positive-semidefinite matrix satisfying the definition of $\bm{A}_m^{1/2}$. 
We use $\otimes$ for the Kronecker product of two matrices, and $\circ$ for element-wise multiplications of vectors. We say a matrix $\bm{M}_1$ is smaller than or equal to $\bm{M}_2$ and write as $\bm{M}_1\leq \bm{M}_2$, if $\bm{M}_2 - \bm{M}_1$ is positive semi-definite.
We say a random vector $\bm{\phi}$ (or its distribution) is symmetric, if $\bm{\phi} \sim -\bm{\phi}$ have the same distribution. 
We say a random vector is spherically symmetric, if its distribution is invariant under orthogonal transformations.
In the asymptotic analysis, we use $\apprsim$ for two sequences of random vectors 
converging weakly to the same distribution, after scaling by $\sqrt{n}$.

\section{$\boldsymbol{2^K}$  completely randomized factorial experiments}\label{sec:CRFE}
The sampling distributions of factorial effect estimators under rerandomization are the same as their conditional distributions given that the treatment assignment vector satisfies the balance criterion. Therefore, we first study the joint sampling distribution of the difference-in-means of the outcomes and covariates. It depends on the finite population variances and covariances:
$
S_{qq} = (n-1)^{-1}\sum_{i=1}^{n}\{ Y_i(q)-\bar{Y}(q) \}^2
$
and
$
S_{qk} = (n-1)^{-1}\sum_{i=1}^{n}\{ Y_i(q)-\bar{Y}(q) \} \{ Y_i(k)-\bar{Y}(k) \}
$
for potential outcomes, 
$
\bm{S}_{\bm{\tau\tau}} = (n-1)^{-1} \sum_{i=1}^{n} (\bm{\tau}_i-\bm{\tau}) (\bm{\tau}_i-\bm{\tau})' 
$
for factorial effects,
$
\bm{S}_{\bm{xx}} = (n-1)^{-1} \sum_{i=1}^{n} (\bm{x}_i - \bar{\bm{x}}) (\bm{x}_i - \bar{\bm{x}})' 
$
for covariates, and 
$
\bm{S}_{q, \bm{x}} = \bm{S}_{\bm{x},q}' = (n-1)^{-1} \sum_{i=1}^{n} \{Y_i(q) - \bar{Y}(q)\} (\bm{x}_i - \bar{\bm{x}})' 
$ 
for potential outcomes and covariates. 
The covariance $\bm{S}_{\bm{xx}}$ is known without any uncertainty. However, other variances or covariances (e.g., $S_{qk}, \bm{S}_{\tau\tau}$ and $\bm{S}_{q,\bm{x}}$) involve potential outcomes or individual factorial effects and are thus generally unknown.

\subsection{Asymptotic sampling distribution under the CRFE}\label{sec:asym_dist_crfe}

Let $Y_i^\obs  = \sum_{q=1}^{Q}1\{Z_i=q\}Y_i(q)$ be the observed outcome of unit $i$, 
and 
$\hat{\bar{Y}}(q) = n_q^{-1}\sum_{i:Z_i=q}Y_i^\obs$ be the average observed  outcome under treatment combination $q$. 
For $1\leq f\leq F$, 
the difference-in-means estimator for the $f$th average factorial effect is  
$$
\hat{\tau}_f = 
\frac{2}{Q}\sum_{q=1}^{Q} g_{fq} \hat{\bar{Y}}(q) = 
\frac{1}{2^{K-1}}
\sum_{q:g_{fq}=1} \hat{\bar{Y}}(q)
-
\frac{1}{2^{K-1}}\sum_{q:g_{fq}=-1} \hat{\bar{Y}}(q). 
$$
Let $\hat{\bm{\tau}} = (\hat{\tau}_1, \ldots, \hat{\tau}_{F})'$ be the $F$ dimensional column vector consisting of all factorial effect estimators.

In the finite population inference, the covariates and potential outcomes are all fixed, and the only random component is the treatment vector $\bm{Z}$. In the asymptotic analysis, we further embed the finite population into a sequence with increasing sizes, and introduce the following regularity conditions.

\begin{condition}\label{cond:fp}
	As $n\rightarrow\infty$, 
	the sequence of finite populations satisfies that for each $1\leq q\neq k\leq Q$, 
	\begin{itemize}
		\item[(i)] the proportion of units under treatment combination $q$, $n_q/n$, has a positive limit, 
		\item[(ii)] the finite population variance and covariances $S_{qq}, S_{qk}, \bm{S}_{\bm{xx}}$ and $\bm{S}_{q,\bm{x}}$ have limiting values, and $\bm{S}_{\bm{xx}}$ and its limit are non-degenerate,
		\item[(iii)] $\max_{1\leq i\leq n} |Y_i(q) - \bar{Y}(q)|^2/n \rightarrow 0$ and 
		$
		\max_{1\leq i\leq n}\|\bm{x}_i - \bar{\bm{x}}\|_2^2/n \rightarrow 0.  
		$
	\end{itemize}
\end{condition}

\begin{proposition}\label{prop:mean_var_cre}
	Under the CRFE, 
	$(\hat{\bm{\tau}}'-\bm{\tau}', \hat{\bm{\tau}}_{\bm{x}}')'$ has mean zero and sampling covariance matrix
	\begin{align*}
	\bm{V} & \equiv 
	2^{-2(K-1)}
	\sum_{q=1}^{Q} n_q^{-1}
	\begin{pmatrix}
	\coef_q \coef_q' S_{qq} & (\coef_q \coef_q') \otimes \bm{S}_{q,\bm{x}} \\
	(\coef_q \coef_q') \otimes \bm{S}_{\bm{x}, q} & 
	(\coef_q \coef_q') \otimes \bm{S}_{\bm{xx}}
	\end{pmatrix} 
	- 
	n^{-1}
	\begin{pmatrix}
	\bm{S}_{\bm{\tau\tau}} & \bm{0}\\
	\bm{0} & \bm{0}
	\end{pmatrix}
	\\
	&\equiv
	\begin{pmatrix}
	\bm{V}_{\bm{\tau} \bm{\tau}} & \bm{V}_{\bm{\tau} \bm{x}}\\
	\bm{V}_{\bm{x} \bm{\tau}} & \bm{V}_{\bm{x} \bm{x}}
	\end{pmatrix}.
	\end{align*}
	Under the CRFE and Condition \ref{cond:fp}, $(\hat{\bm{\tau}}'-\bm{\tau}', \hat{\bm{\tau}}_{\bm{x}}')' \apprsim \mathcal{N}(\bm{0}, \bm{V})$.
\end{proposition}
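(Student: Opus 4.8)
The plan is to express the stacked vector as a fixed linear image of the within-arm sample means in an auxiliary $Q$-arm completely randomized experiment, then read off its moments from classical finite-population identities and its asymptotic law from a finite-population central limit theorem.

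First I would augment the responses: for each unit $i$ and treatment combination $q$, set $\bm{R}_i(q) = (Y_i(q), \bm{x}_i')' \in \mathbb{R}^{1+L}$, whose covariate block does not depend on $q$, with arm means $\hat{\bar{\bm{R}}}(q) = n_q^{-1}\sum_{i:Z_i=q}\bm{R}_i(q) = (\hat{\bar{Y}}(q), \hat{\bar{\bm{x}}}(q)')'$ and population means $\bar{\bm{R}}(q) = (\bar{Y}(q), \bar{\bm{x}}')'$. Because every generating vector $\bm{g}_f$ contains equally many $+1$'s and $-1$'s, $\sum_{q=1}^{Q}\coef_q = \bm{0}$, and hence
\[
\begin{pmatrix}\hat{\bm{\tau}}-\bm{\tau}\\ \hat{\bm{\tau}}_{\bm{x}}\end{pmatrix} = 2^{-(K-1)}\sum_{q=1}^{Q}\bm{A}_q\bigl(\hat{\bar{\bm{R}}}(q)-\bar{\bm{R}}(q)\bigr),\qquad \bm{A}_q = \begin{pmatrix}\coef_q & \bm{0}\\ \bm{0} & \coef_q\otimes\bm{I}_L\end{pmatrix}.
\]
Since $\E[\hat{\bar{\bm{R}}}(q)] = \bar{\bm{R}}(q)$ under the CRFE and $\sum_q\coef_q=\bm 0$, the mean-zero claim follows at once.

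For the covariance I would use the standard simple-random-sampling identities for a $Q$-arm completely randomized experiment: $\Cov(\hat{\bar{\bm{R}}}(q)) = (n_q^{-1}-n^{-1})\bm{S}_{\bm{R}(q)\bm{R}(q)}$ and, for $q\neq k$, $\Cov(\hat{\bar{\bm{R}}}(q),\hat{\bar{\bm{R}}}(k)) = -n^{-1}\bm{S}_{\bm{R}(q)\bm{R}(k)}$, where $\bm{S}_{\bm{R}(q)\bm{R}(k)}$ is the finite-population covariance of $\bm{R}_i(q)$ and $\bm{R}_i(k)$, with blocks $S_{qk}$, $\bm{S}_{q,\bm{x}}$, $\bm{S}_{\bm{x},k}$, and $\bm{S}_{\bm{xx}}$. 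Expanding $\Cov\bigl(\sum_q\bm{A}_q\hat{\bar{\bm{R}}}(q)\bigr)$ and regrouping the diagonal and off-diagonal pieces gives $\sum_q n_q^{-1}\bm{A}_q\bm{S}_{\bm{R}(q)\bm{R}(q)}\bm{A}_q' - n^{-1}\sum_{q,k}\bm{A}_q\bm{S}_{\bm{R}(q)\bm{R}(k)}\bm{A}_k'$. By the mixed-product rule $(\bm{A}\otimes\bm{B})(\bm{C}\otimes\bm{D})=(\bm{A}\bm{C})\otimes(\bm{B}\bm{D})$, each term $\bm{A}_q\bm{S}_{\bm{R}(q)\bm{R}(k)}\bm{A}_k'$ has blocks $S_{qk}\coef_q\coef_k'$, $(\coef_q\coef_k')\otimes\bm{S}_{q,\bm{x}}$, $(\coef_q\coef_k')\otimes\bm{S}_{\bm{x},k}$, and $(\coef_q\coef_k')\otimes\bm{S}_{\bm{xx}}$; setting $q=k$ and multiplying by $2^{-2(K-1)}$ reproduces the first matrix in the display for $\bm{V}$. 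For the subtracted term I would invoke $\sum_q\coef_q=\bm{0}$, which annihilates every block carrying a free sum over $q$ or over $k$ — in particular all three blocks other than the top-left one — together with the identity $\sum_{q,k}S_{qk}\coef_q\coef_k' = 2^{2(K-1)}\bm{S}_{\bm{\tau\tau}}$, obtained by substituting $\bm{\tau}_i-\bm{\tau}=2^{-(K-1)}\sum_q\coef_q\{Y_i(q)-\bar{Y}(q)\}$ into the definition of $\bm{S}_{\bm{\tau\tau}}$. Hence $-n^{-1}\sum_{q,k}\bm{A}_q\bm{S}_{\bm{R}(q)\bm{R}(k)}\bm{A}_k'$ contributes, after the $2^{-2(K-1)}$ scaling, exactly $-n^{-1}\,\mathrm{diag}(\bm{S}_{\bm{\tau\tau}},\bm{0})$, and $\bm{V}$ follows.

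Finally, for the asymptotic statement I would apply a finite-population central limit theorem for completely randomized experiments to the stacked arm means $\bigl(\hat{\bar{\bm{R}}}(1)',\dots,\hat{\bar{\bm{R}}}(Q)'\bigr)'$: Condition \ref{cond:fp}(i) provides nondegenerate limiting arm proportions, (ii) provides convergent finite-population covariances, and (iii) supplies the Lindeberg-type negligibility, since $\|\bm{R}_i(q)-\bar{\bm{R}}(q)\|_2^2 = |Y_i(q)-\bar{Y}(q)|^2+\|\bm{x}_i-\bar{\bm{x}}\|_2^2$. This yields joint asymptotic normality (with a possibly singular limiting covariance), and because the target is the fixed linear image $2^{-(K-1)}\sum_q\bm{A}_q(\cdot)$ of that Gaussian limit, it converges weakly, after scaling by $\sqrt{n}$, to $\mathcal{N}(\bm{0},\bm{V})$, i.e.\ $\apprsim\mathcal{N}(\bm{0},\bm{V})$. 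I expect the main obstacle to be the Kronecker-product bookkeeping in the covariance computation, and specifically the argument that the $-n^{-1}\bm{S}_{\bm{\tau\tau}}$ correction lands only in the $\bm{\tau\tau}$ block while the between-arm covariate and cross contributions cancel — a cancellation resting entirely on the balance property $\sum_q\coef_q=\bm{0}$; the moment identities and the CLT itself are standard.
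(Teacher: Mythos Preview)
Your proposal is correct and follows essentially the same route as the paper: both express $(\hat{\bm{\tau}}'-\bm{\tau}',\hat{\bm{\tau}}_{\bm{x}}')'$ as the fixed linear image $2^{-(K-1)}\sum_q\bm{A}_q\hat{\bar{\bm{R}}}(q)$ of the arm means of the augmented outcomes $\bm{R}_i(q)=(Y_i(q),\bm{x}_i')'$, then read off the mean and covariance from finite-population sampling identities and the asymptotic law from a finite-population CLT. The only difference is packaging: the paper cites \citet[][Theorems 3 and 5]{fcltxlpd2016} directly, which already delivers the correction term as $-n^{-1}\mathrm{diag}(\bm{S}_{\bm{\tau\tau}},\bm{0})$ because the augmented individual effects have zero covariate block, whereas you derive the same cancellation by hand via $\sum_q\coef_q=\bm{0}$.
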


Proposition \ref{prop:mean_var_cre} follows from a finite population central limit theorem \citep[][Theorems 3 and 5]{fcltxlpd2016}, with the proof   in Appendix A2 of the Supplementary Material \citep{rerandfacsupp}.
Proposition \ref{prop:mean_var_cre} immediately gives the sampling properties of any single factorial effect estimator. 
Let $S_{\tau_f\tau_f}$ be the $f$th diagonal element of $\bm{S}_{\bm{\tau\tau}},$ 
and 
$
V_{\tau_f\tau_f} = 
2^{-2(K-1)}\sum_{q=1}^{Q} n_q^{-1}S_{qq} - n^{-1}S_{\tau_f\tau_f}
$
be the $f$th diagonal element of $\bm{V}_{\bm{\tau\tau}}$.   
Then  
$\hat{\tau}_f$ is unbiased for $\tau_f$ with sampling variance $V_{\tau_f\tau_f}$, and 
$
\hat{\tau}_f-\tau_f \apprsim \mathcal{N}(0, V_{\tau_f\tau_f}).   
$
Moreover, $\bm{S}_{\bm{\tau\tau}}$ cannot be unbiasedly estimated from the observed data, and it equals $\bm{0}$ under the {\it additivity} defined below. Under the additivity, the individual treatment effect does not depend on covariates, i.e., there is no treatment-covariate interaction.

\begin{definition}\label{def:additive}
	The factorial effects are additive if and only if the individual factorial effect $\bm{\tau}_i$ is a constant vector for all units, or, equivalently,  
	$\bm{S}_{\bm{\tau\tau}} = \bm{0}$. 
\end{definition}

Under the CRFE, the observed sample variance $s_{qq}=(n_q-1)^{-1}\sum_{i:Z_i=q}\{ Y_i^\obs - \hat{\bar{Y}}(q) \}^2$ is unbiased for $S_{qq}$, because the units receiving treatment combination $q$ are from a simple random sample of size $n_q$. 
Similar to \citet{Neyman:1923}, we can conservatively estimate $\bm{V}_{\bm{\tau\tau}}$ by $2^{-2(K-1)}
\sum_{q=1}^{Q} n_q^{-1}\coef_q \coef_q' s_{qq}$, 
and then construct Wald-type confidence sets for $\bm{\tau}$. 
Both the sampling covariance estimator and confidence sets are asymptotically conservative 
unless the additivity holds.  
It is then straightforward to construct confidence sets for any linear transformations of $\bm{\tau}$.

\subsection{Linear projections}
First, we decompose the potential outcomes.
Let 
$Y^\myparallel_i(q) = \bar{Y}(q) + \bm{S}_{q,\bm{x}}\bm{S}_{\bm{xx}}^{-1}(\bm{x}_i-\bar{\bm{x}})$ be the finite population linear projection of the $Y_i(q)$'s on the $\bm{x}_i$'s, and 
$Y^\myperp_i(q) = Y_i(q) - Y^\myparallel_i(q)$
be the corresponding residual. 
The finite population linear projection of $\bm{\tau}_i$ on $\bm{x}_i$ is then $\bm{\tau}_i^{\myparallel} = 
2^{-(K-1)}\sum_{q=1}^{Q}  \coef_q Y_i^\myparallel(q)$, and the corresponding residual is $\bm{\tau}_i^\myperp = 2^{-(K-1)}\sum_{q=1}^{Q}  \coef_q Y_i^\myperp(q)$. 
Let $S_{qq}^{\myparallel}, S_{qq}^{\myperp}, \bm{S}_{\bm{\tau\tau}}^{\myparallel}$ and $\bm{S}_{\bm{\tau\tau}}^{\myperp}$ be the finite population variances and covariances of $Y^\myparallel(q), Y^\myperp(q), \bm{\tau}^{\myparallel}$ and $\bm{\tau}^{\myperp}$, respectively. 
Define 
\begin{eqnarray*}
\bm{V}_{\bm{\tau}\bm{\tau}}^\myparallel  &=&
2^{-2(K-1)} \sum_{q=1}^{Q} n_q^{-1} \coef_q\coef_q' \cdot S_{qq}^{\myparallel} - n^{-1}\bm{S}_{\bm{\tau\tau}}^{\myparallel} 
,\\
\bm{V}_{\bm{\tau}\bm{\tau}}^\myperp &=& 
2^{-2(K-1)} \sum_{q=1}^{Q} n_q^{-1} \coef_q\coef_q' \cdot S_{qq}^{\myperp} - n^{-1}\bm{S}_{\bm{\tau\tau}}^{\myperp}
\end{eqnarray*}
as analogues of the sampling covariance $\bm{V}_{\bm{\tau}\bm{\tau}}$ in Proposition \ref{prop:mean_var_cre},
with the potential outcomes $Y_i(q)$'s replaced by the linear projections $Y_i^{\myparallel}(q)$'s and the residuals $Y_i^{\myperp}(q)$'s, respectively. We have $\bm{V}_{\bm{\tau}\bm{\tau}} = \bm{V}_{\bm{\tau}\bm{\tau}}^\myparallel + \bm{V}_{\bm{\tau}\bm{\tau}}^\myperp$. 

Second, we decompose the factorial effect estimator $\hat{\bm{\tau}}$.

\begin{theorem}\label{thm:joint_var_expl_unexpl}
	Under the CRFE, the linear projection of $\hat{\bm{\tau}}-\bm{\tau}$ on $\hat{\boldsymbol{\tau}}_{\boldsymbol{x}}$ is $\bm{V}_{\bm{\tau x}} \bm{V}_{\bm{xx}}^{-1}\hat{\boldsymbol{\tau}}_{\boldsymbol{x}}$, 
	the corresponding residual is $\hat{\bm{\tau}}-\bm{\tau}-\bm{V}_{\bm{\tau x}} \bm{V}_{\bm{xx}}^{-1}\hat{\boldsymbol{\tau}}_{\boldsymbol{x}}$, 
	and they have sampling covariances: 
	\begin{align*}
	& \Cov\left(
	\bm{V}_{\bm{\tau x}} \bm{V}_{\bm{xx}}^{-1}\hat{\boldsymbol{\tau}}_{\boldsymbol{x}}
	\right)
	= \bm{V}_{\bm{\tau}\bm{\tau}}^\myparallel, 
	\quad 
	\Cov\left(
	\hat{\bm{\tau}}-\bm{\tau}-\bm{V}_{\bm{\tau x}} \bm{V}_{\bm{xx}}^{-1}\hat{\boldsymbol{\tau}}_{\boldsymbol{x}}
	\right)
	= \bm{V}_{\bm{\tau}\bm{\tau}}^\myperp, 
	\\  
	& \Cov
	\left(
	\bm{V}_{\bm{\tau x}} \bm{V}_{\bm{xx}}^{-1}\hat{\boldsymbol{\tau}}_{\boldsymbol{x}}, \ 
	\hat{\bm{\tau}}-\bm{\tau}-\bm{V}_{\bm{\tau x}} \bm{V}_{\bm{xx}}^{-1}\hat{\boldsymbol{\tau}}_{\boldsymbol{x}}
	\right) = \bm{0}. 
	\end{align*}
\end{theorem}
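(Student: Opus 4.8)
The plan is to decompose $\hat{\bm{\tau}}-\bm{\tau}$ via the potential-outcome projections and then read off the three conclusions from Proposition \ref{prop:mean_var_cre}. Let $\hat{\bm{\tau}}^\myparallel=2^{-(K-1)}\sum_q\coef_q\,\hat{\bar Y}^\myparallel(q)$ and $\hat{\bm{\tau}}^\myperp=2^{-(K-1)}\sum_q\coef_q\,\hat{\bar Y}^\myperp(q)$ be the factorial-effect estimators built from the linear projections $Y_i^\myparallel(q)$ and residuals $Y_i^\myperp(q)$, where $\hat{\bar Y}^\myparallel(q)=n_q^{-1}\sum_{i:Z_i=q}Y_i^\myparallel(q)$ and likewise for $\hat{\bar Y}^\myperp(q)$. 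Since $Y_i(q)=Y_i^\myparallel(q)+Y_i^\myperp(q)$ and $Y_i^\myparallel(q)=\bar Y(q)+\bm{S}_{q,\bm{x}}\bm{S}_{\bm{xx}}^{-1}(\bm{x}_i-\bar{\bm{x}})$, averaging over all $n$ units gives $\bar Y^\myperp(q)=0$, hence $\bm{\tau}^\myperp=\bm{0}$, $\bm{\tau}^\myparallel=\bm{\tau}$, and $\hat{\bm{\tau}}-\bm{\tau}=(\hat{\bm{\tau}}^\myparallel-\bm{\tau})+\hat{\bm{\tau}}^\myperp$; while averaging over the treatment-$q$ group gives $\hat{\bar Y}^\myparallel(q)=\bar Y(q)+\bm{S}_{q,\bm{x}}\bm{S}_{\bm{xx}}^{-1}(\hat{\bar{\bm{x}}}(q)-\bar{\bm{x}})$, so that $\hat{\bm{\tau}}^\myparallel-\bm{\tau}=2^{-(K-1)}\sum_q\coef_q\bm{S}_{q,\bm{x}}\bm{S}_{\bm{xx}}^{-1}(\hat{\bar{\bm{x}}}(q)-\bar{\bm{x}})$ is an explicit linear function of the treatment-group covariate means.

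The core step is to identify $\hat{\bm{\tau}}^\myparallel-\bm{\tau}$ with the linear projection of $\hat{\bm{\tau}}-\bm{\tau}$ on $\hat{\bm{\tau}}_{\bm{x}}$. First, $\hat{\bm{\tau}}^\myparallel-\bm{\tau}$ is a deterministic linear function of $\hat{\bm{\tau}}_{\bm{x}}$: the centered group means satisfy the identity $\sum_q n_q(\hat{\bar{\bm{x}}}(q)-\bar{\bm{x}})=\bm{0}$, and the linear map $(\bm{d}_q)_{q=1}^Q\mapsto\big((\sum_q g_{fq}\bm{d}_q)_{f=1}^F,\ \sum_q n_q\bm{d}_q\big)$ is injective on $\mathbb{R}^{QL}$, because coordinatewise $\bm{g}_1,\dots,\bm{g}_F$ form an orthogonal basis of the hyperplane $\bm{1}_Q^\perp$ while $(n_1,\dots,n_Q)'$ lies outside it, so these $Q$ vectors span $\mathbb{R}^Q$; hence $(\hat{\bar{\bm{x}}}(q)-\bar{\bm{x}})_q$, and so $\hat{\bm{\tau}}^\myparallel-\bm{\tau}$, is a linear function of $\big(\sum_q g_{fq}(\hat{\bar{\bm{x}}}(q)-\bar{\bm{x}})\big)_f=2^{K-1}\hat{\bm{\tau}}_{\bm{x}}$, say $\bm{M}\hat{\bm{\tau}}_{\bm{x}}$. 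Second, applying Proposition \ref{prop:mean_var_cre} with the attributes $\{Y_i^\myperp(q)\}$ in place of $\{Y_i(q)\}$, and using $\bm{S}_{q,\bm{x}}^\myperp=\bm{S}_{q,\bm{x}}-\bm{S}_{q,\bm{x}}\bm{S}_{\bm{xx}}^{-1}\bm{S}_{\bm{xx}}=\bm{0}$, the $\bm{\tau}\bm{x}$ block vanishes, so $\Cov(\hat{\bm{\tau}}^\myperp,\hat{\bm{\tau}}_{\bm{x}})=\bm{0}$ and $\Cov(\hat{\bm{\tau}}^\myperp)=\bm{V}_{\bm{\tau\tau}}^\myperp$. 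Therefore $\bm{V}_{\bm{\tau x}}=\Cov(\hat{\bm{\tau}}-\bm{\tau},\hat{\bm{\tau}}_{\bm{x}})=\Cov(\hat{\bm{\tau}}^\myparallel-\bm{\tau},\hat{\bm{\tau}}_{\bm{x}})=\bm{M}\bm{V}_{\bm{xx}}$, and since $\bm{V}_{\bm{xx}}=2^{-2(K-1)}\big(\sum_q n_q^{-1}\coef_q\coef_q'\big)\otimes\bm{S}_{\bm{xx}}$ is a Kronecker product of two positive definite matrices, hence invertible, $\bm{M}=\bm{V}_{\bm{\tau x}}\bm{V}_{\bm{xx}}^{-1}$. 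Being a linear function of $\hat{\bm{\tau}}_{\bm{x}}$ whose difference from $\hat{\bm{\tau}}-\bm{\tau}$, namely $\hat{\bm{\tau}}^\myperp$, is uncorrelated with $\hat{\bm{\tau}}_{\bm{x}}$, the quantity $\hat{\bm{\tau}}^\myparallel-\bm{\tau}=\bm{V}_{\bm{\tau x}}\bm{V}_{\bm{xx}}^{-1}\hat{\bm{\tau}}_{\bm{x}}$ is the linear projection, with residual $\hat{\bm{\tau}}^\myperp$.

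The covariances then follow. The residual covariance is $\Cov(\hat{\bm{\tau}}^\myperp)=\bm{V}_{\bm{\tau\tau}}^\myperp$, shown above; the projection covariance is $\Cov(\hat{\bm{\tau}}^\myparallel-\bm{\tau})$, which by Proposition \ref{prop:mean_var_cre} applied with $\{Y_i^\myparallel(q)\}$ equals the corresponding $\bm{\tau}\bm{\tau}$ block $2^{-2(K-1)}\sum_q n_q^{-1}\coef_q\coef_q'S_{qq}^\myparallel-n^{-1}\bm{S}_{\bm{\tau\tau}}^\myparallel=\bm{V}_{\bm{\tau\tau}}^\myparallel$ (using $\bm{\tau}^\myparallel=\bm{\tau}$), or equivalently follows from $\bm{V}_{\bm{\tau\tau}}=\bm{V}_{\bm{\tau\tau}}^\myparallel+\bm{V}_{\bm{\tau\tau}}^\myperp$; and the cross-covariance is $\bm{0}$ because the projection is linear in $\hat{\bm{\tau}}_{\bm{x}}$ while the residual $\hat{\bm{\tau}}^\myperp$ is uncorrelated with $\hat{\bm{\tau}}_{\bm{x}}$.

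The main obstacle is not the covariance bookkeeping (three invocations of Proposition \ref{prop:mean_var_cre}) but the middle step, namely showing $\hat{\bm{\tau}}^\myparallel-\bm{\tau}$ is the linear projection; its crux is the linear-algebra fact that the within-treatment covariate means are recoverable from the factorial covariate contrasts $\hat{\bm{\tau}}_{\bm{x}}$ together with the constraint $\sum_q n_q(\hat{\bar{\bm{x}}}(q)-\bar{\bm{x}})=\bm{0}$. If one prefers to avoid this identification, a purely computational route expands $\bm{V}_{\bm{\tau x}}\bm{V}_{\bm{xx}}^{-1}\bm{V}_{\bm{x\tau}}$ by the mixed-product rule for Kronecker products, reducing everything to the scalar identity $\coef_q'\big(\sum_p n_p^{-1}\coef_p\coef_p'\big)^{-1}\coef_k$ equal to $n_q(n-n_q)/n$ for $q=k$ and $-n_qn_k/n$ for $q\neq k$, which in turn follows from $\bm{g}_f'\bm{1}_Q=0$ for all $f$ and an orthogonal-projection argument.
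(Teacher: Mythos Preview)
Your proof is correct but follows a genuinely different route from the paper's. The paper proceeds purely computationally: it expands $\bm{V}_{\bm{\tau x}}\bm{V}_{\bm{xx}}^{-1}\bm{V}_{\bm{x\tau}}$ using the mixed-product rule for Kronecker products and reduces everything to the scalar identity of Lemma~\ref{lemma:rqrmhq_inv_hm}, namely $2^{-2(K-1)}n_q^{-1}n_k^{-1}\coef_q'\covcoef^{-1}\coef_k=n_q^{-1}1\{q=k\}-n^{-1}$, then checks the result equals $\bm{V}_{\bm{\tau\tau}}^\myparallel$ and obtains the residual covariance by subtraction. This is precisely the ``purely computational route'' you sketch in your last paragraph.

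Your primary argument is more structural: you split $\hat{\bm{\tau}}-\bm{\tau}$ into $(\hat{\bm{\tau}}^\myparallel-\bm{\tau})+\hat{\bm{\tau}}^\myperp$ at the level of potential outcomes, then show the first summand is a deterministic linear function of $\hat{\bm{\tau}}_{\bm{x}}$ (via the injectivity of the map determined by the $\bm{g}_f$'s together with the sample-size constraint) and the second is uncorrelated with $\hat{\bm{\tau}}_{\bm{x}}$ (by Proposition~\ref{prop:mean_var_cre} applied to the residuals, whose finite-population covariance with $\bm{x}$ vanishes). The projection identification then forces $\bm{M}=\bm{V}_{\bm{\tau x}}\bm{V}_{\bm{xx}}^{-1}$. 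This approach yields more than the covariance identities: it shows the projection and residual \emph{literally coincide} with the estimators built from $Y_i^\myparallel(q)$ and $Y_i^\myperp(q)$, a fact the paper's computation leaves implicit. The trade-off is that the paper's route isolates Lemma~\ref{lemma:rqrmhq_inv_hm} as a reusable algebraic tool, whereas your argument bypasses it entirely.
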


Theorem \ref{thm:joint_var_expl_unexpl} follows from Proposition \ref{prop:mean_var_cre} and some matrix calculations, with the proof   in Appendix A2 of the Supplementary Material \citep{rerandfacsupp}.
Let $V_{\tau_f\tau_f}^\myparallel$ and 
$S_{\tau_f\tau_f}^{\myparallel}$ be the $f$th diagonal elements of $\bm{V}_{\bm{\tau\tau}}^\myparallel$ and  $\bm{S}_{\bm{\tau\tau}}^{\myparallel}$, respectively. 
The multiple correlation in the following corollary will play an important role in the asymptotic sampling distribution of $\hat{\tau}_f$ under rerandomization. We summarize its equivalent forms below.

\begin{corollary}\label{cor:R2_f}
	Under the CRFE, 
	the sampling squared multiple correlation between $\hat{\tau}_f$ and $\hat{\bm{\tau}}_{\bm{x}}$ 
	has the following equivalent forms: 
	\begin{align*} 
	R^2_f = 
	\text{Corr}^2(\hat{\tau}_f, \hat{\bm{\tau}}_{\bm{x}})
	= 
	\frac{V_{\tau_f\tau_f}^\myparallel}{V_{\tau_f\tau_f}}
	=
	\frac{
		2^{-2(K-1)}\sum_{q=1}^{Q} n_q^{-1}S_{qq}^{\myparallel} - n^{-1} S_{\tau_f\tau_f}^{\myparallel}
	}{
		2^{-2(K-1)}\sum_{q=1}^{Q} n_q^{-1}S_{qq} - n^{-1}S_{\tau_f\tau_f}
	} .
	\end{align*}
It reduces to $R^2_f = S_{11}^{\myparallel}/S_{11}$, the finite population squared multiple correlation between $Y(1)$ and $\bm{x}$
	under the additivity in Definition \ref{def:additive}.  
\end{corollary}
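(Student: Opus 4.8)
The plan is to prove the three asserted identities in order, using only Theorem \ref{thm:joint_var_expl_unexpl}, the definitions in Section \ref{sec:CRFE}, and two elementary facts about the generating vectors: each $g_{fq}\in\{-1,+1\}$, and each $\bm{g}_f$ is balanced so that $\sum_{q=1}^Q g_{fq}=0$.

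First I would establish $R^2_f=\text{Corr}^2(\hat\tau_f,\hat{\bm\tau}_{\bm x})=V^\myparallel_{\tau_f\tau_f}/V_{\tau_f\tau_f}$. Under Condition \ref{cond:fp}(ii) the matrix $\bm V_{\bm{xx}}$ is invertible, so the population linear projection of $\hat{\bm\tau}-\bm\tau$ onto $\hat{\bm\tau}_{\bm x}$ is the vector $\bm V_{\bm{\tau x}}\bm V_{\bm{xx}}^{-1}\hat{\bm\tau}_{\bm x}$ of Theorem \ref{thm:joint_var_expl_unexpl}, and since projection onto a fixed set of regressors acts componentwise on the response, its $f$th entry is exactly the best linear predictor of $\hat\tau_f$ from $\hat{\bm\tau}_{\bm x}$. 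The squared multiple correlation is, by definition, the variance of that predictor divided by $V_{\tau_f\tau_f}$. Theorem \ref{thm:joint_var_expl_unexpl} identifies the covariance matrix of the whole projection vector as $\bm V^\myparallel_{\bm{\tau\tau}}$, whose $f$th diagonal entry is $V^\myparallel_{\tau_f\tau_f}$; dividing by $V_{\tau_f\tau_f}$ gives the claim.

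Second, for the explicit formula I would just read off the $f$th diagonal entries of $\bm V^\myparallel_{\bm{\tau\tau}}$ and $\bm V_{\bm{\tau\tau}}$. Because $g_{fq}\in\{-1,+1\}$, the $(f,f)$ entry of $\coef_q\coef_q'$ equals $g_{fq}^2=1$, so $V^\myparallel_{\tau_f\tau_f}=2^{-2(K-1)}\sum_{q}n_q^{-1}S^\myparallel_{qq}-n^{-1}S^\myparallel_{\tau_f\tau_f}$ and likewise $V_{\tau_f\tau_f}=2^{-2(K-1)}\sum_q n_q^{-1}S_{qq}-n^{-1}S_{\tau_f\tau_f}$, which is the stated ratio. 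Third, under additivity (Definition \ref{def:additive}) one has $(\bm Y_i-\bar{\bm Y})\bm g_f=0$ for every $f$; since $\{\bm 1,\bm g_1,\dots,\bm g_F\}$ is an orthogonal basis of $\mathbb R^Q$ (the $2^K$ Hadamard structure), $\bm Y_i-\bar{\bm Y}$ is a multiple of the all-ones vector, so $Y_i(q)-\bar Y(q)=c_i$ does not depend on $q$. Hence $S_{qq}=S_{11}$ and $\bm S_{q,\bm x}=\bm S_{1,\bm x}$ for all $q$, which makes $Y^\myparallel_i(q)-\bar Y(q)=\bm S_{1,\bm x}\bm S_{\bm{xx}}^{-1}(\bm x_i-\bar{\bm x})$ independent of $q$ and therefore $S^\myparallel_{qq}=S^\myparallel_{11}$. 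Moreover $\sum_q\coef_q=\bm 0$ by balance, so both $\bm\tau_i$ and $\bm\tau^\myparallel_i$ are constant in $i$, giving $S_{\tau_f\tau_f}=S^\myparallel_{\tau_f\tau_f}=0$. Substituting into the formula from the second step, the common factor $2^{-2(K-1)}\sum_q n_q^{-1}$ cancels and $R^2_f=S^\myparallel_{11}/S_{11}$.

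I do not anticipate a real obstacle; the only place requiring care is the first step, where one must confirm that the $f$th coordinate of the vector projection $\bm V_{\bm{\tau x}}\bm V_{\bm{xx}}^{-1}\hat{\bm\tau}_{\bm x}$ truly coincides with the scalar best linear predictor of $\hat\tau_f$ from $\hat{\bm\tau}_{\bm x}$, and that the multiple-correlation ratio is normalized by $V_{\tau_f\tau_f}$ rather than by some submatrix of $\bm V_{\bm{xx}}$. Everything else is bookkeeping with the $\pm1$ generating vectors.
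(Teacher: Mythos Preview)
Your proposal is correct and follows essentially the same approach as the paper's proof: both invoke Theorem \ref{thm:joint_var_expl_unexpl} to identify $V^\myparallel_{\tau_f\tau_f}$ as the explained variance, read off the diagonal entries using $g_{fq}^2=1$, and then specialize under additivity by noting $S_{qq}=S_{11}$, $S^\myparallel_{qq}=S^\myparallel_{11}$, and $S_{\tau_f\tau_f}=S^\myparallel_{\tau_f\tau_f}=0$. The paper's proof is terser and simply asserts these equalities under additivity, whereas you justify them via the Hadamard basis observation that additivity forces $\bm Y_i-\bar{\bm Y}$ to lie in the span of $\bm 1_Q$; this is a nice extra detail but not a different route.
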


The proof of Corollary \ref{cor:R2_f} is in Appendix A2 of the Supplementary Material \citep{rerandfacsupp}.

\section{Rerandomization using the Mahalanobis distance}\label{sec:ReFM}

As shown in Section \ref{sec:asym_dist_crfe}, although $\hat{\bm{\tau}}_{\bm{x}}$ has mean $\bm{0}$, its realized value can be very different from $\bm{0}$ for a particular treatment allocation. Rerandomization can avoid this drawback. 
In the design stage, we can force balance of the covariate means by ensuring $\hat{\bm{\tau}}_{\bm{x}}$ to be ``small."

\subsection{Mahalanobis distance criterion}
A measure of the magnitude of $\hat{\bm{\tau}}_{\bm{x}}$ is the Mahalanobis distance 
$
M \equiv 
\hat{\bm{\tau}}_{\bm{x}}' \bm{V}_{\bm{xx}}^{-1} \hat{\bm{\tau}}_{\bm{x}}.
$
We further let $a$ be a positive constant predetermined in the design stage. 
Using $M$ as the balance criterion,  we accept a treatment assignment vector $\bm{Z}$ from the CRFE if and only if $M\leq a$. 
Below we use ReFM to denote $2^K$ rerandomized factorial experiments using $M$ as the criterion, and $\mathcal{M}$ to denote the event that the treatment vector $\bm{Z}$ satisfies this criterion. From Proposition \ref{prop:mean_var_cre}, $M$ is asymptotically $\chi^2_{LF}$, 
and therefore 
the asymptotic acceptance probability is $p_{a} = P(\chi^2_{LF}\leq a)$ under ReFM. 
In practice, we usually choose a small threshold $a$, or equivalently a small $p_a$, e.g., $p_a=0.001$. 
However, we do not advocate choosing $p_a$ to be too small, because an extremely small $p_a$ may lead to too few configurations of treatment allocations in ReFM.

\subsection{Asymptotic sampling distribution of $\hat{\boldsymbol{\tau}}$ under ReFM}\label{sec:asym_dist}
Rerandomization in the design stage accepts only the treatment assignments resulting in covariate balance, which consequently changes the sampling distribution of $\hat{\bm{\tau}}$. Understanding the asymptotic sampling distribution of $\hat{\bm{\tau}}$ is crucial for conducting the classical repeated sampling inference of $\bm{\tau}$. 
Intuitively, $\hat{\bm{\tau}}$ has two parts: one part is orthogonal to $\hat{\bm{\tau}}_{\bm{x}}$ and thus unaffected by ReFM, and the other part is the linear projection onto $\hat{\bm{\tau}}_{\bm{x}}$ and thus affected by ReFM. 
Let $\bm{\varepsilon}\sim \mathcal{N}(\bm{0}, \bm{I}_F)$ be an $F$ dimensional standard Gaussian random vector, and  
$\bm{\zeta}_{LF,a} \sim \bm{D} \mid \bm{D}'\bm{D}\leq a$ be an $LF$ dimensional truncated Gaussian random vector, where $\bm{D} = (D_1, \ldots, D_{LF})'\sim \mathcal{N}(\bm{0}, \bm{I}_{LF})$. The following theorem shows the asymptotic sampling distribution of $\hat{\bm{\tau}}$.

\begin{theorem}\label{thm:joint_refm}
	Under ReFM and Condition \ref{cond:fp}, 
	\begin{eqnarray}\label{eq:joint_refm}
	\hat{\bm{\tau}} - \bm{\tau} \mid  \mathcal{M} 
	& \apprsim & 
	\left(\bm{V}_{\bm{\tau\tau}}^\myperp\right)^{1/2} \bm{\varepsilon} +  
	\left(\bm{V}_{\bm{\tau\tau}}^\myparallel\right)^{1/2}_{LF}
	\bm{\zeta}_{LF,a}, 
	\end{eqnarray}
	where $\bm{\varepsilon}$ and $\bm{\zeta}_{LF,a}$ are independent. 
\end{theorem}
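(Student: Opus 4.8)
The plan is to condition on $\hat{\bm{\tau}}_{\bm{x}}$ and use the orthogonal decomposition from Theorem~\ref{thm:joint_var_expl_unexpl}. Write $\hat{\bm{\tau}}-\bm{\tau} = \bm{V}_{\bm{\tau x}}\bm{V}_{\bm{xx}}^{-1}\hat{\bm{\tau}}_{\bm{x}} + \bm{R}$, where $\bm{R} = \hat{\bm{\tau}}-\bm{\tau}-\bm{V}_{\bm{\tau x}}\bm{V}_{\bm{xx}}^{-1}\hat{\bm{\tau}}_{\bm{x}}$ is the residual with $\Cov(\bm{R}) = \bm{V}_{\bm{\tau\tau}}^\myperp$ and $\Cov(\bm{R},\hat{\bm{\tau}}_{\bm{x}}) = \bm{0}$. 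The event $\mathcal{M} = \{M\le a\}$ depends on $\bm{Z}$ only through $\hat{\bm{\tau}}_{\bm{x}}$, so conditioning on $\mathcal{M}$ only affects the projection term, not $\bm{R}$ beyond what is already captured by the joint distribution. First I would invoke Proposition~\ref{prop:mean_var_cre}: under Condition~\ref{cond:fp}, $\sqrt{n}(\hat{\bm{\tau}}'-\bm{\tau}',\hat{\bm{\tau}}_{\bm{x}}')'$ converges weakly to $\mathcal{N}(\bm{0},\bm{V})$, and hence $\sqrt{n}(\bm{R}',\hat{\bm{\tau}}_{\bm{x}}')'$ converges to a jointly Gaussian limit in which the two blocks are uncorrelated, therefore independent. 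In the Gaussian limit $\bm{R}$ and $\hat{\bm{\tau}}_{\bm{x}}$ are independent, and $M = \hat{\bm{\tau}}_{\bm{x}}'\bm{V}_{\bm{xx}}^{-1}\hat{\bm{\tau}}_{\bm{x}}$ is asymptotically $\chi^2_{LF}$.

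Next I would pass to the limit carefully. The subtlety is that conditioning and weak convergence do not automatically commute, so I would write, for the Gaussian limit, $\hat{\bm{\tau}}_{\bm{x}} \apprsim \bm{V}_{\bm{xx}}^{1/2}\bm{D}$ with $\bm{D}\sim\mathcal{N}(\bm{0},\bm{I}_{LF})$, so that $\{M\le a\}$ becomes $\{\bm{D}'\bm{D}\le a\}$ in the limit; then $\hat{\bm{\tau}}_{\bm{x}}\mid\mathcal{M} \apprsim \bm{V}_{\bm{xx}}^{1/2}\bm{\zeta}_{LF,a}$ by definition of $\bm{\zeta}_{LF,a}$. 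Since the acceptance probability $p_a = P(\chi^2_{LF}\le a)$ has a positive limit, the conditional weak convergence is legitimate; here I would cite the analogous argument in \citet{asymrerand2016} (or restate the elementary fact that if $(\bm{A}_n,\bm{B}_n)\Rightarrow(\bm{A},\bm{B})$ with $\bm{A},\bm{B}$ independent and $P(\bm{B}\in E)>0$ with $\partial E$ a $P_{\bm{B}}$-null set, then $(\bm{A}_n\mid \bm{B}_n\in E)\Rightarrow \bm{A}$ and $(\bm{B}_n\mid\bm{B}_n\in E)\Rightarrow(\bm{B}\mid\bm{B}\in E)$, with $\bm{A}$ still independent of the conditioned $\bm{B}$). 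Because $\bm{R}$ is asymptotically independent of $\hat{\bm{\tau}}_{\bm{x}}$, conditioning on $\mathcal{M}$ leaves the limiting law of $\bm{R}$ untouched: $\bm{R}\mid\mathcal{M} \apprsim \bm{R} \apprsim (\bm{V}_{\bm{\tau\tau}}^\myperp)^{1/2}\bm{\varepsilon}$, and $\bm{R}$ stays independent of $\hat{\bm{\tau}}_{\bm{x}}\mid\mathcal{M}$.

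Then I would reassemble: conditional on $\mathcal{M}$, $\hat{\bm{\tau}}-\bm{\tau} = \bm{V}_{\bm{\tau x}}\bm{V}_{\bm{xx}}^{-1}\hat{\bm{\tau}}_{\bm{x}} + \bm{R} \apprsim \bm{V}_{\bm{\tau x}}\bm{V}_{\bm{xx}}^{-1}\bm{V}_{\bm{xx}}^{1/2}\bm{\zeta}_{LF,a} + (\bm{V}_{\bm{\tau\tau}}^\myperp)^{1/2}\bm{\varepsilon}$ with the two terms independent. It remains to identify $\bm{V}_{\bm{\tau x}}\bm{V}_{\bm{xx}}^{-1/2}$ as a valid $(\bm{V}_{\bm{\tau\tau}}^\myparallel)^{1/2}_{LF}$: indeed $(\bm{V}_{\bm{\tau x}}\bm{V}_{\bm{xx}}^{-1/2})(\bm{V}_{\bm{\tau x}}\bm{V}_{\bm{xx}}^{-1/2})' = \bm{V}_{\bm{\tau x}}\bm{V}_{\bm{xx}}^{-1}\bm{V}_{\bm{x\tau}} = \bm{V}_{\bm{\tau\tau}}^\myparallel$ by Theorem~\ref{thm:joint_var_expl_unexpl} (the covariance of the projection term), and this matrix has $LF$ columns, matching the dimension of $\bm{\zeta}_{LF,a}$; since the distribution of $(\bm{V}_{\bm{\tau\tau}}^\myparallel)^{1/2}_{LF}\bm{\zeta}_{LF,a}$ depends on the root only through $(\bm{V}_{\bm{\tau\tau}}^\myparallel)^{1/2}_{LF}(\bm{V}_{\bm{\tau\tau}}^\myparallel)^{1/2\prime}_{LF}$ (because $\bm{\zeta}_{LF,a}$ is spherically symmetric in distribution within the ball, so any two roots give the same law), this yields \eqref{eq:joint_refm}. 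The main obstacle is the rigorous justification of exchanging conditioning with weak convergence — handling the truncation event and verifying that the asymptotic independence of $\bm{R}$ and $\hat{\bm{\tau}}_{\bm{x}}$ survives the conditioning; everything else is bookkeeping with the matrix identities already supplied by Theorem~\ref{thm:joint_var_expl_unexpl} and Proposition~\ref{prop:mean_var_cre}.
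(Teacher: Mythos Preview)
Your proposal is correct and follows essentially the same route as the paper: invoke the joint asymptotic Gaussianity from Proposition~\ref{prop:mean_var_cre}, justify exchanging weak convergence with conditioning on the positive-probability event $\{M\le a\}$ (the paper packages exactly this step as a separate lemma, Lemma~\ref{lemma:condi_converge}, appealing to \citet{asymrerand2016}), then use the projection/residual decomposition of Theorem~\ref{thm:joint_var_expl_unexpl} and the identification of $\bm{V}_{\bm{\tau x}}\bm{V}_{\bm{xx}}^{-1/2}$ as a valid $(\bm{V}_{\bm{\tau\tau}}^\myparallel)^{1/2}_{LF}$ via spherical symmetry of $\bm{\zeta}_{LF,a}$ (the paper's Lemma~\ref{lemma:truncate_AApm_BBpm}). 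The only refinement the paper adds is to work throughout with the Gaussian surrogate $(\tilde{\bm{\tau}},\tilde{\bm{\tau}}_{\bm{x}})$ rather than the finite-sample $(\hat{\bm{\tau}}-\bm{\tau},\hat{\bm{\tau}}_{\bm{x}})$, which makes the independence of the residual and the covariate piece exact rather than asymptotic and slightly streamlines the argument.
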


Theorem \ref{thm:joint_refm} holds because the sampling distribution of $\hat{\bm{\tau}}$ under rerandomization is the same as the conditional distribution of $\hat{\bm{\tau}}$ given $M\leq a$. Its proof  is in Appendix A3 of the Supplementary Material \citep{rerandfacsupp}. We emphasize that, although the matrix $\left(\bm{V}_{\bm{\tau\tau}}^\myparallel\right)^{1/2}_{LF}$ may not be unique, the asymptotic sampling distribution \eqref{eq:joint_refm} is. 
Therefore, the asymptotic sampling distribution of $\hat{\bm{\tau}} - \bm{\tau}$ under ReFM depends only on $L,F,a$,  $\bm{V}_{\bm{\tau\tau}}^\myperp$, and $\bm{V}_{\bm{\tau\tau}}^\myparallel$. 
Theorem \ref{thm:joint_refm} immediately implies the asymptotic sampling distribution of a single factorial effect estimator. 
Let $\varepsilon_0\sim \mathcal{N}(0,1)$, and ${\eta}_{LF,a} \sim D_1 \mid \bm{D}'\bm{D}\leq a$ be the first coordinate of $\bm{\zeta}_{LF,a}$. 

\begin{corollary}\label{cor:dist_refm_one_linear}
	Under ReFM and Condition \ref{cond:fp}, for $1\leq f\leq F$, 
	\begin{eqnarray}\label{eq:dist_refm_single}
	\hat{\tau}_f - \tau_f \mid  \mathcal{M}
	& \apprsim & 
	\sqrt{V_{\tau_f\tau_f}}
	\left(
	\sqrt{1-R^2_f}\cdot \varepsilon_0 + \sqrt{R^2_f} \cdot \eta_{LF,a}
	\right). 
	\end{eqnarray}
\end{corollary}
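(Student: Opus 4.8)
The plan is to derive Corollary \ref{cor:dist_refm_one_linear} as a direct specialization of Theorem \ref{thm:joint_refm} to a single coordinate, reconciling the resulting expression with the multiple correlation $R^2_f$ from Corollary \ref{cor:R2_f}. First I would apply Theorem \ref{thm:joint_refm} and extract the $f$th coordinate of the right-hand side of \eqref{eq:joint_refm}. The term $\left(\bm{V}_{\bm{\tau\tau}}^\myperp\right)^{1/2}\bm{\varepsilon}$ contributes a mean-zero Gaussian with variance equal to the $f$th diagonal entry of $\bm{V}_{\bm{\tau\tau}}^\myperp$, namely $V_{\tau_f\tau_f}^\myperp = V_{\tau_f\tau_f} - V_{\tau_f\tau_f}^\myparallel = V_{\tau_f\tau_f}(1-R^2_f)$, using the decomposition $\bm{V}_{\bm{\tau\tau}} = \bm{V}_{\bm{\tau\tau}}^\myparallel + \bm{V}_{\bm{\tau\tau}}^\myperp$ and the definition of $R^2_f$ in Corollary \ref{cor:R2_f}. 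Hence this part equals $\sqrt{V_{\tau_f\tau_f}}\sqrt{1-R^2_f}\cdot\varepsilon_0$ in distribution for a standard normal $\varepsilon_0$.

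Next I would handle the rerandomized part $\left(\bm{V}_{\bm{\tau\tau}}^\myparallel\right)^{1/2}_{LF}\bm{\zeta}_{LF,a}$. Write $\bm{e}_f$ for the $f$th standard basis vector of $\mathbb{R}^F$; its $f$th coordinate is $\bm{e}_f'\left(\bm{V}_{\bm{\tau\tau}}^\myparallel\right)^{1/2}_{LF}\bm{\zeta}_{LF,a}$. Let $\bm{w}' = \bm{e}_f'\left(\bm{V}_{\bm{\tau\tau}}^\myparallel\right)^{1/2}_{LF} \in \mathbb{R}^{1\times LF}$, so this coordinate is $\bm{w}'\bm{\zeta}_{LF,a}$ with $\|\bm{w}\|_2^2 = \bm{e}_f'\bm{V}_{\bm{\tau\tau}}^\myparallel\bm{e}_f = V_{\tau_f\tau_f}^\myparallel = V_{\tau_f\tau_f} R^2_f$. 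The key observation is that $\bm{\zeta}_{LF,a}$ is spherically symmetric: it is $\bm{D}\mid\bm{D}'\bm{D}\le a$ with $\bm{D}\sim\mathcal{N}(\bm{0},\bm{I}_{LF})$, and both the Gaussian law and the constraint $\bm{D}'\bm{D}\le a$ are invariant under orthogonal transformations. Therefore for any unit vector $\bm{u}$, $\bm{u}'\bm{\zeta}_{LF,a}$ has the same distribution as the first coordinate $\eta_{LF,a} = D_1\mid\bm{D}'\bm{D}\le a$. Applying this with $\bm{u} = \bm{w}/\|\bm{w}\|_2$ gives $\bm{w}'\bm{\zeta}_{LF,a} \sim \|\bm{w}\|_2\,\eta_{LF,a} = \sqrt{V_{\tau_f\tau_f}}\sqrt{R^2_f}\cdot\eta_{LF,a}$. (If $R^2_f = 0$ this term vanishes, consistent with $\sqrt{R^2_f} = 0$.)

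Finally I would combine the two pieces. Since $\bm{\varepsilon}$ and $\bm{\zeta}_{LF,a}$ are independent in Theorem \ref{thm:joint_refm}, so are $\varepsilon_0$ and $\eta_{LF,a}$, and the $f$th coordinate of the right-hand side of \eqref{eq:joint_refm} is distributed as $\sqrt{V_{\tau_f\tau_f}}\left(\sqrt{1-R^2_f}\cdot\varepsilon_0 + \sqrt{R^2_f}\cdot\eta_{LF,a}\right)$, which is exactly \eqref{eq:dist_refm_single}. The main obstacle, such as it is, lies in the spherical-symmetry argument: one must be careful that extracting a single linear functional of a spherically symmetric vector reduces cleanly to its first coordinate regardless of the (non-unique) choice of $\left(\bm{V}_{\bm{\tau\tau}}^\myparallel\right)^{1/2}_{LF}$, which is precisely why the statement is phrased in terms of $\eta_{LF,a}$ and $R^2_f$ rather than the matrix square root. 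Everything else is routine bookkeeping with the variance decomposition and the definition of $R^2_f$.
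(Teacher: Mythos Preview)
Your proposal is correct and follows essentially the same route as the paper's proof: extract the $f$th coordinate via $\bm{e}_f$, identify the Gaussian part's variance as $V_{\tau_f\tau_f}(1-R^2_f)$, invoke the spherical symmetry of $\bm{\zeta}_{LF,a}$ to reduce $\bm{w}'\bm{\zeta}_{LF,a}$ to $\|\bm{w}\|_2\,\eta_{LF,a}$, and combine using independence. The only cosmetic difference is that the paper packages the spherical-symmetry step by citing Proposition~\ref{prop:represent} (and an external lemma), whereas you argue it directly from invariance of $\bm{D}$ and the constraint under orthogonal transformations.
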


The proof of Corollary \ref{cor:dist_refm_one_linear} is in Appendix A3 of the Supplementary Material \citep{rerandfacsupp}.
The marginal asymptotic sampling distribution \eqref{eq:dist_refm_single} under ReFM has the same form as that under rerandomized treatment-control experiments using the Mahalanobis distance \citep{asymrerand2016}.

\subsection{Review of the central convex unimodality}
In this subsection, we review a generalization of unimodality to multivariate distributions and apply it to study the asymptotic sampling distribution \eqref{eq:joint_refm}. This property will be important for constructing conservative large-sample confidence sets later. 

Although the definition of symmetric unimodality for univariate distribution is simple and intuitive, it is nontrivial to generalize it to multivariate distribution. Here we adopt the {\it central convex unimodality} proposed by \citet{dharmadhikari1976} based on the results of \citet{sherman1955}, which is also equivalent to the symmetric unimodality in \citet{kanter1977}. 
For a set $\mathcal{B}$ of distributions on $\mathbb{R}^m$, we say that $\mathcal{B}$ is {\it closed convex} if it satisfies two conditions: (i) for any distributions $\nu_1,\nu_2\in \mathcal{B}$ and for any $\lambda\in (0,1)$, the distribution $(1-\lambda)\nu_1+\lambda\nu_2$ is in $\mathcal{B}$, and (ii) a distribution $\nu$ is in $\mathcal{B}$ if there exists a sequence of distributions in $\mathcal{B}$ converging weakly to $\nu$. 
For any set $\mathcal{C}$ of distributions, let the {\it closed convex hull} of $\mathcal{C}$ be the smallest closed convex set containing $\mathcal{C}$. 
A compact convex set in Euclidean space $\mathbb{R}^m$ is called a {\it convex body} if it has a nonempty interior. A set $\mathcal{K}\subset\mathbb{R}^m$ is {\it symmetric} if $\mathcal{K} =  \{-\bm{a}: \bm{a}\in \mathcal{K}\}$. 
Below we introduce the definition. 

\begin{definition}\label{def:ccu}
	A distribution on $\mathbb{R}^m$ is {\it central convex unimodal} if it is in the closed convex hull of $\mathcal{U}$, where $\mathcal{U}$ is the set of all uniform distributions on symmetric convex bodies in $\mathbb{R}^m$. 
\end{definition}

The class of central convex unimodal distributions is closed under convolution, marginality, product measure, and weak convergence \citep{kanter1977}. A sufficient condition for the central convex unimodality is having a log-concave probability density function \citep{kanter1977, dharmadhikari1988}. The following proposition states the central convex unimodality of the asymptotic sampling distribution of $\hat{\bm{\tau}}-\bm{\tau}$ under ReFM.

\begin{proposition}\label{prop:ccu_refm}
	The standard Gaussian random vector $\bm{\varepsilon}$, the truncated Gaussian random vector $\bm{\zeta}_{LF,a}$, and the asymptotic sampling distribution \eqref{eq:joint_refm} are all central convex unimodal.
\end{proposition}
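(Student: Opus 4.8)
The plan is to verify central convex unimodality for each of the three objects in turn, using the closure properties of the class (closure under convolution, marginality, product measure, and weak convergence, as recalled from \citet{kanter1977}) together with the sufficient condition that a log-concave density implies central convex unimodality.

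\textbf{Step 1: the standard Gaussian vector $\bm{\varepsilon}$.} The density of $\mathcal{N}(\bm{0},\bm{I}_F)$ is proportional to $\exp(-\|\bm{x}\|_2^2/2)$, which is log-concave on $\mathbb{R}^F$. Hence $\bm{\varepsilon}$ is central convex unimodal by the log-concavity criterion. (Equivalently, one can write $\mathcal{N}(\bm{0},\bm{I}_F)$ as a scale mixture of uniform distributions on centered Euclidean balls, exhibiting it directly as a member of the closed convex hull of $\mathcal{U}$; either route works.)

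\textbf{Step 2: the truncated Gaussian vector $\bm{\zeta}_{LF,a}$.} Its density is proportional to $\exp(-\|\bm{d}\|_2^2/2)\,\mathbf{1}\{\bm{d}'\bm{d}\le a\}$, i.e.\ the product of the log-concave Gaussian density and the indicator of the centered ball $\{\bm{d}:\|\bm{d}\|_2^2\le a\}$, which is a symmetric convex body. Since the indicator of a convex set is log-concave and a product of log-concave functions is log-concave, this truncated density is log-concave on $\mathbb{R}^{LF}$, so $\bm{\zeta}_{LF,a}$ is central convex unimodal.

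\textbf{Step 3: the distribution in \eqref{eq:joint_refm}.} Write $\bm{W} = (\bm{V}_{\bm{\tau\tau}}^\myperp)^{1/2}\bm{\varepsilon} + (\bm{V}_{\bm{\tau\tau}}^\myparallel)^{1/2}_{LF}\bm{\zeta}_{LF,a}$ as a sum of two independent pieces, so its law is the convolution of the laws of the two summands. Central convex unimodality is preserved under linear images: if $\bm{\phi}$ has a central convex unimodal law and $\bm{A}$ is any (possibly rectangular) matrix, then $\bm{A}\bm{\phi}$ does too, because $\bm{A}$ maps uniform distributions on symmetric convex bodies either to uniform distributions on lower-dimensional symmetric convex sets (obtainable as weak limits of uniforms on full-dimensional symmetric convex bodies) or, in the degenerate directions, to point masses at the origin, all of which lie in the closed convex hull of $\mathcal{U}$; one then pushes this through the closed convex hull using continuity of $\bm{\phi}\mapsto \bm{A}\bm{\phi}$ and the closure under weak convergence. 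Thus each summand $(\bm{V}_{\bm{\tau\tau}}^\myperp)^{1/2}\bm{\varepsilon}$ and $(\bm{V}_{\bm{\tau\tau}}^\myparallel)^{1/2}_{LF}\bm{\zeta}_{LF,a}$ is central convex unimodal by Steps 1--2, and their independent sum is central convex unimodal by closure under convolution. (Alternatively, the first summand is Gaussian, hence log-concave; the issue is only that the second summand may be degenerate on $\mathbb{R}^F$ when $(\bm{V}_{\bm{\tau\tau}}^\myparallel)^{1/2}_{LF}$ has rank less than $F$, so one cannot simply invoke log-concavity of the sum and must instead argue via the linear-image and convolution closure properties.)

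\textbf{Main obstacle.} The routine parts are Steps 1 and 2, which are immediate from log-concavity. The delicate point is the linear-image step in Step 3: $(\bm{V}_{\bm{\tau\tau}}^\myparallel)^{1/2}_{LF}$ is an $F\times LF$ matrix and $(\bm{V}_{\bm{\tau\tau}}^\myparallel)^\myperp$ or $\bm{V}_{\bm{\tau\tau}}^\myparallel$ may be rank-deficient, so the resulting distribution can be supported on a proper subspace of $\mathbb{R}^F$ and need not have a density at all; consequently the log-concavity shortcut is unavailable and one must carefully check that central convex unimodality is stable under (degenerate) linear maps and under convolution, appealing to the closure properties in \citet{kanter1977} and approximating lower-dimensional uniform laws by full-dimensional ones. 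Once that stability is in hand, the conclusion for \eqref{eq:joint_refm} follows by combining Steps 1--2 with closure under convolution.
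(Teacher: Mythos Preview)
Your proposal is correct and follows essentially the same route as the paper: log-concavity of the densities of $\bm{\varepsilon}$ and $\bm{\zeta}_{LF,a}$ gives central convex unimodality, and closure under (possibly degenerate) linear maps together with closure under convolution yields the result for \eqref{eq:joint_refm}. The paper packages the linear-image stability you worry about in Step~3 as a separate lemma (handling invertible, full-row-rank, and rank-deficient matrices in turn), so your ``main obstacle'' is exactly what the paper isolates and proves before invoking it.
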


Proposition \ref{prop:ccu_refm} follows from the log-concavity of the densities of $\bm{\varepsilon}$ and $\bm{\zeta}_{LF,a}$ and the closedness of the class of central convex unimodal distributions under linear transformation and convolution. Its proof is in Appendix A3 of the Supplementary Material \citep{rerandfacsupp}.

\subsection{Representation for the asymptotic sampling distribution of $\hat{\boldsymbol{\tau}}$}

In this subsection, we further represent \eqref{eq:joint_refm} using well-known distributions to gain more insights. Let $\chi^2_{LF,a} \sim \chi^2_{LF}\mid \chi^2_{LF}\leq a$ be a truncated $\chi^2$ random variable, $\bm{S}$ be an $LF$ dimensional random vector whose coordinates are independent random signs with probability $1/2$ of being $\pm 1$, and $\bm{\beta}$ be an $LF$ dimensional Dirichlet random vector with parameters $(1/2, \ldots, 1/2)$. 
Let $\sqrt{\bm{\beta}}$ be the element-wise square root of the vector $\bm{\beta}$, and 
$v_{LF,a} = P(\chi^2_{LF+2}\leq a)/P(\chi^2_{LF}\leq a)
\leq 1$.

\begin{proposition}\label{prop:represent}
$\bm{\zeta}_{LF,a}$ is spherically symmetric with covariance $v_{LF,a}\bm{I}_{LF}$.  
It follows 
	$
	\bm{\zeta}_{LF,a} \sim \chi_{LF,a} \cdot \bm{S}\circ\sqrt{\bm{\beta}}, 
	$
	where $(\chi_{LF,a}, \bm{S}, \bm{\beta})$ are jointly independent. 
\end{proposition}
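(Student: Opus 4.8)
The plan is to exploit the defining property that $\bm{\zeta}_{LF,a}$ is a standard Gaussian vector $\bm{D}$ conditioned on the \emph{rotationally invariant} event $\{\bm{D}'\bm{D}\le a\}$. First I would observe that since the density of $\bm{D}$ is spherically symmetric and the conditioning event depends only on $\|\bm{D}\|_2$, the conditional law of $\bm{\zeta}_{LF,a}$ is invariant under every orthogonal transformation; hence $\bm{\zeta}_{LF,a}$ is spherically symmetric. Its covariance must then be a scalar multiple of $\bm{I}_{LF}$, say $c\,\bm{I}_{LF}$, with $c = \E[\|\bm{\zeta}_{LF,a}\|_2^2]/(LF) = \E[\chi^2_{LF}\mid \chi^2_{LF}\le a]/(LF)$. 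To evaluate this I would use the standard identity $\E[\chi^2_{LF}\,1\{\chi^2_{LF}\le a\}] = LF\cdot P(\chi^2_{LF+2}\le a)$, which follows from the recursion $x f_{LF}(x) = LF\, f_{LF+2}(x)$ on chi-squared densities (equivalently, integration by parts on the Gamma density). Dividing by $P(\chi^2_{LF}\le a)$ gives $c = v_{LF,a}$, and the bound $v_{LF,a}\le 1$ is immediate since $\chi^2_{LF+2}$ stochastically dominates $\chi^2_{LF}$ (or since truncation can only shrink the variance of a zero-mean vector).

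Next I would establish the representation $\bm{\zeta}_{LF,a}\sim \chi_{LF,a}\cdot\bm{S}\circ\sqrt{\bm{\beta}}$ by the usual polar decomposition of a spherically symmetric vector: any such vector equals $R\cdot\bm{U}$ in distribution, where $R=\|\bm{\zeta}_{LF,a}\|_2$ and $\bm{U}$ is uniform on the unit sphere $\mathcal{S}^{LF-1}$, with $R$ and $\bm{U}$ independent. Here $R^2 = \bm{D}'\bm{D}\mid\bm{D}'\bm{D}\le a\sim \chi^2_{LF,a}$, so $R\sim\chi_{LF,a}$. It then remains to identify the law of $\bm{U}$: writing $\bm{U}=\bm{S}\circ\sqrt{\bm{\beta}}$, I would recall the classical fact that if $\bm{\beta}$ is Dirichlet$(1/2,\dots,1/2)$ then $(\beta_1,\dots,\beta_{LF})$ has the law of $(U_1^2,\dots,U_{LF}^2)$ for $\bm{U}$ uniform on the sphere --- this is the Gaussian construction $U_j = D_j/\|\bm{D}\|_2$ combined with the fact that $(D_1^2/\|\bm{D}\|^2,\dots)$ is Dirichlet$(1/2,\dots,1/2)$ since each $D_j^2/2\sim\mathrm{Gamma}(1/2,1)$. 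Attaching independent random signs $\bm{S}$ to $\sqrt{\bm{\beta}}$ recovers the correct signs on the coordinates, and the three blocks $(\chi_{LF,a},\bm{S},\bm{\beta})$ are jointly independent because $R$ is independent of $\bm{U}=\bm{S}\circ\sqrt{\bm{\beta}}$, and within $\bm{U}$ the magnitudes $\sqrt{\bm{\beta}}$ and signs $\bm{S}$ are independent by the coordinatewise-symmetry of the uniform distribution on the sphere.

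I would then close the loop by checking that this representation is consistent with the covariance computed above: $\E[\chi_{LF,a}^2]\cdot\E[(\bm{S}\circ\sqrt{\bm{\beta}})(\bm{S}\circ\sqrt{\bm{\beta}})'] = \E[\chi^2_{LF,a}]\cdot(LF)^{-1}\bm{I}_{LF} = v_{LF,a}\bm{I}_{LF}$, using $\E[\beta_j]=1/(LF)$ and $\E[S_jS_k]=\delta_{jk}$, which is a useful sanity check rather than a logical necessity.

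The main obstacle is not any single step --- each is classical --- but assembling the independence structure cleanly: one must be careful that the polar decomposition genuinely decouples $R$ from $\bm{U}$ under the \emph{conditional} measure (it does, precisely because the conditioning event is a function of $R$ alone), and that the further decomposition of the uniform-on-sphere direction into independent signs times square-rooted Dirichlet weights is stated with the right joint independence. A secondary point requiring care is the Gamma-density recursion underlying $v_{LF,a}$; I would present it as the one-line identity $\int_0^a x\,f_{\chi^2_k}(x)\,dx = k\int_0^a f_{\chi^2_{k+2}}(x)\,dx$ and cite it rather than reprove it.
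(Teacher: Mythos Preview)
Your proposal is correct and follows essentially the same route as the paper: spherical symmetry via the rotationally invariant conditioning event, the polar decomposition $\bm{D}=\|\bm{D}\|_2\cdot\bm{U}$ with $\bm{U}$ uniform on the sphere and independent of $\|\bm{D}\|_2$, and the identification $\bm{U}\sim\bm{S}\circ\sqrt{\bm{\beta}}$ via the Gamma/Dirichlet construction. The only notable difference is that you derive $v_{LF,a}=P(\chi^2_{LF+2}\le a)/P(\chi^2_{LF}\le a)$ explicitly from the chi-squared density recursion, whereas the paper simply cites \citet[][Theorem 3.1]{morgan2012rerandomization} for this fact.
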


Proposition \ref{prop:represent} follows from the spherical symmetry of the standard multivariate Gaussian random vector, with the proof   in Appendix A3 of the Supplementary Material \citep{rerandfacsupp}.
Proposition \ref{prop:represent} allows for easy simulations of the asymptotic sampling distribution \eqref{eq:joint_refm}, which is useful for the repeated sampling inference discussed shortly. 
For simplicity, in the remaining paper, we assume that $\bm{V}_{\bm{\tau\tau}}$ is invertible whenever we mention its inverse; otherwise we can focus on a lower dimensional linear transformation of $\hat{\bm{\tau}}$ \citep{rerandfacsupp}.
Let $\bm{R} = \bm{V}_{\bm{\tau\tau}}^{-1/2}\bm{V}_{\bm{\tau\tau}}^{\myparallel}\bm{V}_{\bm{\tau\tau}}^{-1/2}$ be the matrix measuring the relative sampling covariance of $\hat{\bm{\tau}}$ explained by $\hat{\bm{\tau}}_{\bm{x}}$, 
and $\bm{R} = \bm{\Gamma}\bm{\Pi}^2 \bm{\Gamma}'$ be its eigen-decomposition, where $\bm{\Gamma}\in \mathbb{R}^{F\times F}$ is an orthogonal matrix and $\bm{\Pi}^2 = \textup{diag}(\pi_1^2, \ldots, \pi_F^2)\in \mathbb{R}^{F\times F}$ is a diagonal matrix with nonnegative elements. 
The eigenvalues $(\pi_1^2, \ldots, \pi_F^2)$ are the canonical correlations between the sampling distributions of $\hat{\bm{\tau}}$ and $\hat{\bm{\tau}}_{\bm{x}}$ under the CRFE, which measure the association between the potential outcomes and covariates. 
Under the additivity, $\pi_1^2 =  \cdots = \pi_F^2 = S_{11}^\myparallel/S_{11}$. 
The following corollary gives an equivalent form of \eqref{eq:joint_refm} highlighting the dependence on the canonical correlations $(\pi_1^2, \ldots, \pi_F^2)$.

\begin{corollary}\label{cor:another_form_refm}
	Under ReFM and Condition \ref{cond:fp}, \eqref{eq:joint_refm} is equivalent to 
	\begin{equation}\label{eq:equiva_dist_ReFM}
	\hat{\bm{\tau}} - \bm{\tau} \mid \mathcal{M}
	\  \apprsim \ 
	\bm{V}_{\bm{\tau\tau}}^{1/2}\bm{\Gamma}
	\left\{
	\left(\bm{I}_F - \bm{\Pi}^2\right)^{1/2}\bm{\varepsilon} + \left(\bm{\Pi}, \bm{0}_{F\times (L-1)F}\right) \bm{\zeta}_{LF,a}
	\right\}. 
	\end{equation} 
\end{corollary}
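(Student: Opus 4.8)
The plan is to verify \eqref{eq:equiva_dist_ReFM} by showing that its right-hand side agrees in distribution with the right-hand side of \eqref{eq:joint_refm} from Theorem \ref{thm:joint_refm}, exploiting that the asymptotic sampling distribution is uniquely determined by $L, F, a$, $\bm{V}_{\bm{\tau\tau}}^\myperp$ and $\bm{V}_{\bm{\tau\tau}}^\myparallel$ (as noted right after Theorem \ref{thm:joint_refm}), even though the square-root factors are not unique. First I would record the algebraic relations among $\bm{R}$, $\bm{V}_{\bm{\tau\tau}}$, $\bm{V}_{\bm{\tau\tau}}^\myparallel$, and $\bm{V}_{\bm{\tau\tau}}^\myperp$: from $\bm{R} = \bm{V}_{\bm{\tau\tau}}^{-1/2}\bm{V}_{\bm{\tau\tau}}^{\myparallel}\bm{V}_{\bm{\tau\tau}}^{-1/2}$ and $\bm{V}_{\bm{\tau}\bm{\tau}} = \bm{V}_{\bm{\tau}\bm{\tau}}^\myparallel + \bm{V}_{\bm{\tau}\bm{\tau}}^\myperp$, one gets $\bm{V}_{\bm{\tau\tau}}^{-1/2}\bm{V}_{\bm{\tau\tau}}^{\myperp}\bm{V}_{\bm{\tau\tau}}^{-1/2} = \bm{I}_F - \bm{R}$, hence $\bm{V}_{\bm{\tau\tau}}^{\myparallel} = \bm{V}_{\bm{\tau\tau}}^{1/2}\bm{\Gamma}\bm{\Pi}^2\bm{\Gamma}'\bm{V}_{\bm{\tau\tau}}^{1/2}$ and $\bm{V}_{\bm{\tau\tau}}^{\myperp} = \bm{V}_{\bm{\tau\tau}}^{1/2}\bm{\Gamma}(\bm{I}_F - \bm{\Pi}^2)\bm{\Gamma}'\bm{V}_{\bm{\tau\tau}}^{1/2}$ using the eigen-decomposition $\bm{R} = \bm{\Gamma}\bm{\Pi}^2\bm{\Gamma}'$.

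Next I would identify valid choices of the (non-unique) square-root matrices appearing in \eqref{eq:joint_refm}. Since $\bm{V}_{\bm{\tau\tau}}^{1/2}\bm{\Gamma}(\bm{I}_F-\bm{\Pi}^2)^{1/2}$ times its transpose equals $\bm{V}_{\bm{\tau\tau}}^{\myperp}$, it is a legitimate realization of $(\bm{V}_{\bm{\tau\tau}}^\myperp)^{1/2}$; and $\bm{V}_{\bm{\tau\tau}}^{1/2}\bm{\Gamma}\,(\bm{\Pi}, \bm{0}_{F\times(L-1)F})$ is an $F \times LF$ matrix whose product with its transpose is $\bm{V}_{\bm{\tau\tau}}^{1/2}\bm{\Gamma}\bm{\Pi}^2\bm{\Gamma}'\bm{V}_{\bm{\tau\tau}}^{1/2} = \bm{V}_{\bm{\tau\tau}}^{\myparallel}$, so it is a legitimate realization of $(\bm{V}_{\bm{\tau\tau}}^\myparallel)^{1/2}_{LF}$. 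Substituting these particular choices into \eqref{eq:joint_refm} and factoring out $\bm{V}_{\bm{\tau\tau}}^{1/2}\bm{\Gamma}$ on the left yields exactly the right-hand side of \eqref{eq:equiva_dist_ReFM}. Because the distribution on the right of \eqref{eq:joint_refm} does not depend on which valid square roots are used, this establishes the equivalence; invoking Proposition \ref{prop:represent} is not needed here but the independence of $\bm{\varepsilon}$ and $\bm{\zeta}_{LF,a}$ carries over unchanged.

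The only mild subtlety — and the step I expect to need the most care — is the dimension bookkeeping for $(\bm{V}_{\bm{\tau\tau}}^\myparallel)^{1/2}_{LF}$: the notation from Section \ref{sec:notation} requires an $F \times LF$ matrix $\bm{A}^{1/2}_p$ with $p = LF \geq \mathrm{rank}(\bm{V}_{\bm{\tau\tau}}^\myparallel)$, so I must check that padding $\bm{V}_{\bm{\tau\tau}}^{1/2}\bm{\Gamma}\bm{\Pi}$ with $\bm{0}_{F\times(L-1)F}$ indeed produces an admissible choice (it does, since $\mathrm{rank}(\bm{V}_{\bm{\tau\tau}}^\myparallel) \le F \le LF$ and the product identity holds), and that the corresponding $\bm{\zeta}_{LF,a}$ in both displays is the same $LF$-dimensional truncated Gaussian. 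Everything else is the matrix algebra spelled out above, so I would present it compactly: state the two square-root identities, plug into \eqref{eq:joint_refm}, factor, and appeal to uniqueness of the limiting law.
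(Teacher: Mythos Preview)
Your proposal is correct and reaches the result by a route that is slightly more direct than the paper's own proof. The paper starts from the particular square root $(\bm{V}_{\bm{\tau\tau}}^\myparallel)^{1/2}_{LF}=\bm{V}_{\bm{\tau x}}\bm{V}_{\bm{xx}}^{-1/2}$ obtained in the proof of Theorem~\ref{thm:joint_refm}, rewrites it via the SVD-type identity $\bm{V}_{\bm{\tau\tau}}^{-1/2}\bm{V}_{\bm{\tau x}}\bm{V}_{\bm{xx}}^{-1/2}=\bm{\Gamma}\bm{\Pi}\bm{\Omega}$ with $\bm{\Omega}\bm{\Omega}'=\bm{I}_F$, and then explicitly invokes the spherical symmetry of $\bm{\zeta}_{LF,a}$ (Proposition~\ref{prop:represent}) to replace $\bm{\Omega}\bm{\zeta}_{LF,a}$ by the first $F$ coordinates of $\bm{\zeta}_{LF,a}$. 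You instead pick the target square roots $\bm{V}_{\bm{\tau\tau}}^{1/2}\bm{\Gamma}(\bm{I}_F-\bm{\Pi}^2)^{1/2}$ and $\bm{V}_{\bm{\tau\tau}}^{1/2}\bm{\Gamma}(\bm{\Pi},\bm{0})$ from the outset and appeal to the uniqueness of the limiting law under different admissible square roots, as stated right after Theorem~\ref{thm:joint_refm}. Your shortcut is legitimate and buys a cleaner write-up; the only inaccuracy is the remark that Proposition~\ref{prop:represent} is not needed --- the invariance of \eqref{eq:joint_refm} under different choices of $(\bm{V}_{\bm{\tau\tau}}^\myparallel)^{1/2}_{LF}$ that you rely on is itself a consequence of the spherical symmetry of $\bm{\zeta}_{LF,a}$ (this is Lemma~\ref{lemma:truncate_AApm_BBpm} in the Supplementary Material), so Proposition~\ref{prop:represent} is still doing the work behind the scenes.
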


The proof of Corollary \ref{cor:another_form_refm} is in Appendix A3 of the Supplementary Material \citep{rerandfacsupp}.
The second term in \eqref{eq:equiva_dist_ReFM}, affected by rerandomization, depends on the canonical correlations $(\pi_1^2, \ldots, \pi_F^2)$ and the asymptotic acceptance probability $p_a$ of ReFM. Below we use a numerical example to illustrate such dependence. 
\begin{example}
	We consider the case with $L=1$, $K=2$ and $F=3$, and focus on the standardized distribution
	$
	\left(\bm{I}_3 - \bm{\Pi}^2\right)^{1/2}\bm{\varepsilon} + \bm{\Pi} \bm{\zeta}_{3,a},
	$ 
	which depends on $\bm{\Pi}^2=\textup{diag}(\pi_1^2, \pi_2^2, \pi_3^2)$ and $p_a = P(\chi^2_3 \leq a)$. 
	First, we fix $(\pi_2^2, \pi_3^2, p_a) = (0.5, 0.5, 0.001)$. Figure \ref{fig:std_dist_pi2_all} shows the density of the first two coordinates of $\bm{\zeta}_{3,a}$ for different $\pi_1^2$. As $\pi_1^2$ increases, the density becomes more concentrated around zero, showing that the stronger the association is between the potential outcomes and covariates, the more precise the factorial effect estimators are.

	Second, we fix $(\pi_1^2, \pi_2^2, \pi_3^2)=(0.5, 0.5, 0.5)$. Figure \ref{fig:std_dist_pa_all} shows the density of the first two coordinates of $\bm{\zeta}_{3,a}$ for different $p_a$. As the asymptotic acceptance probability $p_a$ decreases, the density becomes more concentrated around zero, 
	confirming the intuition that a smaller asymptotic acceptance probability gives us more precise factorial effect estimators. 
	Note that the first $\bm{\varepsilon}$ component in the asymptotic sampling distribution \eqref{eq:equiva_dist_ReFM} does not depend on $p_a$ and is usually nonzero. For example, when $\bm{V}_{\bm{\tau\tau}}^\myperp$ is positive definite, 
	$\bm{I} - \bm{R} = \bm{V}_{\bm{\tau\tau}}^{-1/2}\bm{V}_{\bm{\tau\tau}}^{\myperp}\bm{V}_{\bm{\tau\tau}}^{-1/2}$ is positive definite, as well as  
	the coefficient of $\bm{\varepsilon}$ in \eqref{eq:equiva_dist_ReFM}. 
	Therefore, the gain of ReFM by decreasing $p_a$ usually becomes smaller as $p_a$ decreases.
	$\hfill\Box$

	\begin{figure}[htb]
		\centering
		\begin{subfigure}{1\textwidth}
			\centering
			\includegraphics[width=0.8\linewidth]{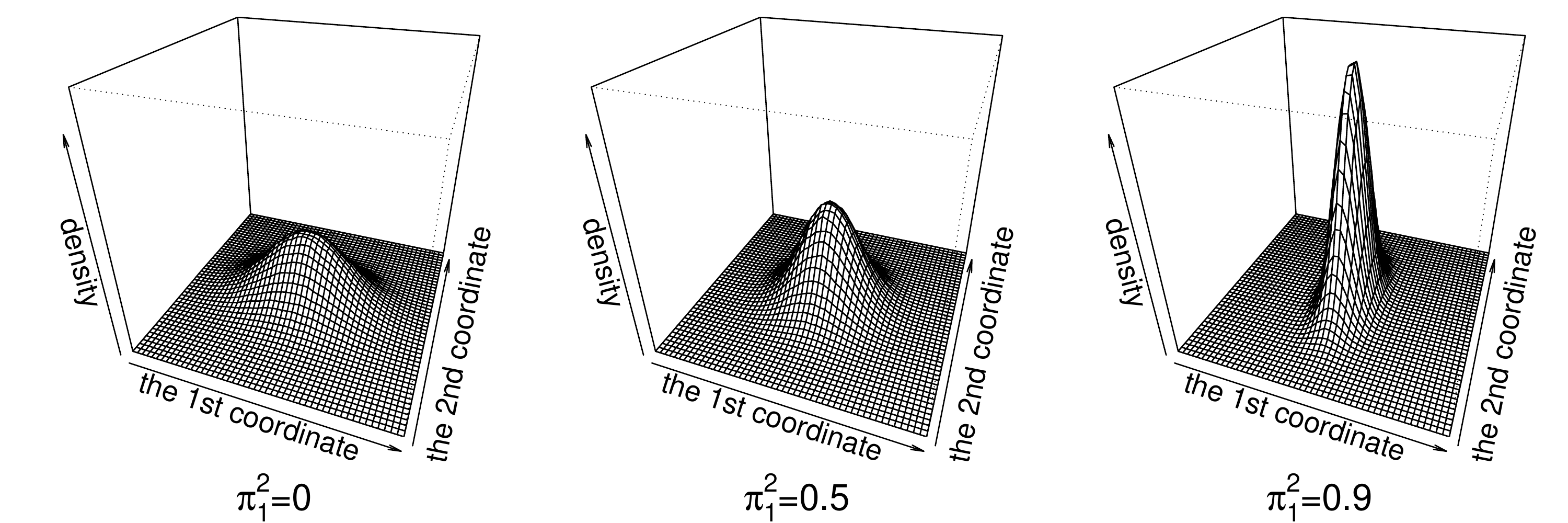}
			\caption{
				$(\pi_2^2, \pi_3^2, p_{\text{a}}) = (0.5, 0.5, 0.001)$}\label{fig:std_dist_pi2_all}
		\end{subfigure}\\
		\begin{subfigure}{1\textwidth}
			\centering
			\includegraphics[width=0.8\linewidth]{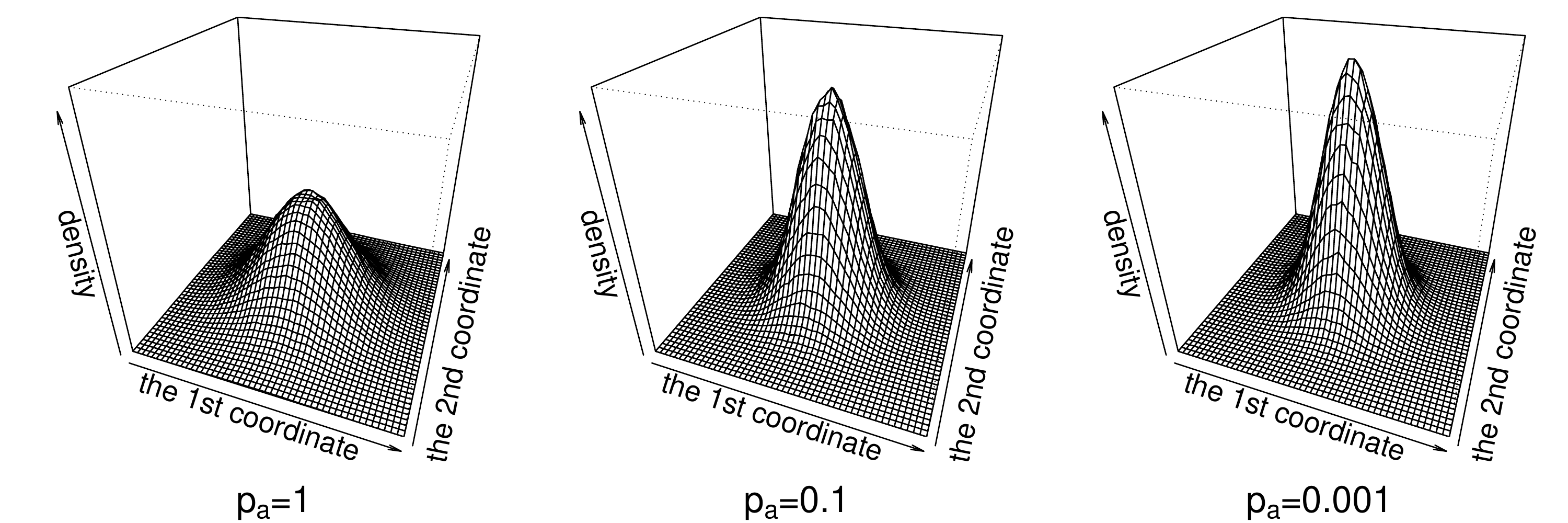}
			\caption{$(\pi_1^2, \pi_2^2, \pi_3^2)=(0.5, 0.5, 0.5)$}\label{fig:std_dist_pa_all}
		\end{subfigure}
		\caption{Joint density of the first two coordinates of $ 
			\left(\bm{I}_3 - \bm{\Pi}^2\right)^{1/2}\bm{\varepsilon} + \bm{\Pi} \bm{\zeta}_{3,a}$}
	\end{figure}
\end{example}

\subsection{Asymptotic unbiasedness, sampling covariance and peakedness}\label{sec:asym_prop}
In this subsection, we further study the asymptotic properties of $\hat{\bm{\tau}}$ under ReFM. 
First, the factorial effects estimator $\hat{\bm{\tau}}$ is consistent for $\bm{\tau}.$ 
Because covariates are potential outcomes unaffected by the treatment, 
the difference-in-means of any observed or unobserved covariate with respect to any factorial effect has asymptotic mean zero.

Second, we compare the asymptotic sampling covariance matrices of $\hat{\bm{\tau}}$ under ReFM and the CRFE, 
which also gives the reduction in asymptotic sampling covariances of difference-in-means of covariates as a special case.

\begin{theorem}\label{thm:red_var_rem}
	Under Condition \ref{cond:fp}, 
	the asymptotic sampling covariance matrix of $\hat{\bm{\tau}}$ under ReFM is smaller than or equal to that under the CRFE, and the reduction in asymptotic sampling covariance is  $(1-v_{LF,a})n\bm{V}_{\bm{\tau\tau}}^{\myparallel}$.
	Specifically, the percentage reduction in asymptotic sampling variance (PRIASV) of $\hat{\tau}_f$ is $(1-v_{LF,a})R^2_f$.
\end{theorem}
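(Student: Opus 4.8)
The plan is to read the asymptotic sampling covariance of $\hat{\bm{\tau}}$ under ReFM directly off the representation in Theorem \ref{thm:joint_refm}, compare it with $\bm{V}_{\bm{\tau\tau}}$ from Proposition \ref{prop:mean_var_cre}, and then extract the scalar statement. By Theorem \ref{thm:joint_refm}, $\hat{\bm{\tau}}-\bm{\tau}\mid\mathcal{M}\apprsim (\bm{V}_{\bm{\tau\tau}}^\myperp)^{1/2}\bm{\varepsilon}+(\bm{V}_{\bm{\tau\tau}}^\myparallel)^{1/2}_{LF}\bm{\zeta}_{LF,a}$ with $\bm{\varepsilon}$ and $\bm{\zeta}_{LF,a}$ independent, so the covariance of the limiting law is the sum of the two term-wise covariances. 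The first term contributes $(\bm{V}_{\bm{\tau\tau}}^\myperp)^{1/2}\bm{I}_F(\bm{V}_{\bm{\tau\tau}}^\myperp)^{1/2}=\bm{V}_{\bm{\tau\tau}}^\myperp$ since $\Cov(\bm{\varepsilon})=\bm{I}_F$; by Proposition \ref{prop:represent}, $\Cov(\bm{\zeta}_{LF,a})=v_{LF,a}\bm{I}_{LF}$, so the second term contributes $v_{LF,a}(\bm{V}_{\bm{\tau\tau}}^\myparallel)^{1/2}_{LF}\big((\bm{V}_{\bm{\tau\tau}}^\myparallel)^{1/2}_{LF}\big)'=v_{LF,a}\bm{V}_{\bm{\tau\tau}}^\myparallel$ by definition of the symmetric square root. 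Hence the ReFM asymptotic covariance equals $\bm{V}_{\bm{\tau\tau}}^\myperp+v_{LF,a}\bm{V}_{\bm{\tau\tau}}^\myparallel$.

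Next I would subtract this from the CRFE asymptotic covariance, using the decomposition $\bm{V}_{\bm{\tau\tau}}=\bm{V}_{\bm{\tau\tau}}^\myperp+\bm{V}_{\bm{\tau\tau}}^\myparallel$ stated just before Theorem \ref{thm:joint_var_expl_unexpl}, to get the reduction $(1-v_{LF,a})\bm{V}_{\bm{\tau\tau}}^\myparallel$, which after the $\sqrt{n}$-scaling convention of $\apprsim$ is $(1-v_{LF,a})n\bm{V}_{\bm{\tau\tau}}^\myparallel$. To conclude the ordering, I would invoke that $v_{LF,a}\leq 1$ (stated above, and a consequence of $\chi^2_{LF+2}$ stochastically dominating $\chi^2_{LF}$) together with $\bm{V}_{\bm{\tau\tau}}^\myparallel\geq\bm{0}$; the latter because Theorem \ref{thm:joint_var_expl_unexpl} identifies $\bm{V}_{\bm{\tau\tau}}^\myparallel$ with $\Cov(\bm{V}_{\bm{\tau x}}\bm{V}_{\bm{xx}}^{-1}\hat{\bm{\tau}}_{\bm{x}})$, and positive semi-definiteness is preserved in the limit under Condition \ref{cond:fp}. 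Therefore $(1-v_{LF,a})\bm{V}_{\bm{\tau\tau}}^\myparallel\geq\bm{0}$, giving the claimed inequality.

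For the scalar claim I would take the $f$th diagonal entry, or equivalently argue from Corollary \ref{cor:dist_refm_one_linear}: the asymptotic variance of $\hat{\tau}_f$ under ReFM is $V_{\tau_f\tau_f}\{(1-R^2_f)+R^2_f\,\Var(\eta_{LF,a})\}$, and $\eta_{LF,a}$ being a coordinate of $\bm{\zeta}_{LF,a}$ has variance $v_{LF,a}$ by Proposition \ref{prop:represent}; hence this variance is $V_{\tau_f\tau_f}\{1-(1-v_{LF,a})R^2_f\}$, while under the CRFE it is $V_{\tau_f\tau_f}$, so the PRIASV is $(1-v_{LF,a})R^2_f$. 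Consistency with the matrix formula follows from Corollary \ref{cor:R2_f}, since $V_{\tau_f\tau_f}^\myparallel/V_{\tau_f\tau_f}=R^2_f$.

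The algebra here is routine; the only point requiring care is the passage from weak convergence ($\apprsim$) to convergence of the covariance matrices. If ``asymptotic sampling covariance under ReFM'' is read as the covariance of the limiting distribution in Theorem \ref{thm:joint_refm}, the argument above is complete as stated. If it is instead read as $\lim_n n\,\Cov(\hat{\bm{\tau}}\mid\mathcal{M})$, I would add a short uniform-integrability step, bounding a $(2+\delta)$-th conditional moment by $p_a^{-1}$ times the unconditional one and applying the finite-population moment bounds behind \citet{fcltxlpd2016}, so that second moments converge along with the laws. I expect this moment-control step, not the computation, to be the only mildly technical obstacle.
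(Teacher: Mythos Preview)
Your proposal is correct and follows essentially the same route as the paper: compute the covariance of the limiting law in Theorem \ref{thm:joint_refm} using $\Cov(\bm{\zeta}_{LF,a})=v_{LF,a}\bm{I}_{LF}$ from Proposition \ref{prop:represent}, subtract from $\bm{V}_{\bm{\tau\tau}}=\bm{V}_{\bm{\tau\tau}}^\myperp+\bm{V}_{\bm{\tau\tau}}^\myparallel$, and read off the scalar claim via Corollary \ref{cor:dist_refm_one_linear}. The paper adopts exactly your first reading of ``asymptotic sampling covariance'' (covariance of the weak limit, denoted $\Cov_{\text{a}}$ in the supplement), so the uniform-integrability step you flag is not needed.
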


Theorem \ref{thm:red_var_rem} follows from Theorem \ref{thm:joint_refm} and Proposition \ref{prop:represent}, with the proof   in Appendix A4 of the Supplementary Material \citep{rerandfacsupp}.
Rigorously, 
the reductions in Theorem \ref{thm:red_var_rem} should be $(1-v_{LF,a})\lim_{n\rightarrow\infty}n\bm{V}_{\bm{\tau\tau}}^{\myparallel}$ and $(1-v_{LF,a})\lim_{n\rightarrow\infty} R^2_f$. However, for descriptive simplicity, we omit the limit signs.
From Theorem \ref{thm:red_var_rem}, the larger the squared multiple correlation $R^2_f$ is, the more PRIASV of the factorial effect estimator is through ReFM.  
When $a$ is close to zero, or equivalently the asymptotic acceptance probability $p_a$ is small, the asymptotic sampling variance of $\hat{\tau}_f$ reduces to $V_{\tau_f\tau_f}(1-R^2_f)$, which is identical to the asymptotic sampling variance of the regression adjusted estimator under the CRFE discussed in \citet{lu2016covadj}.

Third, we compare the peakedness of the asymptotic sampling distributions of $\hat{\bm{\tau}}$ under ReFM and the CRFE, because of its close connection to the volumes of confidence sets for $\bm{\tau}$. 
\citet{birnbaum1948}, \citet{bickel1976} and \citet{shaked1985ordering} proposed some measures of dispersion for univariate distributions. \citet{sherman1955} and \citet{GIOVAGNOLI1995325} generalized them to multivariate distributions.
\citet{marshall2009} discussed some related properties. 
Here we use the definition in \citet{sherman1955}. 

\begin{definition}\label{def:peak}
	For two symmetric random vectors $\bm{\phi}$ and $\bm{\psi}$ in $\mathbb{R}^m$, 
	we say that $\bm{\phi}$ is more peaked than $\bm{\psi}$ and write as $\bm{\phi} \succ \bm{\psi}$, if $P(\bm{\phi}\in \mathcal{K})\geq P(\bm{\psi}\in \mathcal{K})$ for every symmetric convex set $\mathcal{K}\subset \mathbb{R}^m$. 
\end{definition}

From Definition \ref{def:peak}, 
intuitively, 
the more peaked a random vector is, the more ``concentrated" around zero it is. 
Therefore, when comparing two experimental designs, the one with more peaked sampling distribution of the causal estimator gives more precise estimate for the true causal effect. That is, peakedness measures the efficiencies of the designs. 

As a basic fact, the ordering of peakedness directly implies the ordering of the covariance matrices. 

\begin{proposition}\label{prop:peak_cov}
	For two symmetric random vectors $\bm{\phi}$ and $\bm{\psi}$ in $\mathbb{R}^m$ with finite second moments, if $\bm{\phi} \succ \bm{\psi}$, then $\Cov(\bm{\phi})\leq \Cov(\bm{\psi})$.  
\end{proposition}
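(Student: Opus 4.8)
The plan is to derive the covariance inequality directly from the peakedness ordering by expressing each entry of the covariance matrices as an integral of probabilities of symmetric convex sets. Since both $\bm{\phi}$ and $\bm{\psi}$ are symmetric with mean zero, it suffices to show $\bm{u}'\Cov(\bm{\phi})\bm{u} \leq \bm{u}'\Cov(\bm{\psi})\bm{u}$ for every fixed $\bm{u}\in\mathbb{R}^m$, i.e., that $\Var(\bm{u}'\bm{\phi}) \leq \Var(\bm{u}'\bm{\psi})$. The key observation is that for any random variable $W$ with $\E W = 0$ and $\E W^2 < \infty$, we have the layer-cake identity $\E[W^2] = \int_0^\infty 2t\, P(|W| > t)\, dt = \int_0^\infty 2t\, \bigl(1 - P(|W|\leq t)\bigr)\, dt$.

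The main steps are as follows. First I would fix $\bm{u}$ and set $W_{\bm{\phi}} = \bm{u}'\bm{\phi}$, $W_{\bm{\psi}} = \bm{u}'\bm{\psi}$; both have mean zero because $\bm{\phi}$ and $\bm{\psi}$ are symmetric, hence their means are zero (finite second moments guarantee the means exist). Second, for each $t > 0$ the slab $\mathcal{K}_t = \{\bm{a}\in\mathbb{R}^m : |\bm{u}'\bm{a}| \leq t\}$ is a symmetric convex set, so the hypothesis $\bm{\phi}\succ\bm{\psi}$ gives $P(W_{\bm{\phi}}\in[-t,t]) = P(\bm{\phi}\in\mathcal{K}_t) \geq P(\bm{\psi}\in\mathcal{K}_t) = P(W_{\bm{\psi}}\in[-t,t])$, equivalently $P(|W_{\bm{\phi}}| > t) \leq P(|W_{\bm{\psi}}| > t)$ for all $t > 0$. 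Third, applying the layer-cake identity to $W_{\bm{\phi}}$ and $W_{\bm{\psi}}$ and integrating the pointwise inequality against the nonnegative weight $2t\,dt$ yields $\E[W_{\bm{\phi}}^2] \leq \E[W_{\bm{\psi}}^2]$, that is, $\bm{u}'\Cov(\bm{\phi})\bm{u} \leq \bm{u}'\Cov(\bm{\psi})\bm{u}$. Since $\bm{u}$ was arbitrary, $\Cov(\bm{\psi}) - \Cov(\bm{\phi})$ is positive semi-definite, which is exactly $\Cov(\bm{\phi})\leq\Cov(\bm{\psi})$.

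I do not expect any serious obstacle here; the proposition is a standard consequence of the definition of peakedness. The only point requiring a little care is the reduction to one-dimensional projections via the slabs $\mathcal{K}_t$ (one must note these are indeed convex and symmetric, which is immediate), and the justification that finite second moments make the mean-zero layer-cake representation valid; Fubini/Tonelli applies since all integrands are nonnegative. One could alternatively invoke a known result that peakedness implies dominance of all symmetric convex moments, but the direct slab argument is self-contained and short, so that is the route I would take.
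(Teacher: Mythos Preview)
Your proposal is correct and follows essentially the same route as the paper: reduce to one-dimensional projections, compare tail probabilities via the symmetric convex slabs $\{|\bm{u}'\bm{a}|\le t\}$, and integrate using the layer-cake representation of the second moment. The only cosmetic difference is that the paper first invokes its Lemma on preservation of peakedness under linear maps to obtain $\bm{c}'\bm{\phi}\succ\bm{c}'\bm{\psi}$ in $\mathbb{R}$, whereas you apply the $\mathbb{R}^m$ peakedness definition directly to the slab; the two are equivalent.
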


Proposition \ref{prop:peak_cov} follows from some algebra, 
with the proof in Appendix A5 of the Supplementary Material \citep{rerandfacsupp}.
For two Gaussian vectors $\bm{\phi}$ and $\bm{\psi}$, $\Cov(\bm{\phi})\leq \Cov(\bm{\psi})$ also implies $\bm{\phi} \succ \bm{\psi}$. The reverse of Proposition \ref{prop:peak_cov} does not hold for general random vectors. 
For example, we compare a standard Gaussian random variable $\varepsilon_0$ and a truncated Gaussian random variable $\xi_0 \sim \varepsilon_0 \mid 0.5 \leq \varepsilon_0^2 \leq 1$. 
Both random variables are symmetric around zero and $\Var(\xi_0) < 1  = \Var(\varepsilon_0)$. However, 
$\xi_0$ is not more peaked than $\varepsilon_0$, because
$P(|\xi_0| \leq 0.5) = 0 < P(|\varepsilon_0| \leq 0.5)$.

The following theorem shows that the difference-in-means estimator is more ``concentrated" under ReFM than under the CRFE.

\begin{theorem}\label{thm:red_qr_rem}
	Under Condition \ref{cond:fp}, 
	the asymptotic sampling distribution of $\hat{\bm{\tau}} - \bm{\tau}$ under ReFM is more peaked than that under the CRFE. 
\end{theorem}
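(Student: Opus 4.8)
The plan is to compare the two limiting laws directly by exploiting their common ``two-part'' structure. Under the CRFE, Proposition~\ref{prop:mean_var_cre} gives $\hat{\bm{\tau}}-\bm{\tau}\apprsim\mathcal{N}(\bm{0},\bm{V}_{\bm{\tau\tau}})$, and since $\bm{V}_{\bm{\tau\tau}}=\bm{V}_{\bm{\tau\tau}}^\myperp+\bm{V}_{\bm{\tau\tau}}^\myparallel$ this limit admits the representation $(\bm{V}_{\bm{\tau\tau}}^\myperp)^{1/2}\bm{\varepsilon}+(\bm{V}_{\bm{\tau\tau}}^\myparallel)^{1/2}_{LF}\bm{D}$ with $\bm{\varepsilon}\sim\mathcal{N}(\bm{0},\bm{I}_F)$ and $\bm{D}\sim\mathcal{N}(\bm{0},\bm{I}_{LF})$ independent, using the \emph{same} square-root matrix $(\bm{V}_{\bm{\tau\tau}}^\myparallel)^{1/2}_{LF}$ as in Theorem~\ref{thm:joint_refm}. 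Comparing with \eqref{eq:joint_refm}, the two limits differ only in that $\bm{D}$ is replaced by $\bm{\zeta}_{LF,a}\sim\bm{D}\mid\bm{D}'\bm{D}\leq a$; both are symmetric, so Definition~\ref{def:peak} applies. It therefore suffices to prove three claims: (i) $\bm{\zeta}_{LF,a}\succ\bm{D}$; (ii) peakedness is preserved under the linear map $(\bm{V}_{\bm{\tau\tau}}^\myparallel)^{1/2}_{LF}$; and (iii) peakedness is preserved after adding the independent term $(\bm{V}_{\bm{\tau\tau}}^\myperp)^{1/2}\bm{\varepsilon}$.

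For (i), observe that $\bm{D}$ is spherically symmetric and, by Proposition~\ref{prop:represent}, so is $\bm{\zeta}_{LF,a}$; write $\bm{D}=\chi_{LF}\bm{U}$ and $\bm{\zeta}_{LF,a}=\chi_{LF,a}\bm{U}$ with $\bm{U}$ uniform on the unit sphere of $\mathbb{R}^{LF}$ and independent of the radial parts, where $\chi_{LF,a}$ is $\chi_{LF}$ truncated to $[0,\sqrt{a}]$ and hence stochastically no larger than $\chi_{LF}$. The key lemma is: if $c_1$ and $c_2$ are nonnegative random variables with $c_1$ stochastically no larger than $c_2$, each independent of a symmetric random vector $\bm{\xi}$, then $c_1\bm{\xi}\succ c_2\bm{\xi}$. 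Indeed, for any nonempty symmetric convex set $\mathcal{K}$ (which then contains $\bm{0}$), the set $c^{-1}\mathcal{K}$ shrinks as $c$ increases, so $c\mapsto P(\bm{\xi}\in c^{-1}\mathcal{K})$ is nonincreasing; integrating this nonincreasing function against the stochastically ordered laws of $c_1$ and $c_2$ yields $P(c_1\bm{\xi}\in\mathcal{K})\geq P(c_2\bm{\xi}\in\mathcal{K})$. Taking $\bm{\xi}=\bm{U}$, $c_1=\chi_{LF,a}$, $c_2=\chi_{LF}$ gives $\bm{\zeta}_{LF,a}\succ\bm{D}$.

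Claim (ii) is immediate: for a linear map $\bm{A}$ and a symmetric convex set $\mathcal{K}$, the preimage $\bm{A}^{-1}\mathcal{K}$ is again symmetric and convex, so $\bm{\phi}\succ\bm{\psi}$ implies $P(\bm{A}\bm{\phi}\in\mathcal{K})=P(\bm{\phi}\in\bm{A}^{-1}\mathcal{K})\geq P(\bm{\psi}\in\bm{A}^{-1}\mathcal{K})=P(\bm{A}\bm{\psi}\in\mathcal{K})$; hence $(\bm{V}_{\bm{\tau\tau}}^\myparallel)^{1/2}_{LF}\bm{\zeta}_{LF,a}\succ(\bm{V}_{\bm{\tau\tau}}^\myparallel)^{1/2}_{LF}\bm{D}$. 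Claim (iii) is Sherman's convolution theorem \citep{sherman1955}: if $\bm{\phi}\succ\bm{\psi}$ and $\bm{\eta}$ is an independent, symmetric, central convex unimodal random vector, then $\bm{\phi}+\bm{\eta}\succ\bm{\psi}+\bm{\eta}$; here $\bm{\eta}=(\bm{V}_{\bm{\tau\tau}}^\myperp)^{1/2}\bm{\varepsilon}$ is Gaussian, hence central convex unimodal by Proposition~\ref{prop:ccu_refm}. Combining (i)--(iii) shows the ReFM limit is more peaked than the CRFE limit.

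The main obstacle is claim (i)---showing that truncating a spherically symmetric Gaussian to a centered Euclidean ball makes it more peaked---whose crux is the elementary but essential monotonicity fact that, for a fixed symmetric convex set, the probability that a uniformly random direction scaled by $r$ lands in the set is nonincreasing in $r$. The remaining ingredients (behavior of peakedness under linear maps and Sherman's convolution theorem) are standard once that is in hand. One should also dispatch the degenerate edge cases: an empty or lower-dimensional $\mathcal{K}$ (where the inequalities are trivial or follow by approximation), and a singular $\bm{V}_{\bm{\tau\tau}}^\myperp$ or $\bm{V}_{\bm{\tau\tau}}^\myparallel$, which is exactly what the generalized square root $(\,\cdot\,)^{1/2}_{LF}$ is designed to handle.
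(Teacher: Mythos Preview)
Your proposal is correct and follows essentially the same three-step architecture as the paper's proof: (i) $\bm{\zeta}_{LF,a}\succ\bm{D}$, (ii) peakedness is preserved under linear maps, and (iii) peakedness is preserved under convolution with an independent central convex unimodal vector (Sherman's theorem). Steps (ii) and (iii) match the paper's Lemmas on linear transformations and sums verbatim, and the central convex unimodality of the Gaussian piece is handled the same way.

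The only cosmetic difference is in the argument for (i). The paper conditions on $\|\bm{\varepsilon}\|_2=r$, shows $G(r)=P(\bm{\varepsilon}\in\mathcal{K}\mid\|\bm{\varepsilon}\|_2=r)$ is nonincreasing, and then compares $E\{G(\chi_{LF})\}$ with $E\{G(\chi_{LF})\mid\chi_{LF}^2\leq a\}$ by splitting the former into the two regions. You instead invoke the radial--angular decomposition from Proposition~\ref{prop:represent}, note that $\chi_{LF,a}$ is stochastically no larger than $\chi_{LF}$, and integrate the same nonincreasing map $r\mapsto P(r\bm{U}\in\mathcal{K})$ against the two radial laws. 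These are the same idea phrased two ways; your stochastic-ordering formulation is arguably a touch cleaner.
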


Theorem \ref{thm:red_qr_rem}  holds because the truncated Gaussian random vector $\bm{\zeta}_{LF,a}$ is more peaked than the standard Gaussian random vector. Its proof is in Appendix A5 of the Supplementary Material \citep{rerandfacsupp}.
First, Theorem \ref{thm:red_qr_rem}, coupled with Proposition \ref{prop:peak_cov}, implies the asymptotic sampling covariance of $\hat{\bm{\tau}}$ is smaller under ReFM than under the CRFE.
Second, Theorem \ref{thm:red_qr_rem} shows that asymptotically,  $\hat{\bm{\tau}}-\bm{\tau}$ has larger probability to be in any symmetric convex set under ReFM than under the CRFE.  
For a positive definite matrix $\bm{\Lambda} \in \mathbb{R}^{p\times p}$ and $c\geq 0$,  
let $\mathcal{O}(\bm{\Lambda}, c) \equiv \{ \bm{\mu}: \bm{\mu}'\bm{\Lambda}^{-1}\bm{\mu}\leq c \}$. 
The following theorem implies that, for the special class of symmetric convex sets, $\{\mathcal{O}(\bm{V}_{\bm{\tau\tau}}, c): c\geq 0\}$, the asymptotic probability that $\hat{\bm{\tau}}-\bm{\tau}$ lies in $\mathcal{O}(\bm{V}_{\bm{\tau\tau}}, c)$ is nondecreasing in the canonical correlation $\pi_k^2$'s.

\begin{theorem} \label{thm:red_qr_rem_nond_ccorr}
Under ReFM, assume Condition \ref{cond:fp}. Let $c_{1-\alpha}$ be the solution of 
	$
	\lim_{n\rightarrow\infty}P\left\{
	\hat{\bm{\tau}} - \bm{\tau} \in \mathcal{O}(\bm{V}_{\bm{\tau\tau}}, c_{1-\alpha}) \mid \mathcal{M}
	\right\} = 1 - \alpha
	$
	 for any $\alpha\in (0,1)$. It
	depends only on $(L,K,a)$ and the canonical correlation $\pi_k^2$'s, and is nonincreasing in these canonical correlations for fixed $(L,K,a)$. 
\end{theorem}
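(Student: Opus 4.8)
The plan is to reduce the claim, via the representation in Corollary~\ref{cor:another_form_refm}, to the elementary fact that the box probability of a standard bivariate normal vector is monotone in its correlation. First I would record the limiting probability: applying the orthogonal transformation $\bm{\Gamma}'\bm{V}_{\bm{\tau\tau}}^{-1/2}$ to \eqref{eq:equiva_dist_ReFM} turns $\mathcal{O}(\bm{V}_{\bm{\tau\tau}},c)$ into the Euclidean ball of squared radius $c$, so that
\begin{equation*}
\lim_{n\to\infty}P\{\hat{\bm\tau}-\bm\tau\in\mathcal{O}(\bm{V}_{\bm{\tau\tau}},c)\mid\mathcal{M}\}=\Psi(c;\bm\Pi^2):=P\big(\|\bm W\|^2\le c\big),\qquad \bm W:=(\bm{I}_F-\bm\Pi^2)^{1/2}\bm\varepsilon+\bm\Pi\,\bm\zeta^{(1)},
\end{equation*}
where $\bm\zeta^{(1)}$ is the vector of the first $F$ coordinates of $\bm\zeta_{LF,a}$. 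This already gives the first assertion, since $\Psi(c;\bm\Pi^2)$ manifestly depends only on $L$, $F=2^K-1$, $a$, and $(\pi_1^2,\dots,\pi_F^2)$. Moreover $\|\bm W\|^2$ has a continuous, strictly increasing distribution function (it is the squared norm of a vector with an absolutely continuous law), so $c_{1-\alpha}$ is uniquely defined; it therefore suffices to prove that $\Psi(c;\bm\Pi^2)$ is nondecreasing in each $\pi_k^2\in[0,1]$ for fixed $(L,K,a)$, and then invert in $c$. Composing changes one coordinate at a time, I may fix all $\pi_j$ with $j\ne1$ and establish monotonicity in $t:=\pi_1^2$.

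The key manoeuvre is a ``reverse conditioning'' that isolates all the $\bm\Pi$-dependence inside the acceptance event while leaving the measured vector isotropic. Writing $\bm\zeta_{LF,a}=\bm D\mid\|\bm D\|^2\le a$ with $\bm D=(\bm D^{(1)},\bm D^{(2)})\sim\mathcal{N}(\bm0,\bm I_{LF})$ and $\bm D^{(1)}\in\mathbb{R}^F$, taking $\bm\varepsilon,\bm D^{(1)},\bm D^{(2)}$ independent and setting $\bm G:=(\bm{I}_F-\bm\Pi^2)^{1/2}\bm\varepsilon+\bm\Pi\bm D^{(1)}$, one checks $\bm W\overset{d}{=}\bm G\mid\{\|\bm D^{(1)}\|^2+\|\bm D^{(2)}\|^2\le a\}$, that $(\bm G,\bm D^{(1)})$ is jointly Gaussian with $\bm G\sim\mathcal{N}(\bm0,\bm I_F)$, $\bm D^{(1)}\sim\mathcal{N}(\bm0,\bm I_F)$, and $\Cov(\bm G,\bm D^{(1)})=\bm\Pi$, and hence $\bm D^{(1)}=\bm\Pi\bm G+(\bm{I}_F-\bm\Pi^2)^{1/2}\bm\varepsilon'$ for independent $\bm G,\bm\varepsilon'\sim\mathcal{N}(\bm0,\bm I_F)$ and $Q:=\|\bm D^{(2)}\|^2\sim\chi^2_{(L-1)F}$ independent of both. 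This yields
\begin{equation*}
\Psi(c;\bm\Pi^2)=P\Big(\|\bm G\|^2\le c\ \Big|\ \textstyle\sum_{k=1}^{F}\big(\pi_kG_k+\sqrt{1-\pi_k^2}\,\varepsilon_k'\big)^2+Q\le a\Big),
\end{equation*}
where now the vector whose norm is measured, $\bm G$, is exactly $\mathcal{N}(\bm0,\bm I_F)$ and all of $\bm\Pi$ sits inside the truncation event.

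Next I would condition on $(G_2,\dots,G_F,\varepsilon_2',\dots,\varepsilon_F',Q)$, whose joint law does not involve $t$. Setting $c'=c-\sum_{j\ge2}G_j^2$ and $b=a-Q-\sum_{j\ge2}(\pi_jG_j+\sqrt{1-\pi_j^2}\varepsilon_j')^2$, the denominator $P(\cdot)=\E[\mathbf 1\{b\ge0\}\,P(\chi^2_1\le b)]$ is free of $t$ because $\sqrt{t}\,G_1+\sqrt{1-t}\,\varepsilon_1'\sim\mathcal{N}(0,1)$ for every $t\in[0,1]$; so it remains only to show that $U(t):=\E\big[u_t(c',b)\big]$ is nondecreasing in $t$, where $u_t(x,y)=P\big(G_1^2\le x,\ (\sqrt t\,G_1+\sqrt{1-t}\,\varepsilon_1')^2\le y\big)$ for $x,y\ge0$ and $u_t(x,y)=0$ otherwise. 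Since the law of $(c',b)$ is $t$-free, it is enough that $u_t(x,y)$ is nondecreasing in $t$ for each fixed $x,y\ge0$. But $(G_1,\ \sqrt t\,G_1+\sqrt{1-t}\,\varepsilon_1')$ is a standard bivariate normal pair with correlation $\rho=\sqrt t$, so $u_t(x,y)=P(|G_1|\le\sqrt x,\,|T|\le\sqrt y)$, and Plackett's identity (the $\rho$-derivative of the bivariate normal CDF is the bivariate normal density), applied at the four corners $(\pm\sqrt x,\pm\sqrt y)$, shows this $\rho$-derivative equals a positive multiple of $e^{-(x-2\rho\sqrt{xy}+y)/(2(1-\rho^2))}-e^{-(x+2\rho\sqrt{xy}+y)/(2(1-\rho^2))}\ge0$. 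Hence $u_t$, $U$, and $\Psi$ are nondecreasing in $t$, by symmetry in each $\pi_k^2$, and inverting the strictly increasing $\Psi(\cdot;\bm\Pi^2)$ in $c$ gives that $c_{1-\alpha}$ is nonincreasing in each $\pi_k^2$.

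The step I expect to be the real obstacle is the reverse-conditioning reparametrization. A naive approach — conditioning on $\bm\zeta^{(1)}$ and varying $t$ — fails, because the coordinatewise conditional law $\mathcal{N}(\sqrt t\,w_1,\,1-t)$ is not monotone in $t$ for fixed $w_1$, and the within-$\bm\zeta^{(1)}$ correlations block a direct coordinatewise comparison; similarly, a per-slice Anderson-type peakedness argument does not survive the mixing over the slicing variables. Rewriting the distribution so that $\bm G$ is isotropic and the truncation event carries all of $\bm\Pi$ is exactly what decouples the problem into independent coordinate pairs and lets the classical bivariate-normal correlation inequality close it; the bulk of the work is verifying the distributional identities in that step and the continuity and strict monotonicity of $\Psi(\cdot;\bm\Pi^2)$ needed to pass from the CDF ordering to the ordering of $c_{1-\alpha}$.
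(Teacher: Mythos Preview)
Your proposal is correct and follows essentially the same route as the paper. The paper packages the argument as Lemma~\ref{lemma:general_form_refm}, whose proof (via Lemma~\ref{lemma:nondecreasing_eigenvalue}) also reduces to the canonical form of Corollary~\ref{cor:another_form_refm}, varies one $\pi_k$ at a time, conditions on the remaining coordinates, and lands on the scalar fact that $P(|X|\le u,\,|Y|\le v)$ is nondecreasing in the correlation of a standard bivariate normal pair $(X,Y)$ --- exactly your $u_t$ monotonicity. The paper cites this scalar fact from \citet{asymrerand2016}; your Plackett-identity derivation is a clean self-contained alternative.

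The one genuine difference is your ``reverse conditioning'' reparametrization $(\bm G,\bm\varepsilon')$. It is correct, but not needed: the paper simply conditions on $(\bm\varepsilon_{-1},\bm\eta_{-1})$ in the original parametrization and observes that the law of these variables given $\{\bm\eta'\bm\eta\le a\}$ is free of $\rho_1$, which is all that is required for the outer expectation to inherit monotonicity. Your reparametrization achieves the same decoupling by moving all $\bm\Pi$-dependence into the truncation event; this makes the $t$-freeness of the outer integral more visually transparent, but the paper's direct conditioning already avoids the ``naive'' pitfall you flag (it never conditions on $\bm\zeta^{(1)}$ itself). So the step you identify as the real obstacle is handled in the paper by the same conditioning idea, just without the change of variables.
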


Theorem \ref{thm:red_qr_rem_nond_ccorr} is a multivariate extension of \citet[][Theorem 2]{asymrerand2016}, 
with the proof in Appendix A5 of the Supplementary Material \citep{rerandfacsupp}.
The set $\mathcal{O}(\bm{V}_{\bm{\tau\tau}}, c_{1-\alpha})$ in Theorem \ref{thm:red_qr_rem_nond_ccorr} is a $1-\alpha$ asymptotic quantile region of $\hat{\bm{\tau}} - \bm{\tau}$ under ReFM. 
From Theorem \ref{thm:red_qr_rem_nond_ccorr}, with larger canonical correlation  $\pi_k^2$'s, ReFM leads to more percentage reduction in volume of the $1-\alpha$ asymptotic quantile region $\mathcal{O}(\bm{V}_{\bm{\tau\tau}}, c_{1-\alpha})$ of $\hat{\bm{\tau}} - \bm{\tau}$.

Moreover, we can establish similar conclusions as Theorems \ref{thm:red_qr_rem} and \ref{thm:red_qr_rem_nond_ccorr} for any linear transformation of $\hat{\bm{\tau}}$. This follows from two facts: (i) the peakedness relationship is invariant under linear transformations \citep[][Lemma 7.2]{dharmadhikari1988}, i.e., for any $\bm{C}\in \mathbb{R}^{p\times m}$, if $\bm{\phi} \succ \bm{\psi}$, then $\bm{C}\bm{\phi} \succ \bm{C}\bm{\psi}$; 
(ii) the asymptotic sampling distribution of any linear transformation of $\hat{\bm{\tau}}$ has the same form as $\hat{\bm{\tau}}$, i.e.,  a linear combination of a standard Gaussian random vector and a truncated Gaussian random vector.  
For conciseness, we relegate the discussion to the Supplementary Material \citep{rerandfacsupp}, and consider only a single factorial effect estimator in the main text. In this case, the comparison between peakedness of two univariate asymptotic sampling distributions under ReMF and the CRFE reduces to the comparison of the lengths of quantile ranges \citep{asymrerand2016}.

\begin{corollary}\label{cor:red_in_qr_single_remf}
	Under Condition \ref{cond:fp}, for any $1\leq f\leq F$ and $\alpha\in (0,1)$, the threshold $c_{1-\alpha}$ 
	for the $1-\alpha$ asymptotic symmetric quantile range 
	$[-c_{1-\alpha}V_{\tau_f\tau_f}^{1/2}, c_{1-\alpha}V_{\tau_f\tau_f}^{1/2}]$
	of 
	$\hat{\tau}_f - \tau_f$ under ReFM is smaller than or equal to that under the CRFE, and is nonincreasing in $R_f^2$.
\end{corollary}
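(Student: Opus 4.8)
The plan is to reduce the statement to the one-dimensional specializations of Theorems~\ref{thm:red_qr_rem} and~\ref{thm:red_qr_rem_nond_ccorr}, using Corollary~\ref{cor:dist_refm_one_linear} to identify the relevant limiting law. First I would note that $\hat{\tau}_f-\tau_f$ is a scalar linear transformation of $\hat{\bm{\tau}}-\bm{\tau}$, so by the two facts recorded just after Theorem~\ref{thm:red_qr_rem_nond_ccorr} --- invariance of the peakedness order under linear maps \citep[][Lemma~7.2]{dharmadhikari1988}, and the fact that every linear transformation of $\hat{\bm{\tau}}$ has a limiting law of the same ``Gaussian plus truncated Gaussian'' form --- the conclusions of those two results transfer to $\hat{\tau}_f-\tau_f$. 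Dividing by $\sqrt{V_{\tau_f\tau_f}}$ and applying Corollary~\ref{cor:dist_refm_one_linear} (under Condition~\ref{cond:fp}), the standardized estimator converges weakly to $\phi_{R_f^2}\equiv\sqrt{1-R_f^2}\,\varepsilon_0+\sqrt{R_f^2}\,\eta_{LF,a}$ under ReFM, with $\varepsilon_0$ independent of $\eta_{LF,a}$, and to $\phi_0=\varepsilon_0\sim\mathcal{N}(0,1)$ under the CRFE; here $R_f^2$ plays the role of the single canonical correlation of this rank-one problem.

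Second, I would verify that $\phi_{R_f^2}$ is symmetric and unimodal on $\mathbb{R}$, so that the symmetric quantile range is meaningful: $\eta_{LF,a}$, a coordinate of the spherically symmetric $\bm{\zeta}_{LF,a}$ (Proposition~\ref{prop:represent}), has density proportional to $e^{-x^2/2}\,P(\chi^2_{LF-1}\le a-x^2)$, which is symmetric and nonincreasing in $|x|$, and convolving with the symmetric unimodal $\sqrt{1-R_f^2}\,\varepsilon_0$ preserves this. Hence $c\mapsto P(|\phi_{R_f^2}|\le c)$ is continuous and strictly increasing on the relevant range, the threshold $c_{1-\alpha}$ solving $P(|\phi_{R_f^2}|\le c_{1-\alpha})=1-\alpha$ is well defined, and --- since for a univariate symmetric unimodal law the symmetric convex sets are exactly the symmetric intervals --- the peakedness order of Definition~\ref{def:peak} coincides with nesting of symmetric quantile ranges, the more peaked law having the smaller threshold.

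With this dictionary: the inequality $c_{1-\alpha}^{\textup{ReFM}}\le c_{1-\alpha}^{\textup{CRFE}}$ is the one-dimensional instance of Theorem~\ref{thm:red_qr_rem}, $\phi_{R_f^2}\succ\phi_0$; it can also be seen directly by writing $\varepsilon_0\sim\sqrt{1-R_f^2}\,\varepsilon_0'+\sqrt{R_f^2}\,\varepsilon_0''$ with i.i.d.\ standard normal $\varepsilon_0',\varepsilon_0''$, using that $\bm{\zeta}_{LF,a}$ is more peaked than $\mathcal{N}(\bm{0},\bm{I}_{LF})$ (the fact underlying Theorem~\ref{thm:red_qr_rem}), hence $\eta_{LF,a}\succ\varepsilon_0''$, and invoking closure of the univariate peakedness order under convolution with a fixed symmetric unimodal summand \citep{sherman1955}. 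The monotonicity of $c_{1-\alpha}$ in $R_f^2$ is the one-dimensional instance of Theorem~\ref{thm:red_qr_rem_nond_ccorr}; since $\phi_{R_f^2}$ has exactly the form of the limiting distribution arising in rerandomized treatment--control experiments under the Mahalanobis criterion, this is precisely \citet[][Theorem~2]{asymrerand2016}, which can be quoted.

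The main obstacle is the monotonicity in $R_f^2$. Unlike the comparison with the CRFE, one cannot exhibit $\phi_s$ as $\phi_t$ convolved with an independent symmetric term when $s<t$, since raising $R_f^2$ simultaneously shrinks the coefficient $\sqrt{1-R_f^2}$ of the Gaussian part and inflates the coefficient $\sqrt{R_f^2}$ of the truncated part; nor does matching variances suffice, because a centrally truncated Gaussian need not be more peaked than the Gaussian of equal variance. The argument must instead exploit the finer structure of $\eta_{LF,a}$ --- e.g.\ the scale-mixture representation $\eta_{LF,a}\sim\chi_{LF,a}\,S_1\sqrt{\beta_1}$ of Proposition~\ref{prop:represent}, which turns $\phi_{R_f^2}$ into a mixture of two-point Gaussian mixtures whose peakedness can be tracked as $R_f^2$ varies --- mirroring the proof of \citet[][Theorem~2]{asymrerand2016}, which I would adapt (in the Supplementary Material) to the present rank-one reduction.
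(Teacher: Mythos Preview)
Your proposal is correct and follows essentially the same route as the paper: treat $\hat{\tau}_f-\tau_f$ as a one-dimensional linear transformation of $\hat{\bm{\tau}}-\bm{\tau}$, invoke Theorem~\ref{thm:red_qr_rem} (via invariance of peakedness under linear maps) for the comparison with the CRFE, and for the monotonicity in $R_f^2$ appeal to the univariate result \citet[][Theorem~2]{asymrerand2016}, which is exactly what the paper does through its Lemma~A10 (itself citing \citet[][Lemma~A3]{asymrerand2016}). The only cosmetic difference is that the paper packages the monotonicity step inside a general lemma for arbitrary linear transformations $\bm{C}\hat{\bm{\tau}}$ (its Lemma~A12) and then specializes to $p=1$, whereas you work directly in one dimension from Corollary~\ref{cor:dist_refm_one_linear}; the underlying argument is identical.
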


The proof of Corollary \ref{cor:red_in_qr_single_remf} is in Appendix A5 of the Supplementary Material \citep{rerandfacsupp}.
From Corollary \ref{cor:red_in_qr_single_remf}, with larger squared multiple correlation $R^2_f$, ReFM leads to more percentage reductions in lengths of the asymptotic quantile ranges of $\hat{\tau}_f - \tau_f$. 

\subsection{Conservative covariance estimator and confidence sets under ReFM}\label{sec:var_est_refm}

The asymptotic sampling distribution \eqref{eq:joint_refm} of $\hat{\bm{\tau}}$ under ReFM depends on $\bm{V}_{\bm{\tau}\bm{\tau}}^\myperp$ and $(\bm{V}_{\bm{\tau\tau}}^\myparallel)^{1/2}_{LF} = \bm{V}_{\bm{\tau x}}\bm{V}_{\bm{xx}}^{-1/2}$, 
which
further depend on $S_{qq}^{\myperp}, \bm{S}_{\bm{\tau\tau}}^{\myperp}$ and $\bm{S}_{q,\bm{x}}\bm{S}_{\bm{xx}}^{-1/2}$. 
Under treatment combination $q$, define $s_{qq}$ as the sample variance of observed outcomes, $\bm{s}_{q,\bm{x}}$ as the sample covariance between observed outcomes and covariates, 
$\bm{s}_{\bm{xx}}(q)$ as the sample covariance of covariates, and 
$s_{qq}^{\myperp} = s_{qq} - \bm{s}_{q,\bm{x}}\bm{s}_{\bm{xx}}^{-1}(q)\bm{s}_{\bm{x},q}$ as the sample variance of the residuals from the linear projection of observed outcomes on covariates.
We estimate $\bm{V}_{\bm{\tau\tau}}^\myperp$ by 
\begin{align}\label{eq:V_tautau_perp_est}
\hat{\bm{V}}_{\bm{\tau\tau}}^\myperp & = 2^{-2(K-1)} \sum_{q=1}^{Q} n_q^{-1} s_{qq}^{\myperp}\coef_q\coef_q', 
\end{align}
$\bm{V}_{\bm{\tau x}}$ by 
$
\hat{\bm{V}}_{\bm{\tau} \bm{x}} = 
2^{-2(K-1)}
\sum_{q=1}^{Q} n_q^{-1}
(\coef_q \coef_q') \otimes
\left\{ \bm{s}_{q,\bm{x}}\bm{s}_{\bm{xx}}^{-1/2}(q)\bm{S}_{\bm{xx}}^{1/2}
\right\}, 
$
and $(\bm{V}_{\bm{\tau\tau}}^\myparallel)^{1/2}_{LF}$ by $\hat{\bm{V}}_{\bm{\tau x}}\bm{V}_{\bm{xx}}^{-1/2}$. 
We can then 
obtain a covariance estimator and 
construct confidence sets for $\bm{\tau}$ or its linear transformations. 
When the threshold $a$ is small, 
$\bm{\zeta}_{LF,a}$ is close to zero, and
the distribution \eqref{eq:joint_refm} of $\hat{\bm{\tau}}$ is close to the Gaussian distribution with mean $\bm{\tau}$ and covariance matrix $\bm{V}_{\bm{\tau\tau}}^\myperp$. 
Therefore, for a parameter of interest $\bm{C}\bm{\tau}$, we recommend confidence sets of the form 
$
\bm{C}\hat{\bm{\tau}} + \mathcal{O}(\bm{C}\hat{\bm{V}}_{\bm{\tau\tau}}^\myperp\bm{C}', c).  
$
We choose the threshold $c$ based on simulation from the estimated asymptotic sampling distribution,  
and let 
$\hat{c}_{1-\alpha}$ be the $1-\alpha$ quantile of 
$ (\bm{C}\bm{\phi})'(\bm{C}\hat{\bm{V}}_{\bm{\tau\tau}}^\myperp\bm{C}')^{-1}(\bm{C}\bm{\phi})
$
with $\bm{\phi}$ following the estimated asymptotic sampling distribution of $\hat{\bm{\tau}}-\bm{\tau}$.

\begin{theorem}\label{thm:conser_conf_set_refm}
	Under ReFM and Condition \ref{cond:fp},  consider inferring $\bm{C}\bm{\tau}$, where
 $\bm{C}$ has full row rank. The probability limit of the covariance estimator for $\bm{C}\hat{\bm{\tau}}$, 
	$
	\bm{C}\hat{\bm{V}}_{\bm{\tau\tau}}^\myperp\bm{C}' + 
	v_{LF,a}
	\bm{C}\hat{\bm{V}}_{\bm{\tau x}}\bm{V}_{\bm{xx}}^{-1}\hat{\bm{V}}_{\bm{x\tau}}\bm{C}',
	$ 
	is larger than or equal to the sampling covariance, and 
	the $1-\alpha$ confidence set, 
	$
	\bm{C}\hat{\bm{\tau}} + \mathcal{O}(\bm{C}\hat{\bm{V}}_{\bm{\tau\tau}}^\myperp\bm{C}', \hat{c}_{1-\alpha}), 
	$
	has asymptotic coverage rate $\geq 1-\alpha$, 
	with equality holding if $\bm{S}_{\bm{\tau\tau}}^\myperp\rightarrow 0$ as $n\rightarrow \infty$. 
\end{theorem}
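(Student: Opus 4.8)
The plan is to reduce everything to the probability limits of the plug-in estimators, then compare asymptotic covariances, and finally invoke a central-convex-unimodality (Anderson-type) argument for the coverage claim. Throughout I work with the $n$-scaled quantities so that all comparisons are non-degenerate, and I use that, since $P(\mathcal{M})\to p_a>0$, any sequence converging in probability to a constant under the CRFE does so under ReFM as well.

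\emph{Probability limits.} Within treatment arm $q$ the assigned units form a simple random sample, so $s_{qq}\to S_{qq}$, $\bm{s}_{q,\bm{x}}\to \bm{S}_{q,\bm{x}}$, and $\bm{s}_{\bm{xx}}(q)\to\bm{S}_{\bm{xx}}$ in probability. Hence $s_{qq}^{\myperp}\to S_{qq}-\bm{S}_{q,\bm{x}}\bm{S}_{\bm{xx}}^{-1}\bm{S}_{\bm{x},q}=S_{qq}^{\myperp}$ and $\bm{s}_{q,\bm{x}}\bm{s}_{\bm{xx}}^{-1/2}(q)\bm{S}_{\bm{xx}}^{1/2}\to\bm{S}_{q,\bm{x}}$. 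Plugging these into \eqref{eq:V_tautau_perp_est} and the definitions of $\hat{\bm{V}}_{\bm{\tau x}}$ and recalling $\bm{V}_{\bm{\tau\tau}}^{\myperp}=2^{-2(K-1)}\sum_q n_q^{-1}\coef_q\coef_q' S_{qq}^{\myperp}-n^{-1}\bm{S}_{\bm{\tau\tau}}^{\myperp}$, I get, in probability, $n\hat{\bm{V}}_{\bm{\tau\tau}}^{\myperp}\to n\bm{V}_{\bm{\tau\tau}}^{\myperp}+\bm{S}_{\bm{\tau\tau}}^{\myperp}$, $\ \hat{\bm{V}}_{\bm{\tau x}}\bm{V}_{\bm{xx}}^{-1/2}\to \bm{V}_{\bm{\tau x}}\bm{V}_{\bm{xx}}^{-1/2}=(\bm{V}_{\bm{\tau\tau}}^{\myparallel})^{1/2}_{LF}$, and $n\hat{\bm{V}}_{\bm{\tau x}}\bm{V}_{\bm{xx}}^{-1}\hat{\bm{V}}_{\bm{x\tau}}\to n\bm{V}_{\bm{\tau x}}\bm{V}_{\bm{xx}}^{-1}\bm{V}_{\bm{x\tau}}=n\bm{V}_{\bm{\tau\tau}}^{\myparallel}$, the last identity being the content of Theorem \ref{thm:joint_var_expl_unexpl}.

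\emph{Conservative covariance.} Combining the above, the probability limit of $n$ times the proposed estimator is $\bm{C}(n\bm{V}_{\bm{\tau\tau}}^{\myperp})\bm{C}'+\bm{C}\bm{S}_{\bm{\tau\tau}}^{\myperp}\bm{C}'+v_{LF,a}\bm{C}(n\bm{V}_{\bm{\tau\tau}}^{\myparallel})\bm{C}'$. On the other hand, by Theorem \ref{thm:joint_refm}, independence of $\bm{\varepsilon}$ and $\bm{\zeta}_{LF,a}$, and $\Cov(\bm{\zeta}_{LF,a})=v_{LF,a}\bm{I}_{LF}$ from Proposition \ref{prop:represent}, the asymptotic sampling covariance (limit of $n\Cov$) of $\bm{C}\hat{\bm{\tau}}$ under ReFM equals $\bm{C}(n\bm{V}_{\bm{\tau\tau}}^{\myperp})\bm{C}'+v_{LF,a}\bm{C}(n\bm{V}_{\bm{\tau\tau}}^{\myparallel})\bm{C}'$. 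The difference is $\bm{C}\bm{S}_{\bm{\tau\tau}}^{\myperp}\bm{C}'\geq\bm{0}$, which gives the first claim, with equality iff $\bm{S}_{\bm{\tau\tau}}^{\myperp}\to\bm{0}$.

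\emph{Coverage.} Let $\bm{g}$ be the weak limit of $\sqrt{n}\,\bm{C}(\hat{\bm{\tau}}-\bm{\tau})\mid\mathcal{M}$, which by Theorem \ref{thm:joint_refm} has the form $\bm{C}(n\bm{V}_{\bm{\tau\tau}}^{\myperp})^{1/2}\bm{\varepsilon}+\bm{C}(n\bm{V}_{\bm{\tau\tau}}^{\myparallel})^{1/2}_{LF}\bm{\zeta}_{LF,a}$; by Proposition \ref{prop:ccu_refm} and closure of central convex unimodality under linear maps, $\bm{g}$ is central convex unimodal. By Step 1, the estimated asymptotic sampling distribution of $\sqrt{n}\,\bm{C}(\hat{\bm{\tau}}-\bm{\tau})$, obtained by substituting the estimators into \eqref{eq:joint_refm}, converges weakly to $\bm{\psi}:=\bm{C}(n\bm{V}_{\bm{\tau\tau}}^{\myperp}+\bm{S}_{\bm{\tau\tau}}^{\myperp})^{1/2}\bm{\varepsilon}+\bm{C}(n\bm{V}_{\bm{\tau\tau}}^{\myparallel})^{1/2}_{LF}\bm{\zeta}_{LF,a}$, which is equal in law to $\bm{g}+\bm{U}$ with $\bm{U}\sim\mathcal{N}(\bm{0},\bm{C}\bm{S}_{\bm{\tau\tau}}^{\myperp}\bm{C}')$ independent of $\bm{g}$ (split the Gaussian component). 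Assuming $\bm{W}:=\lim_n n\bm{C}(\bm{V}_{\bm{\tau\tau}}^{\myperp}+n^{-1}\bm{S}_{\bm{\tau\tau}}^{\myperp})\bm{C}'$ is positive definite, $\hat{c}_{1-\alpha}\to c^{*}$, the $1-\alpha$ quantile of $\bm{\psi}'\bm{W}^{-1}\bm{\psi}$, which is continuous at $1-\alpha$ since $\bm{\psi}$ has a continuous distribution. A joint Slutsky argument for $(\sqrt{n}\,\bm{C}(\hat{\bm{\tau}}-\bm{\tau}),\,n\bm{C}\hat{\bm{V}}_{\bm{\tau\tau}}^{\myperp}\bm{C}',\,\hat{c}_{1-\alpha})$ then shows the coverage probability tends to $P\{\bm{g}\in\mathcal{O}(\bm{W},c^{*})\}$. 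Since $\mathcal{O}(\bm{W},c^{*})$ is symmetric convex, $\bm{g}$ is central convex unimodal, and $\bm{U}$ is symmetric and independent of $\bm{g}$, Anderson's inequality (equivalently, convolution with a symmetric law cannot increase peakedness; cf.\ Definitions \ref{def:ccu}--\ref{def:peak}, Proposition \ref{prop:peak_cov}, and the proof of Theorem \ref{thm:red_qr_rem}) yields $\bm{g}\succ\bm{g}+\bm{U}$, whence $P\{\bm{g}\in\mathcal{O}(\bm{W},c^{*})\}\geq P\{\bm{g}+\bm{U}\in\mathcal{O}(\bm{W},c^{*})\}=P\{\bm{\psi}\in\mathcal{O}(\bm{W},c^{*})\}=1-\alpha$. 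Equality holds when $\bm{S}_{\bm{\tau\tau}}^{\myperp}\to\bm{0}$, since then $\bm{U}\equiv\bm{0}$ and $\bm{\psi}\sim\bm{g}$.

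\emph{Main obstacle.} The routine parts are Steps 1--2. The crux is the last paragraph: coordinating the random standardization $\bm{C}\hat{\bm{V}}_{\bm{\tau\tau}}^{\myperp}\bm{C}'$, the simulation-based threshold $\hat{c}_{1-\alpha}$, and the peakedness comparison, and in particular the Anderson-type step, which crucially uses that the genuinely non-Gaussian limit \eqref{eq:joint_refm} is central convex unimodal (Proposition \ref{prop:ccu_refm}), so that convolving with the estimation-induced Gaussian noise $\bm{U}$ provably cannot increase the probability assigned to the symmetric convex confidence set.
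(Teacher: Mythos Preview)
Your proposal is correct and follows essentially the same route as the paper's proof in Appendix~A6: consistency of $\hat{\bm{V}}_{\bm{\tau\tau}}^\myperp$ for $\tilde{\bm{V}}_{\bm{\tau\tau}}^\myperp=\bm{V}_{\bm{\tau\tau}}^\myperp+n^{-1}\bm{S}_{\bm{\tau\tau}}^\myperp$ and of $\hat{\bm{V}}_{\bm{\tau x}}\bm{V}_{\bm{xx}}^{-1/2}$ for $\bm{V}_{\bm{\tau x}}\bm{V}_{\bm{xx}}^{-1/2}$ (the paper's Lemmas~A8--A9), then a peakedness comparison via the convolution lemma (the paper's Lemma~A4) and Slutsky to transfer the quantile. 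The only cosmetic difference is the partition used in the peakedness step: the paper treats the truncated-Gaussian piece as the fixed central convex unimodal summand and compares the two Gaussian components via Lemma~A7, whereas you take the whole limit $\bm{g}$ as the CCU summand and compare $\bm{0}\succ\bm{U}$; both are valid instantiations of the same lemma (your citation of Proposition~\ref{prop:peak_cov} is slightly misplaced --- that proposition goes the other way --- but the substantive reference to the peakedness-preservation argument used in Theorem~\ref{thm:red_qr_rem} is the right one).
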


Theorem \ref{thm:conser_conf_set_refm} holds because the ordering of peakedness still holds by adding an independent central convex unimodal random vector. Its proof is in Appendix A6 of the Supplementary Material \citep{rerandfacsupp}.
The above confidence sets will be similar to the ones based on regression adjustment if the threshold $a$ is small. Theoretically, we can extend Theorem \ref{thm:conser_conf_set_refm} to general symmetric convex confidence sets, and we relegate this discussion to the Supplementary Material \citep{rerandfacsupp}.

\section{Rerandomization with tiers of factorial effects}\label{sec:ReFTF}

From Corollary \ref{cor:R2_f}, under the additivity, the squared multiple correlations between $\hat{\tau}_f$ and $\hat{\bm{\tau}}_{\bm{x}}$ are the same for all $f$:
$R^2_1=\cdots=R^2_F = S_{11}^{\myparallel}/S_{11}$. 
From Section  \ref{sec:asym_prop}, 
under the additivity, the improvement of the $f$th factorial effect estimator $\hat{\tau}_f$ under ReFM compared to the CRFE is asymptotically the same for all $f$. However, in practice, we are sometimes more interested in some factorial effects than others. For example, the main effects are often more important than higher-order interactions. Therefore, we need a balance criterion resulting in more precise  estimators for the more important factorial effects.

\subsection{Tiers of factorial effects criterion}\label{sec:tier_factor}

Let $\mathcal{F}=\{1,2,\ldots, F\}$ be the set of all factorial effects. We partition $\mathcal{F}$ into $H$ tiers $(\mathcal{F}_1, \ldots,\mathcal{F}_{H})$ with decreasing importance,  
where the $\mathcal{F}_h$'s are disjoint and $\mathcal{F} = \bigcup_{h=1}^H \mathcal{F}_h$. 
The cardinality $F_h \equiv |\mathcal{F}_h|$ represents the number of factorial effects in tier $h$. 
For example, we can partition $\mathcal{F}$ into three tiers:  
$\mathcal{F}_1$ contains the $K$ main effects, $\mathcal{F}_2$ 
contains the $\binom{K}{2}$ interaction effects between two factors, and 
$\mathcal{F}_3$ contains the remaining factorial effects with higher-order interactions.

Define $\gamma^2_{fk} = \text{Corr}^2( \hat{\tau}_f,  \hat{\bm{\tau}}_{\bm{x},k} )$.
When the $f$th factorial effect is more important, we would like to put more restriction on the difference-in-means vector $\hat{\bm{\tau}}_{\bm{x},k}$ with larger squared multiple correlation $\gamma^2_{fk}.$ 
Although general results for the relative magnitudes of the $\gamma_{fk}^2$'s appear too complicated, below we give a proposition under the additivity, which serves as a guideline for the choice of the balance criterion. 

\begin{proposition}\label{prop:comp_R_fm_add}
Under the CRFE, assume the additivity in Definition \ref{def:additive}.
The squared multiple correlations 
	satisfy  
$
	\max_{1\leq k\leq F}\gamma^2_{fk} = \gamma^2_{ff} = R^2_f = S_{11}^{\myparallel}/S_{11}
	$
for $1\leq f \leq F$.
The squared multiple partial correlation between 
	$\hat{\tau}_f$ and $\hat{\bm{\tau}}_{\bm{x}}$ given $\hat{\bm{\tau}}_{\bm{x},f}$ is zero, i.e., 
	the residuals from the linear projections of $\hat{\tau}_f$ and $\hat{\bm{\tau}}_{\bm{x}}$ on $\hat{\bm{\tau}}_{\bm{x},f}$ are uncorrelated. 
	If further $n_1=\cdots=n_{Q} = n/Q$, 
	then 
	$\gamma^2_{fk}=0$ for $k\neq f$.
\end{proposition}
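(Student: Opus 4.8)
The plan is to exploit the additivity assumption to reduce everything to a single potential outcome. Under additivity, $\bm{S}_{\bm{\tau\tau}} = \bm{0}$, and moreover the individual factorial effects are constant, so $Y_i(q) - \bar Y(q)$ does not depend on $q$; write $D_i \equiv Y_i(q) - \bar Y(q)$, so that $S_{qk} = S_{11}$ for all $q,k$, and similarly $\bm{S}_{q,\bm{x}} = \bm{S}_{1,\bm{x}}$ for all $q$. First I would use Proposition~\ref{prop:mean_var_cre} to write down the relevant blocks of $\bm{V}$: the block $\Cov(\hat\tau_f, \hat{\bm\tau}_{\bm x,k})$ equals $2^{-2(K-1)}\sum_q n_q^{-1} g_{fq} g_{kq}\,\bm{S}_{1,\bm{x}}$ (the $\bm{S}_{\bm{\tau\tau}}$ correction drops out), the variance $V_{\tau_f\tau_f} = 2^{-2(K-1)}\sum_q n_q^{-1} S_{11}$ is the same for all $f$, and $\Cov(\hat{\bm\tau}_{\bm x,k}, \hat{\bm\tau}_{\bm x,k}) = 2^{-2(K-1)}\sum_q n_q^{-1}\bm{S}_{\bm{xx}}$. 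Then $\gamma_{fk}^2 = \big(\sum_q n_q^{-1} g_{fq}g_{kq}\big)^2 \cdot \big(\sum_q n_q^{-1}\big)^{-2} \cdot \big(\bm{S}_{1,\bm{x}}\bm{S}_{\bm{xx}}^{-1}\bm{S}_{\bm{x},1}/S_{11}\big)$, i.e. $\gamma_{fk}^2 = w_{fk}^2 \cdot (S_{11}^\myparallel/S_{11})$ where $w_{fk} = \sum_q n_q^{-1} g_{fq}g_{kq} / \sum_q n_q^{-1}$ and I have used $S_{11}^\myparallel = \bm{S}_{1,\bm{x}}\bm{S}_{\bm{xx}}^{-1}\bm{S}_{\bm{x},1}$.

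Next I would show $|w_{fk}| \le 1$ with equality iff $k=f$, which gives the first claim $\max_k \gamma_{fk}^2 = \gamma_{ff}^2 = S_{11}^\myparallel/S_{11} = R_f^2$ (the last equality being the additive case of Corollary~\ref{cor:R2_f}). Since $g_{fq}g_{kq} = g_{f'q}$ for the generating vector $\bm{g}_{f'}$ of the factorial effect that is the ``product'' of effects $f$ and $k$ (the generating vectors form a group under elementwise multiplication), $w_{fk} = \sum_q n_q^{-1} g_{f'q}/\sum_q n_q^{-1}$, a weighted average of $\pm 1$'s, hence in $[-1,1]$; and when $k=f$, $f'$ is the ``null'' effect with $\bm{g}_{f'} = \bm{1}$, giving $w_{ff}=1$. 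Equality $|w_{fk}|=1$ for $k\neq f$ would force all $g_{f'q}$ to be equal across $q$, impossible for a genuine factorial effect (half its entries are $+1$, half $-1$). This group structure is the crux, and is the step I'd be most careful with — in particular checking that $\bm{g}_f \circ \bm{g}_k$ is again a generating vector (for $k\neq f$) or the all-ones vector (for $k=f$).

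For the second claim — zero squared multiple partial correlation between $\hat\tau_f$ and $\hat{\bm\tau}_{\bm x}$ given $\hat{\bm\tau}_{\bm x,f}$ — I would show that the residual of $\hat\tau_f$ after projecting onto $\hat{\bm\tau}_{\bm x,f}$ is uncorrelated with every $\hat{\bm\tau}_{\bm x,k}$. The projection coefficient of $\hat\tau_f$ on $\hat{\bm\tau}_{\bm x,f}$ is $\bm{S}_{1,\bm{x}}\bm{S}_{\bm{xx}}^{-1}$ (independent of $f$ by the computation above), so the residual is $\hat\tau_f - \bm{S}_{1,\bm{x}}\bm{S}_{\bm{xx}}^{-1}\hat{\bm\tau}_{\bm x,f}$. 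Its covariance with $\hat{\bm\tau}_{\bm x,k}$ is proportional to $\big(\sum_q n_q^{-1} g_{fq} g_{kq}\big)\bm{S}_{1,\bm{x}} - w_{ff}\big(\sum_q n_q^{-1}\big)\bm{S}_{1,\bm{x}}\bm{S}_{\bm{xx}}^{-1}\bm{S}_{\bm{xx}}$; wait — more carefully, $\Cov(\hat\tau_f,\hat{\bm\tau}_{\bm x,k}) - \bm{S}_{1,\bm{x}}\bm{S}_{\bm{xx}}^{-1}\Cov(\hat{\bm\tau}_{\bm x,f},\hat{\bm\tau}_{\bm x,k})$ is proportional to $\big(\sum_q n_q^{-1} g_{fq}g_{kq}\big)\bm{S}_{1,\bm{x}} - \big(\sum_q n_q^{-1} g_{fq}g_{kq}\big)\bm{S}_{1,\bm{x}} = \bm{0}$, since $\Cov(\hat{\bm\tau}_{\bm x,f},\hat{\bm\tau}_{\bm x,k}) \propto \big(\sum_q n_q^{-1} g_{fq}g_{kq}\big)\bm{S}_{\bm{xx}}$. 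So the residual is uncorrelated with $\hat{\bm\tau}_{\bm x}$ entirely, giving the claim. Finally, for the balanced case $n_q = n/Q$, $w_{fk} = Q^{-1}\sum_q g_{f'q} = 0$ for $k\neq f$ because the generating vector of a nontrivial factorial effect sums to zero; hence $\gamma_{fk}^2 = 0$ directly. I would close by noting all statements are understood in the $n\to\infty$ limit via Condition~\ref{cond:fp}, consistent with the convention adopted after Theorem~\ref{thm:red_var_rem}.
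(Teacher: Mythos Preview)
Your proposal is correct and follows essentially the same route as the paper: compute the relevant blocks of $\bm{V}$ under additivity, use the group structure $\bm{g}_f\circ\bm{g}_k=\bm{g}_{f\star k}$ to write $\gamma_{fk}^2$ as a squared weighted average of $\pm 1$'s times $S_{11}^\myparallel/S_{11}$, and invoke Corollary~\ref{cor:R2_f} for $R_f^2$. The only minor differences are that the paper dispatches the partial-correlation claim by observing $\gamma_{ff}^2=R_f^2$ (variance explained by $\hat{\bm\tau}_{\bm x,f}$ already equals that explained by all of $\hat{\bm\tau}_{\bm x}$) rather than your direct residual computation, and that your closing remark about Condition~\ref{cond:fp} is unnecessary---Proposition~\ref{prop:comp_R_fm_add} is a finite-sample statement under the CRFE.
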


Proposition \ref{prop:comp_R_fm_add} follows from some algebra, with the proof   in Appendix A2 of the Supplementary Material \citep{rerandfacsupp}. 
From Proposition \ref{prop:comp_R_fm_add}, with the additivity and under the CRFE, $\hat{\bm{\tau}}_{\bm{x}}$ explains $\hat{\tau}_f$ in the linear projection only through $\hat{\bm{\tau}}_{\bm{x},f}$.
Therefore, it is desirable to impose more restriction on the difference-in-means of covariates with respect to more important factorial effects under rerandomization.

\subsection{Orthogonalization with tiers of factorial effects}\label{sec:tiers_cov_criterion}
For $1\leq h\leq H$, let $\hat{\bm{\tau}}_{\bm{x}}[\mathcal{F}_h]$ be the subvector of $\hat{\bm{\tau}}_{\bm{x}}$, consisting of the difference-in-means of covariates $\hat{\bm{\tau}}_{\bm{x},f}$ with respect to factorial effect $f\in \mathcal{F}_h$.
From Section \ref{sec:tier_factor}, the smaller the $h$ is, the more restriction we want to impose on $\hat{\bm{\tau}}_{\bm{x}}[\mathcal{F}_h]$. 
However, due to the correlations among the $\hat{\bm{\tau}}_{\bm{x}}[\mathcal{F}_h]$'s, 
restrictions on one also restrict others. 
For example, balancing $\hat{\bm{\tau}}_{\bm{x}}[\mathcal{F}_1]$ partially balances
$\hat{\bm{\tau}}_{\bm{x}}[\mathcal{F}_2]$. 
Therefore, instead of unnecessarily balancing for all factorial effects in tier $h$, 
we balance only the part that is orthogonal to the factorial effects in previous tiers.

Let $\covcoef=2^{-2(K-1)}\sum_{q=1}^{Q} n_q^{-1}\coef_q \coef_q'$.
From Proposition \ref{prop:mean_var_cre}, the sampling covariance  of $\hat{\bm{\tau}}_{\bm{x}}$ under the CRFE, 
$\bm{V}_{\bm{xx}}= \covcoef \otimes \bm{S}_{\bm{xx}}$, 
contains two components: 
$\covcoef$ determined by the coefficient vector $\bm{b}_q$'s 
and $\bm{S}_{\bm{xx}}$ determined by the covariates. 
Below we introduce a block-wise Gram--Schmidt orthogonalization of the coefficient vector $\coef_q$'s, taking into account the tiers of factorial effects. 
Let $\mathcal{F}_{\overline{h}} = \bigcup_{l=1}^h \mathcal{F}_{l}$ be the factorial effects in the first $h$ tiers. 
We use $\coef_q[\mathcal{F}_h]$ and 
$\coef_q[\mathcal{F}_{\overline{h}}]$ to denote the subvectors of $\coef_q$ with indices in $\mathcal{F}_h$ and $\mathcal{F}_{\overline{h}}$, 
and 
$\covcoef[\mathcal{F}_h, \mathcal{F}_{\overline{h}}]$ and $\covcoef[\mathcal{F}_{\overline{h}}, \mathcal{F}_{\overline{h}}]$
to denote the submatrices of $\covcoef$ with indices in $\mathcal{F}_h\times \mathcal{F}_{\overline{h}}$ and $\mathcal{F}_{\overline{h}}\times  \mathcal{F}_{\overline{h}}$. 
For each $1\leq q\leq Q$, we define 
the orthogonalized coefficient vector $\orthocoef_q = (\orthocoef_q'[1], \ldots, \orthocoef_q'[H])'$ as 
$\orthocoef_q[1] = \coef_q[\mathcal{F}_1],$ and
\begin{align}\label{eq:b_q}
\orthocoef_q[h] & = \coef_q[\mathcal{F}_h] - 
\covcoef[\mathcal{F}_h, \mathcal{F}_{\overline{h-1}}]
\left\{
\covcoef[\mathcal{F}_{\overline{h-1}}, \mathcal{F}_{\overline{h-1}}]
\right\}^{-1}
\coef_q[\mathcal{F}_{\overline{h-1}}], \quad (2\leq h\leq H). 
\end{align} 
The difference-in-means vector of covariates with respect to orthogonalized coefficient vectors is 
\begin{align}\label{eq:theta_X}
\hat{\bm{\theta}}_{\bm{x}} \equiv 
\begin{pmatrix}
\hat{\bm{\theta}}_{\bm{x}}[1]\\
\vdots\\
\hat{\bm{\theta}}_{\bm{x}}[H]
\end{pmatrix}
=
2^{-(K-1)} \sum_{q=1}^{Q} 
\begin{pmatrix}
\orthocoef_q[1]\\
\vdots\\
\orthocoef_q[H]
\end{pmatrix}
\otimes \hat{\bar{\bm{x}}}(q). 
\end{align}
By construction, $\tilde{\bm{C}} \equiv 2^{-2(K-1)} \sum_{q=1}^{Q} n_q^{-1}\orthocoef_q \orthocoef_q'$ is block diagonal, and thus  
the sampling covariance of $\hat{\bm{\theta}}_{\bm{x}}$ under the CRFE, $\tilde{\bm{C}} \otimes \bm{S}_{\bm{xx}}$, is also block diagonal. The following proposition summarizes these results. 

\begin{proposition}\label{prop:var_cov_tier_factor}
	Under the CRFE,
	$(\hat{\bm{\tau}}'-\bm{\tau}', \hat{\bm{\theta}}_{\bm{x}}')'$ has mean zero and sampling covariance:
	\begin{align*} 
	\Cov
	\left(\hat{\bm{\tau}}-\bm{\tau}, \hat{\bm{\theta}}_{\bm{x}}[h]
	\right) & \equiv \bm{W}_{\bm{\tau x}}[h] = 2^{-2(K-1)}\sum_{q=1}^{Q}
	n_q^{-1}(\coef_q\orthocoef'_q[h]) \otimes \bm{S}_{q, \bm{x}}, \\
	\Cov\left(\hat{\bm{\theta}}_{\bm{x}}[h]\right) & \equiv 
	\bm{W}_{\bm{xx}}[h] = 
	2^{-2(K-1)}\sum_{q=1}^{Q}
	n_q^{-1}(\orthocoef_q[h]\orthocoef'_q[h]) \otimes \bm{S}_{\bm{xx}}, \nonumber
	\end{align*}
	and $\Cov(\hat{\bm{\theta}}_{\bm{x}}[h], \hat{\bm{\theta}}_{\bm{x}}[\tilde{h}])=\bm{0}$ if $h\neq \tilde{h}$. 
\end{proposition}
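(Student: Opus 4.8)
The plan is to recognize $\hat{\bm{\theta}}_{\bm{x}}$ as a fixed (sample-independent) linear transformation of $\hat{\bm{\tau}}_{\bm{x}}$ and then deduce everything from Proposition~\ref{prop:mean_var_cre}. After reindexing the factorial effects tier by tier, the recursion \eqref{eq:b_q} shows that $\orthocoef_q = \bm{A}\coef_q$ for all $1\leq q\leq Q$, where $\bm{A}\in\mathbb{R}^{F\times F}$ does not depend on $q$, is block lower triangular with respect to $(\mathcal{F}_1,\ldots,\mathcal{F}_H)$ with identity diagonal blocks, and has $(\mathcal{F}_h,\mathcal{F}_{\overline{h-1}})$ block $-\covcoef[\mathcal{F}_h,\mathcal{F}_{\overline{h-1}}]\{\covcoef[\mathcal{F}_{\overline{h-1}},\mathcal{F}_{\overline{h-1}}]\}^{-1}$. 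Since $\hat{\bm{\tau}}_{\bm{x}} = 2^{-(K-1)}\sum_{q=1}^{Q}\coef_q\otimes\hat{\bar{\bm{x}}}(q)$, the mixed-product property of the Kronecker product applied to \eqref{eq:theta_X} gives $\hat{\bm{\theta}}_{\bm{x}} = (\bm{A}\otimes\bm{I}_L)\hat{\bm{\tau}}_{\bm{x}}$. The mean-zero statement is then immediate: Proposition~\ref{prop:mean_var_cre} gives $\E(\hat{\bm{\tau}}) = \bm{\tau}$ and $\E(\hat{\bm{\tau}}_{\bm{x}}) = \bm{0}$, hence $\E(\hat{\bm{\theta}}_{\bm{x}}) = (\bm{A}\otimes\bm{I}_L)\bm{0} = \bm{0}$, and the block $\Cov(\hat{\bm{\tau}}-\bm{\tau})$ is unchanged from Proposition~\ref{prop:mean_var_cre}.

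Next I would compute the covariances from $\hat{\bm{\theta}}_{\bm{x}} = (\bm{A}\otimes\bm{I}_L)\hat{\bm{\tau}}_{\bm{x}}$ together with the explicit blocks of $\bm{V}$ in Proposition~\ref{prop:mean_var_cre}, namely $\bm{V}_{\bm{\tau x}} = 2^{-2(K-1)}\sum_{q}n_q^{-1}(\coef_q\coef_q')\otimes\bm{S}_{q,\bm{x}}$ and $\bm{V}_{\bm{xx}} = \covcoef\otimes\bm{S}_{\bm{xx}}$. Bilinearity gives $\Cov(\hat{\bm{\tau}}-\bm{\tau},\hat{\bm{\theta}}_{\bm{x}}) = \bm{V}_{\bm{\tau x}}(\bm{A}'\otimes\bm{I}_L)$ and $\Cov(\hat{\bm{\theta}}_{\bm{x}}) = (\bm{A}\otimes\bm{I}_L)\bm{V}_{\bm{xx}}(\bm{A}'\otimes\bm{I}_L)$. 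Using the mixed-product property again together with $\coef_q'\bm{A}' = \orthocoef_q'$ and $\bm{A}\coef_q\coef_q'\bm{A}' = \orthocoef_q\orthocoef_q'$ collapses these to $2^{-2(K-1)}\sum_q n_q^{-1}(\coef_q\orthocoef_q')\otimes\bm{S}_{q,\bm{x}}$ and $\tilde{\bm{C}}\otimes\bm{S}_{\bm{xx}}$ with $\tilde{\bm{C}} = 2^{-2(K-1)}\sum_q n_q^{-1}\orthocoef_q\orthocoef_q'$. Extracting the coordinate block indexed by $\mathcal{F}_h$ (resp.\ by $\mathcal{F}_h\times\mathcal{F}_{\tilde{h}}$) then yields the claimed formulas for $\bm{W}_{\bm{\tau x}}[h]$ and $\bm{W}_{\bm{xx}}[h]$ and reduces the last assertion to showing that $\tilde{\bm{C}}$ is block diagonal.

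The block-diagonality of $\tilde{\bm{C}}$ is the one genuinely nontrivial step, and it is a Gram--Schmidt orthogonality fact. Multiplying \eqref{eq:b_q} on the right by $\coef_q'[\mathcal{F}_{\overline{h-1}}]$ and summing with weights $2^{-2(K-1)}n_q^{-1}$, the definition $\covcoef = 2^{-2(K-1)}\sum_q n_q^{-1}\coef_q\coef_q'$ (and the corresponding identities for its submatrices) gives the orthogonality relation $2^{-2(K-1)}\sum_q n_q^{-1}\orthocoef_q[h]\coef_q'[\mathcal{F}_{\overline{h-1}}] = \covcoef[\mathcal{F}_h,\mathcal{F}_{\overline{h-1}}] - \covcoef[\mathcal{F}_h,\mathcal{F}_{\overline{h-1}}]\{\covcoef[\mathcal{F}_{\overline{h-1}},\mathcal{F}_{\overline{h-1}}]\}^{-1}\covcoef[\mathcal{F}_{\overline{h-1}},\mathcal{F}_{\overline{h-1}}] = \bm{0}$. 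For $\tilde{h}\leq h-1$ both $\mathcal{F}_{\tilde{h}}$ and $\mathcal{F}_{\overline{\tilde{h}-1}}$ are subsets of $\mathcal{F}_{\overline{h-1}}$, so \eqref{eq:b_q} at index $\tilde{h}$ expresses $\orthocoef_q[\tilde{h}]$ as $\bm{M}\coef_q[\mathcal{F}_{\overline{h-1}}]$ for a fixed matrix $\bm{M}$; hence $\sum_q n_q^{-1}\orthocoef_q[h]\orthocoef_q'[\tilde{h}] = \big(\sum_q n_q^{-1}\orthocoef_q[h]\coef_q'[\mathcal{F}_{\overline{h-1}}]\big)\bm{M}' = \bm{0}$. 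Combined with $\Cov(\hat{\bm{\theta}}_{\bm{x}}) = \tilde{\bm{C}}\otimes\bm{S}_{\bm{xx}}$ this gives $\Cov(\hat{\bm{\theta}}_{\bm{x}}[h],\hat{\bm{\theta}}_{\bm{x}}[\tilde{h}]) = \bm{0}$ for $h\neq\tilde{h}$ and finishes the proof. I expect the main obstacle to be purely organizational: carefully tracking the nested index sets $\mathcal{F}_h$, $\mathcal{F}_{\overline{h-1}}$, $\mathcal{F}_{\overline{\tilde{h}-1}}$ so that the weighted Gram--Schmidt orthogonality applies, while the Kronecker manipulations and the appeal to Proposition~\ref{prop:mean_var_cre} are routine.
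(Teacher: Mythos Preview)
Your proposal is correct. The route differs from the paper's only in how you package the linear-combination step: the paper writes $(\hat{\bm{\tau}}-\bm{\tau},\hat{\bm{\theta}}_{\bm{x}})$ directly as a linear combination of the $(\hat{\bar{Y}}(q)-\bar{Y}(q),\hat{\bar{\bm{x}}}(q))$ and re-invokes the finite population covariance formula of \citet[Theorem~3]{fcltxlpd2016}, whereas you factor through $\hat{\bm{\theta}}_{\bm{x}}=(\bm{A}\otimes\bm{I}_L)\hat{\bm{\tau}}_{\bm{x}}$ and pull everything from Proposition~\ref{prop:mean_var_cre}. Your approach is a touch more economical since it stays inside the paper and does not reappeal to the external theorem; the paper's is slightly more direct in that it parallels the proof of Proposition~\ref{prop:mean_var_cre} verbatim. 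On the block-diagonality of $\tilde{\bm{C}}$, the paper simply asserts it ``from the Gram--Schmidt orthogonalization of the $\orthocoef_q$'s,'' while you actually spell out the weighted orthogonality identity $2^{-2(K-1)}\sum_q n_q^{-1}\orthocoef_q[h]\coef_q'[\mathcal{F}_{\overline{h-1}}]=\bm{0}$ and propagate it to $\orthocoef_q[\tilde h]$; that extra detail is welcome and correct.
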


Proposition \ref{prop:var_cov_tier_factor} follows from some algebra, with the proof   in Appendix A2 of the Supplementary Material \citep{rerandfacsupp}.
From Proposition \ref{prop:var_cov_tier_factor}, $(\hat{\bm{\theta}}_{\bm{x}}[1], \ldots, $ $\hat{\bm{\theta}}_{\bm{x}}[H])$ are mutually uncorrelated under the CRFE, and thus are essentially from a block-wise Gram--Schmidt orthogonalization of $(\hat{\bm{\tau}}_{\bm{x}}[\mathcal{F}_1], \ldots, \hat{\bm{\tau}}_{\bm{x}}[\mathcal{F}_H])$. 
We define the Mahalanobis distance in tier $h$ as
\begin{align}\label{eq:M_h_tier_factor}
M_h =  \hat{\bm{\theta}}_{\bm{x}}'[h]
\left(
\bm{W}_{\bm{xx}}[h]
\right)^{-1}
\hat{\bm{\theta}}_{\bm{x}}[h], \quad (1\leq h\leq H). 
\end{align}
Let $(a_1, \ldots, a_H)$ be $H$ positive constants predetermined in the design stage. Under rerandomization with tiers of factorial effects, denoted by $\text{ReFMT}_\text{F}$, a randomization is acceptable if and only if  $M_h\leq a_h$ for all $1\leq h\leq H$. 
Below we use $\mathcal{T}_{\text{F}}$ to denote the event that the treatment vector $\bm{Z}$ satisfies this criterion. From the finite population central limit theorem, asymptotically, $M_h$ is $\chi^2_{LF_h}$, and $(M_1, \ldots,M_H)$ are jointly independent. Therefore, the asymptotic acceptance probability under $\text{ReFMT}_\text{F}$ is $p_{a} = \prod_{h=1}^{H}P(\chi^2_{LF_h}\leq a_h).$ 
We usually choose small $a_h$'s. The relative magnitude of $a_h$'s depend on our prior knowledge or belief on the relative importance of covariates in all tiers. See \citet{morgan2015rerandomization} for a more detailed discussion.

With equal treatment groups sizes, $M_h$ has simpler form. 
\begin{proposition}\label{prop:simpler_refmt_f}
	When $n_1=\cdots=n_Q=n/Q$, the coefficient $\orthocoef_q[h]$ in \eqref{eq:b_q} reduces to $\coef_q[\mathcal{F}_h]$, 
	the difference-in-means of covariates $\hat{\bm{\theta}}_{\bm{x}}[h]$ in \eqref{eq:theta_X} reduces to $\hat{\bm{\tau}}_{\bm{x}}[\mathcal{F}_h]$, 
	and the Mahalanobis distance $M_h$ in \eqref{eq:M_h_tier_factor} reduces to 
	$M_h = n/4 \cdot
	\sum_{f\in \mathcal{F}_h} 
	\hat{\bm{\tau}}_{\bm{x},f}'
	\bm{S}_{\bm{xx}}^{-1} \hat{\bm{\tau}}_{\bm{x},f}.$ 
\end{proposition}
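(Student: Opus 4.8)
The plan is to use the orthogonality of the generating vectors of a $2^K$ design to show that, under equal group sizes, the matrix $\covcoef$ collapses to a scalar multiple of the identity, after which the block-wise Gram--Schmidt step in \eqref{eq:b_q} becomes trivial and the remaining claims follow by substitution.

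First I would observe that with $n_q = n/Q$ for all $q$,
\[
\covcoef = 2^{-2(K-1)}\sum_{q=1}^{Q} n_q^{-1}\coef_q\coef_q' = \frac{2^{-2(K-1)}Q}{n}\sum_{q=1}^{Q}\coef_q\coef_q'.
\]
The $(f,f')$ entry of $\sum_{q=1}^{Q}\coef_q\coef_q'$ equals $\sum_{q=1}^{Q} g_{fq}g_{f'q} = \bm{g}_f'\bm{g}_{f'}$. Since in a $2^K$ factorial design the $F=Q-1$ generating vectors are mutually orthogonal and each has squared norm $Q$ (together with $\bm{1}_Q$ they are, up to ordering, the rows of a $2^K$ Hadamard matrix), we get $\sum_{q=1}^{Q}\coef_q\coef_q' = Q\bm{I}_F$, and hence, using $2^{-2(K-1)}Q^2 = 4$, $\covcoef = (4/n)\bm{I}_F$. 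Plugging this into \eqref{eq:b_q} and noting that $\mathcal{F}_h$ and $\mathcal{F}_{\overline{h-1}}$ are disjoint, the cross block $\covcoef[\mathcal{F}_h,\mathcal{F}_{\overline{h-1}}]$ is a submatrix of a diagonal matrix supported off the diagonal, hence $\bm{0}$, so $\orthocoef_q[h] = \coef_q[\mathcal{F}_h]$ for every $q$ and $h$. Substituting into \eqref{eq:theta_X} and reading off the block indexed by $f\in\mathcal{F}_h$, the definition \eqref{eq:diff_in_mean_X_f} (with $2/Q = 2^{-(K-1)}$) identifies that block with $\hat{\bm{\tau}}_{\bm{x},f}$, so $\hat{\bm{\theta}}_{\bm{x}}[h] = \hat{\bm{\tau}}_{\bm{x}}[\mathcal{F}_h]$.

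For $M_h$ in \eqref{eq:M_h_tier_factor} I would use that $\sum_{q=1}^{Q}\coef_q[\mathcal{F}_h]\coef_q'[\mathcal{F}_h]$ is the $\mathcal{F}_h\times\mathcal{F}_h$ principal submatrix of $Q\bm{I}_F$, namely $Q\bm{I}_{F_h}$, so $\bm{W}_{\bm{xx}}[h] = (4/n)\,\bm{I}_{F_h}\otimes\bm{S}_{\bm{xx}}$ with inverse $(n/4)\,\bm{I}_{F_h}\otimes\bm{S}_{\bm{xx}}^{-1}$. Writing $\hat{\bm{\theta}}_{\bm{x}}[h]$ as the stack of the $\hat{\bm{\tau}}_{\bm{x},f}$, $f\in\mathcal{F}_h$, the quadratic form $\hat{\bm{\theta}}_{\bm{x}}'[h]\left(\bm{W}_{\bm{xx}}[h]\right)^{-1}\hat{\bm{\theta}}_{\bm{x}}[h]$ separates over the $F_h$ blocks by the Kronecker structure, yielding $M_h = (n/4)\sum_{f\in\mathcal{F}_h}\hat{\bm{\tau}}_{\bm{x},f}'\bm{S}_{\bm{xx}}^{-1}\hat{\bm{\tau}}_{\bm{x},f}$, as claimed.

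The only nonroutine ingredient is the orthogonality relation $\bm{g}_f'\bm{g}_{f'} = Q$ if $f=f'$ and $0$ otherwise; everything else is substitution and Kronecker-product bookkeeping. I expect the main (and fairly minor) obstacle to be stating that relation cleanly — either by invoking the Hadamard/character-group structure of $2^K$ designs, or by the one-line argument that the element-wise product of two distinct generating vectors is itself a generating vector whose $Q$ entries sum to zero.
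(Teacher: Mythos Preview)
Your proposal is correct and follows essentially the same route as the paper's proof: both first reduce $\covcoef$ to $(4/n)\bm{I}_F$ via the orthogonality $\bm{g}_f'\bm{g}_{f'} = Q\cdot 1\{f=f'\}$, then observe that the off-diagonal block $\covcoef[\mathcal{F}_h,\mathcal{F}_{\overline{h-1}}]$ vanishes so $\orthocoef_q[h]=\coef_q[\mathcal{F}_h]$, and finally compute $\bm{W}_{\bm{xx}}[h]=(4/n)\bm{I}_{F_h}\otimes\bm{S}_{\bm{xx}}$ and expand the quadratic form for $M_h$ blockwise. The only cosmetic difference is that the paper writes $\sum_q \coef_q\coef_q'$ as $(\bm{g}_1,\ldots,\bm{g}_F)'(\bm{g}_1,\ldots,\bm{g}_F)$ before invoking orthogonality, whereas you compute the $(f,f')$ entry directly.
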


Proposition \ref{prop:simpler_refmt_f} follows from some algebra, with the proof   in Appendix A2 of the Supplementary Material \citep{rerandfacsupp}.
In Proposition \ref{prop:simpler_refmt_f}, if further each tier contains exactly one factorial effect, $\text{ReFMT}_\text{F}$ reduces to the rerandomization scheme discussed in \citet{branson2016improving}.

\subsection{Asymptotic sampling distribution of $\hat{\boldsymbol{\tau}}$}\label{sec:asym_dist_refmt_f}
In this subsection, we study the asymptotic sampling distribution of $\hat{\bm{\tau}}$ under $\text{ReFMT}_\text{F}$.
Let $\bm{W}_{\bm{\tau\tau}}^\myparallel[h]=\bm{W}_{\bm{\tau x}}[h](
\bm{W}_{\bm{xx}}[h]
)^{-1}\bm{W}_{\bm{x\tau}}[h]$ be the sampling covariance matrix of $\hat{\bm{\tau}}$ explained by $\hat{\bm{\theta}}_{\bm{x}}[h]$ in the linear projection under the CRFE. 
Extending earlier notation, 
let $\bm{\zeta}_{LF_h,a_h}\sim \bm{D}_h \mid \bm{D}_h' \bm{D}_h \leq a_h$ be a truncated Gaussian vector with $LF_h$ dimensions, where $\bm{D}_h = (D_{h1}, \ldots, D_{h,LF_h})'\sim \mathcal{N}(\bm{0}, \bm{I}_{LF_h}).$

\begin{theorem}\label{thm:asymp_refmt_f}
	Under $\text{ReFMT}_\text{F}$ and Condition \ref{cond:fp},
	\begin{eqnarray}\label{eq:asymp_joint_refmt}
	\hat{\bm{\tau}} - \bm{\tau}  \mid \mathcal{T}_{\text{F}} 
	& \apprsim & \left( \bm{V}_{\bm{\tau}\bm{\tau}}^\myperp \right)^{1/2} \bm{\varepsilon} + \sum_{h=1}^H
	\left(\bm{W}_{\bm{\tau\tau}}^\myparallel[h] \right)^{1/2}_{LF_h}
	\bm{\zeta}_{LF_h, a_h},
	\end{eqnarray}
	where $(\bm{\varepsilon}, \bm{\zeta}_{LF_1, a_1}, \ldots, \bm{\zeta}_{LF_H, a_H})$ are jointly independent. 
\end{theorem}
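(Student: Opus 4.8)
The plan is to mirror the proof of Theorem \ref{thm:joint_refm}, replacing the single Mahalanobis criterion by the jointly independent tiered criteria. First I would invoke Proposition \ref{prop:mean_var_cre} together with Condition \ref{cond:fp} to get the joint asymptotic Gaussianity of $(\hat{\bm{\tau}}'-\bm{\tau}', \hat{\bm{\theta}}_{\bm{x}}')'$; by Proposition \ref{prop:var_cov_tier_factor} the limiting covariance of $\hat{\bm{\theta}}_{\bm{x}}$ is block diagonal across tiers, so after scaling by $\sqrt{n}$ the blocks $\hat{\bm{\theta}}_{\bm{x}}[1], \ldots, \hat{\bm{\theta}}_{\bm{x}}[H]$ are asymptotically independent Gaussian vectors, and $M_h = \hat{\bm{\theta}}_{\bm{x}}'[h](\bm{W}_{\bm{xx}}[h])^{-1}\hat{\bm{\theta}}_{\bm{x}}[h]$ is asymptotically $\chi^2_{LF_h}$ with $(M_1,\ldots,M_H)$ jointly independent. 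The event $\mathcal{T}_{\text{F}} = \bigcap_{h=1}^H \{M_h \le a_h\}$ therefore factorizes in the limit.

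Next I would decompose $\hat{\bm{\tau}}-\bm{\tau}$ along the $H$ tiers. Write the linear projection of $\hat{\bm{\tau}}-\bm{\tau}$ onto the full orthogonalized covariate vector $\hat{\bm{\theta}}_{\bm{x}}$ as $\sum_{h=1}^H \bm{W}_{\bm{\tau x}}[h](\bm{W}_{\bm{xx}}[h])^{-1}\hat{\bm{\theta}}_{\bm{x}}[h]$, using block diagonality so that the projection splits into a sum of tier-wise projections; call the residual $\hat{\bm{\tau}}^\myperp$. The key linear-algebra facts to establish (analogues of Theorem \ref{thm:joint_var_expl_unexpl}) are that $\Cov(\hat{\bm{\tau}}^\myperp) = \bm{V}_{\bm{\tau\tau}}^\myperp$ — since projecting onto $\hat{\bm{\theta}}_{\bm{x}}$ spans the same space as projecting onto $\hat{\bm{\tau}}_{\bm{x}}$, the residual is unchanged — that the $h$-th projection term has covariance $\bm{W}_{\bm{\tau\tau}}^\myparallel[h]$, and that the residual and all $H$ projection terms are mutually uncorrelated, hence (being jointly Gaussian in the limit) asymptotically independent. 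Then conditioning on $\mathcal{T}_{\text{F}}$ affects only the $H$ projection terms, and only through their own $M_h$, leaving $\hat{\bm{\tau}}^\myperp$ distributed as $(\bm{V}_{\bm{\tau\tau}}^\myperp)^{1/2}\bm{\varepsilon}$ and each projection term distributed as $(\bm{W}_{\bm{\tau\tau}}^\myparallel[h])^{1/2}_{LF_h}\bm{\zeta}_{LF_h,a_h}$ with the $\bm{\zeta}$'s independent across $h$ and independent of $\bm{\varepsilon}$.

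To make the conditioning rigorous I would use the same device as in the proof of Theorem \ref{thm:joint_refm}: express each scaled projection term as $(\bm{W}_{\bm{\tau\tau}}^\myparallel[h])^{1/2}_{LF_h}$ applied to the standardized Gaussian $\sqrt{n}\,(\bm{W}_{\bm{xx}}[h])^{-1/2}\hat{\bm{\theta}}_{\bm{x}}[h]$, whose limiting law is $\bm{D}_h\sim\mathcal{N}(\bm{0},\bm{I}_{LF_h})$, and note $M_h\le a_h$ is exactly $\bm{D}_h'\bm{D}_h\le a_h$ in the limit; a continuous-mapping / Slutsky argument on the joint vector $(\hat{\bm{\tau}}^\myperp, \hat{\bm{\theta}}_{\bm{x}}[1],\ldots,\hat{\bm{\theta}}_{\bm{x}}[H])$ conditioned on the limit event then yields \eqref{eq:asymp_joint_refmt}. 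I would also remark, as after Theorem \ref{thm:joint_refm}, that although $(\bm{W}_{\bm{\tau\tau}}^\myparallel[h])^{1/2}_{LF_h}$ is not unique the resulting distribution is, since $\bm{\zeta}_{LF_h,a_h}$ is spherically symmetric (Proposition \ref{prop:represent}).

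The main obstacle is the bookkeeping in the second step: verifying that the $H+1$ pieces (the global residual plus one projection term per tier) are pairwise uncorrelated and that the $h$-th projection term's covariance is precisely $\bm{W}_{\bm{\tau\tau}}^\myparallel[h]$, which requires carefully exploiting the block diagonality from Proposition \ref{prop:var_cov_tier_factor} together with the Kronecker structure $\bm{W}_{\bm{xx}}[h] = 2^{-2(K-1)}\sum_q n_q^{-1}(\orthocoef_q[h]\orthocoef_q'[h])\otimes\bm{S}_{\bm{xx}}$. Once the orthogonal decomposition is in place, the conditioning argument is a routine repetition of the single-tier case, since the independence of the $M_h$'s makes the truncation event split cleanly into a product.
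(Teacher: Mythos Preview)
Your proposal is correct and matches the paper's proof essentially line for line: pass to the Gaussian limit for $(\hat{\bm{\tau}}'-\bm{\tau}',\hat{\bm{\theta}}_{\bm{x}}')'$ via Lemma \ref{lemma:condi_converge}, decompose $\tilde{\bm{\tau}}$ into the residual plus the tier-wise projections $\sum_h \bm{W}_{\bm{\tau x}}[h](\bm{W}_{\bm{xx}}[h])^{-1}\tilde{\bm{\theta}}_{\bm{x}}[h]$, identify the residual covariance as $\bm{V}_{\bm{\tau\tau}}^\myperp$ because $\hat{\bm{\theta}}_{\bm{x}}$ and $\hat{\bm{\tau}}_{\bm{x}}$ span the same space, standardize each block and condition, then invoke Lemma \ref{lemma:truncate_AApm_BBpm} for the final form. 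The ``bookkeeping'' you flag as the main obstacle is in fact immediate from the block-diagonal covariance in Proposition \ref{prop:var_cov_tier_factor}, so there is nothing to add.
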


The proof of Theorem \ref{thm:asymp_refmt_f}, similar to that of Theorem \ref{thm:joint_refm}, is in Appendix A3 of the Supplementary Material \citep{rerandfacsupp}.

Let 
$W_{\tau_f\tau_f}^\myparallel[h]$ be the $f$th diagonal element of $\bm{W}_{\bm{\tau\tau}}^\myparallel[h]$. 
The squared multiple correlation between $\hat{\tau}_f$ and $\hat{\bm{\theta}}_{\bm{x}}[h]$ under the CRFE is then  $\rho^2_{f}[h]=W_{\tau_f\tau_f}^\myparallel[h]/V_{\tau_f\tau_f}$. When treatment group sizes are equal, $\rho^2_f[h]$ reduces to 
$\rho^2_{f}[h] = \sum_{k:k\in \mathcal{F}_h}\gamma^2_{fk}$ for all $f$; if further the additivity holds, $\rho^2_{f}[h]$ reduces to $S_{11}^{\myparallel}/S_{11}$ if $f\in \mathcal{F}_h$, and zero otherwise. 
Because the $\hat{\bm{\theta}}_{\bm{x}}[h]$'s are from a block-wise Gram--Schmidt orthogonalization of $\hat{\bm{\tau}}_{\bm{x}}$, the squared multiple correlation between $\hat{\tau}_f$ and $\hat{\bm{\tau}}_{\bm{x}}$ can be decomposed as $R_f^2 = \sum_{h=1}^{H}\rho^2_{f}[h]$. 
The following corollary shows the marginal asymptotic sampling distribution of a single factorial effect estimator. Let ${\eta}_{LF_h,a_h} \sim D_{h1} \mid \bm{D}_h'\bm{D}_h \leq a_h$ be the first coordinate of $\bm{\zeta}_{LF_h,a_h}$.

\begin{corollary}\label{cor:dist_remt}
	Under $\text{ReFMT}_\text{F}$ and Condition \ref{cond:fp}, for $1\leq f\leq F$, 
	\begin{equation}\label{eq:dist_remt}
	\hat{\tau}_f - \tau_f \mid  \mathcal{T}_{\text{F}} 
	\  \apprsim \ 
	\sqrt{V_{\tau_f\tau_f}}
	\left(
	\sqrt{1-R^2_f} \cdot \varepsilon_0 + \sum_{h=1}^H \sqrt{\rho_{f}^2[h]} \cdot \eta_{LF_h,a_h}
	\right),
	\end{equation}
	where $(\varepsilon_0, \eta_{LF_1,a_1}, \ldots, \eta_{LF_H,a_H})$ are jointly independent. 
\end{corollary}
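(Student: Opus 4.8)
The plan is to derive Corollary \ref{cor:dist_remt} as a direct marginalization of Theorem \ref{thm:asymp_refmt_f} to a single coordinate, exactly parallel to how Corollary \ref{cor:dist_refm_one_linear} follows from Theorem \ref{thm:joint_refm}. First I would fix $f$ and pick out the $f$th coordinate of the right-hand side of \eqref{eq:asymp_joint_refmt}. The contribution of the residual term $(\bm{V}_{\bm{\tau\tau}}^\myperp)^{1/2}\bm{\varepsilon}$ to coordinate $f$ is a mean-zero Gaussian with variance equal to the $f$th diagonal entry of $\bm{V}_{\bm{\tau\tau}}^\myperp$, which is $V_{\tau_f\tau_f}^\myperp = V_{\tau_f\tau_f}(1-R_f^2)$ by Corollary \ref{cor:R2_f}; hence it can be written as $\sqrt{V_{\tau_f\tau_f}}\sqrt{1-R_f^2}\,\varepsilon_0$ for a standard normal $\varepsilon_0$. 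Similarly, the $f$th coordinate of $(\bm{W}_{\bm{\tau\tau}}^\myparallel[h])^{1/2}_{LF_h}\bm{\zeta}_{LF_h,a_h}$ is, conditionally on nothing, a scalar of the form $\bm{u}_h'\bm{\zeta}_{LF_h,a_h}$ where $\bm{u}_h$ is the $f$th row of $(\bm{W}_{\bm{\tau\tau}}^\myparallel[h])^{1/2}_{LF_h}$, so $\|\bm{u}_h\|_2^2 = W_{\tau_f\tau_f}^\myparallel[h]$.

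The key step is then to invoke the spherical symmetry of $\bm{\zeta}_{LF_h,a_h}$ established in Proposition \ref{prop:represent}: for a spherically symmetric vector $\bm{\zeta}_{LF_h,a_h}$, the scalar projection $\bm{u}_h'\bm{\zeta}_{LF_h,a_h}$ has the same distribution as $\|\bm{u}_h\|_2$ times any fixed unit-vector projection, in particular the same distribution as $\|\bm{u}_h\|_2\,\eta_{LF_h,a_h}$, where $\eta_{LF_h,a_h} \sim D_{h1}\mid \bm{D}_h'\bm{D}_h\le a_h$ is the first coordinate of $\bm{\zeta}_{LF_h,a_h}$. Writing $\|\bm{u}_h\|_2 = \sqrt{W_{\tau_f\tau_f}^\myparallel[h]} = \sqrt{V_{\tau_f\tau_f}}\sqrt{\rho_f^2[h]}$ using the definition $\rho_f^2[h] = W_{\tau_f\tau_f}^\myparallel[h]/V_{\tau_f\tau_f}$ gives the $h$th summand in \eqref{eq:dist_remt}. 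The remaining point is that the joint independence of $(\bm{\varepsilon}, \bm{\zeta}_{LF_1,a_1}, \ldots, \bm{\zeta}_{LF_H,a_H})$ in Theorem \ref{thm:asymp_refmt_f} is inherited by the one-dimensional projections $(\varepsilon_0, \eta_{LF_1,a_1}, \ldots, \eta_{LF_H,a_H})$, since functions of independent random vectors are independent; one just has to be careful that the $\varepsilon_0$ appearing here is a single coordinate of $\bm{\varepsilon}$ (after a possible rotation absorbed by the Gaussianity), which is legitimate because any one-dimensional marginal of a standard Gaussian is again standard Gaussian. Finally, $R_f^2 = \sum_{h=1}^H \rho_f^2[h]$, noted in the text just before the corollary, guarantees that the coefficients $\sqrt{1-R_f^2}$ and $\sqrt{\rho_f^2[h]}$ are simultaneously the correct ones, so the displayed distribution has the right total scale $\sqrt{V_{\tau_f\tau_f}}$; this identity is what makes the representation internally consistent but it is not logically needed for the derivation itself.

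I expect the only mild subtlety — not a real obstacle — to be the bookkeeping that turns a generic linear functional $\bm{u}_h'\bm{\zeta}_{LF_h,a_h}$ of a spherically symmetric vector into the canonical first-coordinate form $\|\bm{u}_h\|_2\,\eta_{LF_h,a_h}$; this is immediate from Proposition \ref{prop:represent} (or directly from invariance of $\bm{\zeta}_{LF_h,a_h}$ under orthogonal transformations), but it must be stated since it is exactly the mechanism that makes the scalar distribution depend on $(L, F_h, a_h)$ only through $\eta_{LF_h,a_h}$ and on the covariance structure only through $\rho_f^2[h]$. Everything else is a coordinate-wise reading of Theorem \ref{thm:asymp_refmt_f} together with the variance identities from Corollary \ref{cor:R2_f} and the definition of $\rho_f^2[h]$, so the proof is short; the full argument is deferred to Appendix A3 of the Supplementary Material \citep{rerandfacsupp}.
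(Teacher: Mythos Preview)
Your proposal is correct and follows essentially the same approach as the paper: extract the $f$th coordinate from Theorem \ref{thm:asymp_refmt_f} via $\bm{e}_f'$, reduce the Gaussian piece to $\sqrt{V_{\tau_f\tau_f}(1-R_f^2)}\,\varepsilon_0$ using the diagonal entry of $\bm{V}_{\bm{\tau\tau}}^\myperp$, and reduce each truncated-Gaussian piece to $\sqrt{W_{\tau_f\tau_f}^\myparallel[h]}\,\eta_{LF_h,a_h}$ via the spherical symmetry of $\bm{\zeta}_{LF_h,a_h}$ from Proposition \ref{prop:represent}. The paper's proof is precisely this computation, written with the unit vector $\bm{e}_f$ and the normalized direction $\bm{c}_h' = (\bm{e}_f'\bm{W}_{\bm{\tau\tau}}^\myparallel[h]\bm{e}_f)^{-1/2}\bm{e}_f'(\bm{W}_{\bm{\tau\tau}}^\myparallel[h])^{1/2}_{LF_h}$ playing the role of your $\bm{u}_h/\|\bm{u}_h\|_2$.
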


The proof of Corollary \ref{cor:dist_remt} is in Appendix A3 of the Supplementary Material \citep{rerandfacsupp}.

\subsection{Asymptotic unbiasedness, sampling covariance and peakedness}\label{sec:property_refmt_f}
Based on the asymptotic distributions in Section \ref{sec:asym_dist_refmt_f}, we study the asymptotic properties of the factorial effect estimators. 
First, $(\bm{\varepsilon}, \bm{\zeta}_{LF_1,a_1}, \ldots, \bm{\zeta}_{LF_H,a_H})$ are all central convex unimodal from Proposition \ref{prop:ccu_refm}, 
and thus the asymptotic sampling distribution \eqref{eq:asymp_joint_refmt} of $\hat{\bm{\tau}}$ under $\text{ReFMT}_\text{F}$ is also central convex unimodal. The symmetry of the asymptotic sampling distributions ensures that the factorial effect estimator
$\hat{\bm{\tau}}$ is consistent for $\bm{\tau}$ under $\text{ReFMT}_\text{F}$, which implies that the difference-in-means of any observed or unobserved covariate with respect to any factorial effect has asymptotic mean zero.

Second, we compare the asymptotic sampling covariance matrices of $\hat{\bm{\tau}}$ under $\text{ReFMT}_\text{F}$ and the CRFE. 
For each $1\leq h\leq H$, 
let $v_{LF_h, a_h}=P(\chi^2_{LF_h+2}\leq a_h)/P(\chi^2_{LF_h}\leq a_h)\leq 1$.

\begin{theorem}\label{thm:red_var_remft_f}
	Under Condition \ref{cond:fp}, 
	$\hat{\bm{\tau}}$ has smaller asymptotic sampling covariance under $\text{ReFMT}_\text{F}$ than that under the CRFE, and the reduction in asymptotic sampling covariance is 
	$
	n\sum_{h=1}^{H}(1-v_{LF_h,a_h}) \bm{W}_{\bm{\tau\tau}}^\myparallel[h]. 
	$
	Specifically,  
	for each $1\leq f\leq F,$ the PRIASV of $\hat{\tau}_f$ is 
	$
	\sum_{h=1}^H (1-v_{LF_h, a_h})\rho_{f}^2[h]. 
	$
\end{theorem}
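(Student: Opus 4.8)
The plan is to read off both asymptotic sampling covariances directly from the distributional representations already established, and then subtract. First I would invoke Theorem~\ref{thm:asymp_refmt_f}, which gives
\begin{equation*}
\hat{\bm{\tau}} - \bm{\tau} \mid \mathcal{T}_{\text{F}} \apprsim \left( \bm{V}_{\bm{\tau}\bm{\tau}}^\myperp \right)^{1/2} \bm{\varepsilon} + \sum_{h=1}^H \left(\bm{W}_{\bm{\tau\tau}}^\myparallel[h] \right)^{1/2}_{LF_h} \bm{\zeta}_{LF_h, a_h}
\end{equation*}
with $(\bm{\varepsilon}, \bm{\zeta}_{LF_1, a_1}, \ldots, \bm{\zeta}_{LF_H, a_H})$ jointly independent. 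Since $\bm{\varepsilon}$ has identity covariance and, by the tier-wise analogue of Proposition~\ref{prop:represent}, each $\bm{\zeta}_{LF_h,a_h}$ is spherically symmetric with covariance $v_{LF_h,a_h}\bm{I}_{LF_h}$, the independence of the summands and the identity $\bm{A}^{1/2}_p(\bm{A}^{1/2}_p)' = \bm{A}$ yield that the $n$-scaled asymptotic sampling covariance of $\hat{\bm{\tau}}$ under $\text{ReFMT}_\text{F}$ equals $n\bm{V}_{\bm{\tau\tau}}^\myperp + \sum_{h=1}^H v_{LF_h,a_h}\, n\bm{W}_{\bm{\tau\tau}}^\myparallel[h]$.

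The second step is to rewrite the CRFE covariance in the same terms. I would argue that the block-wise Gram--Schmidt construction in \eqref{eq:b_q}--\eqref{eq:theta_X} relates $\hat{\bm{\tau}}_{\bm{x}}$ and the stacked $(\hat{\bm{\theta}}_{\bm{x}}[1], \ldots, \hat{\bm{\theta}}_{\bm{x}}[H])$ by an invertible block-triangular linear map, so they span the same linear space; hence the linear projection of $\hat{\bm{\tau}} - \bm{\tau}$ onto $\hat{\bm{\tau}}_{\bm{x}}$ coincides with its projection onto the stacked $\hat{\bm{\theta}}_{\bm{x}}[h]$'s. Since Proposition~\ref{prop:var_cov_tier_factor} shows the $\hat{\bm{\theta}}_{\bm{x}}[h]$'s are mutually uncorrelated under the CRFE, this projection splits into tier-wise projections with uncorrelated terms, giving $\bm{V}_{\bm{\tau\tau}}^\myparallel = \sum_{h=1}^H \bm{W}_{\bm{\tau\tau}}^\myparallel[h]$. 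Combined with $\bm{V}_{\bm{\tau\tau}} = \bm{V}_{\bm{\tau\tau}}^\myparallel + \bm{V}_{\bm{\tau\tau}}^\myperp$ from Section~\ref{sec:asym_dist_crfe}, this gives $n\bm{V}_{\bm{\tau\tau}} = n\bm{V}_{\bm{\tau\tau}}^\myperp + \sum_{h=1}^H n\bm{W}_{\bm{\tau\tau}}^\myparallel[h]$.

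Subtracting the two covariances then yields the reduction $n\sum_{h=1}^H (1-v_{LF_h,a_h})\bm{W}_{\bm{\tau\tau}}^\myparallel[h]$; each $\bm{W}_{\bm{\tau\tau}}^\myparallel[h] = \bm{W}_{\bm{\tau x}}[h](\bm{W}_{\bm{xx}}[h])^{-1}\bm{W}_{\bm{x\tau}}[h]$ is positive semidefinite and $v_{LF_h,a_h}\leq 1$, so the reduction is positive semidefinite and the covariance is indeed smaller under $\text{ReFMT}_\text{F}$. For the PRIASV claim I would take the $f$th diagonal entry of the reduction, $n\sum_{h=1}^H(1-v_{LF_h,a_h})W_{\tau_f\tau_f}^\myparallel[h]$, divide by the $f$th diagonal entry $nV_{\tau_f\tau_f}$ of $n\bm{V}_{\bm{\tau\tau}}$, and invoke the definition $\rho^2_f[h] = W_{\tau_f\tau_f}^\myparallel[h]/V_{\tau_f\tau_f}$.

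The main obstacle is the identity $\bm{V}_{\bm{\tau\tau}}^\myparallel = \sum_{h=1}^H \bm{W}_{\bm{\tau\tau}}^\myparallel[h]$: one must verify carefully that the orthogonalization \eqref{eq:b_q} really is a change of basis rather than a rank-reducing map, so that projecting onto $\hat{\bm{\tau}}_{\bm{x}}$ and onto the orthogonalized blocks agree and the explained covariance is genuinely additive over tiers. Everything else --- the covariance of a spherically symmetric truncated Gaussian, the positive semidefiniteness, and extracting a diagonal entry --- is routine, and, as elsewhere in the paper, the limit signs on $n\bm{V}_{\bm{\tau\tau}}^\myparallel$ and the like are suppressed for readability.
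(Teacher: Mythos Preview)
Your proposal is correct and follows essentially the same route as the paper: compute the covariance under $\text{ReFMT}_\text{F}$ from Theorem~\ref{thm:asymp_refmt_f} and Proposition~\ref{prop:represent}, use the fact that $\hat{\bm{\tau}}_{\bm{x}}$ and the stacked $\hat{\bm{\theta}}_{\bm{x}}[h]$'s are related by an invertible linear map to obtain $\bm{V}_{\bm{\tau\tau}}^\myparallel = \sum_{h=1}^H \bm{W}_{\bm{\tau\tau}}^\myparallel[h]$, and subtract. The only cosmetic difference is that for the PRIASV the paper reads off the marginal variance from Corollary~\ref{cor:dist_remt} rather than extracting a diagonal entry of the matrix reduction, but these are equivalent.
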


Theorem \ref{thm:red_var_remft_f} follows from Theorem \ref{thm:asymp_refmt_f} and Proposition \ref{prop:represent}, with the proof   in Appendix A4 of the Supplementary Material \citep{rerandfacsupp}. When the threshold $a_h$'s are close to zero, the asymptotic sampling variance of $\hat{\tau}_f$ reduces to $V_{\tau_f\tau_f}(1-R^2_f)$, which is identical to the asymptotic sampling variance of the regression adjusted estimator under the CRFE \citep{lu2016covadj}. 

Third, we compare the peakedness of asymptotic sampling distributions of $\hat{\bm{\tau}}$ under $\text{ReFMT}_\text{F}$ and the CRFE. 
\begin{theorem}\label{thm:quant_region_refmt_f}
	Under Condition \ref{cond:fp}, the asymptotic sampling distribution of $\hat{\bm{\tau}}-\bm{\tau}$ under $\text{ReFMT}_\text{F}$ is more peaked than that under the CRFE. 
\end{theorem}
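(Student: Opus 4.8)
The plan is to mimic the proof of Theorem~\ref{thm:red_qr_rem}, with the single truncated‑Gaussian block replaced by the $H$ independent blocks of Theorem~\ref{thm:asymp_refmt_f}, and then to build up the peakedness ordering one block at a time. First I would record the two explicit representations to be compared. By Theorem~\ref{thm:asymp_refmt_f}, under $\text{ReFMT}_\text{F}$,
\[
\hat{\bm{\tau}}-\bm{\tau}\mid\mathcal{T}_{\text{F}}\ \apprsim\ \left(\bm{V}_{\bm{\tau\tau}}^\myperp\right)^{1/2}\bm{\varepsilon}+\sum_{h=1}^H\left(\bm{W}_{\bm{\tau\tau}}^\myparallel[h]\right)^{1/2}_{LF_h}\bm{\zeta}_{LF_h,a_h},
\]
with jointly independent summands. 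Since $\hat{\bm{\theta}}_{\bm{x}}$ is obtained from $\hat{\bm{\tau}}_{\bm{x}}$ by an invertible (block‑triangular, identity‑diagonal) linear map and its blocks are mutually uncorrelated by Proposition~\ref{prop:var_cov_tier_factor}, the covariance of $\hat{\bm{\tau}}-\bm{\tau}$ explained by $\hat{\bm{\tau}}_{\bm{x}}$ decomposes as $\bm{V}_{\bm{\tau\tau}}^\myparallel=\sum_{h=1}^H\bm{W}_{\bm{\tau\tau}}^\myparallel[h]$ (the scalar version being $R_f^2=\sum_h\rho_f^2[h]$), so $\bm{V}_{\bm{\tau\tau}}=\bm{V}_{\bm{\tau\tau}}^\myperp+\sum_h\bm{W}_{\bm{\tau\tau}}^\myparallel[h]$ and Proposition~\ref{prop:mean_var_cre} lets me write, under the CRFE,
\[
\hat{\bm{\tau}}-\bm{\tau}\ \apprsim\ \left(\bm{V}_{\bm{\tau\tau}}^\myperp\right)^{1/2}\bm{\varepsilon}+\sum_{h=1}^H\left(\bm{W}_{\bm{\tau\tau}}^\myparallel[h]\right)^{1/2}_{LF_h}\bm{D}_h,
\]
where $\bm{D}_h\sim\mathcal{N}(\bm{0},\bm{I}_{LF_h})$ and $(\bm{\varepsilon},\bm{D}_1,\dots,\bm{D}_H)$ are jointly independent; each summand on the right is a centered Gaussian with the matching block covariance, so the whole vector is $\mathcal{N}(\bm{0},\bm{V}_{\bm{\tau\tau}})$. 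It thus suffices to compare these two vectors, which share the same factor structure and differ only by replacing each $\bm{D}_h$ with the truncated $\bm{\zeta}_{LF_h,a_h}$.

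The three ingredients I would invoke are: (a) $\bm{\zeta}_{LF_h,a_h}\succ\bm{D}_h$ for each $h$, i.e.\ a standard spherical Gaussian restricted to a centered ball is more peaked than the Gaussian — precisely the input already used to prove Theorem~\ref{thm:red_qr_rem}; (b) invariance of peakedness under linear maps \citep[][Lemma 7.2]{dharmadhikari1988}, which upgrades (a) to $\left(\bm{W}_{\bm{\tau\tau}}^\myparallel[h]\right)^{1/2}_{LF_h}\bm{\zeta}_{LF_h,a_h}\succ\left(\bm{W}_{\bm{\tau\tau}}^\myparallel[h]\right)^{1/2}_{LF_h}\bm{D}_h$; and (c) the fact used in the proof of Theorem~\ref{thm:conser_conf_set_refm} that if $\bm{\phi}\succ\bm{\psi}$ and $\bm{\xi}$ is central convex unimodal and independent of both, then $\bm{\phi}+\bm{\xi}\succ\bm{\psi}+\bm{\xi}$. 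Writing $\bm{A}_h\equiv\left(\bm{W}_{\bm{\tau\tau}}^\myparallel[h]\right)^{1/2}_{LF_h}$, I would then induct on $h$ to show $\sum_{h=1}^H\bm{A}_h\bm{\zeta}_{LF_h,a_h}\succ\sum_{h=1}^H\bm{A}_h\bm{D}_h$: the base case $h=1$ is (b); for the step, first add $\bm{A}_{k+1}\bm{\zeta}_{LF_{k+1},a_{k+1}}$ (central convex unimodal by Proposition~\ref{prop:ccu_refm} and closure of that class under linear maps) to both sides of the inductive hypothesis $\sum_{h\le k}\bm{A}_h\bm{\zeta}_{LF_h,a_h}\succ\sum_{h\le k}\bm{A}_h\bm{D}_h$; then add the Gaussian (hence central convex unimodal) vector $\sum_{h\le k}\bm{A}_h\bm{D}_h$ to both sides of $\bm{A}_{k+1}\bm{\zeta}_{LF_{k+1},a_{k+1}}\succ\bm{A}_{k+1}\bm{D}_{k+1}$; and combine the two via transitivity of $\succ$. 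Finally, adding the central convex unimodal vector $\left(\bm{V}_{\bm{\tau\tau}}^\myperp\right)^{1/2}\bm{\varepsilon}$ to both sides gives exactly the claimed ordering between the $\text{ReFMT}_\text{F}$ and CRFE asymptotic sampling distributions of $\hat{\bm{\tau}}-\bm{\tau}$.

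I would close with two minor checks. The matrices $\bm{V}_{\bm{\tau\tau}}^\myperp$ and $\bm{W}_{\bm{\tau\tau}}^\myparallel[h]$ may be singular, but a degenerate Gaussian and a linear image of a central convex unimodal vector are still central convex unimodal (being weak limits of nondegenerate ones), so (b) and (c) apply verbatim; and the symmetry required by Definition~\ref{def:peak} holds throughout since $\bm{\varepsilon}$, each $\bm{D}_h$, and each $\bm{\zeta}_{LF_h,a_h}$ are symmetric. I do not anticipate a real obstacle: the substantive content — that truncation to a centered ball increases peakedness — is already available from Theorem~\ref{thm:red_qr_rem}, so the only work is the bookkeeping of assembling the CRFE representation with the correct block covariances (resting on $\Cov(\hat{\bm{\theta}}_{\bm{x}}[h],\hat{\bm{\theta}}_{\bm{x}}[\tilde h])=\bm{0}$ from Proposition~\ref{prop:var_cov_tier_factor}) and the inductive application of the closure properties of $\succ$; the mildly delicate point is making sure the convolution‑closure fact (c) is applied with a genuinely central convex unimodal summand at each stage, which is why the induction alternates between adding a $\bm{\zeta}$‑term and adding a Gaussian term.
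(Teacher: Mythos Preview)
Your proposal is correct and follows essentially the same approach as the paper: both arguments iteratively replace each truncated Gaussian block $\bm{\zeta}_{LF_h,a_h}$ by an untruncated $\bm{D}_h$, using $\bm{\zeta}_{LF_h,a_h}\succ\bm{D}_h$, invariance of peakedness under linear maps, and closure of $\succ$ under convolution with a central convex unimodal summand. The only cosmetic difference is organizational: the paper absorbs each newly introduced Gaussian $\left(\bm{W}_{\bm{\tau\tau}}^\myparallel[h]\right)^{1/2}_{LF_h}\bm{D}_h$ into the $\bm{\varepsilon}$-term at every step (so each iterate has a single Gaussian block), whereas you keep the $\bm{D}_h$'s separate and run a two-step induction joined by transitivity --- both routes invoke exactly the same lemmas and yield the same conclusion.
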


The proof of Theorem \ref{thm:quant_region_refmt_f}, similar to that of Theorem \ref{thm:red_qr_rem}, is in Appendix A5 of the Supplementary Material \citep{rerandfacsupp}. We then consider the specific symmetric convex set $\mathcal{O}(\bm{V}_{\bm{\tau\tau}}, c)$. Unfortunately, considering joint quantile region for $\bm{\tau}$ is technically challenging in general, and we consider the case where the following condition holds.

\begin{condition}\label{cond:simultaneous_ortho}
	There exists an orthogonal matrix $\bm{\Gamma}\in \mathbb{R}^{F\times F}$ such that 
$$
	\bm{\Gamma}' 
	\bm{V}_{\bm{\tau\tau}}^{-1/2}
	\bm{W}_{\bm{\tau\tau}}^\myparallel[h]
	\bm{V}_{\bm{\tau\tau}}^{-1/2} \bm{\Gamma}=\textup{diag}(\omega_{h1}^2, \ldots, \omega_{hF}^2), 
	\quad 
	(1\leq h\leq H)
$$
	where $(\omega_{h1}^2, \ldots, \omega_{hF}^2)$ are the canonical correlations between $\hat{\bm{\tau}}$ and $\hat{\bm{\theta}}_{\bm{x}}[h]$ under the CRFE. 
\end{condition}

Condition \ref{cond:simultaneous_ortho} holds automatically when $H=1$. 
Moreover, the additivity in Definition \ref{def:additive} implies Condition \ref{cond:simultaneous_ortho} for general $H\geq 1$. The following proposition states this result. 
Let $\bm{\Psi}\in \mathbb{R}^{F\times F}$ be the common linear transformation matrix from $\orthocoef_q$ to $\coef_q$, i.e., 
$\coef_q = \bm{\Psi}\orthocoef_q$ for all $1\leq q\leq Q$. 
Recall that 
$\tilde{\bm{B}} = 2^{-2(K-1)}\sum_{q=1}^{Q} n_q^{-1}\coef_q \coef_q'$,  
and 
$\tilde{\bm{C}} = 2^{-2(K-1)}\sum_{q=1}^{Q} n_q^{-1}\orthocoef_q \orthocoef_q'$. 

\begin{proposition}\label{prop:add_simu_diag}
	Under the additivity in Definition \ref{def:additive}, Condition \ref{cond:simultaneous_ortho} holds with orthogonal matrix $\bm{\Gamma} = \tilde{\bm{B}}^{-1/2}\bm{\Psi}  \tilde{\bm{C}}^{1/2}$, and the canonical correlations between $\hat{\bm{\tau}}$ and $\hat{\bm{\theta}}_{\bm{x}}[h]$ have exactly $F_h$ nonzero elements, which are all equal to $S_{11}^\myparallel/S_{11}$. 
\end{proposition}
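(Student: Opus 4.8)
The plan is to collapse every outcome-dependent quantity to a single scalar ratio times a fixed, design-determined matrix, and then verify the claimed simultaneous diagonalization by direct algebra that exploits the block-diagonality of $\tilde{\bm{C}}$ from Proposition \ref{prop:var_cov_tier_factor}.

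First I would record what additivity forces on the potential outcomes. By Definition \ref{def:additive}, additivity is equivalent to $(\bm{Y}_i - \bar{\bm{Y}})\bm{g}_f = 0$ for all $i$ and all $f$; since $\{\bm{1}, \bm{g}_1, \ldots, \bm{g}_F\}$ is an orthogonal basis of $\mathbb{R}^Q$, this forces $Y_i(q) - \bar{Y}(q) = d_i$ for all $q$, where $d_i = Y_i(1) - \bar{Y}(1)$. Hence $S_{qq} = S_{dd}$, $\bm{S}_{q,\bm{x}} = \bm{S}_{d,\bm{x}}$, and $S_{qq}^\myparallel = S_{dd}^\myparallel := \bm{S}_{d,\bm{x}}\bm{S}_{\bm{xx}}^{-1}\bm{S}_{\bm{x},d}$ are all independent of $q$ (here $S_{dd}$ and $\bm{S}_{d,\bm{x}}$ are the finite-population variance and the covariance with $\bm{x}$ of the $d_i$'s), with $S_{dd}^\myparallel/S_{dd} = S_{11}^\myparallel/S_{11}$, and $\bm{S}_{\bm{\tau\tau}} = \bm{S}_{\bm{\tau\tau}}^\myparallel = \bm{0}$. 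Plugging this into the definitions gives $\bm{V}_{\bm{\tau\tau}} = S_{dd}\tilde{\bm{B}}$ and $\bm{V}_{\bm{\tau\tau}}^\myparallel = S_{dd}^\myparallel\tilde{\bm{B}}$. Using $\coef_q = \bm{\Psi}\orthocoef_q$, writing $\bm{C}_h$ for the $h$th diagonal block of $\tilde{\bm{C}}$ and $\bm{E}_h$ for the $F_h\times F$ tier-selection matrix with $\orthocoef_q[h] = \bm{E}_h\orthocoef_q$ (so that $\tilde{\bm{C}} = \sum_h \bm{E}_h'\bm{C}_h\bm{E}_h$ by block-diagonality), and invoking Proposition \ref{prop:var_cov_tier_factor}, I would obtain $\bm{W}_{\bm{\tau x}}[h] = (\bm{\Psi}\bm{E}_h'\bm{C}_h)\otimes\bm{S}_{d,\bm{x}}$ and $\bm{W}_{\bm{xx}}[h] = \bm{C}_h\otimes\bm{S}_{\bm{xx}}$; the Kronecker mixed-product rule then yields $\bm{W}_{\bm{\tau\tau}}^\myparallel[h] = \bm{W}_{\bm{\tau x}}[h](\bm{W}_{\bm{xx}}[h])^{-1}\bm{W}_{\bm{x\tau}}[h] = S_{dd}^\myparallel\,\bm{\Psi}\bm{E}_h'\bm{C}_h\bm{E}_h\bm{\Psi}'$.

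Finally I would verify the diagonalization. Since $\tilde{\bm{B}} = \bm{\Psi}\tilde{\bm{C}}\bm{\Psi}'$, the matrix $\bm{\Gamma} = \tilde{\bm{B}}^{-1/2}\bm{\Psi}\tilde{\bm{C}}^{1/2}$ satisfies $\bm{\Gamma}\bm{\Gamma}' = \tilde{\bm{B}}^{-1/2}\tilde{\bm{B}}\tilde{\bm{B}}^{-1/2} = \bm{I}_F$, so it is orthogonal; and because $\bm{\Psi}$ is invertible (its inverse is the block Gram--Schmidt map \eqref{eq:b_q}) and $\tilde{\bm{C}}$ is invertible, $\bm{\Gamma}'\tilde{\bm{B}}^{-1/2} = \tilde{\bm{C}}^{1/2}\bm{\Psi}'(\bm{\Psi}\tilde{\bm{C}}\bm{\Psi}')^{-1} = \tilde{\bm{C}}^{-1/2}\bm{\Psi}^{-1}$. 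Combining with $\bm{V}_{\bm{\tau\tau}}^{-1/2} = S_{dd}^{-1/2}\tilde{\bm{B}}^{-1/2}$ and the previous step,
\[
\bm{\Gamma}'\bm{V}_{\bm{\tau\tau}}^{-1/2}\bm{W}_{\bm{\tau\tau}}^\myparallel[h]\bm{V}_{\bm{\tau\tau}}^{-1/2}\bm{\Gamma} = \frac{S_{dd}^\myparallel}{S_{dd}}\,\tilde{\bm{C}}^{-1/2}\bm{E}_h'\bm{C}_h\bm{E}_h\tilde{\bm{C}}^{-1/2},
\]
and since the block-diagonality of $\tilde{\bm{C}}$ passes to $\tilde{\bm{C}}^{-1/2}$ (with $h$th block $\bm{C}_h^{-1/2}$), the right-hand side is the $F\times F$ matrix whose $\mathcal{F}_h\times\mathcal{F}_h$ block equals $\bm{C}_h^{-1/2}\bm{C}_h\bm{C}_h^{-1/2} = \bm{I}_{F_h}$ and whose other entries vanish, i.e. $(S_{11}^\myparallel/S_{11})\,\bm{E}_h'\bm{E}_h$. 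This is diagonal with exactly $F_h$ nonzero entries, all equal to $S_{11}^\myparallel/S_{11}$; and as the eigenvalues of $\bm{V}_{\bm{\tau\tau}}^{-1}\bm{W}_{\bm{\tau\tau}}^\myparallel[h]$ are by construction the squared canonical correlations between $\hat{\bm{\tau}}$ and $\hat{\bm{\theta}}_{\bm{x}}[h]$ under the CRFE, this is exactly the claim.

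The main obstacle is purely bookkeeping: one has to keep the Kronecker products, the tier-selection matrices $\bm{E}_h$, and the several symmetric square roots straight, and---crucially---observe that the block-diagonality of $\tilde{\bm{C}}$ carries over to $\tilde{\bm{C}}^{1/2}$ and $\tilde{\bm{C}}^{-1/2}$, which is what makes the final block collapse to $\bm{I}_{F_h}$ and produce a genuinely diagonal matrix. Once the reduction in the second paragraph is in hand, no new difficulty remains, since all outcome-dependent information has been condensed into the scalar $S_{11}^\myparallel/S_{11}$.
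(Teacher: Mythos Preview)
Your proof is correct and follows essentially the same route as the paper's: reduce $\bm{V}_{\bm{\tau\tau}}$ and $\bm{W}_{\bm{\tau\tau}}^\myparallel[h]$ under additivity to scalar multiples of design matrices, then verify orthogonality of $\bm{\Gamma}$ and the diagonal form directly. The only differences are cosmetic---you use tier-selection matrices $\bm{E}_h$ and the identity $\bm{\Gamma}'\tilde{\bm{B}}^{-1/2}=\tilde{\bm{C}}^{-1/2}\bm{\Psi}^{-1}$, whereas the paper works with block-column notation $\tilde{\bm{C}}[{,}h]$ and leaves the conjugation in terms of $\tilde{\bm{B}}^{-1/2}$; and you spell out explicitly why additivity forces $S_{qq}$ and $\bm{S}_{q,\bm{x}}$ to be constant in $q$, which the paper simply uses without comment in this proof.
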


Proposition \ref{prop:add_simu_diag} follows from some algebra, with the proof in Appendix A5 of the Supplementary Material \citep{rerandfacsupp}.

\begin{theorem}\label{thm:quant_region_refmt_f_monotone_joint}
	Under $\text{ReFMT}_\text{F}$, assume  that Conditions \ref{cond:fp} and  \ref{cond:simultaneous_ortho} hold. Let
 $c_{1-\alpha}$ be the solution of
$
	\lim_{n\rightarrow\infty}P\left\{
	\hat{\bm{\tau}} - \bm{\tau} \in \mathcal{O}(\bm{V}_{\bm{\tau\tau}}, c_{1-\alpha}) \mid \mathcal{T}_{\text{F}} 
	\right\} = 1 - \alpha .
$
It	depends only on  $L$, $F_h$'s, $a_h$'s, and $(\omega_{h1}^2, \ldots, \omega_{hF}^2)$'s, 
	and is nonincreasing in $\omega_{hf}^2$ for $1\leq h \leq H$ and $1\leq f\leq F$. 
\end{theorem}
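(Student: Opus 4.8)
The plan is to standardize $\hat{\bm{\tau}}-\bm{\tau}$ into a canonical form and then turn the monotonicity claim into a peakedness comparison. Left-multiplying the display of Theorem~\ref{thm:asymp_refmt_f} by $\bm{\Gamma}'\bm{V}_{\bm{\tau\tau}}^{-1/2}$, invoking Condition~\ref{cond:simultaneous_ortho} — so that $\bm{\Gamma}'\bm{V}_{\bm{\tau\tau}}^{-1/2}\bm{W}_{\bm{\tau\tau}}^{\myparallel}[h]\bm{V}_{\bm{\tau\tau}}^{-1/2}\bm{\Gamma}=\bm{\Omega}_h^2\equiv\textup{diag}(\omega_{h1}^2,\dots,\omega_{hF}^2)$, hence $\bm{\Gamma}'\bm{V}_{\bm{\tau\tau}}^{-1/2}\bm{V}_{\bm{\tau\tau}}^{\myperp}\bm{V}_{\bm{\tau\tau}}^{-1/2}\bm{\Gamma}=\bm{I}_F-\sum_h\bm{\Omega}_h^2$ — and using the spherical symmetry of $\bm{\varepsilon}$ and of the $\bm{\zeta}_{LF_h,a_h}$'s from Proposition~\ref{prop:represent} (so that each term $(\bm{W}_{\bm{\tau\tau}}^{\myparallel}[h])^{1/2}_{LF_h}\bm{\zeta}_{LF_h,a_h}$ may be replaced by any linear image of $\bm{\zeta}_{LF_h,a_h}$ with the same value of the map times its transpose), one obtains
\[
\bm{\Gamma}'\bm{V}_{\bm{\tau\tau}}^{-1/2}(\hat{\bm{\tau}}-\bm{\tau})\mid\mathcal{T}_{\text{F}}\ \apprsim\ \Bigl(\bm{I}_F-\sum_{h=1}^H\bm{\Omega}_h^2\Bigr)^{1/2}\bm{\varepsilon}+\sum_{h=1}^H(\bm{\Omega}_h,\bm{0})\,\bm{\zeta}_{LF_h,a_h}\ =:\ \bm{T},
\]
whose law depends only on $L$, the $F_h$'s, the $a_h$'s and the $\omega_{hf}^2$'s. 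Since $\bm{\Gamma}$ is orthogonal, $\{\hat{\bm{\tau}}-\bm{\tau}\in\mathcal{O}(\bm{V}_{\bm{\tau\tau}},c)\}=\{\|\bm{\Gamma}'\bm{V}_{\bm{\tau\tau}}^{-1/2}(\hat{\bm{\tau}}-\bm{\tau})\|^2\le c\}$, so $c_{1-\alpha}$ is the solution of $\lim_n P(\|\bm{T}\|^2\le c_{1-\alpha}\mid\mathcal{T}_{\text{F}})=1-\alpha$, which already yields the asserted dependence of $c_{1-\alpha}$.

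Because $\{\bm{t}:\|\bm{t}\|^2\le c\}$ is a symmetric convex set and $\bm{T}$ is symmetric, it suffices to show that raising a single $\omega_{hf}^2$, with all other canonical correlations held fixed (so that the coefficient $1-\sum_l\omega_{lf}^2$ of $\varepsilon_f$ drops accordingly), makes $\bm{T}$ more peaked in the sense of Definition~\ref{def:peak}; then $P(\|\bm{T}\|^2\le c)$ is nondecreasing in $\omega_{hf}^2$ and $c_{1-\alpha}$ nonincreasing. To see this, write $\bm{T}\sim A(\bm{\omega})\bm{W}\mid\bm{W}\in\mathcal{C}$, where $\bm{W}=(\bm{\varepsilon},\bm{D}_1,\dots,\bm{D}_H)$ is standard Gaussian with $\bm{\zeta}_{LF_h,a_h}\sim\bm{D}_h\mid\|\bm{D}_h\|^2\le a_h$, the matrix $A(\bm{\omega})$ has orthonormal rows ($A(\bm{\omega})A(\bm{\omega})'=\bm{I}_F$), and $\mathcal{C}=\prod_h\{\|\bm{D}_h\|^2\le a_h\}$ is a symmetric convex cylinder not depending on $\bm{\omega}$; raising $\omega_{hf}^2$ merely rotates the $f$th row of $A(\bm{\omega})$ within the plane spanned by the $\varepsilon_f$-axis and the $D_{h,f}$-axis, turning it from the ``unconstrained'' direction toward the ``tier-$h$-constrained'' direction while keeping the rows orthonormal. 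Conditioning on all coordinates of $\bm{W}$ outside this plane freezes the coordinates $j\ne f$ of $\bm{T}$ and makes coordinate $f$ equal to $\sqrt{\kappa-\theta}\,\varepsilon_0+\sqrt{\theta}\,W+(\text{shift})$, with $\theta=\omega_{hf}^2$, $\kappa=1-\sum_{l\ne h}\omega_{lf}^2$, $W$ the conditioned $D_{h,f}$ (a standard Gaussian truncated to a symmetric interval), $\varepsilon_0$ an independent standard Gaussian, and a shift whose law is symmetric and free of $\omega_{hf}$; the monotonicity of $P(\|\bm{T}\|^2\le c)$ then follows by integrating a one-dimensional monotonicity statement for $\sqrt{\kappa-\theta}\,\varepsilon_0+\sqrt{\theta}\,W$ lying in the (symmetrized) target interval, which is exactly the lemma underlying Theorem~\ref{thm:red_qr_rem_nond_ccorr}, here needed in a multi-tier form.

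The hard part is this last one-dimensional monotonicity. It is not implied by the ordering of variances — which, as the example $\xi_0\sim\varepsilon_0\mid 0.5\le\varepsilon_0^2\le1$ in Section~\ref{sec:asym_prop} illustrates, does not control peakedness — and it is not implied by ``$\bm{T}$ becomes more peaked'' alone, since symmetrizing the conditioning shift turns the target event for coordinate $f$ into a union of two symmetric intervals (an annulus), which is generally non-convex. One must instead exploit the precise truncated-$\chi^2$ structure: that conditioning a standard Gaussian vector to a ball shrinks each coordinate in the stochastic order of absolute values, and that a single scaled copy of such a coordinate is more peaked than a Gaussian-plus-independent-smaller-copy mixture with the same variance, while carefully tracking the shifts produced by the conditioning. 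The degenerate case $LF_h<F$, in which $\bm{\Omega}_h^2$ has at most $LF_h$ nonzero entries and the block $(\bm{\Omega}_h,\bm{0})$ must be read accordingly, requires separate bookkeeping.
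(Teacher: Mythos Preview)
Your standardization to the canonical form $\bm{T}$ and the reduction to showing that $P(\|\bm{T}\|^2\le c)$ is nondecreasing in each $\omega_{hf}^2$ are correct and match the paper's route (this is the content of Lemma~A6 in the Supplement). The gap is in your conditioning step. By freezing the \emph{full} vector $\bm{D}_l$ for every $l\ne h$ (rather than only $\bm{D}_{l,-f}$), you introduce the deterministic shift $s=\sum_{l\ne h}\omega_{lf}D_{l,f}$ and are left trying to show monotonicity of $P\bigl(\sqrt{\kappa-\theta}\,\varepsilon_0+\sqrt{\theta}\,W\in[-r_0-s,\,r_0-s]\bigr)$ in $\theta$ for an \emph{asymmetric} interval. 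As you correctly diagnose, this fails in general, and averaging over the symmetric law of $s$ produces a non-convex target. But this difficulty is an artifact of over-conditioning, not a real obstacle.

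The remedy, which is exactly what the paper does (Lemma~A5 in the Supplement), is to condition on $\bm{\varepsilon}_{-f}$ and on $\bm{D}_{l,-f}$ for \emph{all} tiers $l$ simultaneously, leaving $\varepsilon_f$ and $D_{l,f}$ free for every $l$. This still freezes $T_j$ for $j\ne f$, so the target set for $T_f$ is the symmetric interval $\{x:x^2\le c-\sum_{j\ne f}T_j^2\}$, and now
\[
T_f=\Bigl(1-\textstyle\sum_l\omega_{lf}^2\Bigr)^{1/2}\varepsilon_f+\sum_{l=1}^H\omega_{lf}\,D_{l,f},
\]
with each $D_{l,f}$ conditionally an independent standard Gaussian truncated to $D_{l,f}^2\le a_l-\|\bm{D}_{l,-f}\|^2$ and \emph{no shift}. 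The needed monotonicity is then precisely the multi-tier one-dimensional lemma you allude to (Lemma~A4 in the Supplement): for independent $\varepsilon_0\sim\mathcal{N}(0,1)$ and symmetrically truncated standard Gaussians $\eta_1,\dots,\eta_H$, the probability $P\bigl(|\rho_{H+1}\varepsilon_0+\sum_l\rho_l\eta_l|\le r_0\bigr)$ is nondecreasing in each $\rho_l$, $l\le H$, under the constraint $\sum_{l\le H+1}\rho_l^2=1$. With this conditioning your third-paragraph worry dissolves entirely. The case $LF_h<F$ is then routine bookkeeping: since $\bm{W}_{\bm{\tau\tau}}^{\myparallel}[h]$ has rank at most $LF_h$, at most $LF_h$ of the $\omega_{hf}^2$ are nonzero, and one forms $(\bm{\Omega}_h^2)^{1/2}_{LF_h}$ by keeping only the corresponding columns of $\bm{\Omega}_h$.
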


The proof of Theorem \ref{thm:quant_region_refmt_f_monotone_joint}, similar to that of Theorem \ref{thm:red_qr_rem_nond_ccorr}, is in Appendix A5 of the Supplementary Material \citep{rerandfacsupp}.

Because the peakedness relationship is invariant under linear transformations, and any linear transformation of $\hat{\bm{\tau}}$ has an asymptotic sampling distribution of the same form as $\hat{\bm{\tau}}$,  
we can establish similar conclusions as Theorems \ref{thm:quant_region_refmt_f} and \ref{thm:quant_region_refmt_f_monotone_joint} for any linear transformations of $\hat{\bm{\tau}}$. 
We relegate the details to the Supplementary Material \citep{rerandfacsupp}, and consider only the asymptotic sampling distribution of a single factorial effect estimator below.

\begin{corollary}\label{cor:quant_region_refmt_f_monotone_marg}
	Under Condition \ref{cond:fp}, for any $1\leq f\leq F$ and $\alpha\in (0,1)$, the threshold $c_{1-\alpha}$ for
	$1-\alpha$ asymptotic symmetric quantile range  
	$[-c_{1-\alpha}V_{\tau_f\tau_f}^{1/2}, c_{1-\alpha}V_{\tau_f\tau_f}^{1/2}]$
	of $\hat{\tau}_f-\tau_f$ under $\text{ReFMT}_\text{F}$ is smaller than or equal to that under the CRFE, and is nonincreasing in $\rho^2_f[h]$ for $1\leq h\leq H$.
\end{corollary}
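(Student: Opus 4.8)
The plan is to read the explicit marginal limiting law off Corollary~\ref{cor:dist_remt} and to obtain both assertions from a monotone--peakedness argument, with the comparison against the CRFE emerging as the ``no balance'' special case. Abbreviate $\rho_h\equiv\rho^2_f[h]$, so that $R^2_f=\sum_{h=1}^H\rho_h$. By Corollary~\ref{cor:dist_remt}, under $\text{ReFMT}_\text{F}$ the standardized estimator $(\hat\tau_f-\tau_f)/\sqrt{V_{\tau_f\tau_f}}$ converges in distribution to
\[
T(\rho_1,\dots,\rho_H)\ \equiv\ \sqrt{\,1-\textstyle\sum_{h=1}^H\rho_h\,}\;\varepsilon_0\ +\ \sum_{h=1}^H\sqrt{\rho_h}\;\eta_{LF_h,a_h},
\]
with $(\varepsilon_0,\eta_{LF_1,a_1},\dots,\eta_{LF_H,a_H})$ jointly independent, while under the CRFE the limit is $\varepsilon_0\sim\mathcal N(0,1)$ (Proposition~\ref{prop:mean_var_cre}), which is precisely $T(0,\dots,0)$. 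Each $\eta_{LF_h,a_h}$, being a one-dimensional marginal of the central convex unimodal vector $\bm\zeta_{LF_h,a_h}$ (Proposition~\ref{prop:ccu_refm}), is symmetric unimodal, and a convolution of independent symmetric unimodal univariate laws is symmetric unimodal; hence $T$ is symmetric unimodal and $c_{1-\alpha}$ is the well-defined $(1-\alpha)$-quantile of $|T(\rho_1,\dots,\rho_H)|$, depending only on $L$, the $F_h$'s, the $a_h$'s and the $\rho_h$'s. It therefore suffices to show that $c_{1-\alpha}$ is nonincreasing in each $\rho_h$ over $\{\rho_h\ge0:\ \sum_h\rho_h\le1\}$; evaluating at the origin then gives $c_{1-\alpha}^{\text{ReFMT}_\text{F}}\le c_{1-\alpha}^{\text{CRFE}}$.

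For the coordinatewise monotonicity, fix $h$ and the remaining $\rho_{h'}$, $h'\neq h$; put $s=1-\sum_{h'\neq h}\rho_{h'}$ and let $b=\rho_h$ range over $[0,s]$. Decompose
\[
T\ =\ \Bigl(\sqrt{s-b}\;\varepsilon_0+\sqrt{b}\;\eta_{LF_h,a_h}\Bigr)\ +\ W,\qquad W\ \equiv\ \sum_{h'\neq h}\sqrt{\rho_{h'}}\;\eta_{LF_{h'},a_{h'}},
\]
where $W$ is symmetric unimodal and independent of $(\varepsilon_0,\eta_{LF_h,a_h})$. In $\mathbb R^1$ the peakedness order of Definition~\ref{def:peak} coincides with the pointwise ordering of the maps $t\mapsto P(|\cdot|\le t)$, and it is preserved both under positive scaling and under adding an independent symmetric unimodal summand \citep{sherman1955,dharmadhikari1988}. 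Since $\sqrt{s-b}\,\varepsilon_0+\sqrt{b}\,\eta_{LF_h,a_h}=\sqrt{s}\bigl(\sqrt{1-b/s}\,\varepsilon_0+\sqrt{b/s}\,\eta_{LF_h,a_h}\bigr)$, the claim reduces to showing that $\sqrt{1-r}\,\varepsilon_0+\sqrt{r}\,\eta_{LF_h,a_h}$ is nondecreasing in peakedness as $r$ increases over $[0,1]$.

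This reduced statement is the crux and the main obstacle: it \emph{fails} for a generic symmetric unimodal law in place of $\eta_{LF_h,a_h}$, so the proof must exploit the truncated-Gaussian structure, and in particular the naive strategy of splitting off a Gaussian piece and invoking convolution stability does not close the argument. It coincides with the univariate monotonicity of \citet[Theorem~2]{asymrerand2016} (with $LF_h$ covariates and threshold $a_h$), which is established using the scale--mixture representation behind Proposition~\ref{prop:represent}: for $\bm D_h\sim\mathcal N(\bm0,\bm I_{LF_h})$ one has $\varepsilon_0\overset{d}{=}\|\bm D_h\|\,U_h$ and $\eta_{LF_h,a_h}\overset{d}{=}(\|\bm D_h\|\mid\|\bm D_h\|^2\le a_h)\,U_h$ with the radial and angular factors independent, so that after conditioning the comparison collapses to one-dimensional comparisons governed by a monotone coupling of the truncated and untruncated radii. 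A byproduct is $\eta_{LF_h,a_h}\succ\mathcal N(0,1)$, which by itself already yields the CRFE bound directly: comparing $T$ with $\varepsilon_0\overset{d}{=}\sqrt{1-R^2_f}\,\varepsilon_0'+\sum_h\sqrt{\rho_h}\,\varepsilon_h'$ (for independent standard Gaussians $\varepsilon_0',\varepsilon_1',\dots,\varepsilon_H'$) term by term, via scaling and convolution stability, gives $T\succ\varepsilon_0$ and hence $c_{1-\alpha}^{\text{ReFMT}_\text{F}}\le c_{1-\alpha}^{\text{CRFE}}$. Assembling the reductions shows that $c_{1-\alpha}$ is nonincreasing in every $\rho^2_f[h]$ and bounded above by its CRFE value; this is the scalar case of the linear-transformation extension of Theorem~\ref{thm:quant_region_refmt_f_monotone_joint} alluded to above, with the full details in the Supplementary Material \citep{rerandfacsupp}.
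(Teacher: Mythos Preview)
Your proof is correct. Both your argument and the paper's hinge on the same univariate monotonicity result from \citet{asymrerand2016}, but the organization differs. The paper proves Corollary~\ref{cor:quant_region_refmt_f_monotone_marg} as a one-line specialization of its general multivariate machinery: $\hat\tau_f$ is a one-dimensional linear transformation of $\hat{\bm\tau}$, so the claim follows from Lemma~A16 (which packages the multi-tier monotonicity Lemma~A14, itself imported from \citet[Lemma~A10]{asymrerand2016}). You instead stay entirely in one dimension: working directly with the marginal law from Corollary~\ref{cor:dist_remt}, you fix all but one tier, absorb the remaining tiers into a symmetric unimodal summand $W$, invoke convolution stability of peakedness, and reduce to the \emph{single}-tier statement \citep[Theorem~2]{asymrerand2016}. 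Your route is more elementary---it avoids the matrix lemmas A15--A16 altogether---and your observation that the CRFE bound drops out either as the evaluation at $\rho_h=0$ or via the term-by-term comparison $\eta_{LF_h,a_h}\succ\varepsilon_0$ is a nice bonus. The paper's route buys brevity once the multivariate framework is in place; yours buys self-containedness for the scalar case.
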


The proof of Corollary \ref{cor:quant_region_refmt_f_monotone_marg} is in Appendix A5 of the Supplementary Material \citep{rerandfacsupp}.
From Corollary \ref{cor:quant_region_refmt_f_monotone_marg}, with larger squared multiple correlation $\rho^2_f[h]$, $\text{ReFMT}_\text{F}$ yields more percentage reductions of quantile ranges. 

The example below shows the advantage of $\text{ReFMT}_\text{F}$ over ReFM.

\begin{example}\label{eg:simple_refmt_f}
	We consider experiments with $K$ factors and $L$ dimensional covariates. Assume the additivity, which implies that 
	$R_f^2$ is the same for all factorial effects $f$. 
	Suppose that we are more interested in the $K$ main effects than the interaction effects.
	We divide the $F$ effects into $2$ tiers, where tier 1 contains the $F_1 = K$ main effects and tier 2 contains the remaining $F_2 = 2^K-1-K$ interaction effects. 
	From Proposition \ref{prop:comp_R_fm_add}, we can derive
	$\rho^2_k[1]=R_f^2$ and $\rho^2_k[2]=0$
	for the main effect $1\leq k\leq K$. 
	We compare two rerandomization schemes with the same asymptotic acceptance probability: ReFM with $p_{a} = 0.001$ and $\text{ReFMT}_\text{F}$ with thresholds $(a_1,a_2)$ satisfying $P(\chi^2_{LF_1}\leq a_1) = 0.002$ and $P(\chi^2_{LF_2}\leq a_2)=0.5$. 
	Figure \ref{fig:simple_compare_refmt_f} shows 
	the PRIASV, divided by $R^2_f$, of the main effect estimators for both rerandomization schemes. It shows that the advantage of $\text{ReFMT}_\text{F}$ increases as the numbers of factors and covariates increase. 
	$\hfill\Box$ 
	\begin{figure}[htb]
		\centering
		\includegraphics[width=0.5\textwidth]{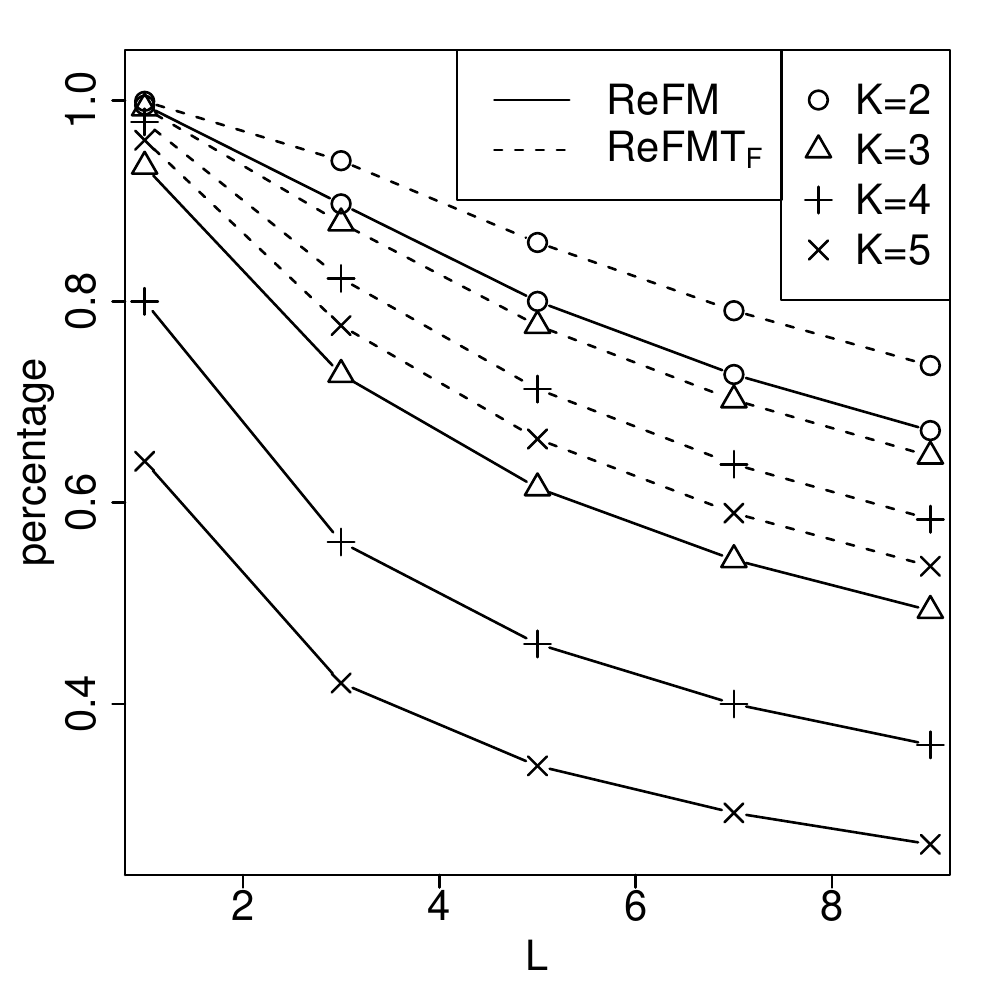}
		\caption{PRIASV of main effect estimators divided by $R^2_f$}\label{fig:simple_compare_refmt_f}
	\end{figure}
\end{example}

\subsection{Conservative covariance estimator and confidence sets under $\text{ReFMT}_\text{F}$}\label{sec:var_est_refmt_f}
We estimate $\bm{V}_{\bm{\tau\tau}}^\myperp$ by 
$\hat{\bm{V}}_{\bm{\tau\tau}}^\myperp$ 
in \eqref{eq:V_tautau_perp_est}, $\bm{W}_{\bm{\tau x}}[h]$ by 
$
\hat{\bm{W}}_{\bm{\tau x}}[h] = 2^{-2(K-1)}\sum_{q=1}^{Q}
n_q^{-1}(\coef_q\orthocoef'_q[h]) \otimes \{\bm{s}_{q,\bm{x}}\bm{s}_{\bm{xx}}^{-1/2}(q)\bm{S}_{\bm{xx}}^{1/2}\},
$
and $
(\bm{W}_{\bm{\tau\tau}}^\myparallel[h] )^{1/2}_{LF_h}$ 
by
$\hat{\bm{W}}_{\bm{\tau x}}[h]
(
\bm{W}_{\bm{xx}}[h]
)^{-1/2}$.
We can then obtain a covariance estimator and 
construct confidence sets for $\bm{\tau}$ or its linear transformations. 
Similar to ReFM, for a parameter of interest $\bm{C}\bm{\tau}$, we recommend confidence sets of the form $
\bm{C}\hat{\bm{\tau}} + \mathcal{O}(\bm{C}\hat{\bm{V}}_{\bm{\tau\tau}}^\myperp\bm{C}', c),   
$  
where we choose the threshold $c$ by simulating random draws from the estimated asymptotic sampling distribution. 
Let 
$\hat{c}_{1-\alpha}$ be the $1-\alpha$ quantile of 
$ (\bm{C}\bm{\phi})'(\bm{C}\hat{\bm{V}}_{\bm{\tau\tau}}^\myperp\bm{C}')^{-1}(\bm{C}\bm{\phi})
$
with $\bm{\phi}$ following the estimated asymptotic sampling distribution of $\hat{\bm{\tau}}-\bm{\tau}$ under $\text{ReFMT}_\text{F}.$

\begin{theorem}\label{thm:conserv_cs_remft_f}
Under $\text{ReFMT}_\text{F}$ and Condition \ref{cond:fp},  consider inferring $\bm{C}\bm{\tau}$, where
 $\bm{C}$ has full row rank. The probability limit of the covariance estimator,
	$
	\bm{C}\hat{\bm{V}}_{\bm{\tau\tau}}^\myperp\bm{C}' + \sum_{h=1}^{H} v_{LF_h, a_h} \bm{C}\hat{\bm{W}}_{\bm{\tau x}}[h]
	(
	\bm{W}_{\bm{xx}}[h]
	)^{-1}
	\hat{\bm{W}}_{\bm{x\tau}}[h]\bm{C}', 
	$
	for $\bm{C}\hat{\bm{\tau}}$ is larger than or equal to the actual sampling covariance, and 
	the $1-\alpha$ confidence set, 
	$
	\bm{C}\hat{\bm{\tau}} + \mathcal{O}(\bm{C}\hat{\bm{V}}_{\bm{\tau\tau}}^\myperp\bm{C}', \hat{c}_{1-\alpha}), 
	$
	has asymptotic coverage rate $\geq 1-\alpha$, 
	with equality holding if $\bm{S}_{\bm{\tau\tau}}^\myperp\rightarrow 0$ as $n\rightarrow \infty$. 
\end{theorem}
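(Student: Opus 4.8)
The plan is to mirror the structure of the proof of Theorem \ref{thm:conser_conf_set_refm} for ReFM, replacing the single truncated Gaussian component $\bm{\zeta}_{LF,a}$ by the sum of independent components $\sum_{h=1}^H (\bm{W}_{\bm{\tau\tau}}^\myparallel[h])_{LF_h}^{1/2}\bm{\zeta}_{LF_h,a_h}$ from Theorem \ref{thm:asymp_refmt_f}. First I would establish the probability limit of the covariance estimator. By the finite population law of large numbers (under Condition \ref{cond:fp}), $s_{qq}^\myperp$ converges in probability to $S_{qq}^\myperp + (\text{bias term from }\bm{S}_{\bm{\tau\tau}}^\myperp)$ — more precisely, the usual Neyman-type argument shows $\hat{\bm{V}}_{\bm{\tau\tau}}^\myperp$ has probability limit $\bm{V}_{\bm{\tau\tau}}^\myperp + n^{-1}$-order correction equal (in the scaled sense) to $n\bm{V}_{\bm{\tau\tau}}^\myperp$ plus a positive semidefinite slack $\bm{S}_{\bm{\tau\tau}}^\myperp$, while $\hat{\bm{W}}_{\bm{\tau x}}[h]\to_p \bm{W}_{\bm{\tau x}}[h]$ exactly because covariates are fixed and $\bm{s}_{q,\bm{x}}\bm{s}_{\bm{xx}}^{-1/2}(q)\to_p \bm{S}_{q,\bm{x}}\bm{S}_{\bm{xx}}^{-1/2}$. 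Combining, the probability limit of $\bm{C}\hat{\bm{V}}_{\bm{\tau\tau}}^\myperp\bm{C}' + \sum_h v_{LF_h,a_h}\bm{C}\hat{\bm{W}}_{\bm{\tau x}}[h](\bm{W}_{\bm{xx}}[h])^{-1}\hat{\bm{W}}_{\bm{x\tau}}[h]\bm{C}'$ exceeds the true asymptotic sampling covariance of $\bm{C}\hat{\bm{\tau}}$, which by Theorem \ref{thm:red_var_remft_f} and Proposition \ref{prop:represent} equals $\bm{C}(\bm{V}_{\bm{\tau\tau}}^\myperp + \sum_h v_{LF_h,a_h}\bm{W}_{\bm{\tau\tau}}^\myparallel[h])\bm{C}'$, with the gap being exactly $n^{-1}\bm{C}\bm{S}_{\bm{\tau\tau}}^\myperp\bm{C}'\succeq \bm{0}$; equality holds iff $\bm{S}_{\bm{\tau\tau}}^\myperp\to\bm{0}$.

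Next I would handle the coverage statement. Write $\bm{\phi}$ for the true asymptotic sampling distribution of $\bm{C}\hat{\bm{\tau}}-\bm{C}\bm{\tau}$ from \eqref{eq:asymp_joint_refmt}, i.e. $\bm{\phi} = \bm{C}(\bm{V}_{\bm{\tau\tau}}^\myperp)^{1/2}\bm{\varepsilon} + \sum_h \bm{C}(\bm{W}_{\bm{\tau\tau}}^\myparallel[h])_{LF_h}^{1/2}\bm{\zeta}_{LF_h,a_h}$, and $\hat{\bm{\phi}}$ for the corresponding draw from the \emph{estimated} distribution (with $\bm{S}$'s replaced by consistent sample analogues and $\bm{S}_{\bm{\tau\tau}}^\myperp$ set to $\bm{0}$). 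The key comparison is that $\bm{\phi} \apprsim \hat{\bm{\phi}} + \bm{C}(\bm{S}_{\bm{\tau\tau}}^\myperp/n)^{1/2}\bm{\varepsilon}'$ in distribution, where $\bm{\varepsilon}'$ is an independent standard Gaussian, because the only discrepancy between the true and estimated distributions is the missing $\bm{S}_{\bm{\tau\tau}}^\myperp$ term in the $\bm{\varepsilon}$-component (the $v_{LF_h,a_h}$-weighting and block structure are captured exactly by $\hat{\bm{W}}_{\bm{\tau x}}[h](\bm{W}_{\bm{xx}}[h])^{-1/2}$ composed with $\bm{\zeta}_{LF_h,a_h}$). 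Since $\bm{C}(\bm{S}_{\bm{\tau\tau}}^\myperp/n)^{1/2}\bm{\varepsilon}'$ is Gaussian hence central convex unimodal, and the class of central convex unimodal distributions is closed under convolution, adding it to $\hat{\bm{\phi}}$ (itself central convex unimodal by Proposition \ref{prop:ccu_refm} and closure under linear transformation) makes $\hat{\bm{\phi}}$ \emph{more peaked} than $\bm{\phi}$: $\hat{\bm{\phi}}\succ\bm{\phi}$, using the fact that convolving with an independent central convex unimodal (in fact symmetric unimodal) vector only spreads mass outward on symmetric convex sets. Since $\mathcal{O}(\bm{C}\hat{\bm{V}}_{\bm{\tau\tau}}^\myperp\bm{C}', \hat{c}_{1-\alpha})$ is a symmetric convex set whose $\hat{\bm{\phi}}$-probability is, by construction of the quantile $\hat{c}_{1-\alpha}$ and continuity of limits, asymptotically $1-\alpha$, peakedness gives $\liminf_n P\{\bm{C}\hat{\bm{\tau}}-\bm{C}\bm{\tau}\in \mathcal{O}(\cdot,\hat{c}_{1-\alpha})\mid\mathcal{T}_{\text{F}}\}\ge 1-\alpha$, with equality when $\bm{S}_{\bm{\tau\tau}}^\myperp\to\bm{0}$ so that $\bm{\phi}$ and $\hat{\bm{\phi}}$ coincide.

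A subtlety to address carefully is that $\hat{c}_{1-\alpha}$ is itself random (computed from the estimated distribution), so strictly speaking I would argue via a continuous-mapping / Slutsky argument: $\hat{c}_{1-\alpha}\to_p c_{1-\alpha}^{\star}$, the $(1-\alpha)$-quantile of $(\bm{C}\hat{\bm{\phi}})'(\bm{C}\bm{V}_{\bm{\tau\tau}}^{\myperp,\lim}\bm{C}')^{-1}(\bm{C}\hat{\bm{\phi}})$ under the limiting estimated law, because the estimated covariances converge in probability and the quantile map is continuous at this point (the limiting quadratic form has a continuous strictly increasing CDF on its support). Then the coverage event is asymptotically equivalent to the fixed-threshold event, and the peakedness inequality applies. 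Another point worth spelling out is invariance under the full-row-rank linear map $\bm{C}$: both $\bm{\phi}$ and $\hat{\bm{\phi}}$ are linear images under $\bm{C}$ of the corresponding $F$-dimensional vectors, so all central-convex-unimodality and peakedness properties are inherited via \citet[Lemma 7.2]{dharmadhikari1988} and closure of central convex unimodality under linear transformation.

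I expect the main obstacle to be the rigorous handling of the randomness in $\hat{c}_{1-\alpha}$ together with the fact that the estimated and true sampling covariances differ only by a positive semidefinite slack: one must verify that \emph{simultaneously} inflating the covariance used to define the quantile region (from $\bm{V}_{\bm{\tau\tau}}^\myperp$ to its over-estimate) and over-stating the dispersion of $\hat{\bm{\phi}}$ relative to $\bm{\phi}$ both push coverage in the conservative direction, and that these do not interact perversely. The clean way around this is to route everything through a single peakedness comparison $\hat{\bm{\phi}}\succ\bm{\phi}$ on \emph{all} symmetric convex sets (not just the ellipsoids $\mathcal{O}(\cdot,c)$), which decouples the choice of confidence region shape from the distributional comparison; this is precisely the strategy used in Theorem \ref{thm:conser_conf_set_refm}, and the present theorem follows by the same argument with the single $\bm{\zeta}_{LF,a}$ term replaced by the independent sum $\sum_h(\bm{W}_{\bm{\tau\tau}}^\myparallel[h])_{LF_h}^{1/2}\bm{\zeta}_{LF_h,a_h}$, whose central convex unimodality is guaranteed by Proposition \ref{prop:ccu_refm} and closure under convolution.
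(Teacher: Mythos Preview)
Your overall strategy---mirror the ReFM proof of Theorem~\ref{thm:conser_conf_set_refm}, replacing the single truncated Gaussian by the sum $\sum_h(\bm{W}_{\bm{\tau\tau}}^\myparallel[h])_{LF_h}^{1/2}\bm{\zeta}_{LF_h,a_h}$, and invoke central convex unimodality of this sum---is exactly what the paper does. The covariance part of your argument is fine.

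However, the peakedness comparison in your coverage argument is \emph{reversed}. You write $\bm{\phi}\apprsim\hat{\bm{\phi}}+\bm{C}(n^{-1}\bm{S}_{\bm{\tau\tau}}^\myperp)^{1/2}\bm{\varepsilon}'$ and conclude $\hat{\bm{\phi}}\succ\bm{\phi}$. But the true Gaussian component has covariance $\bm{V}_{\bm{\tau\tau}}^\myperp=\tilde{\bm{V}}_{\bm{\tau\tau}}^\myperp-n^{-1}\bm{S}_{\bm{\tau\tau}}^\myperp$, which is \emph{smaller} than the probability limit $\tilde{\bm{V}}_{\bm{\tau\tau}}^\myperp$ of $\hat{\bm{V}}_{\bm{\tau\tau}}^\myperp$. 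So it is the \emph{estimated} (limiting) law that equals the true law plus independent Gaussian noise: $\tilde{\bm{\phi}}\sim\bm{\phi}+\text{noise}$, giving $\bm{\phi}\succ\tilde{\bm{\phi}}$, not the other way around. With your stated direction $\hat{\bm{\phi}}\succ\bm{\phi}$, the peakedness inequality would yield $P(\bm{\phi}\in\mathcal{O})\le P(\hat{\bm{\phi}}\in\mathcal{O})=1-\alpha$, i.e.\ \emph{under}-coverage, the opposite of what you claim.

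The fix is simple: swap the roles. From $\bm{\phi}\succ\tilde{\bm{\phi}}$ (true law more peaked because its Gaussian part has smaller covariance, and the common central convex unimodal piece $\sum_h(\bm{W}_{\bm{\tau\tau}}^\myparallel[h])_{LF_h}^{1/2}\bm{\zeta}_{LF_h,a_h}$ preserves the ordering by Lemma~\ref{lemma:peak_sum}), you get $P(\bm{\phi}\in\mathcal{K})\ge P(\tilde{\bm{\phi}}\in\mathcal{K})$ for every symmetric convex $\mathcal{K}$. Taking $\mathcal{K}=\mathcal{O}(\bm{C}\tilde{\bm{V}}_{\bm{\tau\tau}}^\myperp\bm{C}',\tilde{c}_{1-\alpha})$, the right side equals $1-\alpha$ by construction, and your Slutsky/continuous-mapping step then transfers this to $\hat{c}_{1-\alpha}$. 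This is precisely the paper's route: define $\mathcal{L}$ (true), $\tilde{\mathcal{L}}$ (with $\bm{V}_{\bm{\tau\tau}}^\myperp$ replaced by $\tilde{\bm{V}}_{\bm{\tau\tau}}^\myperp$), $\hat{\mathcal{L}}$ (estimated); show $\hat{\mathcal{L}}\apprsim\tilde{\mathcal{L}}$ and $\mathcal{L}\succ\tilde{\mathcal{L}}$; conclude $\hat{c}_{1-\alpha}\to_p\tilde{c}_{1-\alpha}\ge c_{1-\alpha}$.
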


The proof of Theorem \ref{thm:conserv_cs_remft_f}, similar to that of Theorem \ref{thm:conser_conf_set_refm}, is in Appendix A6 of the Supplementary Material \citep{rerandfacsupp}. The above confidence sets will be similar to the ones based on regression adjustment if the threshold $a_h$'s are small \citep{lu2016covadj}.
Moreover,  we can also extend Theorem \ref{thm:conserv_cs_remft_f} to general symmetric convex confidence sets \citep{rerandfacsupp}.

\section{An education example}\label{sec:edu_example}

We illustrate the theory of rerandomization using a dataset from the  Student Achievement and Retention Project \citep{angrist2009incentives}, a $2^2$ CRFE at one of the satellite campuses of a large Canadian university. One treatment factor is the Student Support Program (SSP), which provides students some services for study. The other treatment factor is the Student Fellowship Program (SFP), which awards students scholarships for achieving a target first year grade point average (GPA). There were 1006 students in the control group receiving neither SSP nor SFP (i.e., $(-1,-1)$), 250 students offered only SFP (i.e., $(-1,+1)$), 250 students offered only SSP (i.e., $(+1,-1)$), and 150 students offered both SSP and SFP (i.e., $(+1,+1)$). 
We include $L=5$ pretreatment covariates: high school GPA, gender, age, indicators for whether the student was living at home and whether the student rarely put off studying for tests, and exclude students with missing covariate values. This results in treatment groups of sizes $(856, 216, 208, 118)$ for treatment combinations $(-1,-1), (-1,+1), (+1,-1)$ and $(+1,+1)$, respectively.

To demonstrate the advantage of rerandomization, we compare the CRFE and $\text{ReFMT}_\text{F}$
in terms of the sampling distributions of the factorial effects estimator.
However, the sampling distributions depend on all the potential outcomes including the missing ones. 
To make the simulation more realistic, we impute all of the missing potential outcomes based on simple model fitting. 
Specifically, 
we fit a linear regression of the observed GPA on the levels of two treatment factors, all covariates and the interactions between these covariates, and then impute all the missing potential outcomes based on the fitted model. 
We further truncate all the potential outcomes to $[0,4]$ to mimic the values of GPA. 
Note that the generating models for the missing potential outcomes are not linear in the covariates. 
For the simulated data set, the sampling squared multiple correlations between factorial effect estimators and the difference-in-means of covariates are $(R_1^2, R_2^2, R_3^2)=(0.247,  0.244, 0.245)$.

We divide the three factorial effects into two tiers, where tier 1 contains $F_1=2$ main effects, and tier 2 contains $F_2=1$ interaction effect, and choose thresholds $(a_1,a_2)$ such that 
$P(\chi^2_{LF_1}\leq a_1) = 0.002$ and $P(\chi^2_{LF_2}\leq a_2)=0.5$.
Table \ref{tab:star_comp_refmt_f} shows the empirical and theoretical percentage reductions in the sampling variances and the lengths of $95\%$ symmetric quantile ranges for the three factorial effect estimators under $\text{ReFMT}_\text{F}$, compared to the CRFE. 
From Table \ref{tab:star_comp_refmt_f}, the asymptotic approximations work fairly well, and 
$\text{ReFMT}_\text{F}$ improves the precision of the two average main effects estimators more than that of the average interaction effect estimator.

\begin{table}
	\centering
	\caption{Comparison of the factorial effect estimators between the CRFE and $\text{ReFMT}_\text{F}$. The second and third columns show the percentage reductions in variances, and the fourth and fifth columns show the percentage reductions in the lengths of quantile ranges.}\label{tab:star_comp_refmt_f}
	\begin{tabular}{cccccc}
		\toprule
		Factorial effect & \multicolumn{2}{c}{Reduction in variance}  & & \multicolumn{2}{c}{Reduction in $95\%$ quantile range}\\
		\cline{2-3} \cline{5-6}
		&  empirical & theoretical & &  empirical & theoretical\\ 
		\midrule 
		Main effect of SSP & $20.2\%$ & $21.2\%$ & &  $10.7\%$ & $11.2\%$\\
		Main effect of SFP & $20.4\%$ & $20.9\%$ & & $10.8\%$ & $11.1\%$\\
		Interaction effect & $14.4\%$ & $14.9\%$ & & $7.7\%$ & $7.8\%$\\
		\bottomrule
	\end{tabular}
\end{table}

We then consider confidence sets for the two average main effects $(\tau_1, \tau_2)$ under both designs. The empirical coverage probabilities of $95\%$ confidence sets discussed in Sections \ref{sec:asym_dist_crfe} and \ref{sec:var_est_refmt_f} under the CRFE and $\text{ReFMT}_\text{F}$ are, respectively, $96.4\%$ and $96.5\%$, showing that both confidence sets are slightly conservative. Moreover, 
the percentage reduction in the average volume of $95\%$ confidence sets under $\text{ReFMT}_\text{F}$ compared to the CRFE is $20.5\%$, and the corresponding percentage increase in sample size needed for the CRFE to obtain $95\%$ confidence set of the same average volume as $\text{ReFMT}_\text{F}$ is about $25.8\%$.  

To end this section, we investigate the dependence of the PRIASVs on the choices of thresholds 
$(a_1, a_2)$. Let $p_{ah} \equiv P(\chi^2_{LF_h}\leq a_h)$ be the asymptotic acceptance probability for tier $h$ $(h=1,2)$. 
Fixing the overall asymptotic acceptance probability $p_a \equiv p_{a1}p_{a2}$ at $0.001$, 
Figure \ref{fig:choice_a} shows the PRIASVs of all factorial effect estimators as functions of $p_{a1}$. 
We can see that (1) more stringent restrictions on the first tier of factorial effects (i.e., the two main effects) lead to larger PRIASVs of the corresponding estimators, but (2) the PRIASV of the estimator of the second tier of factorial effect (i.e., the interaction effect) is a non-monotone function of $p_{a1}$. Therefore, in practice, we are facing a trade-off, which depends on the a priori relative importance of the factorial effects.

\begin{figure}[htb]
	\centering
	\includegraphics[width=0.5\textwidth]{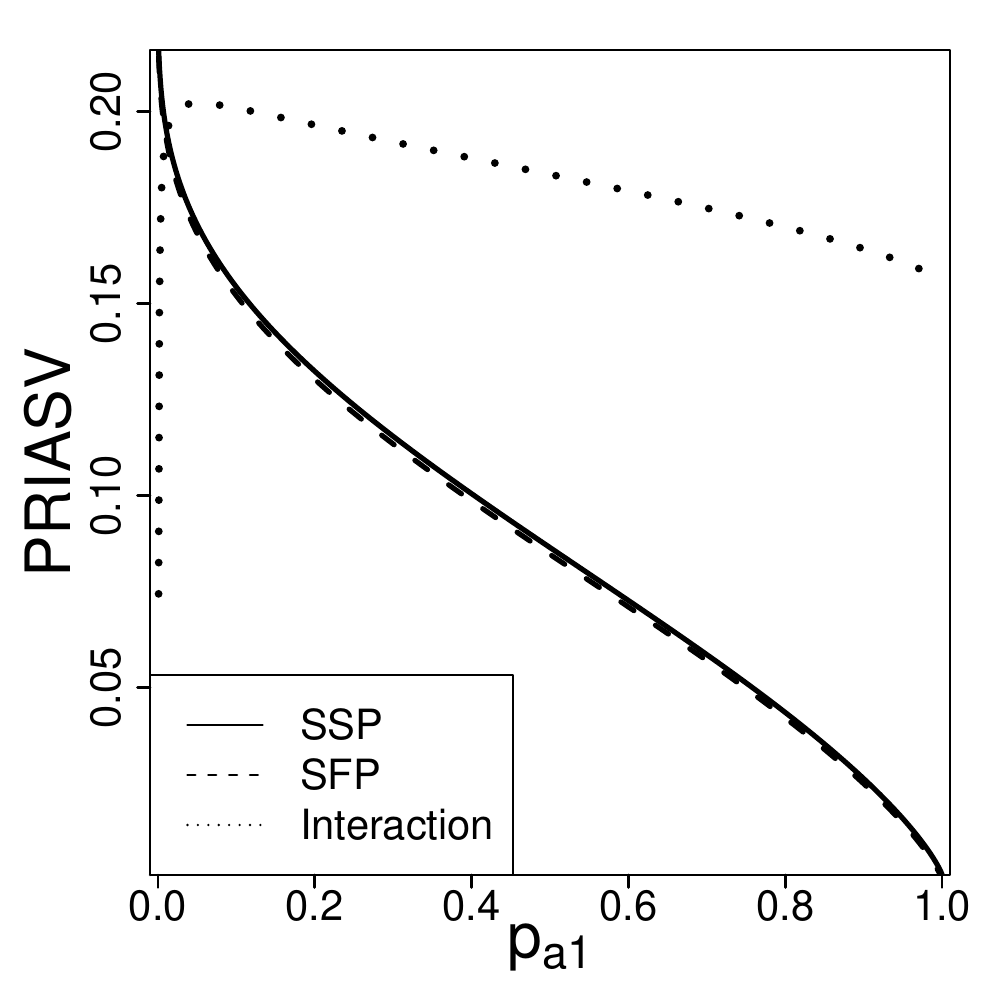}
	\caption{PRIASVs of all factorial effects as $p_{a1}$, with the overall acceptance probability $p_a = p_{a1}p_{a2} $ fixed at 0.001.}\label{fig:choice_a}
\end{figure}

\section{Extensions}\label{sec:extension}
When covariates have varying importance for the potential outcomes, we can further consider balance criterion using tiers of covariates, i.e., rerandomized factorial experiments with tiers of both covariates and factorial effects. We discuss in detail this balance criterion in the Supplementary Material \citep{rerandfacsupp}. There we study the asymptotic sampling distribution and properties of the difference-in-means estimator, and the repeated sampling inference for the average factorial effects. We demonstrate that the advantage of further considering tiers of covariates can increase with the numbers of covariates and factors.

We focused on improving completely randomized factorial experiments using rerandomization. Our future work will focus on using rerandomization to ensure covariate balance in more complex experiments, for example, randomized block, Latin squares and split-plot designs \citep{cox2000theory, Hinkelmann2007, zhao2018randomization}.

\section*{Acknowledgments}
We thank the Associate Editor and two reviewers for helpful comments. Zach Branson at Harvard made constructive suggestions on an early version of our paper.

\begin{supplement}
	\textbf{Supplementary Material to  ``Rerandomization in $\boldsymbol{2^K}$ Factorial Experiments''}
	\slink[url]{DOI: 10.1214/00-AOSXXXXSUPP; .pdf}
	\sdescription{
		We study the theoretical properties of $2^K$ rerandomized factorial experiments with tiers of both covariates and factorial effects, and prove all the theorems, corollaries and propositions 
		in the Supplementary Material \citep{rerandfacsupp}.  
	}
\end{supplement}

\bibliographystyle{plainnat}
\bibliography{causal}

\begin{thebibliography}{44}
\providecommand{\natexlab}[1]{#1}
\providecommand{\url}[1]{\texttt{#1}}
\expandafter\ifx\csname urlstyle\endcsname\relax
  \providecommand{\doi}[1]{doi: #1}\else
  \providecommand{\doi}{doi: \begingroup \urlstyle{rm}\Url}\fi

\bibitem[Angrist et~al.(2009)Angrist, Lang, and
  Oreopoulos]{angrist2009incentives}
J.~Angrist, D.~Lang, and P.~Oreopoulos.
\newblock Incentives and services for college achievement: Evidence from a
  randomized trial.
\newblock \emph{American Economic Journal: Applied Economics}, 1:\penalty0
  136--163, 2009.

\bibitem[Bailey(1983)]{bailey1983restricted}
R.~A. Bailey.
\newblock Restricted randomization.
\newblock \emph{Biometrika}, 70:\penalty0 183--198, 1983.

\bibitem[Baldi~Antognini and Zagoraiou(2011)]{covadapt2011}
A.~Baldi~Antognini and M.~Zagoraiou.
\newblock The covariate-adaptive biased coin design for balancing clinical
  trials in the presence of prognostic factors.
\newblock \emph{Biometrika}, 98, 2011.

\bibitem[Bickel and Lehmann(1976)]{bickel1976}
P.~J. Bickel and E.~L. Lehmann.
\newblock Descriptive statistics for nonparametric models. iii. dispersion.
\newblock \emph{The Annals of Statistics}, 4:\penalty0 1139--1158, 1976.

\bibitem[Birnbaum(1948)]{birnbaum1948}
Z.~W. Birnbaum.
\newblock {On Random Variables with Comparable Peakedness}.
\newblock \emph{The Annals of Mathematical Statistics}, 19:\penalty0 76--81,
  1948.

\bibitem[Bose(1947)]{bose1947mathematical}
R.~C. Bose.
\newblock Mathematical theory of the symmetrical factorial design.
\newblock \emph{Sankhy{\=a}: The Indian Journal of Statistics}, 8:\penalty0
  107--166, 1947.

\bibitem[Box et~al.(2005)Box, Hunter, and Hunter]{box2005statistics}
G.~E.~P. Box, J.~S. Hunter, and W.~G. Hunter.
\newblock \emph{Statistics for Experimenters: Design, Innovation, and
  Discovery}.
\newblock Hoboken, NJ : Wiley-Interscience, 2 edition, 2005.

\bibitem[Branson et~al.(2016)Branson, Dasgupta, and
  Rubin]{branson2016improving}
Z.~Branson, T.~Dasgupta, and D.~B. Rubin.
\newblock {Improving covariate balance in $2^{K}$ factorial designs via
  rerandomization with an application to a New York City Department of
  Education High School Study}.
\newblock \emph{The Annals of Applied Statistics}, 10:\penalty0 1958--1976,
  2016.

\bibitem[Bruhn and McKenzie(2009)]{Bruhn:2009}
M.~Bruhn and D.~McKenzie.
\newblock In pursuit of balance: Randomization in practice in development field
  experiments.
\newblock \emph{American Economic Journal: Applied Economics}, 1:\penalty0
  200--232, 2009.

\bibitem[Cochran and Cox(1950)]{cochran1950experimental}
W.~G. Cochran and G.~M. Cox.
\newblock \emph{Experimental Designs}.
\newblock New York: Wiley, 1950.

\bibitem[Cox(1982)]{cox:1982}
D.~R. Cox.
\newblock Randomization and concomitant variables in the design of experiments.
\newblock In P.~R.~Krishnaiah G.~Kallianpur and J.~K. Ghosh, editors,
  \emph{Statistics and Probability: Essays in Honor of C. R. Rao}, pages
  197--202. North-Holland, Amsterdam, 1982.

\bibitem[Cox(2009)]{cox:2009}
D.~R. Cox.
\newblock Randomization in the design of experiments.
\newblock \emph{International Statistical Review}, 77:\penalty0 415--429, 2009.

\bibitem[Cox and Reid(2000)]{cox2000theory}
D.~R. Cox and N.~Reid.
\newblock \emph{The Theory of the Design of Experiments}.
\newblock New York: Chapman and Hall/CRC, 2000.

\bibitem[Dai(1989)]{dai1989}
T.~Dai.
\newblock \emph{On Multivariate Unimodal Distributions}.
\newblock PhD thesis, University of British Columbia, 1989.

\bibitem[Das~Gupta et~al.(1972)Das~Gupta, Eaton, Olkin, Perlman, Savage, and
  Sobel]{dasgupta1972}
S.~Das~Gupta, M.~L. Eaton, I.~Olkin, M.~Perlman, L.~J. Savage, and M.~Sobel.
\newblock Inequalitites on the probability content of convex regions for
  elliptically contoured distributions.
\newblock In \emph{Proceedings of the Sixth Berkeley Symposium on Mathematical
  Statistics and Probability, Volume 2: Probability Theory}, pages 241--265,
  Berkeley, California, 1972. University of California Press.

\bibitem[Dasgupta et~al.(2015)Dasgupta, Pillai, and Rubin]{dasgupta2014causal}
T.~Dasgupta, N.~S. Pillai, and D.~B. Rubin.
\newblock {Causal inference from $2^K$ factorial designs by using potential
  outcomes}.
\newblock \emph{Journal of the Royal Statistical Society: Series B (Statistical
  Methodology)}, 77:\penalty0 727--753, 2015.

\bibitem[Dharmadhikari and Joag-Dev(1988)]{dharmadhikari1988}
S.~Dharmadhikari and K.~Joag-Dev.
\newblock \emph{Unimodality, Convexity, and Applications}.
\newblock San Diego, CA: Academic Press, Inc., 1988.

\bibitem[Dharmadhikari and Jogdeo(1976)]{dharmadhikari1976}
S.~W. Dharmadhikari and K.~Jogdeo.
\newblock Multivariate unimodality.
\newblock \emph{The Annals of Statistics}, 4:\penalty0 607--613, 1976.

\bibitem[Finney(1943)]{finney1943fractional}
D.~J. Finney.
\newblock The fractional replication of factorial arrangements.
\newblock \emph{Annals of Human Genetics}, 12:\penalty0 291--301, 1943.

\bibitem[Fisher(1926)]{fisher1926}
R.~A. Fisher.
\newblock The arrangement of field experiments.
\newblock \emph{Journal of the Ministry of Agriculture of Great Britain},
  33:\penalty0 503--513, 1926.

\bibitem[Fisher(1935)]{Fisher:1935}
R.~A. Fisher.
\newblock \emph{The {D}esign of {E}xperiments, 1st Edition}.
\newblock Edinburgh, London: Oliver and Boyd, 1935.

\bibitem[Giovagnoli and Wynn(1995)]{GIOVAGNOLI1995325}
A.~Giovagnoli and H.~P. Wynn.
\newblock Multivariate dispersion orderings.
\newblock \emph{Statistics \& Probability Letters}, 22:\penalty0 325--332,
  1995.

\bibitem[Grundy and Healy(1950)]{grundy1950restricted}
P.~M. Grundy and M.~J.~R. Healy.
\newblock Restricted randomization and quasi-latin squares.
\newblock \emph{Journal of the Royal Statistical Society, Series B
  (Methodological)}, 12:\penalty0 286--291, 1950.

\bibitem[Hansen and Bowers(2008)]{hansen2008covariate}
B.~B. Hansen and J.~Bowers.
\newblock Covariate balance in simple, stratified and clustered comparative
  studies.
\newblock \emph{Statistical Science}, 23:\penalty0 219--236, 2008.

\bibitem[Hinkelmann and Kempthorne(2007)]{Hinkelmann2007}
K.~Hinkelmann and O.~Kempthorne.
\newblock \emph{Design and Analysis of Experiments, Volume 1, Introduction to
  Experimental Design, 2nd Edition}.
\newblock New York: John Wiley \& Sons, 2007.

\bibitem[Kanter(1977)]{kanter1977}
M.~Kanter.
\newblock Unimodality and dominance for symmetric random vectors.
\newblock \emph{Transactions of the American Mathematical Society},
  229:\penalty0 65--85, 1977.

\bibitem[Kempthorne(1952)]{kempthorne1952design}
O.~Kempthorne.
\newblock \emph{The Design and Analysis of Experiments}.
\newblock New York: Chapman and Hall/CRC, 1952.

\bibitem[Li and Ding(2017)]{fcltxlpd2016}
X.~Li and P.~Ding.
\newblock General forms of finite population central limit theorems with
  applications to causal inference.
\newblock \emph{Journal of the American Statistical Association}, 112:\penalty0
  1759--1769, 2017.

\bibitem[Li et~al.(2018{\natexlab{a}})Li, Ding, and Rubin]{asymrerand2016}
X.~Li, P.~Ding, and D.~B. Rubin.
\newblock Asymptotic theory of rerandomization in treatment-control
  experiments.
\newblock \emph{Proceedings of the National Academy of Sciences of the United
  States of America}, 115:\penalty0 9157--9162, 2018{\natexlab{a}}.

\bibitem[Li et~al.(2018{\natexlab{b}})Li, Ding, and Rubin]{rerandfacsupp}
X.~Li, P.~Ding, and D.~B. Rubin.
\newblock {Supplementary Material to ``Rerandomization in $2^K$ Factorial
  Experiments''}.
\newblock \emph{https://doi.org/10.1214/00-AOSXXXXSUPP}, 2018{\natexlab{b}}.

\bibitem[Lu(2016)]{lu2016covadj}
J.~Lu.
\newblock Covariate adjustment in randomization-based causal inference for
  factorial designs.
\newblock \emph{Statistics \& Probability Letters}, 119:\penalty0 11--20, 2016.

\bibitem[Marshall et~al.(2009)Marshall, Olkin, and Arnold]{marshall2009}
A.~W. Marshall, I.~Olkin, and B.~C. Arnold.
\newblock \emph{Inequalities: Theory of Majorization and Its Applications}.
\newblock Springer New York Dordrecht Heidelberg London, 2 edition, 2009.

\bibitem[Morgan and Rubin(2012)]{morgan2012rerandomization}
K.~L. Morgan and D.~B. Rubin.
\newblock Rerandomization to improve covariate balance in experiments.
\newblock \emph{The Annals of Statistics}, 40:\penalty0 1263--1282, 2012.

\bibitem[Morgan and Rubin(2015)]{morgan2015rerandomization}
K.~L. Morgan and D.~B. Rubin.
\newblock Rerandomization to balance tiers of covariates.
\newblock \emph{Journal of the American Statistical Association}, 110:\penalty0
  1412--1421, 2015.

\bibitem[Neyman(1923)]{Neyman:1923}
J.~Neyman.
\newblock On the application of probability theory to agricultural experiments.
  essay on principles (with discussion). section 9 (translated). reprinted ed.
\newblock \emph{Statistical Science}, 5:\penalty0 465--472, 1923.

\bibitem[Shaked(1985)]{shaked1985ordering}
M.~Shaked.
\newblock Ordering distributions by dispersion.
\newblock In Johnson and Kotz, editors, \emph{Encyclopedia of Statistical
  Sciences}, volume~6, pages 485--490. 1985.

\bibitem[Sherman(1955)]{sherman1955}
S.~Sherman.
\newblock {A theorem on convex sets with applications}.
\newblock \emph{The Annals of Mathematical Statistics}, 26:\penalty0 763--767,
  1955.

\bibitem[Student(1938)]{student:1938}
Student.
\newblock Comparison between balanced and random arrangements of field plots.
\newblock \emph{Biometrika}, 29:\penalty0 363--378, 1938.

\bibitem[Wu(2015)]{wu2015post}
C.~F.~J. Wu.
\newblock Post-{F}isherian experimentation: from physical to virtual.
\newblock \emph{Journal of the American Statistical Association}, 110:\penalty0
  612--620, 2015.

\bibitem[Wu and Hamada(2011)]{wu2011experiments}
C.~F.~J. Wu and M.~S. Hamada.
\newblock \emph{Experiments: Planning, Analysis, and Optimization}.
\newblock Hoboken, NJ: John Wiley \& Sons, Inc., 2011.

\bibitem[Yates(1937)]{yates1937design}
F.~Yates.
\newblock The design and analysis of factorial experiments.
\newblock Technical report, Imperial Bureau of Soil Sciences---Technical
  Communication. No. 35, Harpenden, 1937.

\bibitem[Yates(1948)]{yates1948comment}
F.~Yates.
\newblock Comment to {F}. {J}. {A}nscombe.
\newblock \emph{Journal of the Royal Statistical Society, Series A (General)},
  111:\penalty0 204--205, 1948.

\bibitem[Youden(1972)]{youden1972randomization}
W.~J. Youden.
\newblock Randomization and experimentation.
\newblock \emph{Technometrics}, 14:\penalty0 13--22, 1972.

\bibitem[Zhao et~al.(2018)Zhao, Ding, and Dasgupta]{zhao2018randomization}
A.~Zhao, P.~Ding, and T.~Dasgupta.
\newblock Randomization-based causal inference from unbalanced $2^2$ split-plot
  designs.
\newblock \emph{The Annals of Statistics}, page in press, 2018.

\end{thebibliography}

\newpage
\setcounter{page}{1}
\begin{center}
	\bf \huge 
	Supplementary Material
\end{center}

\bigskip

\setcounter{equation}{0}
\setcounter{section}{0}
\setcounter{figure}{0}
\setcounter{example}{0}
\setcounter{proposition}{0}
\setcounter{corollary}{0}
\setcounter{theorem}{0}
\setcounter{table}{0}

\renewcommand {\theproposition} {A\arabic{proposition}}
\renewcommand {\theexample} {A\arabic{example}}
\renewcommand {\thefigure} {A\arabic{figure}}
\renewcommand {\thetable} {A\arabic{table}}
\renewcommand {\theequation} {A\arabic{equation}}
\renewcommand {\thelemma} {A\arabic{lemma}}
\renewcommand {\thesection} {A\arabic{section}}
\renewcommand {\thetheorem} {A\arabic{theorem}}
\renewcommand {\thecorollary} {A\arabic{corollary}}
\renewcommand {\thecondition} {A\arabic{condition}}

Appendix \ref{app:refmt_cf} studies the theoretical properties of $2^K$ rerandomized factorial experiments with tiers of both covariates and factorial effects. 

Appendix \ref{app:covariance} shows the sampling covariances, asymptotic sampling distributions, and squared multiple correlations under the CRFE. 
It includes the proofs of 
Theorem \ref{thm:joint_var_expl_unexpl}, 
Corollary \ref{cor:R2_f}, 
and 
Propositions \ref{prop:mean_var_cre},  \ref{prop:comp_R_fm_add}--\ref{prop:simpler_refmt_f} and \ref{prop:var_cov_tier_factor_covariate}.

Appendix \ref{app:dist} proves the asymptotic sampling distributions of $\hat{\bm{\tau}}$ under ReFM, $\text{ReFMT}_\text{F}$ and $\text{ReFMT}_\text{CF}$. 
It includes the proofs of 
Theorems \ref{thm:joint_refm}, \ref{thm:asymp_refmt_f} and \ref{thm:asym_refmt_cf},  
Corollaries \ref{cor:dist_refm_one_linear}, \ref{cor:another_form_refm},  \ref{cor:dist_remt} and \ref{cor:single_refmt_cf}, 
and 
Propositions \ref{prop:ccu_refm} and \ref{prop:represent}.

Appendix \ref{app:red_cov} compares the asymptotic sampling covariances of $\hat{\bm{\tau}}$ under rerandomizations and the CRFE.  It includes the proofs of 
Theorems \ref{thm:red_var_rem}, \ref{thm:red_var_remft_f} and 
\ref{thm:red_var_refmt_cf}. 

Appendix \ref{app:peak} compares the peakedness of the asymptotic sampling distributions under rerandomizations and the CRFE. 
It includes the proofs of 
Theorems \ref{thm:red_qr_rem}, \ref{thm:red_qr_rem_nond_ccorr}, \ref{thm:quant_region_refmt_f} and  \ref{thm:quant_region_refmt_f_monotone_joint}, 
Corollaries \ref{cor:red_in_qr_single_remf} and \ref{cor:quant_region_refmt_f_monotone_marg}, 
and 
Propositions \ref{prop:peak_cov}, \ref{prop:add_simu_diag} and \ref{prop:add_simu_diag_refmt_cf}.

Appendix \ref{app:inference} proves the asymptotic conservativeness of covariance estimators and symmetric convex confidence sets under rerandomizations. 
It includes the proof of Theorem \ref{thm:conser_conf_set_refm}.

\section{Tiers of both covariates and factorial effects}\label{app:refmt_cf}
\subsection{Tiers and orthogonalized covariates}
When covariates have varying importance for the potential outcomes, \citet{morgan2015rerandomization} proposed rerandomization with tiers of covariates in treatment-control experiments. It is important to consider the tiers of covariates with tiers of factorial effects. We first partition the covariates into $T$ tiers with decreasing importance, and use $\bm{x}_i[t]$ to denote the $L_t$ dimensional covariates in tier $t$.   
Let 
$\bm{x}_i[\overline{t}]=(\bm{x}_i[1], \ldots, \bm{x}_i[t])$ be the covariates in the first $t$ tiers, 
$\bm{S}_{\bm{x}[\overline{t}]\bm{x}[\overline{t}]}$ be the finite population covariance of the covariates in the first $t$ tiers, and $\bm{S}_{\bm{x}[t], \bm{x}[\overline{t-1}]}$ be the finite population covariance between $\bm{x}[t]$ and $\bm{x}[\overline{t-1}]$.
We then apply a block-wise Gram--Schmidt orthogonalization to the  $\bm{x}_i[t]$'s: $\bm{e}_i[1] = \bm{x}_i[1]$, and 
\begin{eqnarray*}
	\bm{e}_i[t] = \bm{x}_i[t] - \bm{S}_{\bm{x}[t], \bm{x}[\overline{t-1}]}  \bm{S}_{\bm{x}[\overline{t-1}]\bm{x}[\overline{t-1}]}^{-1}\bm{x}_i[\overline{t-1}], \quad ( 2\leq t\leq \Tau ) 
\end{eqnarray*}
where $\bm{e}_i[t]$ is the residual from the linear projection of the covariates $\bm{x}_i[t]$ in tier $t$ onto the space spanned by the covariates in previous tiers. \citet{asymrerand2016} call $\bm{e}_i[t]$ the orthogonalized covariates in tier $t$.

\subsection{Tiers of covariates and factorial effects criterion}
Using the notation in Section \ref{sec:ReFTF}, we partition the $F$ factorial effects into $H$ tiers with decreasing importance, i.e., $\mathcal{F} = \bigcup_{h=1}^H \mathcal{F}_t,$ 
and the corresponding block-wise Gram--Schmidt orthogonalization of $(\coef_q[\mathcal{F}_1], \ldots, \coef_q[\mathcal{F}_H])$ is $(\orthocoef_q[1], \ldots, \orthocoef_q[H])$ defined in \eqref{eq:b_q}. 
Let  
$\hat{\bar{{\bm{e}}}}_{[t]}(q)=n_q^{-1}\sum_{i:Z_i=q}\bm{e}_i[t]$ be the mean of the orthogonalized covariates in tier $t$ under treatment combination $q$. 
The difference-in-means of orthogonalized covariates in tier $t$ with coefficients $\orthocoef_q$'s is 
\begin{align}\label{eq:theta_E}
\hat{\bm{\theta}}_{\bm{e}[t]} = 
\begin{pmatrix}
\hat{\bm{\theta}}_{\bm{e}[t]}[1]\\
\vdots\\
\hat{\bm{\theta}}_{\bm{e}[t]}[H]
\end{pmatrix}
=
2^{-(K-1)} \sum_{q=1}^{Q} 
\begin{pmatrix}
\orthocoef_q[1]\\
\vdots\\
\orthocoef_q[H]
\end{pmatrix}
\otimes \hat{\bar{\bm{e}}}_{[t]}(q), \quad (1\leq t\leq T).  
\end{align}
Let 
$\bm{S}_{\bm{e}[{t}]\bm{e}[{t}]}$ be the finite population covariance of the orthogonalized covariates in tier $t$, and $\bm{S}_{q, \bm{e}[{t}]}$ be the finite population covariance between the ${Y}_i(q)$'s and $\bm{e}_i[{t}]$'s.

\begin{proposition}\label{prop:var_cov_tier_factor_covariate}
	Under the CRFE, 
	$(\hat{\bm{\tau}}'-\bm{\tau}', \hat{\bm{\theta}}_{\bm{e}[1]}', \ldots, \hat{\bm{\theta}}_{\bm{e}[T]}')'$ has mean zero and
	sampling covariance:  
	\begin{align*} 
	\Cov
	\left(\hat{\bm{\tau}}-\bm{\tau}, \hat{\bm{\theta}}_{\bm{e}[t]}[h]
	\right) & = \bm{W}_{\bm{\tau e}[t]}[h] = 2^{-2(K-1)}\sum_{q=1}^{Q}
	n_q^{-1}(\coef_q\orthocoef'_q[h]) \otimes \bm{S}_{q, \bm{e}[t]}, \\
	\Cov\left(\hat{\bm{\theta}}_{\bm{e}[t]}[h]\right) & = 
	\bm{W}_{\bm{e}[t]\bm{e}[t]}[h] = 
	2^{-2(K-1)}\sum_{q=1}^{Q}
	n_q^{-1}(\orthocoef_q[h]\orthocoef'_q[h]) \otimes \bm{S}_{\bm{e}[t]\bm{e}[t]}, 
	\end{align*}
	and $\Cov(\hat{\bm{\theta}}_{\bm{e}[t]}[h], \hat{\bm{\theta}}_{\bm{e}[\tilde{t}]}[\tilde{h}])=\bm{0}$ if $t \neq \tilde{t}$ or $h \neq \tilde{h}$, i.e., $\hat{\bm{\theta}}_{\bm{e}[t]}[h]$'s are mutually uncorrelated. 
\end{proposition}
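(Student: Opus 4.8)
The plan is to reduce everything to Proposition \ref{prop:mean_var_cre}, which already records the mean and the full joint sampling covariance of $(\hat{\bm{\tau}}'-\bm{\tau}', \hat{\bm{\tau}}_{\bm{x}}')'$ under the CRFE, together with the bilinearity of covariance in the linear combinations that define the $\hat{\bm{\theta}}_{\bm{e}[t]}[h]$'s. First I would observe that the orthogonalized covariates $\bm{e}_i[t]$ are themselves fixed attributes of the units (linear functions of the $\bm{x}_i$'s with coefficients determined by the finite-population covariance $\bm{S}_{\bm{x}\bm{x}}$, which is known), so replacing $\bm{x}$ by $\bm{e}[t]$ in \eqref{eq:diff_in_mean_X_f} and then further forming the combinations with the orthogonalized coefficient vectors $\orthocoef_q[h]$ in \eqref{eq:theta_E} just produces another family of difference-in-means statistics of fixed unit-level quantities. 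Hence $\E[\hat{\bm{\theta}}_{\bm{e}[t]}[h]] = \bm{0}$ and, more importantly, Proposition \ref{prop:mean_var_cre} applies verbatim with the covariate vector taken to be the stacked orthogonalized covariates, giving a ``master'' covariance formula of the Kronecker-product form $2^{-2(K-1)}\sum_q n_q^{-1} (\bm{b}_q \tilde{\bm{b}}_q'[h])\otimes \bm{S}_{q,\bm{e}[t]}$ for the cross term and $2^{-2(K-1)}\sum_q n_q^{-1}(\tilde{\bm{b}}_q[h]\tilde{\bm{b}}_q'[h])\otimes \bm{S}_{\bm{e}[t]\bm{e}[t]}$ for the diagonal block, exactly the claimed expressions.

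The second, and main, part is the vanishing of $\Cov(\hat{\bm{\theta}}_{\bm{e}[t]}[h], \hat{\bm{\theta}}_{\bm{e}[\tilde t]}[\tilde h])$ whenever $t\neq \tilde t$ or $h\neq \tilde h$. I would split this into the two orthogonality mechanisms, which act on the two factors of the Kronecker product independently. For the covariate index: by construction of the block-wise Gram--Schmidt orthogonalization, $\bm{e}_i[t]$ is the residual of $\bm{x}_i[t]$ on the span of $\bm{x}_i[\overline{t-1}]$, so for $t<\tilde t$ we have $\bm{S}_{\bm{e}[t],\bm{e}[\tilde t]} = \bm{0}$ (the residual in the later tier is orthogonal, in the finite-population inner product, to all covariates — hence all residuals — in earlier tiers); this is the standard Gram--Schmidt fact and is cited from \citet{asymrerand2016}. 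For the factorial-effect index: $\tilde{\bm{C}} = 2^{-2(K-1)}\sum_q n_q^{-1}\orthocoef_q\orthocoef_q'$ is block diagonal across the tiers $h$, which is the content established just before Proposition \ref{prop:var_cov_tier_factor} (and reproved there); equivalently $2^{-2(K-1)}\sum_q n_q^{-1}\orthocoef_q[h]\orthocoef_q'[\tilde h] = \bm{0}$ for $h\neq\tilde h$. Then the general cross-covariance from the master formula is $2^{-2(K-1)}\sum_q n_q^{-1}(\orthocoef_q[h]\orthocoef_q'[\tilde h])\otimes \bm{S}_{\bm{e}[t],\bm{e}[\tilde t]}$, and a Kronecker product is zero as soon as one of its factors is zero; so the block vanishes if $t\neq\tilde t$ (first factor zero via Gram--Schmidt on covariates) or if $h\neq\tilde h$ (second factor, i.e. the coefficient Gram matrix block, zero). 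This gives the mutual uncorrelatedness.

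I anticipate that the only mild subtlety — and the step I would be most careful about — is bookkeeping the two orthogonalizations simultaneously: the covariate Gram--Schmidt lives inside the $\bm{S}_{\bm{e}[t]\bm{e}[t]}$ factor while the factorial-effect Gram--Schmidt lives inside the $\orthocoef_q[h]$ factor, and one must be sure that the Kronecker structure in Proposition \ref{prop:mean_var_cre} genuinely decouples them (it does, because $\bm{S}_{q,\bm{x}}$ enters only through the second Kronecker factor). Everything else is routine algebra: expand $\hat{\bm{\theta}}_{\bm{e}[t]}[h]$ as a linear combination of the $\hat{\bar{\bm{e}}}_{[t]}(q)$'s, use that the units in group $q$ are a simple random sample so that the within-group and between-group covariance calculations reproduce exactly the $n_q^{-1}$-weighted sums above, and read off the three displayed identities. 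No new limit arguments are needed since this is a finite-$n$ statement about exact sampling covariances.
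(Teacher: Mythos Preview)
Your proposal is correct and follows essentially the same route as the paper. The paper's proof writes the stacked vector $(\hat{\bm{\tau}}'-\bm{\tau}', \hat{\bm{\theta}}_{\bm{e}[1]}', \ldots, \hat{\bm{\theta}}_{\bm{e}[T]}')'$ explicitly as a block-linear combination of the group means $(\hat{\bar Y}(q)-\bar Y(q),\hat{\bar{\bm{e}}}_{[1]}(q),\ldots,\hat{\bar{\bm{e}}}_{[T]}(q))'$ and applies Theorem~3 of \citet{fcltxlpd2016} directly (rather than going through Proposition~\ref{prop:mean_var_cre}, though that is equivalent), then reads off the displayed covariance; the two orthogonality mechanisms you isolate---$\bm{S}_{\bm{e}[t]\bm{e}[\tilde t]}=\bm{0}$ for $t\neq\tilde t$ and the block-diagonality of $\tilde{\bm C}$ for $h\neq\tilde h$---are exactly what make the off-diagonal blocks vanish, just as you describe. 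One small point of care in your write-up: for the case $t=\tilde t$, $h\neq\tilde h$, the individual summands $(\orthocoef_q[h]\orthocoef_q'[\tilde h])\otimes\bm{S}_{\bm{e}[t]\bm{e}[t]}$ are not zero; you must first use distributivity of $\otimes$ (Lemma~\ref{lemma:kronecker_product}(2)) to pull out the common factor $\bm{S}_{\bm{e}[t]\bm{e}[t]}$ before invoking $\sum_q n_q^{-1}\orthocoef_q[h]\orthocoef_q'[\tilde h]=\bm{0}$.
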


From Proposition \ref{prop:var_cov_tier_factor_covariate}, we define the Mahalanobis distance for orthogonalized covariates in tier $t$ with respect to factorial effects in tier $h$ as  
\begin{align*}
M_{t,h} = \hat{\bm{\theta}}_{\bm{e}[t]}'[h]
\left(\bm{W}_{\bm{e}[t] \bm{e}[t]}[h]\right)^{-1}
\hat{\bm{\theta}}_{\bm{e}[t]}[h], \quad (1\leq h 
\leq H, 1\leq t\leq T). 
\end{align*}
Table \ref{tab:tiers_cov_fac} displays the Mahalanobis distances for all tiers of covariates and factorial effects.
As discussed in Section \ref{sec:ReFTF}, we should balance more the orthogonalized covariates with respect to more important factorial effects, e.g., putting decreasing restrictions on $M_{t,1}, M_{t,2}, \ldots, M_{t,H}$ for a given $t$.  
With the pre-assumed importance of covariates, we should also balance more the covariates in more important tiers, e.g., putting decreasing restrictions on $M_{1,h}, M_{2,h}, \ldots, M_{T,h}$ for a given $h$. 
Generally, the restrictions on the Mahalanobis distances in Table \ref{tab:tiers_cov_fac} should decrease from left to right and from top to bottom, i.e.,
if $h\leq \tilde{h}$ and $t\leq \tilde{t}$, we should put more restriction on $M_{t,h}$ than that on $M_{\tilde{t}, \tilde{h}}$. This implies a partial order of importance on the set $\mathcal{S}=\{(t,h): 1\leq t\leq T, 1\leq h\leq H\}$ of all combinations of tiers of covariates and factorial effects.
In practice, we can divide the set $\mathcal{S}$ into $J$ tiers $(\mathcal{S}_1, \ldots, \mathcal{S}_J)$ with decreasing importance, which are coherent with the partial order on $\mathcal{S}.$ 
\begin{table}
	\centering
	\caption{Mahalanobis distances: covariates and factorial effects in different tiers}
	\label{tab:tiers_cov_fac}
	\begin{tabular}{cc|cccc}
		& & \multicolumn{4}{c}{Factorial effects}\\
		& & Tier 1 & Tier 2 & $\cdots$ & Tier $H$\\
		\hline
		\multirow{4}{*}{Covariates} & Tier 1 & $M_{1,1}$ & $M_{1,2}$ & $\cdots$ & $M_{1,H}$\\
		& Tier 2 & $M_{2,1}$ & $M_{2,2}$ & $\cdots$ & $M_{2,H}$\\
		& $\vdots$ & $\vdots$ & $\vdots$ & $\ddots$ & $\vdots$\\
		& Tier $T$ & $M_{T,1}$ & $M_{T,2}$ & $\cdots$ & $M_{T,H}$
	\end{tabular}
\end{table}
\begin{example}\label{eg:triang_tier}
	A choice of the $\mathcal{S}_j$'s is the triangular tiers, where  $J=\min\{T,H\}$, and  
	\begin{align*}
	\mathcal{S}_j & = \left\{(t,h): h+t=j+1, 1\leq h\leq H, 1\leq t\leq T\right\}, \quad (1\leq j\leq J-1)  
	\\
	\mathcal{S}_{J} & = \left\{(t,h): h+t>J, 1\leq h\leq H, 1\leq t\leq T\right\}.
	\end{align*}
\end{example}

Let $(a_1, \ldots, a_J)$ be $J$ positive constants predetermined in the design stage. 
Under rerandomized factorial experiments with tiers of covariates and factorial effects, denoted by $\text{ReFMT}_\text{CF}$, 
we accept only those treatment assignments with
$
\sum_{(t,h)\in \mathcal{S}_j} M_{t,h} \leq a_j,
$
for all $1\leq j\leq J$. 
Below we use $\mathcal{T}_{\text{CF}}$ to denote the event that the treatment vector $\bm{Z}$ is accepted under $\text{ReFMT}_\text{CF}$. 
From the finite population central limit theorem, asymptotically, $M_{t,h}$ is $\chi^2_{L_t F_h}$, and the $M_{t,h}$'s are jointly independent. 
For each $1\leq j\leq J$, let $\lambda_j = \sum_{(t,h)\in \mathcal{S}_j}L_tF_h$. 
The asymptotic acceptance probability under $\text{ReFMT}_\text{CF}$ is then 
$
p_a = \prod_{j=1}^{J} P(\chi^2_{\lambda_j}\leq a_j ). 
$

By the same logic as Proposition \ref{prop:simpler_refmt_f}, 
with equal treatment groups sizes, $M_{t,h}$ reduces to 
$M_{t,h} = n/4 \cdot
\sum_{f\in \mathcal{F}_h} 
\hat{\bm{\tau}}_{\bm{e}[t],f}'
\bm{S}_{\bm{e}[t]\bm{e}[t]}^{-1} \hat{\bm{\tau}}_{\bm{e}[t],f},$ 
where $\hat{\bm{\tau}}_{\bm{e}[t],f} = 
2^{-(K-1)} \sum_{q=1}^{Q} g_{fq} \hat{\bar{{\bm{e}}}}_{[t]}(q)$ is the difference-in-means of orthogonalized covariate $\bm{e}[t]$ with respect to the $f$th factorial effect. 

\subsection{Asymptotic sampling distribution of $\hat{\boldsymbol{\tau}}$}
For each tier $\mathcal{S}_j$, let $\bm{U}_{\bm{\tau e}}[j] \in \mathbb{R}^{F\times \lambda_j}$ be a matrix consisting of the columns of matrices $\{\bm{W}_{\bm{\tau}\bm{e}[t]}[h]\}_{(t,h)\in \mathcal{S}_j}$,
$\bm{U}_{\bm{ee}}[j]\in \mathbb{R}^{\lambda_j\times \lambda_j}$ be a block diagonal matrix with $\{\bm{W}_{\bm{e}[t]\bm{e}[t]}[h]\}_{(t,h)\in \mathcal{S}_j}$ as the diagonal components,  
and 
$\bm{U}_{\bm{\tau\tau}}^\myparallel[j] = \bm{U}_{\bm{\tau e}}[j]
(
\bm{U}_{\bm{ee}}[j]
)^{-1}\bm{U}_{\bm{e\tau}}[j]$ be the sampling covariance matrix of $\hat{\bm{\tau}}$ explained by $\{\hat{\bm{\theta}}_{\bm{e}[t]}[h]\}_{(t,h)\in \mathcal{S}_j}$ in the linear projection under the CRFE. 
Recall $\bm{\zeta}_{\lambda_j,a_j}\sim \bm{D}_j \mid \bm{D}_j'\bm{D}_j \leq a_j$, 
where $\bm{D}_j = (D_{j1}, \ldots, D_{j,\lambda_j})' \sim \mathcal{N}(\bm{0}, \bm{I}_{\lambda_j})$. 

\begin{theorem}\label{thm:asym_refmt_cf}
	Under $\text{ReFMT}_\text{CF}$ and Condition \ref{cond:fp},
	\begin{eqnarray}\label{eq:asym_refmt_cf}
	\hat{\bm{\tau}} - \bm{\tau}  \mid \mathcal{T}_{\text{CF}} 
	& \apprsim &
	\left(\bm{V}_{\bm{\tau}\bm{\tau}}^\myperp\right)^{1/2} \bm{\varepsilon} + 
	\sum_{j=1}^{J}
	\left(\bm{U}_{\bm{\tau\tau}}^\myparallel[j]\right)^{1/2}_{\lambda_j}
	\bm{\zeta}_{\lambda_j,a_j},
	\end{eqnarray}
	where $(\bm{\varepsilon}, \bm{\zeta}_{\lambda_1,a_1}, \ldots, \bm{\zeta}_{\lambda_J,a_J})$ are jointly independent. 
\end{theorem}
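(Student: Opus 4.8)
The plan is to mirror the proof of Theorem~\ref{thm:asymp_refmt_f}, which itself follows the template of Theorem~\ref{thm:joint_refm}. The starting point is the observation that the sampling distribution of $\hat{\bm{\tau}}$ under $\text{ReFMT}_\text{CF}$ equals the conditional distribution of $\hat{\bm{\tau}}$ given the event $\mathcal{T}_{\text{CF}}$, i.e. given $\sum_{(t,h)\in\mathcal{S}_j}M_{t,h}\le a_j$ for all $1\le j\le J$. First I would invoke the finite population central limit theorem (as in Proposition~\ref{prop:mean_var_cre} and Proposition~\ref{prop:var_cov_tier_factor_covariate}) to get joint asymptotic Gaussianity of $(\hat{\bm{\tau}}'-\bm{\tau}',\ \hat{\bm{\theta}}_{\bm{e}[1]}',\ldots,\hat{\bm{\theta}}_{\bm{e}[T]}')'$ with the block structure from Proposition~\ref{prop:var_cov_tier_factor_covariate}: the $\hat{\bm{\theta}}_{\bm{e}[t]}[h]$'s are mutually uncorrelated, hence asymptotically mutually independent. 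Grouping them by tier $\mathcal{S}_j$, the stacked vector $\hat{\bm{\theta}}^{(j)}$ of all $\{\hat{\bm{\theta}}_{\bm{e}[t]}[h]\}_{(t,h)\in\mathcal{S}_j}$ has asymptotic covariance $\bm{U}_{\bm{ee}}[j]$ (block diagonal by construction), and the $J$ vectors $\hat{\bm{\theta}}^{(1)},\ldots,\hat{\bm{\theta}}^{(J)}$ are jointly asymptotically independent; moreover $M_{t,h}$ is asymptotically $\chi^2_{L_tF_h}$ and $\sum_{(t,h)\in\mathcal{S}_j}M_{t,h}$ is asymptotically $\chi^2_{\lambda_j}$, with these $J$ quadratic forms jointly independent.

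Next I would perform the projection decomposition. Write $\hat{\bm{\tau}}-\bm{\tau}$ as the sum of its linear projection onto $(\hat{\bm{\theta}}^{(1)},\ldots,\hat{\bm{\theta}}^{(J)})$ plus the residual. Because the $\hat{\bm{\theta}}^{(j)}$'s are (asymptotically) mutually orthogonal, the projection splits as $\sum_{j=1}^J \bm{U}_{\bm{\tau e}}[j](\bm{U}_{\bm{ee}}[j])^{-1}\hat{\bm{\theta}}^{(j)}$, and the residual is asymptotically independent of all the $\hat{\bm{\theta}}^{(j)}$'s (joint Gaussianity plus zero covariance). The residual has asymptotic covariance $\bm{V}_{\bm{\tau\tau}}-\sum_j\bm{U}_{\bm{\tau\tau}}^\myparallel[j]$, which I would identify with $\bm{V}_{\bm{\tau\tau}}^\myperp$ exactly as in Theorem~\ref{thm:joint_var_expl_unexpl} and the corresponding step of Theorem~\ref{thm:asymp_refmt_f} — here one uses that the union of all orthogonalized covariates $\bm{e}[1],\ldots,\bm{e}[T]$ spans the same space as $\bm{x}$, so the linear projection of $\hat{\bm{\tau}}$ onto the full orthogonalized system coincides with its projection onto $\hat{\bm{\tau}}_{\bm{x}}$, giving residual covariance $\bm{V}_{\bm{\tau\tau}}^\myperp$. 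Then conditioning on $\mathcal{T}_{\text{CF}}$ affects only the $j$th projection term through the constraint on $\hat{\bm{\theta}}^{(j)}$, which is a Gaussian vector conditioned on $\hat{\bm{\theta}}^{(j)'}(\bm{U}_{\bm{ee}}[j])^{-1}\hat{\bm{\theta}}^{(j)}\le a_j$; standardizing, this is exactly $\bm{\zeta}_{\lambda_j,a_j}$ up to the linear map $(\bm{U}_{\bm{ee}}[j])^{1/2}_{\lambda_j}$, so $\bm{U}_{\bm{\tau e}}[j](\bm{U}_{\bm{ee}}[j])^{-1}\hat{\bm{\theta}}^{(j)}\mid\mathcal{T}_{\text{CF}}\apprsim(\bm{U}_{\bm{\tau\tau}}^\myparallel[j])^{1/2}_{\lambda_j}\bm{\zeta}_{\lambda_j,a_j}$. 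Assembling the residual term with $\bm{\varepsilon}$ and the $J$ projection terms, and using the joint independence established above (the residual $\bm{\varepsilon}$ is independent of all $\hat{\bm{\theta}}^{(j)}$, and the $\hat{\bm{\theta}}^{(j)}$'s are independent of each other, and conditioning is coordinatewise within tiers), yields \eqref{eq:asym_refmt_cf}.

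The last piece is a consistency/continuity argument showing that the weak convergence survives the conditioning on the shrinking acceptance region — i.e. that conditional weak convergence holds uniformly enough that the limit is the stated conditional distribution. This is handled exactly as in the proof of Theorem~\ref{thm:joint_refm} (Appendix A3), using that the limiting quadratic forms $\chi^2_{\lambda_j}$ have positive density at every $a_j>0$ so the conditioning events have asymptotically positive probability $p_a=\prod_j P(\chi^2_{\lambda_j}\le a_j)$, and applying a standard argument that joint convergence of $(\hat{\bm{\tau}}-\bm{\tau},\hat{\bm{\theta}}^{(1)},\ldots,\hat{\bm{\theta}}^{(J)})$ plus convergence of the constraint statistics implies convergence of the conditional law. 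I expect the main obstacle to be the bookkeeping in the projection step: one must verify that $\{\bm{e}[1],\ldots,\bm{e}[T]\}$ jointly, combined with the block-wise orthogonalization of the $\coef_q$'s across factorial tiers, produces precisely the decomposition $\bm{V}_{\bm{\tau\tau}}=\bm{V}_{\bm{\tau\tau}}^\myperp+\sum_{j=1}^J\bm{U}_{\bm{\tau\tau}}^\myparallel[j]$ with the residual equal to $\bm{V}_{\bm{\tau\tau}}^\myperp$ and not merely $\le\bm{V}_{\bm{\tau\tau}}^\myperp$; this requires that the span of all $\hat{\bm{\theta}}_{\bm{e}[t]}[h]$ equals the span of $\hat{\bm{\tau}}_{\bm{x}}$, which in turn follows because block-wise Gram--Schmidt is invertible on both the covariate side and the factorial-effect side. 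Everything else is linear algebra and appeals to results already proved.
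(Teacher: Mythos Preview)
Your proposal is correct and follows essentially the same route as the paper: group the orthogonalized blocks $\hat{\bm{\theta}}_{\bm{e}[t]}[h]$ by tier $\mathcal{S}_j$ into vectors with covariance $\bm{U}_{\bm{ee}}[j]$, invoke the conditional-convergence lemma (the paper's Lemma~\ref{lemma:condi_converge}), project $\hat{\bm{\tau}}-\bm{\tau}$ onto these mutually orthogonal blocks, identify the residual covariance with $\bm{V}_{\bm{\tau\tau}}^\myperp$ via the invertibility of block-wise Gram--Schmidt on both sides, and finish with Lemma~\ref{lemma:truncate_AApm_BBpm}. The bookkeeping point you flag about $\bm{V}_{\bm{\tau\tau}}=\bm{V}_{\bm{\tau\tau}}^\myperp+\sum_j\bm{U}_{\bm{\tau\tau}}^\myparallel[j]$ is exactly what the paper handles by noting that the $\hat{\bm{\theta}}_{\bm{e}[t]}[h]$'s and $\hat{\bm{\tau}}_{\bm{x}}$ are invertible linear transformations of each other.
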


Let 
$U_{\tau_f\tau_f}^\myparallel[j]$ be the $f$th diagonal element of $\bm{U}_{\bm{\tau\tau}}^\myparallel[j]$,   
and 
$\beta_f^2[j] = U_{\tau_f\tau_f}^\myparallel[j]/V_{\tau_f\tau_f}$ be the proportion of variance of $\hat{\tau}_f$ explained by $
\{\hat{\bm{\theta}}_{\bm{e}[t]}[h]\}_{(t,h)\in \mathcal{S}_j}$
in the linear projection. Because the $
\{\hat{\bm{\theta}}_{\bm{e}[t]}[h]\}_{(t,h)\in \mathcal{S}_j}$'s are essentially from a block-wise Gram--Schmidt orthogonalization of $\hat{\bm{\tau}}_{\bm{x}}$, we have  $\sum_{j=1}^{J}\beta_f^2[j]=R_f^2.$ 
Let $\eta_{\lambda_j,a_j}\sim {D}_{j1}\mid \bm{D}_j'\bm{D}_j\leq a_j$ be the first coordinate of  $\bm{\zeta}_{\lambda_j,a_j}$. 
Theorem \ref{thm:asym_refmt_cf} immediately implies the following asymptotic sampling distribution of a single factorial effect estimator. 
\begin{corollary}\label{cor:single_refmt_cf}
	Under $\text{ReFMT}_\text{CF}$ and Condition \ref{cond:fp}, for $1\leq f\leq F$, 
	\begin{eqnarray}\label{eq:refmt_cf}
	\hat{\tau}_f - \tau_f \mid \mathcal{T}_{\text{CF}}
	& \apprsim & 
	\sqrt{V_{\tau_f\tau_f}}
	\left(
	\sqrt{1-R^2_f} \cdot \varepsilon_0 + \sum_{j=1}^J \sqrt{ \beta_{f}^2[j] }
	\cdot \eta_{\lambda_j,a_j}\right),
	\end{eqnarray}
	where $(\varepsilon_0, \eta_{\lambda_1,a_1}, \ldots,\eta_{\lambda_J,a_J})$ are jointly independent. 
\end{corollary}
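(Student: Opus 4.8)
The plan is to derive \eqref{eq:refmt_cf} as a direct marginalization of the joint asymptotic sampling distribution \eqref{eq:asym_refmt_cf} in Theorem \ref{thm:asym_refmt_cf}, exactly mirroring how Corollary \ref{cor:dist_refm_one_linear} follows from Theorem \ref{thm:joint_refm} and how Corollary \ref{cor:dist_remt} follows from Theorem \ref{thm:asymp_refmt_f}. First I would extract the $f$th coordinate of the right-hand side of \eqref{eq:asym_refmt_cf}. Writing $\bm{e}_f$ for the $f$th standard basis vector of $\mathbb{R}^F$, the $f$th coordinate of $\hat{\bm{\tau}}-\bm{\tau}$ is asymptotically $\bm{e}_f'(\bm{V}_{\bm{\tau\tau}}^\myperp)^{1/2}\bm{\varepsilon} + \sum_{j=1}^J \bm{e}_f'(\bm{U}_{\bm{\tau\tau}}^\myparallel[j])^{1/2}_{\lambda_j}\bm{\zeta}_{\lambda_j,a_j}$. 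Since $\bm{\varepsilon}\sim\mathcal{N}(\bm{0},\bm{I}_F)$, the first term is a mean-zero Gaussian with variance $\bm{e}_f'\bm{V}_{\bm{\tau\tau}}^\myperp\bm{e}_f = V_{\tau_f\tau_f}-V_{\tau_f\tau_f}^\myparallel = V_{\tau_f\tau_f}(1-R_f^2)$, using $\bm{V}_{\bm{\tau\tau}}=\bm{V}_{\bm{\tau\tau}}^\myparallel+\bm{V}_{\bm{\tau\tau}}^\myperp$ and the definition $R_f^2 = V_{\tau_f\tau_f}^\myparallel/V_{\tau_f\tau_f}$ from Corollary \ref{cor:R2_f}; hence it equals $\sqrt{V_{\tau_f\tau_f}(1-R_f^2)}\,\varepsilon_0$ in distribution.

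Next I would handle each rerandomization term. Fix $j$ and let $\bm{w}_j' = \bm{e}_f'(\bm{U}_{\bm{\tau\tau}}^\myparallel[j])^{1/2}_{\lambda_j}\in\mathbb{R}^{\lambda_j}$, so that $\|\bm{w}_j\|_2^2 = \bm{e}_f'\bm{U}_{\bm{\tau\tau}}^\myparallel[j]\bm{e}_f = U_{\tau_f\tau_f}^\myparallel[j] = \beta_f^2[j]\,V_{\tau_f\tau_f}$ by the definition of $\beta_f^2[j]$. The key fact is that $\bm{\zeta}_{\lambda_j,a_j}$ is spherically symmetric (Proposition \ref{prop:represent}), so for any fixed vector $\bm{w}_j$, the inner product $\bm{w}_j'\bm{\zeta}_{\lambda_j,a_j}$ has the same distribution as $\|\bm{w}_j\|_2\cdot(\bm{e}_1'\bm{\zeta}_{\lambda_j,a_j})/1$, i.e. as $\|\bm{w}_j\|_2\,\eta_{\lambda_j,a_j}$, where $\eta_{\lambda_j,a_j}\sim D_{j1}\mid \bm{D}_j'\bm{D}_j\le a_j$ is the first coordinate of $\bm{\zeta}_{\lambda_j,a_j}$ as defined just before the corollary. (More precisely, spherical symmetry gives $\bm{w}_j'\bm{\zeta}_{\lambda_j,a_j}\sim \|\bm{w}_j\|_2\, O\bm{\zeta}_{\lambda_j,a_j}$ read off any coordinate, since an orthogonal map sending $\bm{w}_j/\|\bm{w}_j\|_2$ to $\bm{e}_1$ leaves the distribution of $\bm{\zeta}_{\lambda_j,a_j}$ invariant.) Thus the $j$th term equals $\sqrt{\beta_f^2[j]\,V_{\tau_f\tau_f}}\,\eta_{\lambda_j,a_j}$ in distribution. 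Finally, since $(\bm{\varepsilon},\bm{\zeta}_{\lambda_1,a_1},\ldots,\bm{\zeta}_{\lambda_J,a_J})$ are jointly independent by Theorem \ref{thm:asym_refmt_cf}, the resulting one-dimensional variables $(\varepsilon_0,\eta_{\lambda_1,a_1},\ldots,\eta_{\lambda_J,a_J})$ can be taken jointly independent, and collecting terms and factoring out $\sqrt{V_{\tau_f\tau_f}}$ gives precisely \eqref{eq:refmt_cf}.

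There is no serious obstacle here; the statement is a corollary in the literal sense. The only point requiring a little care is the reduction from a general linear functional of a spherically symmetric vector to its first coordinate — one must invoke the spherical symmetry of $\bm{\zeta}_{\lambda_j,a_j}$ from Proposition \ref{prop:represent} (which in turn rests on the spherical symmetry of the truncated Gaussian) rather than compute moments directly, and one must be careful that the ``$\apprsim$'' equivalences hold jointly so that the independence structure is preserved when passing to coordinates. All of this is routine and parallels the proofs of Corollaries \ref{cor:dist_refm_one_linear} and \ref{cor:dist_remt}, so the argument above would be written out in full in Appendix A3.
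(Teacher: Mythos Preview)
Your proposal is correct and follows essentially the same approach as the paper's proof: extract the $f$th coordinate via $\bm{e}_f$, reduce the Gaussian term to $\sqrt{V_{\tau_f\tau_f}(1-R_f^2)}\,\varepsilon_0$, and use the spherical symmetry of each $\bm{\zeta}_{\lambda_j,a_j}$ (Proposition \ref{prop:represent}) to replace $\bm{w}_j'\bm{\zeta}_{\lambda_j,a_j}$ by $\|\bm{w}_j\|_2\,\eta_{\lambda_j,a_j}$. The paper writes the unit vector explicitly as $\bm{c}_j' = (\bm{e}_f'\bm{U}_{\bm{\tau\tau}}^\myparallel[j]\bm{e}_f)^{-1/2}\bm{e}_f'(\bm{U}_{\bm{\tau\tau}}^\myparallel[j])^{1/2}_{\lambda_j}$ rather than your $\bm{w}_j/\|\bm{w}_j\|_2$, but this is purely notational.
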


\subsection{Asymptotic unbiasedness, sampling covariance and peakedness}

First, the asymptotic sampling distribution \eqref{eq:asym_refmt_cf} is central convex unimodal.  
Therefore, the factorial effect estimator
$\hat{\bm{\tau}}$ is asymptotically unbiased for $\bm{\tau}$ under $\text{ReFMT}_\text{CF}$, and the difference-in-means of covariates with respect to any factorial effect has mean zero asymptotically. 

Second, we consider the asymptotic sampling covariance 
of $\hat{\bm{\tau}}$ under $\text{ReFMT}_\text{CF}$. 
\begin{theorem}\label{thm:red_var_refmt_cf}
	Under Condition \ref{cond:fp}, $\hat{\bm{\tau}}$ has smaller asymptotic sampling covariance matrix under $\text{ReFMT}_\text{CF}$ than that under the CRFE, and the reduction in asymptotic sampling covariance is 
	$
	n\sum_{j=1}^{J}(1-v_{\lambda_j,a_j}) \bm{U}_{\bm{\tau\tau}}^\myparallel[j]. 
	$ 
	For each $1\leq f\leq F,$ the PRIASV of $\hat{\tau}_f$ is 
	$
	\sum_{j=1}^J (1-v_{\lambda_j,a_j})\beta_{f}^2[j]. 
	$
\end{theorem}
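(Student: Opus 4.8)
The plan is to mimic the proofs of Theorems \ref{thm:red_var_rem} and \ref{thm:red_var_remft_f}: read off the second moments of the asymptotic sampling distribution in Theorem \ref{thm:asym_refmt_cf} using the spherical-symmetry/covariance information for truncated Gaussians, and then compare with the CRFE benchmark from Proposition \ref{prop:mean_var_cre}. First I would start from the representation \eqref{eq:asym_refmt_cf}, namely $\hat{\bm{\tau}}-\bm{\tau}\mid\mathcal{T}_{\text{CF}} \apprsim (\bm{V}_{\bm{\tau\tau}}^\myperp)^{1/2}\bm{\varepsilon} + \sum_{j=1}^{J}(\bm{U}_{\bm{\tau\tau}}^\myparallel[j])^{1/2}_{\lambda_j}\bm{\zeta}_{\lambda_j,a_j}$ with $(\bm{\varepsilon},\bm{\zeta}_{\lambda_1,a_1},\ldots,\bm{\zeta}_{\lambda_J,a_J})$ jointly independent. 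Since $\bm{\varepsilon}\sim\mathcal{N}(\bm{0},\bm{I}_F)$ and, by Proposition \ref{prop:represent} applied with $LF$ replaced by $\lambda_j$, each $\bm{\zeta}_{\lambda_j,a_j}$ is spherically symmetric with covariance $v_{\lambda_j,a_j}\bm{I}_{\lambda_j}$, independence yields that the covariance of the right-hand side equals $\bm{V}_{\bm{\tau\tau}}^\myperp + \sum_{j=1}^{J}v_{\lambda_j,a_j}(\bm{U}_{\bm{\tau\tau}}^\myparallel[j])^{1/2}_{\lambda_j}\big((\bm{U}_{\bm{\tau\tau}}^\myparallel[j])^{1/2}_{\lambda_j}\big)' = \bm{V}_{\bm{\tau\tau}}^\myperp + \sum_{j=1}^{J}v_{\lambda_j,a_j}\bm{U}_{\bm{\tau\tau}}^\myparallel[j]$. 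A standard uniform-integrability remark (guaranteed by Condition \ref{cond:fp}, as used in the finite population central limit theorem underlying Proposition \ref{prop:mean_var_cre}) lets me pass from weak convergence to convergence of second moments, so the asymptotic sampling covariance of $\hat{\bm{\tau}}$ under $\text{ReFMT}_\text{CF}$ is $\bm{V}_{\bm{\tau\tau}}^\myperp + \sum_{j=1}^{J}v_{\lambda_j,a_j}\bm{U}_{\bm{\tau\tau}}^\myparallel[j]$ in the $n$-scaled convention of Theorems \ref{thm:red_var_rem} and \ref{thm:red_var_remft_f}.

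Next I would identify the CRFE benchmark and perform the subtraction. Proposition \ref{prop:mean_var_cre} gives $\bm{V}_{\bm{\tau\tau}}$ for $\hat{\bm{\tau}}$ under the CRFE, and $\bm{V}_{\bm{\tau\tau}} = \bm{V}_{\bm{\tau\tau}}^\myparallel + \bm{V}_{\bm{\tau\tau}}^\myperp$ by the decomposition in Theorem \ref{thm:joint_var_expl_unexpl}. The key identity needed is $\sum_{j=1}^{J}\bm{U}_{\bm{\tau\tau}}^\myparallel[j] = \bm{V}_{\bm{\tau\tau}}^\myparallel$: since the orthogonalized covariates $\{\bm{e}_i[t]\}_{t=1}^{T}$ together span the same linear space as $\{\bm{x}_i\}$, the orthogonalized coefficient vectors $\orthocoef_q$ are a fixed invertible linear image of $\coef_q$, and the blocks $\{\hat{\bm{\theta}}_{\bm{e}[t]}[h]\}_{(t,h)\in\mathcal{S}}$ are mutually uncorrelated under the CRFE by Proposition \ref{prop:var_cov_tier_factor_covariate}, the collection $\{\hat{\bm{\theta}}_{\bm{e}[t]}[h]\}_{(t,h)}$ is an invertible linear transformation of $\hat{\bm{\tau}}_{\bm{x}}$ whose explained covariance of $\hat{\bm{\tau}}$ therefore decomposes additively over the $J$ tiers and sums to the full covariance of the linear projection of $\hat{\bm{\tau}}-\bm{\tau}$ onto $\hat{\bm{\tau}}_{\bm{x}}$, i.e.\ $\bm{V}_{\bm{\tau\tau}}^\myparallel$; the marginal version of this is exactly the relation $\sum_{j=1}^{J}\beta_f^2[j]=R_f^2$ already recorded in the text. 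Subtracting the two covariances then gives the reduction $n\big\{\bm{V}_{\bm{\tau\tau}}^\myparallel - \sum_{j=1}^{J}v_{\lambda_j,a_j}\bm{U}_{\bm{\tau\tau}}^\myparallel[j]\big\} = n\sum_{j=1}^{J}(1-v_{\lambda_j,a_j})\bm{U}_{\bm{\tau\tau}}^\myparallel[j]$, which is positive semi-definite since $v_{\lambda_j,a_j}\le 1$ and each $\bm{U}_{\bm{\tau\tau}}^\myparallel[j]$ is positive semi-definite; this proves the covariance comparison.

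Finally, the PRIASV claim follows by extracting the $f$th diagonal entry: the reduction in the asymptotic sampling variance of $\hat{\tau}_f$ is $n\sum_{j=1}^{J}(1-v_{\lambda_j,a_j})U_{\tau_f\tau_f}^\myparallel[j]$, and dividing by the CRFE variance $nV_{\tau_f\tau_f}$ gives $\sum_{j=1}^{J}(1-v_{\lambda_j,a_j})\,U_{\tau_f\tau_f}^\myparallel[j]/V_{\tau_f\tau_f} = \sum_{j=1}^{J}(1-v_{\lambda_j,a_j})\beta_f^2[j]$ by the definition of $\beta_f^2[j]$. The hard part will be the bookkeeping behind the identity $\sum_{j=1}^{J}\bm{U}_{\bm{\tau\tau}}^\myparallel[j] = \bm{V}_{\bm{\tau\tau}}^\myparallel$ — one must check carefully that the two-layer (covariates-then-factorial-effects) block-wise Gram--Schmidt construction loses none of the variance of $\hat{\bm{\tau}}$ explained by $\hat{\bm{\tau}}_{\bm{x}}$; once that is in place, the remainder is the same independence-plus-spherical-symmetry computation already carried out for ReFM and $\text{ReFMT}_\text{F}$.
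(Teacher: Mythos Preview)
Your proposal is correct and matches the paper's approach exactly: the paper's proof simply notes that the $\{\hat{\bm{\theta}}_{\bm{e}[t]}[h]\}_{(t,h)\in\mathcal{S}_j}$'s are from a block-wise Gram--Schmidt orthogonalization of $\hat{\bm{\tau}}_{\bm{x}}$ (which is the identity $\sum_{j=1}^J\bm{U}_{\bm{\tau\tau}}^\myparallel[j]=\bm{V}_{\bm{\tau\tau}}^\myparallel$ you highlight) and then refers to the proof of Theorem \ref{thm:red_var_remft_f}, i.e.\ the same independence-plus-$\Cov(\bm{\zeta}_{\lambda_j,a_j})=v_{\lambda_j,a_j}\bm{I}_{\lambda_j}$ computation you outline. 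One small remark: the paper defines the ``asymptotic sampling covariance'' as the covariance of the limiting distribution, so no separate uniform-integrability argument is needed.
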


Third, we consider the peakedness of \eqref{eq:asym_refmt_cf}.

\begin{theorem}\label{thm:quant_region_refmt_cf}
	Under Condition \ref{cond:fp}, the asymptotic sampling distribution of $\hat{\bm{\tau}}-\bm{\tau}$ under $\text{ReFMT}_\text{CF}$ is more peaked than that under the CRFE. 
\end{theorem}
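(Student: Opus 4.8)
The proof follows the same route as that of Theorem \ref{thm:quant_region_refmt_f}: reduce the multi-tier comparison to the one-block fact behind Theorem \ref{thm:red_qr_rem} by a telescoping argument, using that peakedness is preserved under linear maps and under convolution with an independent central convex unimodal vector.

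First I would put the two limiting laws in a common form. By Theorem \ref{thm:asym_refmt_cf}, $\hat{\bm{\tau}}-\bm{\tau}\mid\mathcal{T}_{\text{CF}}$ converges to $(\bm{V}_{\bm{\tau\tau}}^\myperp)^{1/2}\bm{\varepsilon} + \sum_{j=1}^J (\bm{U}_{\bm{\tau\tau}}^\myparallel[j])^{1/2}_{\lambda_j}\bm{\zeta}_{\lambda_j,a_j}$ with all summands independent. By Proposition \ref{prop:mean_var_cre}, under the CRFE $\hat{\bm{\tau}}-\bm{\tau}\apprsim\mathcal{N}(\bm{0},\bm{V}_{\bm{\tau\tau}})$; and since the $\hat{\bm{\theta}}_{\bm{e}[t]}[h]$'s form a block-wise Gram--Schmidt orthogonalization of $\hat{\bm{\tau}}_{\bm{x}}$ (Proposition \ref{prop:var_cov_tier_factor_covariate}), they span the same space, so the explained covariances add, $\bm{V}_{\bm{\tau\tau}}^\myparallel=\sum_{j=1}^J\bm{U}_{\bm{\tau\tau}}^\myparallel[j]$, whence $\bm{V}_{\bm{\tau\tau}}=\bm{V}_{\bm{\tau\tau}}^\myperp+\sum_{j=1}^J\bm{U}_{\bm{\tau\tau}}^\myparallel[j]$. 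Because a sum of independent centered Gaussians is centered Gaussian with the summed covariance, the CRFE limit law is also that of $(\bm{V}_{\bm{\tau\tau}}^\myperp)^{1/2}\bm{\varepsilon} + \sum_{j=1}^J (\bm{U}_{\bm{\tau\tau}}^\myparallel[j])^{1/2}_{\lambda_j}\bm{D}_j$, with $\bm{D}_j\sim\mathcal{N}(\bm{0},\bm{I}_{\lambda_j})$ mutually independent and independent of $\bm{\varepsilon}$. Thus the two laws differ only in that each $\bm{\zeta}_{\lambda_j,a_j}$ replaces the untruncated $\bm{D}_j$.

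Next I would invoke the basic fact underlying Theorem \ref{thm:red_qr_rem}: the truncated Gaussian $\bm{\zeta}_{\lambda_j,a_j}\sim\bm{D}_j\mid\bm{D}_j'\bm{D}_j\le a_j$ is more peaked than $\bm{D}_j$, i.e.\ $\bm{\zeta}_{\lambda_j,a_j}\succ\bm{D}_j$ (conditioning a spherically symmetric log-concave vector on a centered ball increases peakedness). Then I would telescope: for $j=0,1,\dots,J$ let $\bm{\Xi}_j=(\bm{V}_{\bm{\tau\tau}}^\myperp)^{1/2}\bm{\varepsilon}+\sum_{i\le j}(\bm{U}_{\bm{\tau\tau}}^\myparallel[i])^{1/2}_{\lambda_i}\bm{\zeta}_{\lambda_i,a_i}+\sum_{i>j}(\bm{U}_{\bm{\tau\tau}}^\myparallel[i])^{1/2}_{\lambda_i}\bm{D}_i$, so $\bm{\Xi}_0$ is the CRFE law and $\bm{\Xi}_J$ the $\text{ReFMT}_\text{CF}$ law. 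Writing $\bm{\Xi}_{j-1}=\bm{R}_j+(\bm{U}_{\bm{\tau\tau}}^\myparallel[j])^{1/2}_{\lambda_j}\bm{D}_j$ and $\bm{\Xi}_j=\bm{R}_j+(\bm{U}_{\bm{\tau\tau}}^\myparallel[j])^{1/2}_{\lambda_j}\bm{\zeta}_{\lambda_j,a_j}$ with $\bm{R}_j$ collecting all remaining independent terms, I would deduce $\bm{\Xi}_j\succ\bm{\Xi}_{j-1}$ from: (i) invariance of peakedness under linear maps \citep[][Lemma 7.2]{dharmadhikari1988}, giving $(\bm{U}_{\bm{\tau\tau}}^\myparallel[j])^{1/2}_{\lambda_j}\bm{\zeta}_{\lambda_j,a_j}\succ(\bm{U}_{\bm{\tau\tau}}^\myparallel[j])^{1/2}_{\lambda_j}\bm{D}_j$; and (ii) the fact that adding an independent symmetric central convex unimodal vector preserves the peakedness preorder, where $\bm{R}_j$ is symmetric and central convex unimodal as a sum of independent symmetric central convex unimodal vectors (Proposition \ref{prop:ccu_refm} and closure under linear transformation and convolution). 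Transitivity of $\succ$ along $\bm{\Xi}_0\prec\bm{\Xi}_1\prec\cdots\prec\bm{\Xi}_J$ then yields the claim.

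The main obstacle is ingredient (ii): establishing that convolution with an independent symmetric central convex unimodal random vector preserves the peakedness ordering. This is the only nonroutine step; it is a consequence of Sherman's theorem on peakedness together with the stability of central convex unimodality under convolution, and it is precisely the lemma already used in the proofs of Theorems \ref{thm:red_qr_rem} and \ref{thm:conser_conf_set_refm}. The additivity $\bm{V}_{\bm{\tau\tau}}^\myparallel=\sum_j\bm{U}_{\bm{\tau\tau}}^\myparallel[j]$, the Gaussian-convolution reshuffling in Step 1, and the linear-map invariance in Step 2 are all routine.
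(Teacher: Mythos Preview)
Your proposal is correct and follows essentially the same route as the paper's proof of Theorem \ref{thm:quant_region_refmt_f} (which the paper says the proof of Theorem \ref{thm:quant_region_refmt_cf} mirrors): telescope by replacing one truncated Gaussian block $\bm{\zeta}_{\lambda_j,a_j}$ with an untruncated $\bm{D}_j$ at a time, using Lemma A10 ($\bm{\zeta}_{m,a}\succ\bm{D}$), Lemma A8 (peakedness under linear maps), and Lemma A9 (convolution with an independent central convex unimodal vector preserves $\succ$). The only cosmetic difference is that the paper absorbs each replaced Gaussian block into the leading $(\cdot)^{1/2}\bm{\varepsilon}$ term at every step, whereas you keep the $\bm{D}_i$'s separate in $\bm{R}_j$; both are fine since all the relevant pieces are central convex unimodal.
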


We then consider specific symmetric convex sets of form $\mathcal{O}(\bm{V}_{\bm{\tau\tau}}, c)$. 
Similar to Section \ref{sec:property_refmt_f}, 
due to some technical difficulties, we consider only the case under the following condition. 

\begin{condition}\label{cond:simultaneous_ortho_refmt_cf}
	There exists an orthogonal matrix $\bm{\Gamma}\in \mathbb{R}^{F\times F}$ such that 
	\begin{align*} 
	\bm{\Gamma}' 
	\bm{V}_{\bm{\tau\tau}}^{-1/2}\bm{U}_{\bm{\tau\tau}}^\myparallel[j]
	\bm{V}_{\bm{\tau\tau}}^{-1/2} \bm{\Gamma}=\textup{diag}(\kappa_{j1}^2, \ldots, \kappa_{jF}^2), 
	\quad (1\leq j\leq J)
	\end{align*}
	where $(\kappa_{j1}^2, \ldots, \kappa_{jF}^2)$ are the canonical correlations between $\hat{\bm{\tau}}$ and $\{\hat{\bm{\theta}}_{\bm{e}[t]}[h]\}_{(t,h)\in \mathcal{S}_j}$ under the CRFE. 
\end{condition}

Condition \ref{cond:simultaneous_ortho_refmt_cf} holds automatically when $J=1$. Moreover, the following proposition shows that the additivity is a sufficient condition for Condition \ref{cond:simultaneous_ortho_refmt_cf}. Recall that $\bm{\Psi}$ is the linear transformation from $\orthocoef_q$ to $\coef_q$, 
$\tilde{\bm{B}} = 2^{-2(K-1)}\sum_{q=1}^{Q} n_q^{-1}\coef_q \coef_q'$,  
and 
$\tilde{\bm{C}} = 2^{-2(K-1)}\sum_{q=1}^{Q} n_q^{-1}\orthocoef_q \orthocoef_q'$. 

\begin{proposition}\label{prop:add_simu_diag_refmt_cf}
	Under the additivity in Definition \ref{def:additive}, Condition \ref{cond:simultaneous_ortho_refmt_cf} holds with orthogonal matrix $\bm{\Gamma}=\tilde{\bm{B}}^{-1/2}\bm{\Psi}  \tilde{\bm{C}}^{1/2}$. 
\end{proposition}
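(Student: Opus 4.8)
The plan is to follow the proof of Proposition \ref{prop:add_simu_diag} almost verbatim, merely carrying along the extra covariate-tier index $t$; the one genuinely new step is a Kronecker-product bookkeeping computation. First I would record what additivity buys. If $\bm{\tau}_i$ is constant, then expanding the vector $(Y_i(1),\dots,Y_i(Q))$ in the orthogonal $\bm{g}$-basis shows $Y_i(q)-\bar Y(q)$ does not depend on $q$; hence $S_{qq}=S_{11}$ for all $q$, the covariance $\bm{S}_{q,\bm{e}[t]}$ is a fixed row vector $\bm{S}_{1,\bm{e}[t]}$ for all $q$, and $\bm{S}_{\bm{\tau\tau}}=\bm{0}$. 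By Proposition \ref{prop:mean_var_cre} this gives $\bm{V}_{\bm{\tau\tau}}=S_{11}\tilde{\bm{B}}$. Using $\coef_q=\bm{\Psi}\orthocoef_q$ one checks $\bm{\Psi}\tilde{\bm{C}}\bm{\Psi}'=\tilde{\bm{B}}$, so (as already noted in Proposition \ref{prop:add_simu_diag}) $\bm{\Gamma}=\tilde{\bm{B}}^{-1/2}\bm{\Psi}\tilde{\bm{C}}^{1/2}$ is orthogonal; since $\tilde{\bm{B}}$ is invertible, $\bm{\Psi}$ is invertible with $\bm{\Psi}'\tilde{\bm{B}}^{-1}\bm{\Psi}=\tilde{\bm{C}}^{-1}$, and recall that $\tilde{\bm{C}}$ and its square root $\tilde{\bm{C}}^{1/2}$ are block diagonal with blocks indexed by $\mathcal{F}_1,\dots,\mathcal{F}_H$ (Proposition \ref{prop:var_cov_tier_factor}).

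Next I would compute the blocks of Proposition \ref{prop:var_cov_tier_factor_covariate} under additivity. Writing $\bm{E}_h\in\mathbb{R}^{F\times F_h}$ for the selector of the coordinates in $\mathcal{F}_h$ and $\tilde{\bm{C}}_{hh}=\bm{E}_h'\tilde{\bm{C}}\bm{E}_h$, additivity together with $\coef_q=\bm{\Psi}\orthocoef_q$ gives $\bm{W}_{\bm{e}[t]\bm{e}[t]}[h]=\tilde{\bm{C}}_{hh}\otimes\bm{S}_{\bm{e}[t]\bm{e}[t]}$ and $\bm{W}_{\bm{\tau e}[t]}[h]=(\bm{\Psi}\bm{E}_h\tilde{\bm{C}}_{hh})\otimes\bm{S}_{1,\bm{e}[t]}$. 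Applying $(A\otimes B)(C\otimes D)=AC\otimes BD$ and collapsing the $1\times1$ covariate factor to the scalar $\sigma_t^2\equiv\bm{S}_{1,\bm{e}[t]}\bm{S}_{\bm{e}[t]\bm{e}[t]}^{-1}\bm{S}_{\bm{e}[t],1}$, I obtain
\[
\bm{W}_{\bm{\tau e}[t]}[h](\bm{W}_{\bm{e}[t]\bm{e}[t]}[h])^{-1}\bm{W}_{\bm{e}[t]\bm{\tau}}[h]
=\sigma_t^2\,\bm{\Psi}\bm{E}_h\tilde{\bm{C}}_{hh}\bm{E}_h'\bm{\Psi}'
=\sigma_t^2\,\bm{\Psi}\tilde{\bm{C}}\bm{P}_h\bm{\Psi}',
\]
where $\bm{P}_h=\bm{E}_h\bm{E}_h'$ and the last equality uses block-diagonality of $\tilde{\bm{C}}$. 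Summing over $(t,h)\in\mathcal{S}_j$ yields $\bm{U}_{\bm{\tau\tau}}^\myparallel[j]=\bm{\Psi}\tilde{\bm{C}}\bm{D}_j\bm{\Psi}'$ with $\bm{D}_j\equiv\sum_{(t,h)\in\mathcal{S}_j}\sigma_t^2\bm{P}_h$. The key observation is that $\bm{D}_j$ is not merely diagonal but equals $\sum_h d_{jh}\bm{P}_h$ with $d_{jh}=\sum_{t:(t,h)\in\mathcal{S}_j}\sigma_t^2$, i.e., a scalar on each $\mathcal{F}_h$-block, so $\bm{D}_j$ commutes with the block-diagonal matrices $\tilde{\bm{C}}$ and $\tilde{\bm{C}}^{1/2}$.

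Finally I would substitute. From $\bm{V}_{\bm{\tau\tau}}=S_{11}\tilde{\bm{B}}$ we have $\bm{V}_{\bm{\tau\tau}}^{-1/2}\bm{\Gamma}=S_{11}^{-1/2}\tilde{\bm{B}}^{-1}\bm{\Psi}\tilde{\bm{C}}^{1/2}$, hence
\[
\bm{\Gamma}'\bm{V}_{\bm{\tau\tau}}^{-1/2}\bm{U}_{\bm{\tau\tau}}^\myparallel[j]\bm{V}_{\bm{\tau\tau}}^{-1/2}\bm{\Gamma}
=S_{11}^{-1}\tilde{\bm{C}}^{1/2}(\bm{\Psi}'\tilde{\bm{B}}^{-1}\bm{\Psi})\tilde{\bm{C}}\bm{D}_j(\bm{\Psi}'\tilde{\bm{B}}^{-1}\bm{\Psi})\tilde{\bm{C}}^{1/2}
=S_{11}^{-1}\tilde{\bm{C}}^{1/2}\bm{D}_j\tilde{\bm{C}}^{-1/2}
=S_{11}^{-1}\bm{D}_j,
\]
using $\bm{\Psi}'\tilde{\bm{B}}^{-1}\bm{\Psi}=\tilde{\bm{C}}^{-1}$ and the commutativity of $\bm{D}_j$ with $\tilde{\bm{C}}^{1/2}$. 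This is diagonal, so Condition \ref{cond:simultaneous_ortho_refmt_cf} holds with this $\bm{\Gamma}$, and the diagonal entries — which are the asserted canonical correlations — equal $\kappa_{jf}^2=S_{11}^{-1}\sum_{t:(t,h)\in\mathcal{S}_j}\sigma_t^2$, where $h$ denotes the tier of factorial effects containing $f$. I expect the only delicate points to be the Kronecker-product identification of $\bm{W}_{\bm{\tau e}[t]}[h]$ and $\bm{W}_{\bm{e}[t]\bm{e}[t]}[h]$ under additivity, and the recognition that $\bm{D}_j$ is constant on each factorial-effect block (so that it commutes with $\tilde{\bm{C}}^{1/2}$); the rest is exactly the algebra already used for Proposition \ref{prop:add_simu_diag}.
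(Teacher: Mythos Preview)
Your proposal is correct and follows essentially the same route as the paper: both identify the Kronecker structure of $\bm{W}_{\bm{\tau e}[t]}[h]$ and $\bm{W}_{\bm{e}[t]\bm{e}[t]}[h]$ under additivity, reduce each $\bm{W}_{\bm{\tau e}[t]}[h](\bm{W}_{\bm{e}[t]\bm{e}[t]}[h])^{-1}\bm{W}_{\bm{e}[t]\bm{\tau}}[h]$ to $S_{11}^\myparallel[t]\,\bm{\Psi}\tilde{\bm{C}}^{1/2}\,\textup{diag}(\bm{0},\ldots,\bm{I}_{F_h},\ldots,\bm{0})\,\tilde{\bm{C}}^{1/2}\bm{\Psi}'$, and then conjugate by $\bm{V}_{\bm{\tau\tau}}^{-1/2}$ to pull out the orthogonal $\bm{\Gamma}=\tilde{\bm{B}}^{-1/2}\bm{\Psi}\tilde{\bm{C}}^{1/2}$. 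The only cosmetic difference is that the paper symmetrically inserts $\tilde{\bm{C}}^{1/2}$ in each term before conjugating, whereas you keep the asymmetric factor $\tilde{\bm{C}}\bm{D}_j$ and invoke commutativity of $\bm{D}_j$ with $\tilde{\bm{C}}^{1/2}$ at the end; the content is the same.
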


\begin{theorem}\label{thm:quant_region_refmt_cf_monotone_joint}
	Under $\text{ReFMT}_\text{CF}$, assume that Conditions \ref{cond:fp} and \ref{cond:simultaneous_ortho_refmt_cf} hold. Let
	$c_{1-\alpha}$ be the solution of 
	$
	\lim_{n\rightarrow\infty}P\left\{
	\hat{\bm{\tau}} - \bm{\tau} \in \mathcal{O}(\bm{V}_{\bm{\tau\tau}}, c_{1-\alpha}) \mid \mathcal{T}_{\text{CF}} 
	\right\} = 1 - \alpha .
	$
	It depends only on  $K$, $\lambda_j$'s, $a_j$'s, and $(\kappa_{j1}^2, \ldots, \kappa_{jF}^2)$'s, and is nonincreasing in $\kappa_{jf}^2$ for $1\leq j\leq J$ and $1\leq f\leq F$. 
\end{theorem}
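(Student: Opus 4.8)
The plan is to mimic the proof of Theorem \ref{thm:quant_region_refmt_f_monotone_joint}, which itself extends Theorem \ref{thm:red_qr_rem_nond_ccorr}; the only structural change is that the tiers $\mathcal{F}_h$ of factorial effects are replaced by the tiers $\mathcal{S}_j$ of covariate/factorial-effect combinations, each of which contributes one independent truncated Gaussian block $\bm{\zeta}_{\lambda_j,a_j}$ in the limiting law \eqref{eq:asym_refmt_cf}.

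First I would pass to a canonical form. Standardizing $\hat{\bm{\tau}}-\bm{\tau}$ by $\bm{V}_{\bm{\tau\tau}}^{-1/2}$ turns the region $\mathcal{O}(\bm{V}_{\bm{\tau\tau}},c)$ into the Euclidean ball $\{\bm{\mu}\in\mathbb{R}^F:\|\bm{\mu}\|^2\le c\}$, and applying the rotation $\bm{\Gamma}'$ from Condition \ref{cond:simultaneous_ortho_refmt_cf} keeps this ball fixed and keeps $\bm{\varepsilon}$ standard Gaussian. Combining Theorem \ref{thm:asym_refmt_cf} with $R_f^2=\sum_{j}\beta_f^2[j]$ (equivalently $\bm{V}_{\bm{\tau\tau}}^{\myperp}=\bm{V}_{\bm{\tau\tau}}-\sum_j\bm{U}_{\bm{\tau\tau}}^{\myparallel}[j]$), the rotated standardized error converges weakly to $\bigl(\bm{I}_F-\sum_{j=1}^J\bm{K}_j^2\bigr)^{1/2}\bm{\varepsilon}+\sum_{j=1}^J\bigl(\bm{K}_j,\bm{0}_{F\times(\lambda_j-F)}\bigr)\bm{\zeta}_{\lambda_j,a_j}$ with $\bm{K}_j=\textup{diag}(\kappa_{j1},\ldots,\kappa_{jF})$ and $(\bm{\varepsilon},\bm{\zeta}_{\lambda_1,a_1},\ldots,\bm{\zeta}_{\lambda_J,a_J})$ jointly independent (the padded form of the coefficients is one admissible choice of $(\bm{U}_{\bm{\tau\tau}}^{\myparallel}[j])^{1/2}_{\lambda_j}$, and the limiting law does not depend on the choice). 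Since the law of this limit depends only on $F$, the $\lambda_j$'s, the $a_j$'s and the $\kappa_{jf}^2$'s, and $F$ is determined by $K$, the equation $P(\|\cdot\|^2\le c_{1-\alpha})=1-\alpha$ defining $c_{1-\alpha}$ inherits exactly this dependence, which is the first claim.

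For the monotonicity it suffices to prove that, for each fixed $c\ge 0$, the probability that the displayed limit lies in the ball of radius $\sqrt c$ is nondecreasing in every $\kappa_{jf}^2$. Here I would invoke Proposition \ref{prop:represent} to write each block as $\bm{\zeta}_{\lambda_j,a_j}\overset{d}{=}\chi_{\lambda_j,a_j}\bm{u}_j$ with an independent truncated-$\chi$ length $\chi_{\lambda_j,a_j}$ and a uniform direction $\bm{u}_j$ on the unit sphere of $\mathbb{R}^{\lambda_j}$; since the first $F$ coordinates of $\bm{u}_j$, conditioned on their own norm, are uniform on the corresponding sphere in $\mathbb{R}^F$, the $j$-th contribution to coordinate $f$ is $\rho_j\kappa_{jf}v_{jf}$, where $\rho_j\in[0,\sqrt{a_j}]$ has a fixed law and $\bm{v}_j$ is uniform on $S^{F-1}$, all mutually independent and independent of $\bm{\varepsilon}$. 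Because the $\bm{K}_j$'s are now diagonal, perturbing a single $\kappa_{jf}^2$ changes only coordinate $f$ of the limiting vector, transferring weight from the Gaussian summand $\sqrt{1-\sum_j\kappa_{jf}^2}\,\varepsilon_f$ to the symmetric (conditionally sign-valued) summand $\rho_j\kappa_{jf}v_{jf}$. Conditioning on all the $\rho$'s, on the direction coordinates $v_{j'f'}$ with $f'\neq f$, and on the Gaussian coordinates $\varepsilon_{f'}$ with $f'\neq f$, the ball event collapses to a one-dimensional statement of the form $|\,\sigma\varepsilon_f+W\,|\le\sqrt{c'}$ compared across two values of the parameter $\kappa_{jf}^2$, where $W$ is symmetric about zero and $\sigma^2$ is the residual Gaussian variance; averaging back over the conditioning variables, and using the symmetry of $W$ to absorb the induced mean shift, yields the claimed monotonicity, exactly as in the single-coordinate argument behind Theorems \ref{thm:red_qr_rem_nond_ccorr} and \ref{thm:quant_region_refmt_f_monotone_joint}.

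The main obstacle is precisely this last reduction: because each $\bm{\zeta}_{\lambda_j,a_j}$ injects noise that is correlated across the coordinates of $\hat{\bm{\tau}}$, one cannot argue coordinate-by-coordinate on the un-rotated scale, and it is Condition \ref{cond:simultaneous_ortho_refmt_cf} (satisfied under additivity by Proposition \ref{prop:add_simu_diag_refmt_cf}) that lets $\bm{\Gamma}'$ simultaneously diagonalize every $\bm{V}_{\bm{\tau\tau}}^{-1/2}\bm{U}_{\bm{\tau\tau}}^{\myparallel}[j]\bm{V}_{\bm{\tau\tau}}^{-1/2}$ and render the ball event amenable to the one-dimensional comparison. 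Verifying that the perturbation of a single $\kappa_{jf}^2$ indeed never decreases the centered-interval probability, uniformly over the conditioning, is the delicate point; it is where the one-dimensional estimates of \citet{asymrerand2016} and the peakedness arguments from the earlier appendices of \citet{rerandfacsupp} get reused, the rest of the proof being bookkeeping over the tiers $\mathcal{S}_j$.
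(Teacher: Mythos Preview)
Your high-level plan---standardize by $\bm{V}_{\bm{\tau\tau}}^{-1/2}$, rotate by the $\bm{\Gamma}$ of Condition \ref{cond:simultaneous_ortho_refmt_cf}, and reduce to a one-dimensional monotonicity statement by conditioning on the other coordinates---is exactly the paper's approach (Lemma A11 applied to Theorem \ref{thm:quant_region_refmt_cf_monotone_joint}). The gap is in the specific conditioning you choose.

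You condition on the radii $\rho_j$ and on the sphere coordinates $v_{j,f'}$ with $f'\neq f$. Because $\bm{v}_j$ lives on the unit sphere, this determines $|v_{jf}|$ as well, so the only remaining randomness in tier $j$'s contribution to coordinate $f$ is a \emph{sign}. The one-dimensional problem you end up with is therefore of the form
\[
P\bigl(\,\bigl|\sqrt{1-c_0-\kappa^2}\,\varepsilon_f + \kappa\, b\, S + W_0\bigr| \le r\,\bigr),\qquad b=\rho_j|v_{jf}|\ \text{fixed},\ S\ \text{a random sign},
\]
and you claim this is nondecreasing in $\kappa$. It is not: take $J=1$, $c_0=0$, $W_0=0$, $b=2$, $r=1$. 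At $\kappa=0$ the probability is $\Phi(1)-\Phi(-1)\approx 0.683$; at $\kappa=1$ the variable is $\pm 2$ and the probability is $0$. The two-point noise $bS$ is not more peaked than $\mathcal{N}(0,1)$, so the trade-off that makes Lemma A3 (and its tiered version Lemma A9) work does not go through here. The appeal to ``the one-dimensional estimates of \citet{asymrerand2016}'' is thus misplaced: those lemmas require the residual noise to be a truncated Gaussian $\eta_{1,\cdot}$, not a random sign times a constant.

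The paper's conditioning avoids this by working with the underlying Gaussian $\bm{D}_j\sim\mathcal{N}(\bm{0},\bm{I}_{\lambda_j})$ directly (Lemma A10): after arranging the nonzero entry in row $f$ of each coefficient matrix to sit in column $f$, one conditions on $\bm{D}_{j,-f}$ (all $\lambda_j-1$ other coordinates) and on $\bm{\varepsilon}_{-f}$. The conditional law of $D_{j,f}$ given $\bm{D}_j'\bm{D}_j\le a_j$ and $\bm{D}_{j,-f}$ is $\eta_{1,\,a_j-\|\bm{D}_{j,-f}\|^2}$, a genuine one-dimensional truncated Gaussian, and Lemma A9 applies verbatim. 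Dropping the Proposition \ref{prop:represent} representation and using this coarser conditioning fixes your argument. (A minor separate issue: your padded form $(\bm{K}_j,\bm{0}_{F\times(\lambda_j-F)})$ presumes $\lambda_j\ge F$; the paper's Lemma A11 handles $\lambda_j<F$ by selecting the columns carrying the nonzero canonical correlations.)
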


Because the peakedness relationship is invariant under linear transformations, and any linear transformation of $\hat{\bm{\tau}}$ has asymptotic sampling distribution of the same form as $\hat{\bm{\tau}}$, we can establish similar conclusions as Theorems \ref{thm:quant_region_refmt_cf} and \ref{thm:quant_region_refmt_cf_monotone_joint} for any linear transformations of $\hat{\bm{\tau}}$. 
We relegate the details to Appendix \ref{app:peak}, and consider only the marginal asymptotic sampling distribution of a single factorial effect estimator here. 

\begin{corollary}\label{cor:red_qr_refmt_cf}
	Under Condition \ref{cond:fp}, for any $1\leq f\leq F$ and $\alpha\in (0,1)$, the threshold $c_{1-\alpha}$ for 
	$1-\alpha$ asymptotic symmetric quantile range  
	$[-c_{1-\alpha}V_{\tau_f\tau_f}^{1/2}, c_{1-\alpha}V_{\tau_f\tau_f}^{1/2}]$
	of $\hat{\tau}_f-\tau_f$ under $\text{ReFMT}_\text{CF}$ is smaller than or equal to that under the CRFE, and is nonincreasing in $\beta^2_f(j)$, for any $1\leq j\leq J$. 
\end{corollary}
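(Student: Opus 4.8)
The plan is to work entirely from the explicit marginal law of Corollary~\ref{cor:single_refmt_cf}. Dividing \eqref{eq:refmt_cf} by $\sqrt{V_{\tau_f\tau_f}}$, the standardized asymptotic distribution of $\hat{\tau}_f-\tau_f$ under $\text{ReFMT}_\text{CF}$ is that of $\phi = \sqrt{1-R_f^2}\,\varepsilon_0 + \sum_{j=1}^{J}\sqrt{\beta_f^2[j]}\,\eta_{\lambda_j,a_j}$, with $R_f^2=\sum_j\beta_f^2[j]$ and $(\varepsilon_0,\eta_{\lambda_1,a_1},\dots,\eta_{\lambda_J,a_J})$ jointly independent, so that $c_{1-\alpha}$ is the $(1-\alpha)$-quantile of $|\phi|$, while under the CRFE the corresponding variable is $\varepsilon_0\sim\mathcal N(0,1)$. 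First I would note that $\phi$ is symmetric and unimodal: each $\eta_{\lambda_j,a_j}$ is a coordinate of the central convex unimodal vector $\bm\zeta_{\lambda_j,a_j}$ (Proposition~\ref{prop:ccu_refm}), hence central convex unimodal in one dimension, and this class is closed under scalar multiplication and independent convolution; therefore $|\phi|$ has a continuous strictly increasing distribution function near the relevant quantile and $c_{1-\alpha}$ is unambiguous. The statement that $c_{1-\alpha}$ is no larger than its CRFE counterpart then follows at once from Theorem~\ref{thm:quant_region_refmt_cf}: the joint law of $\hat{\bm\tau}-\bm\tau$ under $\text{ReFMT}_\text{CF}$ is more peaked than under the CRFE, peakedness is preserved under the linear map onto the $f$th coordinate \citep[][Lemma~7.2]{dharmadhikari1988}, and for symmetric univariate laws the peakedness order coincides with the reverse order of the lengths of symmetric quantile ranges. (It also re-emerges from the monotonicity below, by letting each $\beta_f^2[j]$ run from $0$ up to its value, since $\phi=\varepsilon_0$ when all weights vanish.)

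For the monotonicity in one weight $\beta_f^2[j_0]$, fix $j_0$ and all $\beta_f^2[j]$ with $j\ne j_0$, take $0\le b<b'$, and let $\phi_b,\phi_{b'}$ denote the two standardized laws; then $R_f^2$ moves from $R_b^2$ to $R_{b'}^2=R_b^2+(b'-b)$. By stability of the Gaussian, $\sqrt{1-R_b^2}\,\varepsilon_0\stackrel{d}{=}\sqrt{1-R_{b'}^2}\,\varepsilon_0'+\sqrt{b'-b}\,\varepsilon_0''$ with $\varepsilon_0',\varepsilon_0''$ i.i.d.\ $\mathcal N(0,1)$, so that
\[
\phi_b\stackrel{d}{=}\psi+\bigl(\sqrt b\,\eta_{\lambda_{j_0},a_{j_0}}+\sqrt{b'-b}\,\varepsilon_0''\bigr),\qquad \phi_{b'}\stackrel{d}{=}\psi+\sqrt{b'}\,\eta_{\lambda_{j_0},a_{j_0}},
\]
where $\psi=\sqrt{1-R_{b'}^2}\,\varepsilon_0'+\sum_{j\ne j_0}\sqrt{\beta_f^2[j]}\,\eta_{\lambda_j,a_j}$ is independent of the bracketed terms. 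Since the peakedness order is preserved by adding an independent central convex unimodal summand (the fact underlying Theorem~\ref{thm:conser_conf_set_refm}), it suffices to show $\sqrt{b'}\,\eta_{\lambda_{j_0},a_{j_0}}\succ\sqrt b\,\eta_{\lambda_{j_0},a_{j_0}}+\sqrt{b'-b}\,\varepsilon_0''$; rescaling by $\sqrt{b'}$ and writing $\lambda=b/b'\in[0,1)$, $m=\lambda_{j_0}$, $a=a_{j_0}$, this is the single comparison
\[
\eta_{m,a}\ \succ\ \sqrt{\lambda}\,\eta_{m,a}+\sqrt{1-\lambda}\,\varepsilon_0'',\qquad \eta_{m,a}\perp\varepsilon_0''\sim\mathcal N(0,1).
\]
Granting this, $\phi_{b'}\succ\phi_b$, so the $(1-\alpha)$-quantile of $|\phi_{b'}|$ is at most that of $|\phi_b|$, i.e.\ $c_{1-\alpha}$ is nonincreasing in $\beta_f^2[j_0]$; and since only the single marginal \eqref{eq:refmt_cf} enters, no analogue of Condition~\ref{cond:simultaneous_ortho_refmt_cf} is needed here.

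The crux is the last display, the univariate ingredient already behind Corollaries~\ref{cor:red_in_qr_single_remf} and~\ref{cor:quant_region_refmt_f_monotone_marg} and, for treatment-control experiments, behind \citet{asymrerand2016}. I would prove it by conditioning on the last $m-1$ coordinates of the defining vector $\bm D=(D_1,\dots,D_m)\sim\mathcal N(\bm0,\bm I_m)$: given $\|(D_2,\dots,D_m)\|^2=s\le a$, $\eta_{m,a}$ has the $\mathcal N(0,1)$ law truncated to $[-r,r]$ with $r=\sqrt{a-s}$, and conditionally $\sqrt\lambda\,\eta_{m,a}+\sqrt{1-\lambda}\,\varepsilon_0''$ has the law of $\sqrt\lambda\,T_r+\sqrt{1-\lambda}\,\varepsilon$, where $T_r$ is $\mathcal N(0,1)$ truncated to $[-r,r]$ and $\varepsilon\sim\mathcal N(0,1)$ is independent; since both $\eta_{m,a}$ and its diluted version are mixtures over the \emph{same} distribution of $s$, it is enough to prove, for each fixed $r>0$, that $T_r\succ\sqrt\lambda\,T_r+\sqrt{1-\lambda}\,\varepsilon$. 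For $t\ge r$ this is trivial ($P(|T_r|\le t)=1$), and for $t<r$ it is a direct comparison of the two densities on $[-t,t]$, a Gaussian convolution of a truncated Gaussian; here Jensen's inequality is too crude, since conditioning on $T_r$ only yields a lower bound that is itself below $P(|T_r|\le t)$, so a genuine density estimate exploiting log-concavity of the Gaussian kernel is required. This fixed-$r$ lemma is the main obstacle, and is precisely the computation relegated to the Supplementary Material; everything else is the bookkeeping above.
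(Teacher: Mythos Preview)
Your argument is correct and lands on the same univariate crux as the paper, but the route is organised differently. The paper does not work directly from the marginal law \eqref{eq:refmt_cf}; instead it treats $\hat\tau_f$ as a one-dimensional linear transformation $\bm C\hat{\bm\tau}$, invokes the general machinery (Lemma~\ref{lemma:general_mono_ccorr}, whose simultaneous-diagonalisation hypothesis is vacuous when $m=1$), and thereby reduces everything to Lemma~\ref{lemma:order_rho_mono_tier}, the multi-term monotonicity statement imported from \citet[][Lemma~A10]{asymrerand2016}. Your approach is more elementary for this particular corollary: by splitting the Gaussian piece via stability and using preservation of peakedness under convolution with a central convex unimodal summand (Lemma~\ref{lemma:peak_sum}), you isolate a single weight and reduce to the one-term case $\eta_{m,a}\succ\sqrt\lambda\,\eta_{m,a}+\sqrt{1-\lambda}\,\varepsilon$, then condition on $\|(D_2,\dots,D_m)\|^2$ to drop to the scalar truncated-Gaussian comparison. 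That final ``fixed-$r$ lemma'' is exactly Lemma~\ref{lemma:order_rho_mono} with $\rho=1$, $\tilde\rho=\sqrt\lambda$; note that the paper does not actually carry out this computation in the Supplementary Material but cites \citet[][Lemma~A3]{asymrerand2016} and \citet[][Theorem~2.1]{dasgupta1972}, so your honest admission that this step is the real content is accurate. What your route buys is a self-contained reduction that avoids the multivariate Lemmas~\ref{lemma:matrix_tier_monotone}--\ref{lemma:general_mono_ccorr}; what the paper's route buys is that the joint and marginal statements (Theorem~\ref{thm:quant_region_refmt_cf_monotone_joint} and the present corollary) flow from a single lemma.
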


\begin{example}
	We consider again the setting in Example \ref{eg:simple_refmt_f} in the main text, and further assume that the finite population partial covariance between potential outcome $Y(1)$ and other covariates given the first covariate is zero, i.e., only the first covariate is important. 
	We divide the covariates into two tiers, where tier $1$ contains only the first covariate and tier $2$ contains the remaining covariates, and then construct triangular tiers in Example \ref{eg:triang_tier}, where $\mathcal{S}_1$ consists of the combination of main effects and first covariate with  $\lambda_1=K$, and $\mathcal{S}_2$ consists of the remaining combinations of factorial effects and covariates with $\lambda_2 = (2^K-1) L-K$. 
	We choose thresholds $(a_1,a_2)$ such that 
	$P(\chi^2_{\lambda_1}\leq a_1) = 0.002$ and $P(\chi^2_{\lambda_2}\leq a_2)=0.5$. 
	Then for the main effect $1\leq k \leq K,$ $\beta_k^2[1]=R_k^2$ and $\beta_k^2[2]=0.$ 
	Figure \ref{fig:simple_compare_refmt_cf} shows the PRIASV, divided by $R^2_f$, of the main effect estimators for $\text{ReFMT}_\text{F}$ and $\text{ReFMT}_\text{CF}$. It shows that the advantage of further using tiers of covariates increases as numbers of factors and covariates increase. 
	$\hfill\Box$
	\begin{figure}[htb]
		\centering
		\includegraphics[width=0.5\textwidth]{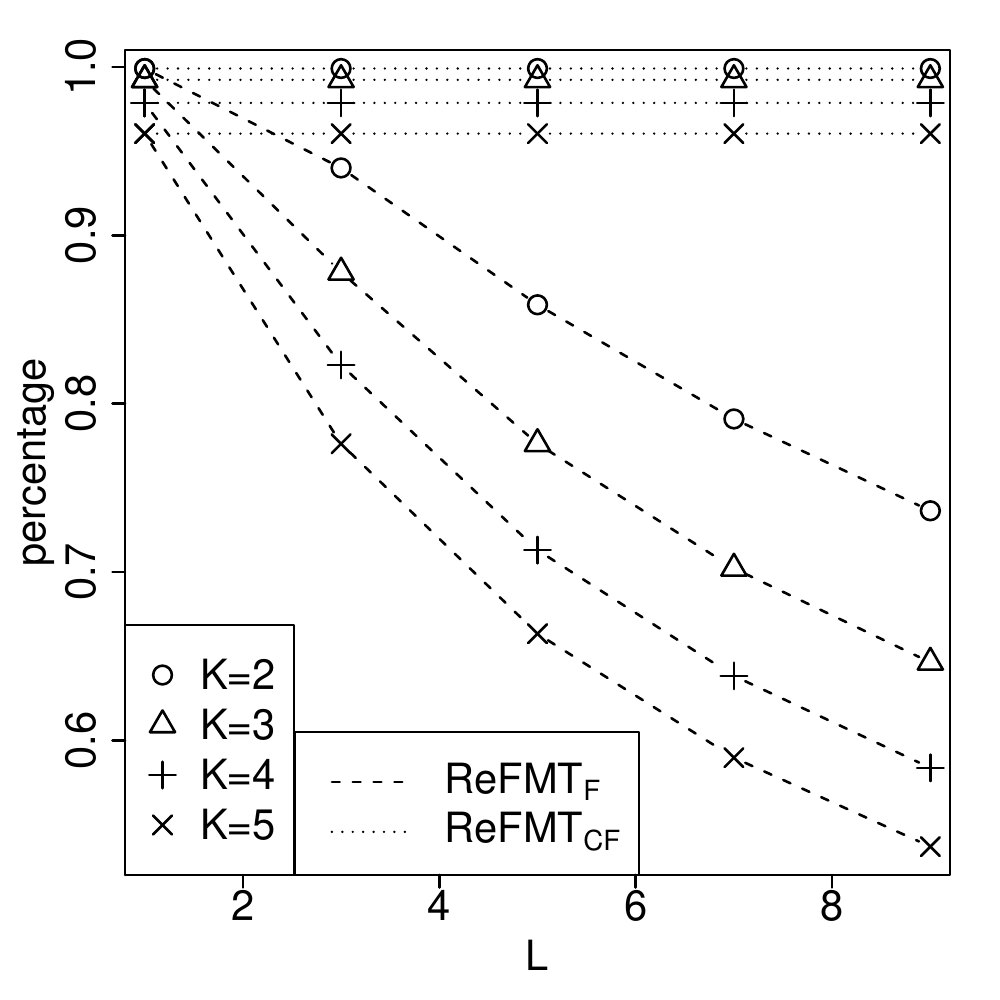}
		\caption{PRIASV of main effect estimators under $\text{ReFMT}_\text{F}$ and $\text{ReFMT}_\text{CF}$, divided by $R^2_f$}\label{fig:simple_compare_refmt_cf}
	\end{figure}
\end{example}

\subsection{Conservative covariance estimator and confidence set under $\text{ReFMT}_\text{CF}$}\label{sec:var_est_refmt_cf}

We define $\bm{s}_{q,\bm{e}[t]}$ and $\bm{s}_{\bm{e}[t]\bm{e}[t]}(q)$ as the sample covariances 
of the observed outcomes and orthogonalized covariates under treatment combination $q$.  
We estimate $\bm{V}_{\bm{\tau\tau}}^\myperp$ by 
$\hat{\bm{V}}_{\bm{\tau\tau}}^\myperp$ 
in \eqref{eq:V_tautau_perp_est}, 
$\bm{W}_{\bm{\tau e}[t]}[h]$ by 
$\hat{\bm{W}}_{\bm{\tau e}[t]}[h] = 2^{-2(K-1)}\sum_{q=1}^{Q}
n_q^{-1}(\coef_q\orthocoef'_q[h]) \otimes \left\{\bm{s}_{q,\bm{e}[t]}\bm{s}_{\bm{e}[t]\bm{e}[t]}^{-1/2}(q)\bm{S}_{\bm{e}[t]\bm{e}[t]}^{1/2}\right\}$, $\bm{U}_{\bm{\tau e}}[j]$ by $\hat{\bm{U}}_{\bm{\tau e}}[j]$ consisting of the corresponding 
$\hat{\bm{W}}_{\bm{\tau e}[t]}[h]$'s, and $
(\bm{U}_{\bm{\tau\tau}}^\myparallel[j])^{1/2}_{\lambda_j} = 
\bm{U}_{\bm{\tau e}}[j]
(
\bm{U}_{\bm{ee}}[j]
)^{-1/2}$ by $\hat{\bm{U}}_{\bm{\tau e}}[j]
(
\bm{U}_{\bm{ee}}[j]
)^{-1/2}$.  
We can then obtain a covariance estimator and 
construct  confidence sets for $\bm{\tau}$ or its lower dimensional linear transformation.  
Similar to ReFM, 
for a parameter of interest $\bm{C}\bm{\tau}$, we recommend to use confidence sets of the form 
$
\bm{C}\hat{\bm{\tau}} + \mathcal{O}(\bm{C}\hat{\bm{V}}_{\bm{\tau\tau}}^\myperp\bm{C}', c), 
$
and choose the threshold $c$ by simulating the estimated asymptotic distribution. 
Let 
$\hat{c}_{1-\alpha}$ be the $1-\alpha$ quantile of 
$ (\bm{C}\bm{\phi})'(\bm{C}\hat{\bm{V}}_{\bm{\tau\tau}}^\myperp\bm{C}')^{-1}(\bm{C}\bm{\phi})
$
with $\bm{\phi}$ following the estimated asymptotic sampling distribution of $\hat{\bm{\tau}}-\bm{\tau}$ under $\text{ReFMT}_\text{CF}.$

\begin{theorem}\label{thm:conserv_cs_remft_cf}
	Under $\text{ReFMT}_\text{CF}$ and Condition \ref{cond:fp},  consider inferring $\bm{C}\bm{\tau}$, where
	$\bm{C}$ has full row rank. The probability limit of covariance estimator for $\bm{C}\hat{\bm{\tau}}$, 
	$
	\bm{C}\hat{\bm{V}}_{\bm{\tau\tau}}^\myperp\bm{C}' + 
	\sum_{j=1}^{J} 
	v_{\lambda_j, a_j}
	\bm{C} \hat{\bm{U}}_{\bm{\tau e}}[j]
	(
	\bm{U}_{\bm{ee}}[j]
	)^{-1}
	\hat{\bm{U}}_{\bm{e\tau}}[j]\bm{C}', 
	$
	is larger than or equal to the actual sampling covariance, and 
	the $1-\alpha$ confidence set, 
	$
	\bm{C}\hat{\bm{\tau}} + \mathcal{O}(\bm{C}\hat{\bm{V}}_{\bm{\tau\tau}}^\myperp\bm{C}', \hat{c}_{1-\alpha}), 
	$
	has asymptotic coverage rate $\geq 1-\alpha$, 
	with equality holding if $\bm{S}_{\bm{\tau\tau}}^\myperp\rightarrow 0$ as $n\rightarrow \infty$. 
\end{theorem}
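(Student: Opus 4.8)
The plan is to follow the proof of Theorem~\ref{thm:conser_conf_set_refm} essentially verbatim in structure, the only new feature being that the ``explained'' part of the limiting law of $\hat{\bm{\tau}}-\bm{\tau}$ now splits into the $J$ mutually independent truncated-Gaussian blocks of Theorem~\ref{thm:asym_refmt_cf}. First I would record the needed consistency statements: under treatment combination $q$ the treated units form a simple random sample of size $n_q$, so the finite-population law of large numbers gives $s_{qq}\to S_{qq}$, $\bm{s}_{q,\bm{e}[t]}\to\bm{S}_{q,\bm{e}[t]}$ and $\bm{s}_{\bm{e}[t]\bm{e}[t]}(q)\to\bm{S}_{\bm{e}[t]\bm{e}[t]}$ in probability, entrywise. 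Combining these with Condition~\ref{cond:fp} and the continuous mapping theorem yields, with limit signs suppressed as in the paper, $n\hat{\bm{V}}_{\bm{\tau\tau}}^\myperp\to n\bm{V}_{\bm{\tau\tau}}^\myperp+\bm{S}_{\bm{\tau\tau}}^\myperp$ (the extra $\bm{S}_{\bm{\tau\tau}}^\myperp$ arising because $\hat{\bm{V}}_{\bm{\tau\tau}}^\myperp$ omits the term $-n^{-1}\bm{S}_{\bm{\tau\tau}}^\myperp$ present in $\bm{V}_{\bm{\tau\tau}}^\myperp$), $n\hat{\bm{W}}_{\bm{\tau e}[t]}[h]\to n\bm{W}_{\bm{\tau e}[t]}[h]$, and therefore $n\,\hat{\bm{U}}_{\bm{\tau e}}[j](\bm{U}_{\bm{ee}}[j])^{-1}\hat{\bm{U}}_{\bm{e\tau}}[j]\to n\bm{U}_{\bm{\tau\tau}}^\myparallel[j]$, all in probability.

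Granted this, the covariance claim is a direct computation. By Theorem~\ref{thm:asym_refmt_cf} together with Proposition~\ref{prop:represent}, which gives $\Cov(\bm{\zeta}_{\lambda_j,a_j})=v_{\lambda_j,a_j}\bm{I}_{\lambda_j}$, the genuine limiting covariance of $\sqrt n\,\bm{C}(\hat{\bm{\tau}}-\bm{\tau})$ under $\text{ReFMT}_\text{CF}$ is $\bm{C}\bigl(n\bm{V}_{\bm{\tau\tau}}^\myperp+\sum_{j=1}^{J}v_{\lambda_j,a_j}\,n\bm{U}_{\bm{\tau\tau}}^\myparallel[j]\bigr)\bm{C}'$, while the probability limit of the proposed estimator equals this same matrix plus $\bm{C}\bm{S}_{\bm{\tau\tau}}^\myperp\bm{C}'$. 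Since $\bm{S}_{\bm{\tau\tau}}^\myperp$ is positive semidefinite, so is $\bm{C}\bm{S}_{\bm{\tau\tau}}^\myperp\bm{C}'$; hence the estimator is asymptotically conservative, with equality precisely when $\bm{S}_{\bm{\tau\tau}}^\myperp\to\bm{0}$.

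For the confidence set, let $\bm{G}$ denote the limiting random vector on the right-hand side of \eqref{eq:asym_refmt_cf} and let $\hat{\bm{G}}$ denote the weak limit of the $\sqrt n$-scaled estimated sampling distribution of $\hat{\bm{\tau}}-\bm{\tau}$ from Section~\ref{sec:var_est_refmt_cf}, i.e.\ of \eqref{eq:asym_refmt_cf} with the estimators above substituted. Using $n\hat{\bm{V}}_{\bm{\tau\tau}}^\myperp\to n\bm{V}_{\bm{\tau\tau}}^\myperp+\bm{S}_{\bm{\tau\tau}}^\myperp$, the fact that $\mathcal{N}(\bm{0},\,n\bm{V}_{\bm{\tau\tau}}^\myperp+\bm{S}_{\bm{\tau\tau}}^\myperp)$ is the law of the sum of independent $\mathcal{N}(\bm{0},n\bm{V}_{\bm{\tau\tau}}^\myperp)$ and $\mathcal{N}(\bm{0},\bm{S}_{\bm{\tau\tau}}^\myperp)$ variables, and the $\text{ReFMT}_\text{CF}$ analogue of the remark after Theorem~\ref{thm:joint_refm} (the law of each truncated-Gaussian term is insensitive to the choice of square roots $(\bm{U}_{\bm{\tau\tau}}^\myparallel[j])^{1/2}_{\lambda_j}$), one obtains $\bm{C}\hat{\bm{G}}\overset{d}{=}\bm{C}\bm{G}+\bm{C}\bm{\delta}$ with $\bm{\delta}\sim\mathcal{N}(\bm{0},\bm{S}_{\bm{\tau\tau}}^\myperp)$ independent of $\bm{G}$. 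By the log-concavity and closure argument of Proposition~\ref{prop:ccu_refm}, \eqref{eq:asym_refmt_cf} is central convex unimodal, hence so is its linear image $\bm{C}\bm{G}$, and $\bm{C}\bm{\delta}$ is symmetric; an Anderson-type inequality---peakedness is never increased by convolving with an independent symmetric central convex unimodal vector---then gives $\bm{C}\bm{G}\succ\bm{C}\hat{\bm{G}}$, i.e.\ $P(\bm{C}\bm{G}\in\mathcal{K})\geq P(\bm{C}\hat{\bm{G}}\in\mathcal{K})$ for every symmetric convex $\mathcal{K}$. Writing $\bm{\Sigma}_C$ for the probability limit of $n\,\bm{C}\hat{\bm{V}}_{\bm{\tau\tau}}^\myperp\bm{C}'$ and $c^\ast_{1-\alpha}$ for the $(1-\alpha)$-quantile of $(\bm{C}\hat{\bm{G}})'\bm{\Sigma}_C^{-1}(\bm{C}\hat{\bm{G}})$ (which is the probability limit of $\hat c_{1-\alpha}$), membership of $\bm{C}\bm{\tau}$ in the proposed set is the event $\bigl(\sqrt n(\bm{C}\hat{\bm{\tau}}-\bm{C}\bm{\tau})\bigr)'(n\bm{C}\hat{\bm{V}}_{\bm{\tau\tau}}^\myperp\bm{C}')^{-1}\bigl(\sqrt n(\bm{C}\hat{\bm{\tau}}-\bm{C}\bm{\tau})\bigr)\leq\hat c_{1-\alpha}$; by Slutsky, the continuous mapping theorem and continuity of the limiting quadratic-form distribution at $c^\ast_{1-\alpha}$, its probability converges to $P\{(\bm{C}\bm{G})'\bm{\Sigma}_C^{-1}(\bm{C}\bm{G})\leq c^\ast_{1-\alpha}\}$. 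Applying $\bm{C}\bm{G}\succ\bm{C}\hat{\bm{G}}$ to the symmetric convex ellipsoid $\{\bm{\mu}:\bm{\mu}'\bm{\Sigma}_C^{-1}\bm{\mu}\leq c^\ast_{1-\alpha}\}$ bounds this below by $P\{(\bm{C}\hat{\bm{G}})'\bm{\Sigma}_C^{-1}(\bm{C}\hat{\bm{G}})\leq c^\ast_{1-\alpha}\}=1-\alpha$, the asserted coverage; when $\bm{S}_{\bm{\tau\tau}}^\myperp\to\bm{0}$ we have $\bm{\delta}=\bm{0}$ and $\bm{C}\hat{\bm{G}}\overset{d}{=}\bm{C}\bm{G}$, so the bound is an equality.

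The main obstacle is the Anderson/peakedness step: recognizing that the estimated asymptotic law is the true one convolved with an independent symmetric central convex unimodal (here Gaussian) vector, hence less peaked, so that the quantile comparison drops out of the definition of peakedness applied to the correct ellipsoid. Everything else---propagating the $\sqrt n$-scaling, checking that the ellipsoid defining $\hat c_{1-\alpha}$ is the one appearing in the coverage event, and the harmless non-uniqueness of the matrix square roots $(\bm{U}_{\bm{\tau\tau}}^\myparallel[j])^{1/2}_{\lambda_j}$ and $(\bm{U}_{\bm{ee}}[j])^{-1/2}$---is routine bookkeeping already carried out for Theorems~\ref{thm:conser_conf_set_refm} and \ref{thm:conserv_cs_remft_f}.
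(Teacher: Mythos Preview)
Your proposal is correct and follows essentially the same route as the paper: establish consistency of the plug-in pieces, read off the covariance conservatism from $\tilde{\bm V}_{\bm{\tau\tau}}^\myperp=\bm V_{\bm{\tau\tau}}^\myperp+n^{-1}\bm S_{\bm{\tau\tau}}^\myperp\ge\bm V_{\bm{\tau\tau}}^\myperp$, and for the confidence set use central convex unimodality plus a peakedness comparison together with Slutsky. Your convolution formulation $\bm C\hat{\bm G}\overset{d}{=}\bm C\bm G+\bm C\bm\delta$ is exactly the content of the paper's Lemma~A7 (Gaussian with larger covariance is less peaked) combined with Lemma~A6, just packaged slightly differently; both arguments reduce to $\bm 0\succ\bm C\bm\delta$ and the CCU property of $\bm C\bm G$.

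One small correction: your sentence ``under treatment combination $q$ the treated units form a simple random sample of size $n_q$'' is true under the CRFE but not under $\text{ReFMT}_\text{CF}$, where the acceptance event induces dependence. The consistency of $s_{qq}$, $\bm s_{q,\bm e[t]}$, $\bm s_{\bm e[t]\bm e[t]}(q)$ under rerandomization is what the paper isolates as Lemma~A8/A9: one first gets $o_p(1)$ under the CRFE and then observes that conditioning on an event of asymptotic probability $p_a>0$ preserves $o_p(1)$. Replace your justification by that two-step argument and the proof goes through.
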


The above confidence sets is similar to the ones based on regression adjustment if the threshold $a_j$'s are small \citep{lu2016covadj}.  Moreover,  we will extend Theorem \ref{thm:conserv_cs_remft_cf} to general symmetric convex confidence sets in Appendix \ref{app:inference}.

\section{Sampling properties under the CRFE}\label{app:covariance}

\subsection{Lemmas for matrices}
For any positive integer $m$, we use $\bm{1}_{m}$ to denote an $m$ dimensional column vector with all elements one, 
$\bm{I}_m$ to denote an $m\times m$ identity matrix, 
and $\bm{J}_m$ to denote an $m\times m$ matrix with all elements one.

\begin{lemma}\label{lemma:kronecker_product}
	Let $\bm{A}, \bm{B}, \bm{C}$ and $\bm{D}$ be four matrices.  
	\begin{itemize}
		\item[(1)] $(\bm{A}\otimes \bm{B})' = \bm{A}' \otimes \bm{B}'.$
		\item[(2)] If $\bm{A}$ and $\bm{B}$ have the same dimension, then 
		$
		(\bm{A}+\bm{B})\otimes \bm{C} = \bm{A}\otimes \bm{C} + \bm{B}\otimes \bm{C}, $
		and 
		$ 
		\bm{C} \otimes (\bm{A}+\bm{B}) = \bm{C} \otimes \bm{A} + \bm{C} \otimes \bm{B}. 
		$
		\item[(3)] If one can form the matrix products $\bm{AC}$ and $\bm{BD}$, then
		$
		(\bm{A}\otimes \bm{B})(\bm{C} \otimes \bm{D}) = 
		(\bm{AC}) \otimes (\bm{BD}). 
		$
		\item[(4)] If $\bm{A}$ and $\bm{B}$ are invertible, then 
		$
		(\bm{A}\otimes \bm{B})^{-1} = \bm{A}^{-1} \otimes \bm{B}^{-1}. 
		$
	\end{itemize}
\end{lemma}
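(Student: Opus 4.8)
The plan is to establish all four identities by comparing corresponding blocks of the matrices on the two sides, using the defining block structure of the Kronecker product: if $\bm{A} = (a_{ij})$, then $\bm{A}\otimes\bm{B}$ is the block matrix whose $(i,j)$ block equals $a_{ij}\bm{B}$. All four statements are classical facts of matrix algebra, so the argument is purely mechanical; I would include it only for completeness, since these identities are used repeatedly in the paper and in the Supplementary Material.

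For part (1), transposing a block matrix transposes both the arrangement of the blocks and each block individually, so the $(i,j)$ block of $(\bm{A}\otimes\bm{B})'$ is $(a_{ji}\bm{B})' = a_{ji}\bm{B}'$, which is exactly the $(i,j)$ block of $\bm{A}'\otimes\bm{B}'$. For part (2), when $\bm{A}$ and $\bm{B}$ have the same shape, the $(i,j)$ block of $(\bm{A}+\bm{B})\otimes\bm{C}$ is $(a_{ij}+b_{ij})\bm{C} = a_{ij}\bm{C}+b_{ij}\bm{C}$, which matches the sum of the $(i,j)$ blocks of $\bm{A}\otimes\bm{C}$ and $\bm{B}\otimes\bm{C}$; the companion law $\bm{C}\otimes(\bm{A}+\bm{B}) = \bm{C}\otimes\bm{A}+\bm{C}\otimes\bm{B}$ is verified in the same way.

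Part (3) is the substantive step. Assuming $\bm{A}\bm{C}$ and $\bm{B}\bm{D}$ are well-defined, the block sizes of $\bm{A}\otimes\bm{B}$ and $\bm{C}\otimes\bm{D}$ are conformable for block multiplication, and the $(i,j)$ block of the product $(\bm{A}\otimes\bm{B})(\bm{C}\otimes\bm{D})$ equals $\sum_{k}(a_{ik}\bm{B})(c_{kj}\bm{D}) = \big(\sum_k a_{ik}c_{kj}\big)\bm{B}\bm{D} = (\bm{A}\bm{C})_{ij}\,\bm{B}\bm{D}$, which is the $(i,j)$ block of $(\bm{A}\bm{C})\otimes(\bm{B}\bm{D})$. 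Part (4) then follows at once: if $\bm{A}$ and $\bm{B}$ are invertible, part (3) gives $(\bm{A}\otimes\bm{B})(\bm{A}^{-1}\otimes\bm{B}^{-1}) = (\bm{A}\bm{A}^{-1})\otimes(\bm{B}\bm{B}^{-1}) = \bm{I}\otimes\bm{I}$, the identity matrix of the appropriate order, so $(\bm{A}\otimes\bm{B})^{-1} = \bm{A}^{-1}\otimes\bm{B}^{-1}$. The only point requiring any care is the bookkeeping of block dimensions in part (3), ensuring the conformability hypotheses on $\bm{A}\bm{C}$ and $\bm{B}\bm{D}$ are invoked precisely where the block multiplication is carried out; beyond that there is no real obstacle.
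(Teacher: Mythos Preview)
Your proof is correct. The paper itself states this lemma without proof, treating the four identities as standard facts about the Kronecker product; your block-by-block verification is the canonical argument and is entirely sound, so you have simply supplied the details the paper omits.
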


Recall that $\bm{g}_f = (g_{f1}, \ldots, g_{fQ})'$ is the generating vector for the $f$th factorial effect, 
$\bm{b}_q = (g_{1q}, \ldots, g_{Fq})'$ is the coefficient vector for the treatment combination $q$, and 
$\covcoef = 2^{-2(K-1)}\sum_{q=1}^{Q}n_q^{-1}\coef_q\coef_q'$. 
\begin{lemma}\label{lemma:rqrmhq_inv_hm} 
	For any $1\leq q,k\leq Q$, 
	\begin{align}\label{eq:rqrmhq_inv_hm}
	2^{-2(K-1)} n_q^{-1}n_k^{-1}\coef_q' 
	\covcoef^{-1}
	\coef_k = n_q^{-1} \times 1\{ q=k \} - n^{-1} .
	\end{align} 
\end{lemma}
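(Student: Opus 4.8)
The plan is to exploit the Hadamard-type orthogonality of the $2^K$ factorial model matrix to evaluate $\covcoef^{-1}$ explicitly via an orthogonal-projection argument, thereby avoiding any Kronecker manipulation. First I would assemble the $Q\times Q$ matrix $\bm{G}=(\bm{1}_Q,\bm{g}_1,\ldots,\bm{g}_F)$, whose $q$th row is $(1,\coef_q')$, and let $\bm{L}\in\mathbb{R}^{Q\times F}$ be the matrix with rows $\coef_q'$ (equivalently with columns $\bm{g}_1,\ldots,\bm{g}_F$). Since each generating vector has half its entries $+1$ and half $-1$, and the elementwise product of two distinct generating vectors is again a generating vector (hence balanced), the columns of $\bm{G}$ are mutually orthogonal with squared norm $Q$; thus $\bm{G}'\bm{G}=\bm{G}\bm{G}'=Q\bm{I}_Q$, so $\{\bm{1}_Q,\bm{g}_1,\ldots,\bm{g}_F\}$ is an orthogonal basis of $\mathbb{R}^Q$, $\bm{L}'\bm{1}_Q=\bm{0}$, and $\bm{L}'\bm{L}=Q\bm{I}_F$ (in particular $\bm{L}$ has full column rank).

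Next, writing $\bm{D}=\mathrm{diag}(n_1,\ldots,n_Q)$ and using $2^{-2(K-1)}=4/Q^2$, I would observe that $\covcoef=(4/Q^2)\,\bm{L}'\bm{D}^{-1}\bm{L}$, so that $\coef_q'\covcoef^{-1}\coef_k$ is the $(q,k)$ entry of the $Q\times Q$ matrix $\bm{L}\covcoef^{-1}\bm{L}'=(Q^2/4)\,\bm{L}(\bm{L}'\bm{D}^{-1}\bm{L})^{-1}\bm{L}'$. Substituting $\bm{M}=\bm{D}^{-1/2}\bm{L}$ rewrites this as $(Q^2/4)\,\bm{D}^{1/2}\bm{M}(\bm{M}'\bm{M})^{-1}\bm{M}'\bm{D}^{1/2}=(Q^2/4)\,\bm{D}^{1/2}\bm{P}\bm{D}^{1/2}$, where $\bm{P}$ is the orthogonal projection onto $\mathrm{col}(\bm{M})$.

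The crux is to identify $\bm{P}$. Because $\mathrm{col}(\bm{L})=\mathrm{span}(\bm{g}_1,\ldots,\bm{g}_F)$ has orthogonal complement $\mathrm{span}(\bm{1}_Q)$ by the orthogonal-basis fact, and $\bm{D}^{-1/2}$ is invertible, $\mathrm{col}(\bm{M})=\bm{D}^{-1/2}\mathrm{col}(\bm{L})$ has orthogonal complement $\mathrm{span}(\bm{D}^{1/2}\bm{1}_Q)$. With $\bm{w}=\bm{D}^{1/2}\bm{1}_Q$ and $\|\bm{w}\|^2=\sum_q n_q=n$, this gives $\bm{P}=\bm{I}_Q-n^{-1}\bm{w}\bm{w}'$. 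Plugging back in and using $\bm{D}^{1/2}\bm{w}=\bm{D}\bm{1}_Q=(n_1,\ldots,n_Q)'$, the $(q,k)$ entry of $\bm{L}\covcoef^{-1}\bm{L}'$ equals $(Q^2/4)\big(n_q\,1\{q=k\}-n_qn_k/n\big)$; multiplying by $(4/Q^2)\,n_q^{-1}n_k^{-1}$ and using $n_q=n_k$ on the diagonal yields exactly $n_q^{-1}1\{q=k\}-n^{-1}$, which is \eqref{eq:rqrmhq_inv_hm}.

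I expect the only non-mechanical point to be the identification of $\mathrm{col}(\bm{M})^\perp$, hence of $\bm{P}$; everything else is routine bookkeeping built on the Hadamard orthogonality of $\bm{G}$. As a consistency check, in the balanced case $n_q\equiv n/Q$ one has $\covcoef=(4/n)\bm{I}_F$, and the claimed identity collapses to $\coef_q'\coef_k=Q\,1\{q=k\}-1$, i.e.\ the off-intercept block of $\bm{G}\bm{G}'=Q\bm{I}_Q$, confirming that the normalizations are correct.
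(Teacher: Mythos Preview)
Your proof is correct. Both you and the paper reduce the claim to evaluating the same matrix $\bm{C}=\bm{N}^{-1}\bm{B}'(\bm{B}\bm{N}^{-1}\bm{B}')^{-1}\bm{B}\bm{N}^{-1}$ (in the paper's notation $\bm{B}=\bm{L}'$, $\bm{N}=\bm{D}$), and both rely on the same key fact that $\{\bm{1}_Q,\bm{g}_1,\ldots,\bm{g}_F\}$ is an orthogonal basis of $\mathbb{R}^Q$. The execution differs: the paper argues indirectly, observing that $\bm{B}(\bm{C}-\bm{N}^{-1})=\bm{0}$ forces each column of $\bm{C}-\bm{N}^{-1}$ into $\mathrm{span}(\bm{1}_Q)$, then uses symmetry to conclude $\bm{C}-\bm{N}^{-1}=c\bm{J}_Q$, and finally pins down $c=-n^{-1}$ by a trace computation. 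Your route is more direct: by writing $\bm{M}=\bm{D}^{-1/2}\bm{L}$ you recognize the middle object as the orthogonal projection $\bm{P}$ onto $\mathrm{col}(\bm{M})$, identify $\mathrm{col}(\bm{M})^\perp=\mathrm{span}(\bm{D}^{1/2}\bm{1}_Q)$ in one step, and read off $\bm{P}=\bm{I}_Q-n^{-1}\bm{w}\bm{w}'$ without an auxiliary unknown. Your approach buys a slightly cleaner endgame (no separate trace argument), while the paper's avoids introducing $\bm{D}^{\pm 1/2}$; both are short and rest on the identical orthogonality fact.
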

\begin{proof}[Proof of Lemma \ref{lemma:rqrmhq_inv_hm}]
	Let $\bm{B}=(\coef_1, \ldots, \coef_{Q}) = (
	\bm{g}_1, \ldots, \bm{g}_{F}
	)'\in \mathbb{R}^{F\times Q}$, and $\bm{N} = \textup{diag}(n_1, \ldots, n_{Q})\in \mathbb{R}^{Q\times Q}$. Then $\covcoef = 2^{-2(K-1)} \sum_{q=1}^{Q}n_q^{-1}\coef_q\coef_q'= 2^{-2(K-1)} \bm{B}\bm{N}^{-1}\bm{B}',$ and $2^{-2(K-1)} n_q^{-1}n_k^{-1}\coef_q' 
	\covcoef^{-1}
	\coef_k$
	is the $(q,k)$th element of matrix
	$
	\bm{C} = 
	\bm{N}^{-1}\bm{B}'
	\left(
	\bm{B}\bm{N}^{-1}\bm{B}'
	\right)^{-1}
	\bm{B} \bm{N}^{-1}. 
	$
	
	First, we show that there exists a constant $c$ such that 
	$
	\bm{C} = \bm{N}^{-1} + c \bm{J}_{Q}. 
	$
	By definition,  
	$
	\bm{B}\bm{C} = 
	\bm{B}\bm{N}^{-1}\bm{B}'
	\left(
	\bm{B}\bm{N}^{-1}\bm{B}'
	\right)^{-1}
	\bm{B} \bm{N}^{-1}
	= 
	\bm{B}\bm{N}^{-1}.
	$
	Thus, 
	\begin{align}\label{eq:A_C_N_inv}
	\bm{0} = \bm{B}(\bm{C}-\bm{N}^{-1}) = 
	\left(
	\bm{g}_1, \ldots, \bm{g}_{F}
	\right)' (\bm{C}-\bm{N}^{-1}).
	\end{align}
	By the properties of generating vectors, 
	$(\bm{1}_{Q}, \bm{g}_1, \ldots, \bm{g}_{F})$ constitute an  orthogonal basis of $\mathbb{R}^{Q}$. 
	Equation \eqref{eq:A_C_N_inv} implies that each column of $\bm{C}-\bm{N}^{-1}$ is orthogonal to $\bm{g}_1, \ldots, \bm{g}_{F}$, and thus has to be $c \bm{1}_{Q\times 1}$ for some constant $c$. Because $\bm{C}-\bm{N}^{-1}$ is a symmetric matrix, $\bm{C}-\bm{N}^{-1}$ must be $c\bm{J}_{Q}$ for some constant $c.$ 
	
	Second, we show that $c=-n^{-1}$ and  
	$
	\bm{C} = \bm{N}^{-1} - n^{-1}\bm{J}_{Q}.  
	$
	On the one hand,
	$
	\bm{C} = \bm{N}^{-1} + c \bm{J}_{Q}
	$
	implies 
	$$
	\text{tr}(\bm{CN}) = \text{tr}(\bm{I})+ c \cdot \text{tr}(\bm{J}_Q\bm{N}) = Q+c\sum_{q=1}^Q n_q = Q + cn;
	$$
	on the other hand, the definition of $\bm{C}$ implies 
	\begin{align*}
	\text{tr}(\bm{CN}) & = \text{tr}\left\{\bm{N}^{-1}\bm{B}'
	\left(
	\bm{B}\bm{N}^{-1}\bm{B}'
	\right)^{-1}
	\bm{B}\right\} = 
	\text{tr}\left\{
	\bm{B}\bm{N}^{-1}\bm{B}'
	\left(
	\bm{B}\bm{N}^{-1}\bm{B}'
	\right)^{-1}
	\right\} \\
	& = F = Q-1.
	\end{align*}
	Therefore, 
	$c=-n^{-1},$ 
	$
	\bm{C} = \bm{N}^{-1} - n^{-1}\bm{J}_{Q},
	$
	and 
	Lemma \ref{lemma:rqrmhq_inv_hm} holds.  
\end{proof}

\begin{lemma}\label{lemma:ortho_AApm_BBpm}
	If two matrices $\bm{A}$ and $\bm{B}$ in $\mathbb{R}^{p\times m}$ satisfy $\bm{A}\bm{A}'=\bm{B}\bm{B'}$, then there exists an orthogonal matrix $\bm{\Gamma}\in \mathbb{R}^{m\times m}$ such that $\bm{A}=\bm{B}\bm{\Gamma}$. 
\end{lemma}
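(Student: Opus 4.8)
The plan is to reduce the claim to the classical fact that two finite tuples of vectors in $\mathbb{R}^m$ sharing a common Gram matrix are related by an orthogonal transformation of $\mathbb{R}^m$, and then to give a short self-contained proof of that fact. Write $\bm{A}=[\bm{a}_1,\ldots,\bm{a}_p]'$ and $\bm{B}=[\bm{b}_1,\ldots,\bm{b}_p]'$ with $\bm{a}_i,\bm{b}_i\in\mathbb{R}^m$ the transposed rows; then the hypothesis $\bm{A}\bm{A}'=\bm{B}\bm{B}'$ says exactly that $\bm{a}_i'\bm{a}_j=\bm{b}_i'\bm{b}_j$ for all $i,j$, i.e., the two tuples have the same Gram matrix $\bm{G}=\bm{B}\bm{B}'$. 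It therefore suffices to produce an orthogonal $\bm{T}\in\mathbb{R}^{m\times m}$ with $\bm{a}_i=\bm{T}\bm{b}_i$ for every $i$: this gives $\bm{A}'=\bm{T}\bm{B}'$, so $\bm{\Gamma}:=\bm{T}'$ is orthogonal and $\bm{B}\bm{\Gamma}=(\bm{T}\bm{B}')'=\bm{A}$.

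To construct $\bm{T}$, set $\mathcal{W}=\operatorname{span}\{\bm{b}_1,\ldots,\bm{b}_p\}$ and $\mathcal{U}=\operatorname{span}\{\bm{a}_1,\ldots,\bm{a}_p\}$. I would first verify that $\bm{T}_0:\mathcal{W}\to\mathbb{R}^m$ defined on spanning vectors by $\bm{T}_0\big(\sum_i c_i\bm{b}_i\big)=\sum_i c_i\bm{a}_i$ is well defined: if $\sum_i c_i\bm{b}_i=\bm{0}$, then $\sum_{i,j}c_ic_jG_{ij}=0$, so the common quadratic form forces $\big\|\sum_i c_i\bm{a}_i\big\|^2=0$ and hence $\sum_i c_i\bm{a}_i=\bm{0}$. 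The same identity, $(\sum_i c_i\bm{a}_i)'(\sum_j d_j\bm{a}_j)=\sum_{i,j}c_id_jG_{ij}=(\sum_i c_i\bm{b}_i)'(\sum_j d_j\bm{b}_j)$, shows $\bm{T}_0$ preserves inner products, so $\bm{T}_0$ is a linear isometry of $\mathcal{W}$ onto $\mathcal{U}$; in particular $\dim\mathcal{U}=\dim\mathcal{W}=:r$, whence $\dim\mathcal{W}^{\perp}=\dim\mathcal{U}^{\perp}=m-r$. Choosing any linear isometry $\bm{T}_1$ of $\mathcal{W}^{\perp}$ onto $\mathcal{U}^{\perp}$ and defining $\bm{T}(\bm{w}+\bm{w}^{\perp})=\bm{T}_0\bm{w}+\bm{T}_1\bm{w}^{\perp}$ on $\mathbb{R}^m=\mathcal{W}\oplus\mathcal{W}^{\perp}$ yields an orthogonal map (it is onto and, since $\mathcal{W}\perp\mathcal{W}^{\perp}$ and $\mathcal{U}\perp\mathcal{U}^{\perp}$, inner-product preserving) with $\bm{T}\bm{b}_i=\bm{a}_i$ for all $i$, as required.

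An alternative is a direct singular-value computation: writing $\bm{A}=\bm{U}\bm{\Sigma}\bm{V}'$ and $\bm{B}=\tilde{\bm{U}}\bm{\Sigma}\tilde{\bm{V}}'$ (the singular values coincide because $\bm{A}\bm{A}'$ and $\bm{B}\bm{B}'$ share their spectrum), one has $\tilde{\bm{U}}=\bm{U}\bm{W}$ for some orthogonal $\bm{W}$ commuting with $\bm{\Sigma}\bm{\Sigma}'$, and $\bm{\Gamma}_0:=\tilde{\bm{V}}\bm{\Sigma}^{+}\bm{W}'\bm{\Sigma}\bm{V}'$ already satisfies $\bm{B}\bm{\Gamma}_0=\bm{A}$. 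The subtlety — and the one point where care is needed — is that $\bm{\Gamma}_0$ is only a partial isometry of rank $r=\operatorname{rank}(\bm{A}\bm{A}')$, and the rank-deficient case $r<m$ is exactly the relevant one (the factorizations $\bm{A}^{1/2}_p$ in the paper carry padded zero columns), so one still must append an isometry between the orthogonal complements of the two row spaces to promote $\bm{\Gamma}_0$ to a genuine orthogonal matrix; this is the role of $\bm{T}_1$ above. For that reason I prefer the Gram-matrix route, whose only real work is the (routine) well-definedness and isometry checks for $\bm{T}_0$, both of which fall out of the common quadratic form $\sum_{i,j}c_ic_jG_{ij}$.
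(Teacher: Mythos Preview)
Your proof is correct and takes a genuinely different route from the paper's. The paper proceeds by case analysis on the relation between $p$ and $m$: for square matrices ($p=m$) it invokes the polar decomposition $\bm{A}=(\bm{A}\bm{A}')^{1/2}\bm{\Gamma}_1$, $\bm{B}=(\bm{B}\bm{B}')^{1/2}\bm{\Gamma}_2$ and sets $\bm{\Gamma}=\bm{\Gamma}_2'\bm{\Gamma}_1$; for $p<m$ it zero-pads to squares and reduces to the first case; for $p>m$ it quotients out the common null space of $\bm{A}'$ and $\bm{B}'$ to land back in the second case. Your Gram-matrix argument, by contrast, is dimension-agnostic: reading $\bm{A}\bm{A}'=\bm{B}\bm{B}'$ as equality of the Gram matrices of the row tuples, you build the isometry $\bm{T}_0$ between row spans directly and then extend over the orthogonal complements. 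This is more elementary (no polar decomposition, no case split) and arguably cleaner; the paper's route has the virtue of reducing everything to a single named factorization, but pays for it with a somewhat fiddly $p>m$ step. Your aside about the SVD alternative correctly identifies the rank-deficient subtlety and why the complement extension $\bm{T}_1$ is still needed there.
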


\begin{proof}[Proof of Lemma \ref{lemma:ortho_AApm_BBpm}]
	First, we consider the case with $p=m$, i.e., $\bm{A}$ and $\bm{B}$ are square matrices. 
	From the polar decomposition, $\bm{A}=(\bm{A}\bm{A}')^{1/2}\bm{\Gamma}_1$ and 
	$\bm{B}=(\bm{B}\bm{B}')^{1/2}\bm{\Gamma}_2$, 
	where $\bm{\Gamma}_1$ and $\bm{\Gamma}_2$ are orthogonal matrices. 
	Therefore, 
	\begin{align*}
	\bm{B}\bm{\Gamma}_2'\bm{\Gamma}_1 
	= (\bm{B}\bm{B}')^{1/2}\bm{\Gamma}_2\bm{\Gamma}_2'\bm{\Gamma}_1 
	= (\bm{A}\bm{A}')^{1/2}\bm{\Gamma}_1  
	= \bm{A},
	\end{align*}
	where $\bm{\Gamma}_2'\bm{\Gamma}_1$ is an orthogonal matrix. 
	Thus, Lemma \ref{lemma:ortho_AApm_BBpm} holds when $p=m$. 
	
	Second, we consider the case with $p<m$. Define two square matrices:  
	\begin{align*}
	\bm{A}_1 = 
	\begin{pmatrix}
	\bm{A}\\
	\bm{0}_{(m-p)\times m}
	\end{pmatrix} \in \mathbb{R}^{m\times m}, 
	\quad 
	\bm{B}_1 = 
	\begin{pmatrix}
	\bm{B}\\
	\bm{0}_{(m-p)\times m}
	\end{pmatrix} \in \mathbb{R}^{m\times m}. 
	\end{align*}
	We can verify $\bm{A}_1\bm{A}_1'=\bm{B}_1\bm{B}_1'$. 
	From the first case for square matrices, there exists an orthogonal matrix $\bm{\Gamma}\in \mathbb{R}^{m\times m}$ such that $\bm{A}_1 = \bm{B}_1\bm{\Gamma}$, which immediately implies $\bm{A} = \bm{B}\bm{\Gamma}$. 
	Thus, Lemma \ref{lemma:ortho_AApm_BBpm} holds when $p<m$. 
	
	Third, we consider the case with $p>m$. Let $\bm{W}_0\in \mathbb{R}^{p\times p_0}$ be a matrix whose columns are an orthonormal basis of $\{\bm{x}: \bm{A}'\bm{x}=0 \}\subset \mathbb{R}^p$, and $\bm{W}_1 \in \mathbb{R}^{p\times (p-p_0)}$ be the matrix such that $\bm{W}=(\bm{W}_1, \bm{W}_0) \in \mathbb{R}^{p\times p}$ is an orthogonal matrix. 
	We can verify $\bm{W}_0'\bm{A}=\bm{0}_{p_0\times m}$. 
	Because 
	$$
	\bm{W}_0'\bm{B} (\bm{W}_0'\bm{B})' = \bm{W}_0'\bm{B} \bm{B}' \bm{W}_0 
	= \bm{W}_0'\bm{A} \bm{A}' \bm{W}_0 = \bm{0}_{p_0\times p_0}, 
	$$
	we have $\bm{W}_0'\bm{B}=\bm{0}_{p_0\times m}$.
	Because $\{\bm{x}: \bm{A}'\bm{x}=0 \}$ is of dimension at least $p-m$, we have  $p_0\geq p-m$ and therefore $p-p_0\leq m$. 
	Because the two matrices $\bm{W}_1'\bm{A}$ and $\bm{W}_1'\bm{B}$ in $\mathbb{R}^{(p-p_0)\times m}$ satisfy 
	$
	\bm{W}_1'\bm{A}\bm{A}'\bm{W}_1 = \bm{W}_1'\bm{B}\bm{B}'\bm{W}_1, 
	$
	from the second case, there exists an orthogonal matrix $\bm{\Gamma}\in \mathbb{R}^{m\times m}$ such that 
	$
	\bm{W}_1'\bm{A} = \bm{W}_1'\bm{B}\bm{\Gamma}. 
	$
	Thus, 
	\begin{align*}
	\bm{W}' \bm{B}\bm{\Gamma} = 
	\begin{pmatrix}
	\bm{W}_1'\\
	\bm{W}_0'
	\end{pmatrix}
	\bm{B}\bm{\Gamma} 
	= 
	\begin{pmatrix}
	\bm{W}_1'\bm{B}\bm{\Gamma} \\
	\bm{W}_0'\bm{B}\bm{\Gamma} 
	\end{pmatrix}
	= 
	\begin{pmatrix}
	\bm{W}_1'\bm{A} \\
	\bm{0}_{p_0\times m}
	\end{pmatrix}
	= 
	\begin{pmatrix}
	\bm{W}_1'\bm{A}\\
	\bm{W}_0'\bm{A}
	\end{pmatrix}
	= \bm{W}'\bm{A}.  
	\end{align*}
	Because $\bm{W}$ is an orthogonal matrix, we have 
	$\bm{B}\bm{\Gamma} = \bm{A}$. Thus, Lemma \ref{lemma:ortho_AApm_BBpm} holds when $p>m$.
\end{proof}

\subsection{Covariances between $\hat{\bm{\tau}}$ and $\hat{\bm{\tau}}_{\bm{x}}$}

\begin{proof}[{   Proof of Proposition \ref{prop:mean_var_cre}}]
	We first write the vector $(\hat{\bm{\tau}}', \hat{\bm{\tau}}_{\bm{x}}')'$ as a linear combination of average observed outcomes and covariates: 
	\begin{align}\label{eq:vector_Y_and_X}
	\begin{pmatrix}
	\hat{\bm{\tau}}\\
	\hat{\bm{\tau}}_{\bm{x}}
	\end{pmatrix}
	= 
	2^{-(K-1)}\sum_{q=1}^Q 
	\begin{pmatrix}
	\coef_q\hat{\bar{Y}}(q)\\
	g_{1q} \hat{\bar{{\bm{x}}}}(q)\\
	\vdots\\
	g_{Fq} \hat{\bar{{\bm{x}}}}(q)
	\end{pmatrix}
	= 2^{-(K-1)}\sum_{q=1}^{Q}
	\begin{pmatrix}
	\coef_q & \bm{0}_{F\times L}\\
	\bm{0}_{FL \times 1} & \coef_q \otimes \bm{I}_L
	\end{pmatrix}
	\begin{pmatrix}
	\hat{\bar{Y}}(q)\\
	\hat{\bar{{\bm{x}}}}(q)
	\end{pmatrix}. 
	\end{align}
	We can view covariates as ``outcomes" unaffected by the treatment, and view $(Y_i(q), \bm{x}_i')'$ as the potential outcome vector under treatment combination $q$. Using  \citet[][Theorem 3]{fcltxlpd2016}, $(\hat{\bm{\tau}}',\hat{\bm{\tau}}_{\bm{x}}')'$ has sampling mean 
	\begin{align*}
	2^{-(K-1)}\sum_{q=1}^{Q}
	\begin{pmatrix}
	\coef_q & \bm{0}_{F\times L}\\
	\bm{0}_{FL \times 1} & \coef_q \otimes \bm{I}_L
	\end{pmatrix}
	\begin{pmatrix}
	\bar{Y}(q)\\
	\bar{{\bm{x}}}
	\end{pmatrix} = 
	\begin{pmatrix}
	\bm{\tau}\\
	\bm{0}_{FL\times 1}
	\end{pmatrix},
	\end{align*} 
	and sampling covariance matrix
	\begin{eqnarray*}
		& & 2^{-2(K-1)}\sum_{q=1}^{Q}n_q^{-1}
		\begin{pmatrix}
			\coef_q & \bm{0}_{F\times L}\\
			\bm{0}_{FL \times 1} & \coef_q \otimes \bm{I}_L
		\end{pmatrix}
		\begin{pmatrix}
			S_{qq} & \bm{S}_{q,\bm{x}}\\
			\bm{S}_{\bm{x},q} & \bm{S}_{\bm{xx}}
		\end{pmatrix}
		\begin{pmatrix}
			\coef_q & \bm{0}_{F\times L}\\
			\bm{0}_{FL \times 1} & \coef_q \otimes \bm{I}_L
		\end{pmatrix}' \\
		& & - 
		n^{-1}
		\begin{pmatrix}
			\bm{S}_{\bm{\tau\tau}} & \bm{0}\\
			\bm{0} & \bm{0}
		\end{pmatrix}\\
		& = & 
		2^{-2(K-1)}
		\sum_{q=1}^{Q}
		n_q^{-1}
		\begin{pmatrix}
			\coef_q \coef_q' S_{qq} & \coef_q \bm{S}_{q, \bm{x}} \cdot (\coef_q \otimes \bm{I}_L)'\\
			\left(\coef_q \otimes \bm{I}_L\right) \cdot \bm{S}_{\bm{x}, q} \coef_q'  & \left(\coef_q \otimes \bm{I}_L\right) \bm{S}_{\bm{xx}} \left(\coef_q \otimes \bm{I}_L\right)'
		\end{pmatrix} \\
		& & - 
		n^{-1}
		\begin{pmatrix}
			\bm{S}_{\bm{\tau\tau}} & \bm{0}\\
			\bm{0} & \bm{0}
		\end{pmatrix}. 
	\end{eqnarray*}
	From Lemma \ref{lemma:kronecker_product}, 
	\begin{align*}
	\coef_q \bm{S}_{q, \bm{x}} \cdot \left(\coef_q \otimes \bm{I}_L\right)' & = (\coef_q \otimes \bm{S}_{q, \bm{x}})\left(\coef_q' \otimes \bm{I}_L\right) = (\coef_q\coef_q') \otimes \bm{S}_{q, \bm{x}}, \\
	(\coef_q \otimes \bm{I}_L) \bm{S}_{\bm{xx}} (\coef_q \otimes \bm{I}_L)' & = 
	(\coef_q \otimes \bm{I}_L) (1 \otimes \bm{S}_{\bm{xx}}) (\coef_q' \otimes \bm{I}_L) = 
	(\coef_q \coef_q') \otimes \bm{S}_{\bm{xx}}.  
	\end{align*}
	Therefore, we deduce the mean and sampling covariance of $(\hat{\bm{\tau}}',\hat{\bm{\tau}}_{\bm{x}}')$ in the form of Proposition \ref{prop:mean_var_cre}. The asymptotic Gaussianity of $(\hat{\bm{\tau}}',\hat{\bm{\tau}}_{\bm{x}}')'$ follows directly from Condition \ref{cond:fp} and \citet[][Theorem 5]{fcltxlpd2016}.
\end{proof}

\begin{proof}[{   Proof of Proposition \ref{prop:var_cov_tier_factor}}]
	We first rewrite 
	\begin{align*}
	\begin{pmatrix}
	\hat{\bm{\tau}}-\bm{\tau}\\
	\hat{\bm{\theta}}_{\bm{x}}
	\end{pmatrix}
	= 
	2^{-(K-1)}
	\sum_{q=1}^{Q}
	\begin{pmatrix}
	\coef_q \hat{\bar{Y}}(q) - \coef_q \bar{Y}(q)\\
	\orthocoef_q\otimes\hat{\bar{\bm{x}}}(q)
	\end{pmatrix}
	= 
	2^{-(K-1)}
	\sum_{q=1}^{Q}
	\begin{pmatrix}
	\coef_q & \bm{0}\\
	\bm{0} & \orthocoef_q\otimes\bm{I}_L
	\end{pmatrix}
	\begin{pmatrix}
	\hat{\bar{Y}}(q) - \bar{Y}(q)\\
	\hat{\bar{\bm{x}}}(q)
	\end{pmatrix}.
	\end{align*}
	From \citet[][Theorem 3]{fcltxlpd2016}, $(\hat{\bm{\tau}}'-\bm{\tau}', \hat{\bm{\theta}}_{\bm{x}}')'$ has mean zero and sampling covariance
	\begin{eqnarray*}
		& & 
		2^{-2(K-1)}
		\sum_{q=1}^{Q}
		n_q^{-1}
		\begin{pmatrix}
			\coef_q & \bm{0}\\
			\bm{0} & \orthocoef_q\otimes\bm{I}_L
		\end{pmatrix}
		\begin{pmatrix}
			S_{qq} & \bm{S}_{q, \bm{x}}\\
			\bm{S}_{\bm{x},q} & \bm{S}_{\bm{xx}}
		\end{pmatrix}
		\begin{pmatrix}
			\coef_q & \bm{0}\\
			\bm{0} & \orthocoef_q\otimes\bm{I}_L
		\end{pmatrix}'
		- 
		n^{-1}
		\begin{pmatrix}
			\bm{S}_{\bm{\tau\tau}} & \bm{0}\\
			\bm{0} & \bm{0}
		\end{pmatrix}\\
		& = & 2^{-2(K-1)}
		\sum_{q=1}^{Q}
		n_q^{-1}
		\begin{pmatrix}
			\coef_q\coef_q' S_{qq} & (\coef_q\orthocoef_q')\otimes \bm{S}_{q,\bm{x}}\\
			(\orthocoef_q\coef_q') \otimes \bm{S}_{\bm{x},q} & (\orthocoef_q\orthocoef'_q) \otimes \bm{S}_{\bm{xx}} 
		\end{pmatrix} - 
		n^{-1}
		\begin{pmatrix}
			\bm{S}_{\bm{\tau\tau}} & \bm{0} \\
			\bm{0} & \bm{0} 
		\end{pmatrix}.
	\end{eqnarray*}
	From the Gram--Schmidt orthogonalization of the $\orthocoef_q$'s, 
	\begin{align*}
	\sum_{q=1}^{Q}
	n_q^{-1}\orthocoef_q\orthocoef_q' & = \sum_{q=1}^{Q}
	n_q^{-1}
	\begin{pmatrix}
	\orthocoef_q[1]\orthocoef_q'[1] & \bm{0} & \ldots & \bm{0}\\
	\bm{0} & \orthocoef_q[2]\orthocoef_q'[2] & \ldots & \bm{0}\\
	\vdots & \vdots & \ddots & \vdots\\
	\bm{0} & \bm{0} & \ldots & \orthocoef_q[H]\orthocoef_q'[H]
	\end{pmatrix}. 
	\end{align*}
	Therefore, Proposition \ref{prop:var_cov_tier_factor} holds. 
\end{proof}

\begin{proof}[{   Proof of Proposition \ref{prop:var_cov_tier_factor_covariate}}]
	We first rewrite
	\begin{align*}
	\begin{pmatrix}
	\hat{\bm{\tau}}-\bm{\tau}\\
	\hat{\bm{\theta}}_{\bm{e}[1]}\\
	\vdots\\
	\hat{\bm{\theta}}_{\bm{e}[T]}
	\end{pmatrix}
	& = 2^{-(K-1)}
	\sum_{q=1}^{Q}
	\begin{pmatrix}
	\coef_q \hat{\bar{Y}}(q) - \coef_q \bar{Y}(q)\\
	\orthocoef_q\otimes\hat{\bar{\bm{e}}}_{[1]}(q)\\
	\vdots\\
	\orthocoef_q\otimes\hat{\bar{\bm{e}}}_{[T]}(q)
	\end{pmatrix}\\
	& = 2^{-(K-1)}
	\sum_{q=1}^{Q}
	\begin{pmatrix}
	\coef_q & \bm{0} & \ldots & \bm{0}\\
	\bm{0} & \orthocoef_q\otimes\bm{I}_{L_1} & \ldots & \bm{0}\\
	\vdots &\vdots & \ddots & \vdots\\
	\bm{0} &  \bm{0} & \ldots & \orthocoef_q\otimes \bm{I}_{L_T}
	\end{pmatrix}
	\begin{pmatrix}
	\hat{\bar{Y}}(q) - \bar{Y}(q)\\
	\hat{\bar{\bm{e}}}_{[1]}(q)\\
	\vdots\\
	\hat{\bar{\bm{e}}}_{[T]}(q)
	\end{pmatrix}.
	\end{align*}
	From \citet[][Theorem 3]{fcltxlpd2016}, $(\hat{\bm{\tau}}'-\bm{\tau}', \hat{\bm{\theta}}_{\bm{e}[1]}', \ldots, \hat{\bm{\theta}}_{\bm{e}[T]}')'$ has mean zero and sampling covariance
	\begin{align*}
	& \quad \ 2^{-2(K-1)}\sum_{q=1}^{Q} n_q^{-1}
	\left\{
	\begin{pmatrix}
	\coef_q & \bm{0} & \ldots & \bm{0}\\
	\bm{0} & \orthocoef_q\otimes\bm{I}_{L_1} & \ldots & \bm{0}\\
	\vdots &\vdots & \ddots & \vdots\\
	\bm{0} &  \bm{0} & \ldots & \orthocoef_q\otimes \bm{I}_{L_T}
	\end{pmatrix}
	\right.
	\\
	& \quad   \ 
	\left.
	\cdot
	\begin{pmatrix}
	S_{qq} & \bm{S}_{q, \bm{e}[1]} & \ldots & \bm{S}_{q, \bm{e}[T]}\\
	\bm{S}_{\bm{e}[1], q} & \bm{S}_{\bm{e}[1]\bm{e}[1]} & \ldots & \bm{0}\\
	\vdots & \vdots & \ddots & \vdots\\
	\bm{S}_{\bm{e}[T],q} & \bm{0} & \ldots & \bm{S}_{\bm{e}[T]\bm{e}[T]}
	\end{pmatrix}
	\cdot
	\begin{pmatrix}
	\coef_q & \bm{0} & \ldots & \bm{0}\\
	\bm{0} & \orthocoef_q\otimes\bm{I}_{L_1} & \ldots & \bm{0}\\
	\vdots &\vdots & \ddots & \vdots\\
	\bm{0} &  \bm{0} & \ldots & \orthocoef_q\otimes \bm{I}_{L_T}
	\end{pmatrix}'
	\right\}\\
	& \quad \ 
	- n^{-1}
	\begin{pmatrix}
	\bm{S}_{\bm{\tau\tau}} & \bm{0} & \ldots & \bm{0}\\
	\bm{0} & \bm{0} & \ldots & \bm{0}\\
	\vdots &\vdots & \ddots & \vdots\\
	\bm{0} &  \bm{0} & \ldots & \bm{0}
	\end{pmatrix}\\
	& = 2^{-2(K-1)}
	\sum_{q=1}^{Q} n_q^{-1}
	\begin{pmatrix}
	\coef_q\coef_q'S_{qq} & (\coef_q\orthocoef_q')\otimes\bm{S}_{q, \bm{e}[1]} & \ldots & (\coef_q\orthocoef_q')\otimes\bm{S}_{q, \bm{e}[T]}\\
	(\orthocoef_q\coef_q')\otimes\bm{S}_{\bm{e}[1], q} & (\orthocoef_q\orthocoef_q')\otimes\bm{S}_{\bm{e}[1]\bm{e}[1]} & \ldots & \bm{0}\\
	\vdots & \vdots & \ddots & \vdots\\
	(\orthocoef_q\coef_q')\otimes \bm{S}_{\bm{e}[T],q} & \bm{0} & \ldots & (\orthocoef_q\orthocoef_q')\otimes\bm{S}_{\bm{e}[T]\bm{e}[T]}
	\end{pmatrix}
	\\
	& \quad \ -
	n^{-1}
	\begin{pmatrix}
	\bm{S}_{\bm{\tau\tau}} & \bm{0} & \ldots & \bm{0}\\
	\bm{0} & \bm{0} & \ldots & \bm{0}\\
	\vdots &\vdots & \ddots & \vdots\\
	\bm{0} &  \bm{0} & \ldots & \bm{0}
	\end{pmatrix}. 
	\end{align*}
	Therefore, Proposition \ref{prop:var_cov_tier_factor_covariate} holds.
\end{proof}

\subsection{Linear projections and squared multiple correlations}
\begin{proof}[{   Proof of Theorem \ref{thm:joint_var_expl_unexpl}}]
	From Lemma \ref{lemma:kronecker_product}, 
	the sampling covariance matrix of $\hat{\bm{\tau}}$ explained by  $\hat{\boldsymbol{\tau}}_{\boldsymbol{x}}$ in the linear projection satisfies 
	\begin{align*}
	& \quad \ \bm{V}_{\bm{\tau x}} \bm{V}_{\bm{xx}}^{-1} \bm{V}_{\bm{x\tau}} \\
	& = 
	2^{-4(K-1)} 
	\left\{\sum_{q=1}^{Q} n_q^{-1}(\coef_q\coef_q')\otimes \bm{S}_{q, \bm{x}}
	\right\}
	\left(
	\covcoef \otimes \bm{S}_{\bm{xx}}
	\right)^{-1}
	\left\{
	\sum_{k=1}^{Q} n_k^{-1}(\coef_k\coef_k')\otimes \bm{S}_{\bm{x}, k}
	\right\}\\
	& = 
	2^{-4(K-1)} 
	\sum_{q=1}^{Q}\sum_{k=1}^{Q} 
	\left\{\sum_{q=1}^{Q} n_q^{-1}(\coef_q\coef_q')\otimes \bm{S}_{q, \bm{x}}
	\right\}
	\left(
	\covcoef^{-1}
	\otimes \bm{S}_{\bm{xx}}^{-1}
	\right) 
	\left\{
	\sum_{k=1}^{Q} n_k^{-1}(\coef_k\coef_k')\otimes \bm{S}_{\bm{x}, k}
	\right\}\\
	& = 2^{-4(K-1)} 
	\sum_{q=1}^{Q}\sum_{k=1}^{Q}
	\left\{
	n_q^{-1}(\coef_q\coef_q') \cdot \covcoef^{-1}\cdot
	n_k^{-1}(\coef_k\coef_k')
	\right\} \otimes 
	\left(
	\bm{S}_{q, \bm{x}}
	\bm{S}_{\bm{xx}}^{-1}
	\bm{S}_{\bm{x}, k}
	\right)\\
	& = 2^{-2(K-1)} 
	\sum_{q=1}^{Q}\sum_{k=1}^{Q}
	\left\{ \coef_q \left(
	2^{-2(K-1)}
	n_q^{-1}n_k^{-1}
	\coef_q' 
	\covcoef^{-1}
	\coef_k \right) \coef_k'
	\right\}
	\cdot
	\left(
	\bm{S}_{q, \bm{x}}
	\bm{S}_{\bm{xx}}^{-1}
	\bm{S}_{\bm{x}, k}
	\right),  
	\end{align*}
	where in the last equality the Kronecker product reduces to the matrix product because $\bm{S}_{q, \bm{x}}
	\bm{S}_{\bm{xx}}^{-1}
	\bm{S}_{\bm{x}, k}$ is a scalar. 
	Using Lemma \ref{lemma:rqrmhq_inv_hm}, we have
	\begin{align*}
	& \quad \ \bm{V}_{\bm{\tau x}} \bm{V}_{\bm{xx}}^{-1} \bm{V}_{\bm{x \tau}}
	\\
	& = 
	2^{-2(K-1)} 
	\left[
	\sum_{q=1}^{Q}
	(n_q^{-1} \coef_q\coef_q') \cdot \left( \bm{S}_{q, \bm{x}}
	\bm{S}_{\bm{xx}}^{-1}
	\bm{S}_{\bm{x}, q} \right)  
	-
	\sum_{q=1}^{Q}\sum_{k=1}^{Q} n^{-1}
	\coef_q\coef_k' \cdot
	\left(
	\bm{S}_{q, \bm{x}}
	\bm{S}_{\bm{xx}}^{-1}
	\bm{S}_{\bm{x}, k} \right)
	\right]
	\\
	& =  2^{-2(K-1)} \sum_{q=1}^{Q} n_q^{-1} \coef_q\coef_q' \cdot S_{qq}^{\myparallel} 
	- 
	n^{-1}
	\left(2^{-(K-1)}\sum_{q=1}^{Q}\coef_q\bm{S}_{q, \bm{x}}\right)
	\bm{S}_{\bm{xx}}^{-1}
	\left(
	2^{-(K-1)}\sum_{k=1}^{Q}\bm{S}_{\bm{x}, k}\coef_{k}'
	\right)\\
	& = 2^{-2(K-1)} \sum_{q=1}^{Q} n_q^{-1} \coef_q\coef_q' \cdot S_{qq}^{\myparallel}  - n^{-1}\bm{S}_{ \bm{\tau},  \bm{x}}\bm{S}_{\bm{xx}}^{-1}\bm{S}_{\bm{x},\bm{\tau}}
	\\
	& = 2^{-2(K-1)} \sum_{q=1}^{Q} n_q^{-1} \coef_q\coef_q' \cdot S_{qq}^{\myparallel} - n^{-1}\bm{S}_{\bm{\tau\tau}}^{\myparallel} \equiv \bm{V}_{\bm{\tau}\bm{\tau}}^\myparallel. 
	\end{align*}
	Therefore, the sampling covariance of the residual from the linear projection of $\hat{\bm{\tau}}$ on $\hat{\boldsymbol{\tau}}_{\boldsymbol{x}}$ satisfies 
	\begin{align*}
	& \quad \ \bm{V}_{\bm{\tau \tau}} - \bm{V}_{\bm{\tau x}}\bm{V}_{\bm{xx}}^{-1}\bm{V}_{\bm{x\tau}} \\
	& =
	\bm{V}_{\bm{\tau \tau}} - \bm{V}_{\bm{\tau}\bm{\tau}}^\myparallel
	= 
	2^{-2(K-1)}
	\sum_{q=1}^{Q} n_q^{-1}
	\coef_q \coef_q' \cdot \left(S_{qq}-S_{qq}^{\myparallel}\right)
	- 
	n^{-1}\left(\bm{S}_{\bm{\tau\tau}} - \bm{S}_{\bm{\tau\tau}}^{\myparallel}
	\right)\\
	& = 
	2^{-2(K-1)}
	\sum_{q=1}^{Q} n_q^{-1}
	\coef_q \coef_q' \cdot S_{qq}^{\myperp}
	- 
	n^{-1} \bm{S}_{\bm{\tau\tau}}^{\myperp} \equiv
	\bm{V}_{\bm{\tau}\bm{\tau}}^\myperp. 
	\end{align*}
	Theorem \ref{thm:joint_var_expl_unexpl} holds. 
\end{proof}

\begin{proof}[{   Proof of Corollary \ref{cor:R2_f}}]
	From Proposition \ref{prop:mean_var_cre} and Theorem \ref{thm:joint_var_expl_unexpl}, under the CRFE,  
	the variance of $\hat{\tau}_f$ is $V_{\tau_f\tau_f}$, and 
	the variance of $\hat{\tau}_f$ explained by $\hat{\bm{\tau}}_{\bm{x}}$ in the linear projection is $V_{\tau_f\tau_f}^\myparallel$. Therefore, the squared multiple correlation between $\hat{\tau}_f$ and $\hat{\bm{\tau}}_{\bm{x}}$ is
	$$
	R^2(f)  
	= \frac{V_{\tau_f\tau_f}^\myparallel}{V_{\tau_f\tau_f}}
	=
	\frac{
		2^{-2(K-1)}\sum_{q=1}^{Q} n_q^{-1}S_{qq}^{\myparallel} - n^{-1} S_{\tau_f\tau_f}^{\myparallel}
	}{
		2^{-2(K-1)}\sum_{q=1}^{Q} n_q^{-1}S_{qq} - n^{-1}S_{\tau_f\tau_f}
	}.
	$$
	Under the additivity, 
	$S_{11} = \cdots = S_{QQ}$, $S_{11}^{\myparallel} = \cdots = S_{QQ}^{\myparallel}$, and 
	$S_{\tau_f\tau_f} = S_{\tau_f\tau_f}^{\myparallel}=0,$ 
	which further imply $R^2_f=S_{11}^{\myparallel}/S_{11}.$  
\end{proof}

\begin{proof}[{   Proof of Proposition \ref{prop:comp_R_fm_add}}]
	Under the additivity, 
	from Proposition \ref{prop:mean_var_cre}, the sampling variance and covariances are 
	$
	\Var(\hat{\tau}_f)  = 2^{-2(K-1)}\sum_{q=1}^{Q} n_q^{-1} S_{11},
	$
	$
	\Cov(\hat{\boldsymbol{\tau}}_{\boldsymbol{x},k})  = 2^{-2(K-1)}\sum_{q=1}^{Q}n_q^{-1}\bm{S}_{\bm{xx}}, 
	$
	and
	$
	\Cov(\hat{\tau}_f, \hat{\boldsymbol{\tau}}_{\boldsymbol{x},k})  = 
	2^{-2(K-1)}\sum_{q=1}^{Q} n_q^{-1} g_{fq}g_{kq}\bm{S}_{1,\bm{x}}.  
	$
	Let $\bm{g}_0=\bm{1}_{Q}$. 
	By the property of $2^K$ factorial experiments, for any $1\leq f,k\leq F$, there exists $0\leq m\leq F$ such that $\bm{g}_f\circ\bm{g}_k = \bm{g}_m$, recalling that $\circ$ denotes element-wise multiplication. 
	Define $f\star k$ as the index such that $\bm{g}_f\circ\bm{g}_k = \bm{g}_{f\star k}$.
	Let $w_q = n_q^{-1}/\sum_{k=1}^{Q}n_k^{-1}$ be the weight inversely proportional to the number of units under treatment combination $q$. 
	We can simplify
	the variance of $\hat{\tau}_f$ explained by $\hat{\boldsymbol{\tau}}_{\boldsymbol{x},k}$ under the CRFE as 
	\begin{eqnarray*}
		& & \Cov(\hat{\tau}_f, \hat{\boldsymbol{\tau}}_{\boldsymbol{x},k})
		\Cov^{-1}(\hat{\boldsymbol{\tau}}_{\boldsymbol{x},k})
		\Cov(\hat{\boldsymbol{\tau}}_{\boldsymbol{x},k}, \hat{\tau}_f)\\
		& = & 
		2^{-2(K-1)}
		\left(\sum_{q=1}^{Q} n_q^{-1} g_{f\star k, q}\bm{S}_{1,\bm{x}}\right)
		\left(
		\sum_{q=1}^{Q}n_q^{-1}\bm{S}_{\bm{xx}}
		\right)^{-1}
		\left(\sum_{q=1}^{Q} n_q^{-1} g_{f\star k, q}\bm{S}_{\bm{x},1}\right)\\
		& = & 
		2^{-2(K-1)}
		\left(
		\sum_{q=1}^{Q}n_q^{-1}
		\right)
		\left(\sum_{q=1}^{Q} w_q g_{f\star k, q}\right)^2S_{11}^{\myparallel}. 
	\end{eqnarray*}
	Therefore, the squared multiple correlation between $\hat{\tau}_f$ and  $\hat{\boldsymbol{\tau}}_{\boldsymbol{x},k}$ satisfies  
	\begin{align*}
	\gamma^2_{fk} & = 
	\frac{
		\Cov(\hat{\tau}_f, \hat{\boldsymbol{\tau}}_{\boldsymbol{x},k})
		\Cov^{-1}(\hat{\boldsymbol{\tau}}_{\boldsymbol{x},k})
		\Cov(\hat{\boldsymbol{\tau}}_{\boldsymbol{x},k}, \hat{\tau}_f)
	}{\Var(\hat{\tau}_f)}\\
	& = \frac{
		2^{-2(K-1)}
		\left(
		\sum_{q=1}^{Q}n_q^{-1}
		\right)
		\left(\sum_{q=1}^{Q} w_q g_{f\star k, q}\right)^2S_{11}^{\myparallel}
	}{2^{-2(K-1)}\sum_{q=1}^{Q} n_q^{-1} S_{11}}\\
	& = \left(\sum_{q=1}^{Q} w_q g_{f\star k, q}\right)^2S_{11}^{\myparallel}/S_{11}
	= 
	\left(
	\sum_{q:g_{f\star k, q}=1} w_q - \sum_{q:g_{f\star k, q}=-1} w_q
	\right)^2S_{11}^{\myparallel}/S_{11}\\
	& \leq 
	\left(
	\sum_{q=1}^Q w_q
	\right)^2S_{11}^{\myparallel}/S_{11} = S_{11}^{\myparallel}/S_{11}, 
	\end{align*}
	where the equality holds if $f\star k=0$, i.e., $k=f$. Using Corollary \ref{cor:R2_f}, we have $\gamma^2_{fk}\leq \gamma^2_{ff}=R^2_f = S_{11}^{\myparallel}/S_{11}$.  
	Moreover, because $\gamma^2_{ff}=R^2_f$, under the CRFE, the variance of $\hat{\tau}_f$ explained by $\hat{\bm{\tau}}_{\bm{x}}$ is the same as that explained by $\hat{\bm{\tau}}_{\bm{x},f}$. Therefore, the squared multiple partial correlation between 
	$\hat{\tau}_f$ and $\hat{\bm{\tau}}_{\bm{x}}$ given $\hat{\bm{\tau}}_{\bm{x},f}$ is zero. 
	If $n_1=\cdots=n_{Q} = n/Q$, then $w_1=\cdots=w_Q=Q^{-1}$, and thus  
	$
	\sum_{q=1}^{Q} w_q g_{f\star k, q} = Q^{-1} \sum_{q=1}^{Q} g_{f\star k, q} = 0
	$
	if $k\neq f$. 
	Therefore, $\gamma^2_{fk}=0$ for $k\neq f$.
\end{proof}

\begin{proof}[{   Proof of Proposition \ref{prop:simpler_refmt_f}}]
	With equal treatment group sizes, from the definition of coefficient vector $\coef_q$, we have
	\begin{align*}
	\tilde{\bm{B}} & = 2^{-2(K-1)}
	\sum_{q=1}^{Q} n_q^{-1}
	\coef_q \coef_q' 
	= 
	(4/Q^2)
	(n/Q)^{-1} \cdot
	\sum_{q=1}^{Q} 
	\coef_q \coef_q'\\
	& = 
	4/(nQ) \cdot
	\left(
	\coef_1, \ldots, \coef_Q
	\right) \cdot
	\left(
	\coef_1, \ldots, \coef_Q
	\right)' 
	\\
	& = 
	4/(nQ) \cdot 
	\left(
	\bm{g}_1, \ldots, \bm{g}_F
	\right)' \cdot
	\left(
	\bm{g}_1, \ldots, \bm{g}_F
	\right). 
	\end{align*}
	By the property of $2^K$ factorial experiments, these generating vectors satisfy that, for $1\leq f\neq k\leq F$, $\bm{g}_f'\bm{g}_k=0$ and $\bm{g}_f'\bm{g}_f = Q$. Thus, $\covcoef$ further reduces to 
	$
	\covcoef
	= 
	4/(nQ) \cdot Q \bm{I}_F =  (4/n)\bm{I}_F,
	$
	which is a diagonal matrix. Therefore, in \eqref{eq:b_q},  $\covcoef[\mathcal{F}_h, \mathcal{F}_{\overline{h-1}}]=\bm{0}$, and $\orthocoef_q[h]$ reduces to $\coef_q[\mathcal{F}_h]$. Consequently, $\hat{\bm{\theta}}_{\bm{x}}[h]$ in \eqref{eq:theta_X} reduces to $\hat{\bm{\tau}}_{\bm{x}}[\mathcal{F}_h]$. 
	Because 
	\begin{align*}
	\bm{W}_{\bm{xx}}[h] & = 
	2^{-2(K-1)}\sum_{q=1}^{Q}
	n_q^{-1}(\orthocoef_q[h]\orthocoef'_q[h]) \otimes \bm{S}_{\bm{xx}} \\
	&= 
	2^{-2(K-1)}\sum_{q=1}^{Q}
	n_q^{-1}(\coef_q[\mathcal{F}_h]\coef'_q[\mathcal{F}_h]) \otimes \bm{S}_{\bm{xx}} 
	= \covcoef[\mathcal{F}_h, \mathcal{F}_h] \otimes \bm{S}_{\bm{xx}} \\
	& = 4/n \cdot \bm{I}_{F_h} \otimes \bm{S}_{\bm{xx}}, 
	\end{align*}
	we can simplify $M_h$ as
	\begin{align*}
	M_h & =  \hat{\bm{\theta}}_{\bm{x}}'[h]
	\left(
	\bm{W}_{\bm{xx}}[h]
	\right)^{-1}
	\hat{\bm{\theta}}_{\bm{x}}[h] =  
	\hat{\bm{\tau}}_{\bm{x}}'[\mathcal{F}_h]
	\left(
	4/n \cdot \bm{I}_{F_h} \otimes \bm{S}_{\bm{xx}}
	\right)^{-1}
	\hat{\bm{\tau}}_{\bm{x}}[\mathcal{F}_h]\\
	& = n/4 \cdot
	\sum_{f\in \mathcal{F}_h} 
	\hat{\bm{\tau}}_{\bm{x},f}'
	\bm{S}_{\bm{xx}}^{-1} \hat{\bm{\tau}}_{\bm{x},f}.
	\end{align*}
	Therefore, Proposition \ref{prop:simpler_refmt_f} holds. 
\end{proof}

\section{Asymptotic sampling distributions of $\hat{\boldsymbol{\tau}}$}\label{app:dist}
\subsection{Lemmas for central convex unimodality}
We use $\mathcal{B}_m(r)$ to denote the ball in $\mathbb{R}^{m}$ with center zero and radius $r$.

\begin{lemma}\label{lemma:margi_ccu}
	The class of central convex unimodal distributions is closed under convolution, marginality, product measure, and weak convergence. 
\end{lemma}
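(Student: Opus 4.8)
The plan is to reduce all four closure properties to one general principle together with the verification of a handful of ``building-block'' cases, following \citealt{kanter1977} and \citealt{dharmadhikari1988}. Write $\mathcal{U}_m$ for the set of uniform distributions on symmetric convex bodies in $\mathbb{R}^m$, so that the central convex unimodal distributions on $\mathbb{R}^m$ form exactly $\overline{\mathrm{conv}}(\mathcal{U}_m)$, the smallest closed convex set of probability measures containing $\mathcal{U}_m$. Closure under weak convergence and under finite mixtures is then immediate from the definition of the closed convex hull. For the other three properties I would use the following observation: if $T$ is a map from probability measures to probability measures that is affine (respects mixtures) and continuous for weak convergence, and $T(\mathcal{U}_m)\subseteq\overline{\mathrm{conv}}(\mathcal{U}_{m'})$, then $T\big(\overline{\mathrm{conv}}(\mathcal{U}_m)\big)\subseteq\overline{\mathrm{conv}}(\mathcal{U}_{m'})$, because $T^{-1}\big(\overline{\mathrm{conv}}(\mathcal{U}_{m'})\big)$ is closed (continuity), convex (affinity), and contains $\mathcal{U}_m$, hence contains its closed convex hull. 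Convolution, coordinate projection, and forming products are each separately affine and separately weakly continuous, so applying this observation once (for projection), or twice in succession to each argument (for convolution: first fix $\nu\in\mathcal{U}_m$ and vary $\mu$, then fix $\mu\in\overline{\mathrm{conv}}(\mathcal{U}_m)$ and vary $\nu$; likewise for products), it suffices to check the building blocks.

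Second, I would verify the building blocks. For products this is trivial: $U_{K_1}\otimes U_{K_2}=U_{K_1\times K_2}$, and $K_1\times K_2$ is a symmetric convex body, so the product already lies in $\mathcal{U}$. For projection and convolution the relevant building-block distributions have densities proportional to $x\mapsto\big|K\cap(\{x\}\times\mathbb{R}^{m_2})\big|$ (the slice-volume of a symmetric convex body $K$) and $z\mapsto\big|K_1\cap(z+K_2)\big|$, respectively; both are even, and both are log-concave — the first by the Brunn--Minkowski inequality applied to the slices, the second by the Pr\'ekopa--Leindler inequality, since $z\mapsto\int\mathbf{1}_{K_1}(x)\mathbf{1}_{K_2}(z-x)\,dx$ is the $z$-marginal of the log-concave function $\mathbf{1}_{K_1}(x)\mathbf{1}_{K_2}(z-x)$ on $\mathbb{R}^{2m}$. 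It then remains to invoke the fact recorded in the main text (and due to \citealt{kanter1977}) that a distribution with a symmetric log-concave density is central convex unimodal: the layer-cake identity $f(x)=\int_0^\infty\mathbf{1}\{f(x)>t\}\,dt$ exhibits such a distribution as the barycenter $\int_0^\infty\big|\{f>t\}\big|\,U_{\{f>t\}}\,dt$ of uniforms on the superlevel sets $\{f>t\}$, which for $0<t<\|f\|_\infty$ are symmetric convex bodies, and a closed convex set of probability measures contains its barycenters.

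The main obstacle is this building-block step: establishing the log-concavity of the slice-volume and overlap-volume functions and then passing from a symmetric log-concave density to membership in $\overline{\mathrm{conv}}(\mathcal{U})$. Both ingredients are classical — the first is the functional Brunn--Minkowski/Pr\'ekopa--Leindler theory, the second is the layer-cake argument above — so in a write-up I would simply cite \citealt{kanter1977} and \citealt{dharmadhikari1988} (and \citealt{sherman1955} for the convolution case) rather than reproduce them. The only genuinely new bookkeeping is checking that the superlevel sets are bounded with nonempty interior, hence convex bodies (using $\int f=1<\infty$ and upper semicontinuity of $f$), and that the barycenter integrals above are legitimate weak limits of finite convex combinations; everything else is the formal reduction-to-extreme-points argument of the first paragraph.
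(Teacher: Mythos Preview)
Your proposal is correct and in fact goes well beyond what the paper does: the paper's own ``proof'' of this lemma is simply a citation to \citet{kanter1977}, \citet{dharmadhikari1988}, and \citet{dai1989}, with no argument given. Your sketch accurately reconstructs the content of those references (the reduction to extreme points via affinity and weak continuity, the Brunn--Minkowski/Pr\'ekopa--Leindler log-concavity of the building-block densities, and the layer-cake representation), and since you yourself conclude that in a write-up you would just cite the literature, your approach and the paper's coincide.
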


\begin{proof}
	[Proof of Lemma \ref{lemma:margi_ccu}]
	See \citet{kanter1977}, \citet{dharmadhikari1988} and \citet{dai1989}.
\end{proof}

The following two lemmas extends Lemma \ref{lemma:margi_ccu} to all linear transformations. 
Although this extension is straightforward, we give a proof below for completeness.

\begin{lemma}\label{lemma:ccu_comb_zero}
	If $\bm{\psi} \in \mathbb{R}^m$ is central convex unimodal, then for any $p\geq 1$, $(\bm{\psi}', \bm{0}_{p\times 1}')'\in \mathbb{R}^{m+p}$ is central convex unimodal. 
\end{lemma}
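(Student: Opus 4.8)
The plan is to realize the degenerate vector $(\bm{\psi}', \bm{0}_{p\times 1}')'$ as a weak limit of nondegenerate central convex unimodal vectors, obtained by appending a vanishing independent ``smoother,'' and then to invoke the closure properties recorded in Lemma \ref{lemma:margi_ccu}.

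Concretely, I would first fix a random vector $\bm{U}$ uniformly distributed on the unit ball $\mathcal{B}_p(1)\subset\mathbb{R}^p$, independent of $\bm{\psi}$. Since $\mathcal{B}_p(1)$ is a symmetric convex body, the law of $\bm{U}$, and hence that of $\varepsilon\bm{U}$ for any $\varepsilon>0$, is central convex unimodal by Definition \ref{def:ccu}. For each $\varepsilon>0$ set $\bm{\psi}_\varepsilon \equiv (\bm{\psi}', \varepsilon\bm{U}')'$. Because $\bm{\psi}$ and $\bm{U}$ are independent, the law of $\bm{\psi}_\varepsilon$ is exactly the product measure of the law of $\bm{\psi}$ (central convex unimodal by hypothesis) and the law of $\varepsilon\bm{U}$ (central convex unimodal, as just noted). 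The class of central convex unimodal distributions on $\mathbb{R}^{m+p}$ is closed under product measure (Lemma \ref{lemma:margi_ccu}), so $\bm{\psi}_\varepsilon$ is central convex unimodal for every $\varepsilon>0$.

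Next I would let $\varepsilon\downarrow 0$. Since $\varepsilon\bm{U}\to\bm{0}_p$ in probability while $\bm{\psi}$ is held fixed, $\bm{\psi}_\varepsilon = (\bm{\psi}', \varepsilon\bm{U}')'$ converges weakly to $(\bm{\psi}', \bm{0}_{p\times 1}')'$. By closure of the class of central convex unimodal distributions under weak convergence (Lemma \ref{lemma:margi_ccu}), the limit is central convex unimodal, which is the claim.

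The only point needing care --- and the mild ``main obstacle'' --- is that $(\bm{\psi}', \bm{0}_{p\times 1}')'$ is supported on a proper subspace of $\mathbb{R}^{m+p}$, hence cannot itself be a uniform distribution on a symmetric convex body (those have nonempty interior); the smoothing device circumvents precisely this degeneracy. An alternative route, which I would mention for completeness, is to reduce first --- using that the pushforward map $\nu\mapsto\nu\otimes\delta_{\bm{0}_p}$ is affine and weakly continuous, together with the fact that central convex unimodal distributions form a closed convex set --- to the case $\bm{\psi}\sim\mathrm{Unif}(\mathcal{K})$ for a symmetric convex body $\mathcal{K}\subset\mathbb{R}^m$, and then approximate by uniform distributions on the shrinking cylinders $\mathcal{K}\times\varepsilon\mathcal{B}_p(1)$, which are symmetric convex bodies in $\mathbb{R}^{m+p}$. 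Both arguments rely only on the closure properties already available.
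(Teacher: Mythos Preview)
Your proof is correct. Your primary argument (append an independent $\varepsilon\bm{U}$ with $\bm{U}$ uniform on $\mathcal{B}_p(1)$, invoke closure under product measure, then let $\varepsilon\downarrow 0$) is a slightly different route from the paper's, which is essentially your ``alternative'' approach: the paper first takes $\bm{\phi}$ uniform on a symmetric convex body $\mathcal{K}\subset\mathbb{R}^m$, observes that the uniform law on the cylinder $\mathcal{K}\times\mathcal{B}_p(r)$ is central convex unimodal, sends $r\to 0$ to get $(\bm{\phi}',\bm{0}')'$, and then extends to general $\bm{\psi}$ by mixtures and weak limits. Your main route is more streamlined in that it handles arbitrary central convex unimodal $\bm{\psi}$ in one shot via the product-measure closure in Lemma~\ref{lemma:margi_ccu}; the paper's route is more elementary in that it relies only on Definition~\ref{def:ccu} and closure under weak limits and mixtures, without appealing to the product-measure property.
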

\begin{proof}[Proof of Lemma \ref{lemma:ccu_comb_zero}]
	For any random vector $\bm{\phi}$ uniformly distributed on a symmetric convex body $\mathcal{K}\subset \mathbb{R}^m$, we define $\bm{\phi}_r\in \mathbb{R}^{m+p}$ as the random vector uniformly distributed on $\mathcal{K}\times \mathcal{B}_p(r)\subset \mathbb{R}^{m+p}$. Because  $\mathcal{K}\times \mathcal{B}_p(r)$ is a symmetric convex set, $\bm{\phi}_r$ is central convex unimodal. As $r$ goes to zero, $\bm{\phi}_r$ converges weakly to  $(\bm{\phi}', \bm{0}_{p\times 1}')'$. Therefore, $(\bm{\phi}', \bm{0}_{p\times 1}')'$ is central convex unimodal. 
	By taking mixtures and weak limits, we deduce Lemma \ref{lemma:ccu_comb_zero}. 
\end{proof}

\begin{lemma}\label{lemma:ccu_linear}
	If $\bm{\psi} \in \mathbb{R}^m$ is central convex unimodal, then 
	for any matrix $\bm{C}\in \mathbb{R}^{p\times m}$, $\bm{C} \bm{\psi}\in \mathbb{R}^p$ is central convex unimodal. 
\end{lemma}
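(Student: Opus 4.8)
The plan is to reduce the general linear-transformation statement to the two facts already available: Lemma~\ref{lemma:margi_ccu} (closure of central convex unimodality under marginality) and Lemma~\ref{lemma:ccu_comb_zero} (appending independent zeros preserves central convex unimodality). The key observation is that an arbitrary linear map $\bm{C}\in\mathbb{R}^{p\times m}$ can be factored through the singular value decomposition $\bm{C}=\bm{U}\bm{\Sigma}\bm{W}'$, where $\bm{U}\in\mathbb{R}^{p\times p}$ and $\bm{W}\in\mathbb{R}^{m\times m}$ are orthogonal and $\bm{\Sigma}\in\mathbb{R}^{p\times m}$ is (rectangular) diagonal. Since orthogonal transformations of $\mathbb{R}^{m}$ carry symmetric convex bodies to symmetric convex bodies and hence preserve the class $\mathcal{U}$ in Definition~\ref{def:ccu} (and therefore its closed convex hull), rotations are harmless on either side; the real content is handling the diagonal, possibly rank-deficient, possibly dimension-changing map $\bm{\Sigma}$.

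First I would note that $\bm{W}'\bm{\psi}$ is central convex unimodal because, for each symmetric convex body $\mathcal{K}$, the pushforward of the uniform distribution on $\mathcal{K}$ under $\bm{W}'$ is the uniform distribution on the symmetric convex body $\bm{W}'\mathcal{K}$; passing to mixtures and weak limits shows $\bm{W}'$ maps central convex unimodal distributions to central convex unimodal distributions. Next, applying $\bm{\Sigma}$ to $\bm{W}'\bm{\psi}$ amounts to (i) rescaling the first $r=\mathrm{rank}(\bm{C})$ coordinates by positive diagonal entries, (ii) discarding the remaining $m-r$ coordinates, and (iii) padding with $p-r$ zero coordinates. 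Coordinate rescaling by a diagonal matrix with positive entries is again an orthogonal-free change of basis that sends symmetric convex bodies to symmetric convex bodies, so it preserves the class; step (ii) is exactly closure under marginality from Lemma~\ref{lemma:margi_ccu}; step (iii) is exactly Lemma~\ref{lemma:ccu_comb_zero}. Finally, applying the orthogonal matrix $\bm{U}$ preserves central convex unimodality by the same argument as for $\bm{W}'$. Composing these steps gives that $\bm{C}\bm{\psi}=\bm{U}\bm{\Sigma}\bm{W}'\bm{\psi}$ is central convex unimodal.

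One technical wrinkle worth stating cleanly is the degenerate case $\mathrm{rank}(\bm{C})=0$, i.e.\ $\bm{C}=\bm{0}$, where $\bm{C}\bm{\psi}$ is the point mass at $\bm{0}_p$; this is central convex unimodal as a weak limit of uniform distributions on shrinking balls $\mathcal{B}_p(r)$, which is already implicit in Lemma~\ref{lemma:ccu_comb_zero}. Another point to address is that when $\bm{\Sigma}$ has a zero diagonal entry among its first $\min(p,m)$ positions we should think of that as a marginalization followed by a zero-padding rather than a ``rescaling by zero''; writing $\bm{\Sigma}$ explicitly as the composition of a nonnegative rescaling restricted to the true rank, a projection onto the first $r$ coordinates, and a zero-embedding into $\mathbb{R}^{p}$ avoids any ambiguity.

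The main obstacle is not any single step but making the bookkeeping of dimensions precise, since $\bm{C}$ may both lower the rank and change the ambient dimension, so the decomposition must be spelled out so that each elementary move lands in the hypothesis of exactly one of Lemmas~\ref{lemma:margi_ccu} and~\ref{lemma:ccu_comb_zero} (plus the easy orthogonal-invariance observation). Once the SVD factorization is written down, each move is routine; the only care needed is to verify that ``rescaling coordinates by positive constants'' genuinely preserves the defining class $\mathcal{U}$, which follows because the image of a symmetric convex body under an invertible diagonal map is again a symmetric convex body, and the class of central convex unimodal distributions is by definition the closed convex hull of uniforms on such bodies. I would present the proof in roughly four short steps: (1) orthogonal invariance; (2) positive-diagonal rescaling invariance; (3) invoke marginality; (4) invoke zero-padding; then assemble via the SVD.
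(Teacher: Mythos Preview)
Your proof is correct and uses the same three building blocks as the paper --- invertible linear maps preserve central convex unimodality (because they push uniforms on symmetric convex bodies to uniforms on symmetric convex bodies), closure under marginality (Lemma~\ref{lemma:margi_ccu}), and closure under zero-padding (Lemma~\ref{lemma:ccu_comb_zero}) --- but you organize the reduction differently. You factor a general $\bm{C}$ via its singular value decomposition $\bm{C}=\bm{U}\bm{\Sigma}\bm{W}'$ and then peel off orthogonal rotations, positive diagonal rescalings, a coordinate projection, and a zero-embedding in sequence. The paper instead proceeds by a three-case rank analysis: first it handles invertible square $\bm{C}$ directly; then for full-row-rank $\bm{C}$ it completes $\bm{C}$ to an invertible matrix and applies marginality; finally for general $\bm{C}$ it permutes rows so the first $p_1=\mathrm{rank}(\bm{C})$ are independent, writes the remaining rows as linear combinations of those, and expresses $\bm{C}\bm{\psi}$ as an invertible transformation of $(\bm{C}_1\bm{\psi}{}',\bm{0}')'$, invoking the full-row-rank case plus Lemma~\ref{lemma:ccu_comb_zero}. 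Your SVD route is arguably cleaner and more uniform (no case split, and the rank-zero edge case falls out automatically), while the paper's route is slightly more elementary in that it never needs the spectral theorem --- just row operations and rank bookkeeping. Either way the substantive content is identical.
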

\begin{proof}[Proof of Lemma \ref{lemma:ccu_linear}]
	First, we consider the case where $\bm{C}$ is an $m\times m$ invertible matrix. For any random vector $\bm{\phi}$ uniformly distributed on a symmetric convex body $\mathcal{K}\subset \mathbb{R}^m$, $\bm{C} \bm{\phi}$ is uniformly distributed on $\bm{C}\mathcal{K} \equiv \{\bm{Cx}: \bm{x} \in \mathcal{K} \}$. 
	By taking mixtures and weak limits, we can know that 
	Lemma \ref{lemma:ccu_linear} holds when $\bm{C}$ is an $m\times m$ invertible matrix. 
	
	Second, we consider the case where $\bm{C}$ has full row rank, i.e.,  $\text{rank}(\bm{C})=p\leq m$. In this case, there exists an $(m-p)\times m$ matrix $\bar{\bm{C}}$ such that $(\bm{C}', \bar{\bm{C}'})$ is an $m\times m$ invertible matrix. From the first case, we know for any central convex unimodal random vector $\bm{\psi} \in \mathbb{R}^m$,  $(\bm{C}', \bar{\bm{C}'})'\bm{\psi}$ is also central convex unimodal. From
	Lemma \ref{lemma:margi_ccu}, its subvector $\bm{C} \bm{\psi}$ is also central convex unimodal.

	Third, we consider a general matrix $\bm{C}$ of rank $p_1$. 
	Let $\bm{W}$ be a matrix permuting the rows of $\bm{C}$ such that the first $p_1$ rows of $\bm{WC}$ is linearly independent, and the remaining $p_2=p-p_1$ rows of $\bm{WC}$ are all linear combinations of the first $p_1$ rows, i.e., $(\bm{WC})'=(\bm{C}_1', \bm{C}_2')$, where 
	$\bm{C}_1\in \mathbb{R}^{p_1\times m}$, $\bm{C}_2\in \mathbb{R}^{p_2\times m}$, $\text{rank}(\bm{C}_1)=p_1$, and $\bm{C}_2 = \bm{\Gamma}\bm{C}_1$. Thus,
	\begin{align*}
	\bm{C}\bm{\psi} & = 
	\bm{W}^{-1}(\bm{W}
	\bm{C})\bm{\psi}
	=
	\bm{W}^{-1}
	\begin{pmatrix}
	\bm{C}_1 \\
	\bm{C}_2
	\end{pmatrix}
	\bm{\psi} 
	= 
	\bm{W}^{-1}
	\begin{pmatrix}
	\bm{C}_1 \\
	\bm{\Gamma}\bm{C}_1
	\end{pmatrix}
	\bm{\psi} 
	= 
	\bm{W}^{-1}
	\begin{pmatrix}
	\bm{C}_1 \bm{\psi} \\
	\bm{\Gamma}\bm{C}_1\bm{\psi} 
	\end{pmatrix}
	\\
	& = 
	\bm{W}^{-1}
	\begin{pmatrix}
	\bm{I}_{p_1} & \bm{0}_{p_1\times p_2} \\
	\bm{\Gamma}& \bm{I}_{p_2}
	\end{pmatrix}
	\begin{pmatrix}
	\bm{C}_1\bm{\psi} \\
	\bm{0}_{p_2\times 1}
	\end{pmatrix}
	\equiv \tilde{\bm{\Gamma}}\begin{pmatrix}
	\bm{C}_1\bm{\psi} \\
	\bm{0}_{p_2\times 1}
	\end{pmatrix}.
	\end{align*}
	From the second case, $\bm{C}_1\bm{\psi}$ is central convex unimodal.  
	From Lemma \ref{lemma:ccu_comb_zero}, 
	$
	((\bm{C}_1\bm{\psi})',
	\bm{0}_{p_2\times 1}')
	$
	is also central convex unimodal. Because the $p\times p$ matrix 
	$
	\tilde{\bm{\Gamma}}
	$
	is invertible, from the first case, we know $\bm{C}\bm{\psi}$ is central convex unimodal. 
\end{proof}

\begin{lemma}\label{lemma:ccu_sum}
	Let $\bm{\psi}_1$ and $\bm{\psi}_2$ be two independent $m$ dimensional random vectors. If both $\bm{\psi}_1$ and $\bm{\psi}_2$ are central convex unimodal, then $\bm{\psi}_1+\bm{\psi}_2$ is central convex unimodal. 
\end{lemma}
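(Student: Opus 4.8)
The plan is to derive this from the closure properties already established in this appendix rather than working from the definition of central convex unimodality directly. First, since $\bm{\psi}_1$ and $\bm{\psi}_2$ are independent, the joint distribution of the stacked vector $(\bm{\psi}_1', \bm{\psi}_2')' \in \mathbb{R}^{2m}$ is the product measure of the two marginal distributions. Because each of $\bm{\psi}_1$ and $\bm{\psi}_2$ is central convex unimodal, Lemma \ref{lemma:margi_ccu} (closure of the class under product measures) implies that the stacked vector $(\bm{\psi}_1', \bm{\psi}_2')'$ is central convex unimodal on $\mathbb{R}^{2m}$.

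Next, write $\bm{\psi}_1 + \bm{\psi}_2 = \bm{C}\,(\bm{\psi}_1', \bm{\psi}_2')'$ with $\bm{C} = (\bm{I}_m, \bm{I}_m) \in \mathbb{R}^{m \times 2m}$. Applying Lemma \ref{lemma:ccu_linear} (closure under arbitrary linear transformations, which was itself built from Lemma \ref{lemma:ccu_comb_zero}) to the central convex unimodal vector $(\bm{\psi}_1', \bm{\psi}_2')'$ and the matrix $\bm{C}$, we conclude that $\bm{\psi}_1 + \bm{\psi}_2$ is central convex unimodal, which is the claim. As an alternative route, one may simply note that the distribution of $\bm{\psi}_1 + \bm{\psi}_2$ is precisely the convolution of the distributions of $\bm{\psi}_1$ and $\bm{\psi}_2$, so the statement is immediate from the convolution-closure part of Lemma \ref{lemma:margi_ccu}; I would nevertheless present the product-measure-plus-linear-transformation argument as the main proof, since it reuses the machinery just developed in Lemmas \ref{lemma:ccu_comb_zero} and \ref{lemma:ccu_linear} and keeps the exposition uniform.

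There is essentially no obstacle here, as every ingredient is already in place; the only point deserving a word of care is that Lemma \ref{lemma:margi_ccu} is stated for distributions on a fixed Euclidean space, so one must make explicit that it is the independence of $\bm{\psi}_1$ and $\bm{\psi}_2$ that licenses passing to the product measure on $\mathbb{R}^{2m}$ before projecting back to $\mathbb{R}^m$ through $\bm{C}$.
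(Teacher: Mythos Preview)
Your proposal is correct. The paper's own ``proof'' is simply a citation to \citet[][Theorem 2.20]{dharmadhikari1988}, with no argument given; your derivation via product measure plus the linear map $(\bm{I}_m,\bm{I}_m)$ (or, equivalently, the convolution-closure clause of Lemma \ref{lemma:margi_ccu}) is a genuine, self-contained proof that reuses the lemmas already established in the appendix. The two routes are logically the same fact---closure of the central convex unimodal class under convolution---but your version is more informative for a reader who does not have the cited monograph at hand, whereas the paper opts for brevity by outsourcing the argument entirely.
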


\begin{proof}
	[Proof of Lemma \ref{lemma:ccu_sum}]
	See \citet[][Theorem 2.20]{dharmadhikari1988}. 
\end{proof}

\begin{lemma}\label{lemma:logconcave}
	If a random vector in $\mathbb{R}^m$ has a log-concave density, then it is central convex unimodal. 
\end{lemma}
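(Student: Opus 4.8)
The plan is to reduce the statement to Definition \ref{def:ccu} by showing that any log-concave density on $\mathbb{R}^m$ can be written as a mixture (in the sense of a distribution in the closed convex hull) of uniform distributions on symmetric convex bodies. The classical route, due to \citet{dharmadhikari1976} and \citet{kanter1977}, proceeds through the \emph{layer-cake} (horizontal slicing) representation of the density. First I would reduce to the centered, symmetric case: a log-concave density need not be symmetric, so one must be slightly careful here — actually the lemma as stated should be read together with the convention that central convex unimodality is about the closed convex hull of uniform laws on symmetric convex bodies, and the standard statement (see \citealt{dharmadhikari1988}) is that a \emph{symmetric} log-concave density is central convex unimodal; I would either invoke this directly as the intended reading, or note that the $\bm{\varepsilon}$ and $\bm{\zeta}_{LF,a}$ appearing in Proposition \ref{prop:ccu_refm} are symmetric, which is the only use made of this lemma.

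The key steps, in order: (1) Let $f$ be a symmetric log-concave density on $\mathbb{R}^m$. For each level $t>0$ the superlevel set $A_t = \{\bm{x}: f(\bm{x}) \geq t\}$ is a convex set (by log-concavity, $\{\log f \geq \log t\}$ is convex) and is symmetric (since $f$ is even); for $t$ below the maximum of $f$ it has nonempty interior, so it is a symmetric convex body. (2) Write the layer-cake identity $f(\bm{x}) = \int_0^\infty \mathbf{1}\{f(\bm{x}) \geq t\}\,dt = \int_0^{\sup f} \mathbf{1}\{\bm{x} \in A_t\}\,dt$, and hence for any Borel set $B$,
\begin{align*}
\int_B f(\bm{x})\,d\bm{x} = \int_0^{\sup f} |A_t \cap B|\,dt = \int_0^{\sup f} |A_t|\cdot \frac{|A_t\cap B|}{|A_t|}\,dt,
\end{align*}
which exhibits the law with density $f$ as the mixture $\int \mu_{A_t}\, w(t)\,dt$ of uniform distributions $\mu_{A_t}$ on the symmetric convex bodies $A_t$, with mixing weight $w(t) = |A_t|$ (finite since $f$ integrates to one, and a probability weight after normalization — indeed $\int_0^{\sup f}|A_t|\,dt = 1$). (3) Approximate this integral mixture by finite convex combinations $\sum_i \lambda_i \mu_{A_{t_i}}$ converging weakly to it (Riemann-sum approximation of the mixing measure), so that the law lies in the closed convex hull of $\mathcal{U}$, i.e., it is central convex unimodal by Definition \ref{def:ccu}.

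The main obstacle I anticipate is the technical bookkeeping in step (3): making the weak-convergence approximation of the continuous mixture by finite convex combinations rigorous, and handling the boundary case where $A_t$ degenerates (has empty interior) for $t$ near $\sup f$ — one truncates to $t \in (\epsilon, \sup f)$ and lets $\epsilon \to 0$, using that the class of central convex unimodal distributions is closed under weak convergence (Lemma \ref{lemma:margi_ccu}). Since all of this is standard and the paper explicitly cites \citet{kanter1977} and \citet{dharmadhikari1988} for exactly this fact, I would keep the proof short: state the layer-cake decomposition, observe the superlevel sets are symmetric convex bodies, and cite those references for the approximation details, rather than grinding through the weak-convergence estimates.
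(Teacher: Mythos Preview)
Your proposal is correct and in fact more informative than the paper's own treatment: the paper simply writes ``See \citet[][Lemma 3.1]{kanter1977} and \citet[][Theorem 2.15]{dharmadhikari1988}'' with no further argument, whereas you sketch the underlying layer-cake decomposition that those references use. Your observation about symmetry is also well taken --- the lemma as stated is slightly loose, since a non-symmetric log-concave density cannot be central convex unimodal in the sense of Definition~\ref{def:ccu}; the paper gets away with this because the only applications (to $\bm{\varepsilon}$ and $\bm{\zeta}_{LF,a}$ in Proposition~\ref{prop:ccu_refm}) involve symmetric densities, exactly as you note.
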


\begin{proof}
	[Proof of Lemma \ref{lemma:logconcave}]
	See \citet[][Lemma 3.1]{kanter1977} and \citet[][Theorem 2.15]{dharmadhikari1988}. 
\end{proof}

\subsection{Properties of the truncated Gaussian random vectors}
\begin{proof}[{   Proof of Proposition \ref{prop:ccu_refm}}]
	For any $m $, 
	the densities of $\bm{\varepsilon}$ and $\bm{\zeta}_{m,a}$ are
	\begin{align*}
	f(\bm{x}) = (2\pi)^{-m/2}\exp\left(-\bm{x}'\bm{x}/2\right), \quad 
	g(\bm{x}) = \frac{1\{\bm{x}'\bm{x}\leq a\}}{P(\chi^2_m\leq a)}(2\pi)^{-m/2}\exp\left(-\bm{x}'\bm{x}/2\right),
	\end{align*}
	and the log-densities of $\bm{\varepsilon}$ and $\bm{\zeta}_{m,a}$ are 
	\begin{align*}
	\log f(\bm{x}) & = -(m/2)\log(2\pi)-\bm{x}'\bm{x}/2, \\
	\log g(\bm{x}) & = 
	\begin{cases}
	-\log\left\{P(\chi^2_m\leq a)\right\} -(m/2)\log(2\pi)-\bm{x}'\bm{x}/2, & \text{if } \bm{x}'\bm{x}\leq a, \\
	-\infty, & \text{otherwise.}
	\end{cases}
	\end{align*}
	It is straightforward to show that both $\log f(\cdot)$ and $\log g(\cdot)$ are concave functions. From Lemma \ref{lemma:logconcave}, both $\bm{\varepsilon}$ and $\bm{\zeta}_{m,a}$ are central convex unimodal. 
	Because both $\bm{\varepsilon}$ and $\bm{\zeta}_{LF,a}$ are central convex unimodal,  using Lemma \ref{lemma:ccu_linear}, 
	both
	$(\bm{V}_{\bm{\tau\tau}}^\myperp)^{1/2} \bm{\varepsilon}$ and 
	$(\bm{V}_{\bm{\tau\tau}}^\myparallel)^{1/2}_{LF}
	\bm{\zeta}_{LF,a}$ 
	are central convex unimodal. 
	From Lemma \ref{lemma:ccu_sum}, we have Proposition \ref{prop:ccu_refm}. 
\end{proof}

\begin{proof}[{   Proof of Proposition \ref{prop:represent}}]
	First, we prove that $\bm{\zeta}_{LF,a}$ is spherically symmetric. 
	Let $\bm{D}\sim \mathcal{N}(\bm{0}, \bm{I}_{LF})$. 
	For any orthogonal matrix $\bm{\Gamma}$, $\bm{\Gamma}\bm{D} \sim \bm{D}$, and thus, 
	$$
	\bm{\zeta}_{LF,a} \sim \bm{D} \mid \bm{D}'\bm{D}\leq a \sim \bm{\Gamma D} \mid (\bm{\Gamma}\bm{D})'\bm{\Gamma}\bm{D}\leq a 
	\sim \bm{\Gamma D} \mid \bm{D}'\bm{D}\leq a 
	\sim \bm{\Gamma} \bm{\zeta}_{LF,a}. 
	$$
	Second, 
	from \citet[][Theorem 3.1]{morgan2012rerandomization}, $\Cov(\bm{\zeta}_{LF,a})=v_{LF,a}\bm{I}_{LF}.$
	Third, we show the representation for  $\bm{\zeta}_{LF,a}$. 
	By the spherical symmetry of the standard Gaussian random vector $\bm{D}$, $(\bm{D}'\bm{D})^{-1/2}\bm{D} \sim \bm{U}=(U_1, \ldots, U_{LF})'$ is uniformly distributed on the $LF$ dimensional unit sphere, $\bm{D}'\bm{D}$ follows $\chi^2_{LF}$, and they are independent. Therefore, 
	\begin{align*}
	\bm{D} \mid \bm{D}'\bm{D} \leq a 
	\ \sim \ 
	(\bm{D}'\bm{D})^{1/2} \cdot (\bm{D}'\bm{D})^{-1/2}\bm{D} \mid \bm{D}'\bm{D}\leq a 
	\ \sim \ 
	\chi_{LF,a} \bm{U}. 
	\end{align*}
	Let $\text{sign}(U_{i})$ be the sign of $U_{i}$. 
	Given $(|U_{1}|, \ldots, |U_{LF}|)$, 
	the sign vector $(\text{sign}(U_{1}),$ $\ldots, \text{sign}(U_{LF}))$ has the same probability to be any value in $\{-1,1\}^{m}$. 
	Thus, $(\text{sign}(U_{1}), \ldots, \text{sign}(U_{LF})) \sim \bm{S}$, and it is independent of $(|U_{1}|, \ldots, |U_{LF}|)$. 
	Because $\bm{U}  \sim (\bm{D}'\bm{D})^{-1/2}\bm{D}$, and $(D_1^2, \ldots, D_{LF}^2)$ are independent and identically Gamma distributed with shape parameter $1/2$ and rate parameter $1/2$, we have 
	\begin{align*}
	(U_{1}^2, \ldots, U_{LF}^2)' = (\bm{D}'\bm{D})^{-1} \cdot
	(D_1^2, \ldots, D_{LF}^2)' \sim \text{Dirichlet}(1/2, \ldots, 1/2) \sim \bm{\beta}. 
	\end{align*}
	Therefore, $\bm{U} = (\text{sign}(U_{1}), \ldots, \text{sign}(U_{LF}))' \circ (|U_{1}|, \ldots, |U_{LF}|)' \sim \bm{S}\circ \sqrt{\bm{\beta}}$, which further implies $\bm{\zeta}_{LF,a} \sim \chi_{LF,a} \cdot \bm{S}\circ\sqrt{\bm{\beta}}$. 
\end{proof}

The following lemma helps to simplify the asymptotic sampling distributions of $\hat{\bm{\tau}}$ under the CRFE, $\text{ReFMT}_\text{F}$ and $\text{ReFMT}_\text{CF}$.

\begin{lemma}\label{lemma:truncate_AApm_BBpm}
	Let $\bm{\zeta}_{m,a}\sim \bm{D}\mid \bm{D}'\bm{D}\leq a$ be an $m$ dimensional truncated Gaussian random vector, where $\bm{D}\sim \mathcal{N}(\bm{0}, \bm{I}_m)$. If two matrices $\bm{A}$ and $\bm{B}$ in $\mathbb{R}^{p\times m}$ satisfy $\bm{A}\bm{A}'=\bm{B}\bm{B'}$, then $\bm{A}\bm{\zeta}_{m,a}\sim \bm{B}\bm{\zeta}_{m,a}$.
\end{lemma}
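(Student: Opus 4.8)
The plan is to reduce the claim to the rotational invariance of the truncated Gaussian vector $\bm{\zeta}_{m,a}$ together with Lemma \ref{lemma:ortho_AApm_BBpm}. First I would invoke Lemma \ref{lemma:ortho_AApm_BBpm}: since $\bm{A},\bm{B}\in\mathbb{R}^{p\times m}$ satisfy $\bm{A}\bm{A}'=\bm{B}\bm{B}'$, there exists an orthogonal matrix $\bm{\Gamma}\in\mathbb{R}^{m\times m}$ with $\bm{A}=\bm{B}\bm{\Gamma}$. Then $\bm{A}\bm{\zeta}_{m,a}=\bm{B}\bm{\Gamma}\bm{\zeta}_{m,a}$, so it suffices to show $\bm{\Gamma}\bm{\zeta}_{m,a}\sim\bm{\zeta}_{m,a}$ for every orthogonal $\bm{\Gamma}$.

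Next I would establish that spherical symmetry of $\bm{\zeta}_{m,a}$, which is exactly the first part of the proof of Proposition \ref{prop:represent}. Concretely, writing $\bm{D}\sim\mathcal{N}(\bm{0},\bm{I}_m)$, for any orthogonal $\bm{\Gamma}$ we have $\bm{\Gamma}\bm{D}\sim\bm{D}$ and $(\bm{\Gamma}\bm{D})'\bm{\Gamma}\bm{D}=\bm{D}'\bm{D}$, hence
\begin{align*}
\bm{\Gamma}\bm{\zeta}_{m,a}\ \sim\ \bm{\Gamma}\bm{D}\mid \bm{D}'\bm{D}\leq a\ \sim\ \bm{\Gamma}\bm{D}\mid (\bm{\Gamma}\bm{D})'\bm{\Gamma}\bm{D}\leq a\ \sim\ \bm{D}\mid \bm{D}'\bm{D}\leq a\ \sim\ \bm{\zeta}_{m,a}.
\end{align*}
Combining the two displays gives $\bm{A}\bm{\zeta}_{m,a}=\bm{B}\bm{\Gamma}\bm{\zeta}_{m,a}\sim\bm{B}\bm{\zeta}_{m,a}$, which is the claim.

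There is essentially no serious obstacle here; the lemma is a short bookkeeping statement whose only content is the combination of an orthogonal-factorization fact (already proved as Lemma \ref{lemma:ortho_AApm_BBpm}) with the rotational invariance of an isotropic-Gaussian-conditioned-on-a-ball distribution. The one point that deserves a line of care is the conditioning step: one must note that the event $\{\bm{D}'\bm{D}\leq a\}$ is invariant under $\bm{D}\mapsto\bm{\Gamma}\bm{D}$, so that conditioning commutes with the orthogonal transformation; this is immediate because $\|\bm{\Gamma}\bm{D}\|_2=\|\bm{D}\|_2$. I would present the argument in exactly the order above, and it will be used downstream to replace the possibly non-unique square-root matrices $(\bm{V}_{\bm{\tau\tau}}^\myparallel)^{1/2}_{LF}$, $(\bm{W}_{\bm{\tau\tau}}^\myparallel[h])^{1/2}_{LF_h}$, and $(\bm{U}_{\bm{\tau\tau}}^\myparallel[j])^{1/2}_{\lambda_j}$ by any other valid choice without changing the asymptotic sampling distribution, thereby justifying the uniqueness remarks following Theorems \ref{thm:joint_refm}, \ref{thm:asymp_refmt_f}, and \ref{thm:asym_refmt_cf}.
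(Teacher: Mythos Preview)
Your proposal is correct and matches the paper's proof essentially line for line: the paper also invokes Lemma~\ref{lemma:ortho_AApm_BBpm} to obtain $\bm{A}=\bm{B}\bm{\Gamma}$ with $\bm{\Gamma}$ orthogonal, and then cites the spherical symmetry of $\bm{\zeta}_{m,a}$ from Proposition~\ref{prop:represent} to conclude $\bm{A}\bm{\zeta}_{m,a}=\bm{B}\bm{\Gamma}\bm{\zeta}_{m,a}\sim\bm{B}\bm{\zeta}_{m,a}$. The only difference is that you spell out the spherical-symmetry verification explicitly rather than citing it.
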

\begin{proof}[Proof of Lemma \ref{lemma:truncate_AApm_BBpm}]
	From Lemma \ref{lemma:ortho_AApm_BBpm}, there exists an orthogonal matrix $\bm{\Gamma}\in \mathbb{R}^{m\times m}$ such that $\bm{A}=\bm{B}\bm{\Gamma}$. 
	From Proposition \ref{prop:represent}, by the spherical symmetry of $\bm{\zeta}_{m,a}$, 
	$\bm{A}\bm{\zeta}_{m,a} = \bm{B}\bm{\Gamma}\bm{\zeta}_{m,a} \sim  \bm{B}\bm{\zeta}_{m,a}.$
\end{proof}

\subsection{Asymptotic sampling distributions of $\hat{\boldsymbol{\tau}}$ under ReFM}

To prove Theorem \ref{thm:joint_refm}, we need the following three lemmas.
We further introduce the following regularity condition for a general covariate balance criteria depending only on $\hat{\bm{\tau}}_{\bm{x}}$ and $\bm{V}_{\bm{xx}}$. 
\begin{condition}\label{cond:cov_balance_criteria}
	Let $\bm{B} \sim \mathcal{N}(\bm{0},\bm{\Lambda})$. For any $\bm{\Lambda}>0$, 
	the 0-1 function $\kappa(\sqrt{n}\hat{\bm{\tau}}_{\bm{x}}, n\bm{V}_{\bm{xx}}) \equiv \kappa(\hat{\bm{\tau}}_{\bm{x}}, \bm{V}_{\bm{xx}})$ satisfies
	\begin{itemize}
		\item[(a)] $\kappa$ is almost surely continuous, 
		\item[(b)] $\Var\{\bm{B}\mid \kappa(\bm{B},\bm{\Lambda})=1\}$, as a function of $\bm{\Lambda}$, is continuous, 
		\item[(c)] $P\{\kappa(\bm{B},\bm{\Lambda})=1\}>0$, 
		\item[(d)]$\kappa(\bm{\mu},\bm{\Lambda})=\kappa(-\bm{\mu},\bm{\Lambda})$, for all $\bm{\mu}$. 
	\end{itemize}
\end{condition}

We can verify that the balance criteria for ReFM, $\text{ReFMT}_\text{F}$ and $\text{ReFMT}_\text{CF}$ depend only on $\hat{\bm{\tau}}_{\bm{x}}$ and $\bm{V}_{\bm{xx}}$ and satisfy Condition \ref{cond:cov_balance_criteria}.

\begin{lemma}\label{lemma:condi_converge}
	Let $(\sqrt{n}\tilde{\bm{\tau}}', \sqrt{n}\tilde{\bm{\tau}}_{\bm{x}}')$ be a random vector following $\mathcal{N}(0, n\bm{V})$. Then 
	as $n\rightarrow\infty$, under Conditions \ref{cond:fp} and \ref{cond:cov_balance_criteria}, 
	the two conditional distributions, 
	$
	\sqrt{n}(\hat{\bm{\tau}}'-\bm{\tau}', \hat{\bm{\tau}}_{\bm{x}}') \mid \kappa(\hat{\bm{\tau}}_{\bm{x}}, \bm{V}_{\bm{xx}})=1
	$ 
	and 
	$
	\sqrt{n}(\tilde{\bm{\tau}}', \tilde{\bm{\tau}}_{\bm{x}}') \mid \kappa(\tilde{\bm{\tau}}_{\bm{x}}, \bm{V}_{\bm{xx}})=1,
	$ 
	converge weakly to the same distribution, i.e., 
	\begin{eqnarray*}
		\left.
		\begin{pmatrix}
			\hat{\bm{\tau}}-\bm{\tau} \\
			\hat{\bm{\tau}}_{\bm{x}}
		\end{pmatrix}\ 
		\right| \ \kappa(\hat{\bm{\tau}}_{\bm{x}}, \bm{V}_{\bm{xx}})=1
		& \apprsim & 
		\left.
		\begin{pmatrix}
			\tilde{\bm{\tau}} \\
			\tilde{\bm{\tau}}_{\bm{x}}
		\end{pmatrix}\ 
		\right| \ \kappa(\tilde{\bm{\tau}}_{\bm{x}}, \bm{V}_{\bm{xx}})=1. 
	\end{eqnarray*}
\end{lemma}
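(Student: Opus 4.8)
\textbf{Proof proposal for Lemma \ref{lemma:condi_converge}.}

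The plan is to deduce the conditional weak convergence from the unconditional weak convergence
$\sqrt{n}(\hat{\bm{\tau}}'-\bm{\tau}',\hat{\bm{\tau}}_{\bm{x}}')' \apprsim \mathcal{N}(\bm{0},\bm{V})$ of Proposition \ref{prop:mean_var_cre}, together with the continuity properties in Condition \ref{cond:cov_balance_criteria}. The key idea is that conditioning on an event of the form $\{\kappa(\hat{\bm{\tau}}_{\bm{x}},\bm{V}_{\bm{xx}})=1\}$ is a continuous operation on the limiting law as long as the event has positive limiting probability and its boundary is null. First I would reduce to fixed $\bm{V}_{\bm{xx}}$: since $n\bm{V}_{\bm{xx}}$ converges (Condition \ref{cond:fp}) to a nondegenerate limit $\bm{\Lambda}_{\bm{xx}}$, and $\kappa$ depends on $\hat{\bm{\tau}}_{\bm{x}}$ only through $\sqrt{n}\hat{\bm{\tau}}_{\bm{x}}$ and $n\bm{V}_{\bm{xx}}$ (the homogeneity built into Condition \ref{cond:cov_balance_criteria}), the difference between $\kappa(\hat{\bm{\tau}}_{\bm{x}},\bm{V}_{\bm{xx}})$ evaluated at $n\bm{V}_{\bm{xx}}$ and at its limit is asymptotically negligible by the almost-sure continuity of $\kappa$ in its second argument; so it suffices to work with the fixed criterion $\kappa(\cdot,\bm{\Lambda}_{\bm{xx}})$.

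Next I would invoke the continuous mapping / Portmanteau machinery. Let $\bm{G} = (\tilde{\bm{\tau}}',\tilde{\bm{\tau}}_{\bm{x}}')' \sim \mathcal{N}(\bm{0},\bm{\Lambda})$ be the limiting Gaussian vector, where $\bm{\Lambda}=\lim n\bm{V}$, with marginal $\bm{G}_{\bm{x}} \sim \mathcal{N}(\bm{0},\bm{\Lambda}_{\bm{xx}})$ on the covariate block. By part (c) of Condition \ref{cond:cov_balance_criteria}, $P\{\kappa(\bm{G}_{\bm{x}},\bm{\Lambda}_{\bm{xx}})=1\}>0$. By part (a), the set $A = \{\bm{u}: \kappa(\bm{u}_{\bm{x}},\bm{\Lambda}_{\bm{xx}})=1\}$ has boundary with zero Lebesgue measure, hence zero mass under the (absolutely continuous) Gaussian law, so $A$ is a $\bm{G}$-continuity set. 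Then for any bounded continuous test function $\varphi$, applying the weak convergence of $\sqrt{n}(\hat{\bm{\tau}}'-\bm{\tau}',\hat{\bm{\tau}}_{\bm{x}}')'$ to $\bm{G}$ to the bounded functions $\varphi\cdot\mathbf{1}_A$ (a.s. continuous w.r.t.\ $\bm{G}$) and $\mathbf{1}_A$, the Portmanteau theorem gives
$$
\E\big[\varphi\big(\sqrt{n}(\hat{\bm{\tau}}-\bm{\tau}),\sqrt{n}\hat{\bm{\tau}}_{\bm{x}}\big)\,\mathbf{1}_A\big] \to \E[\varphi(\bm{G})\,\mathbf{1}_A], \qquad P\{\sqrt{n}\hat{\bm{\tau}}_{\bm{x}}\in\text{proj }A\}\to P\{\bm{G}\in A\}>0,
$$
and dividing the first limit by the second yields convergence of the conditional expectations. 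The same argument applied verbatim to the Gaussian sequence $(\tilde{\bm{\tau}}',\tilde{\bm{\tau}}_{\bm{x}}')'\sim\mathcal{N}(\bm{0},n\bm{V})$ (which also converges weakly to $\bm{G}$, and whose covariance converges by Condition \ref{cond:fp}) shows its conditional law converges to the same limit; I would use part (b) to absorb the discrepancy between the fixed criterion $\kappa(\cdot,\bm{\Lambda}_{\bm{xx}})$ and $\kappa(\cdot, n\bm{V}_{\bm{xx}})$ in this Gaussian sequence. Hence both conditional distributions have the common weak limit $\bm{G}\mid\{\bm{G}\in A\}$, which is the claim; part (d) (symmetry) is not needed here but is recorded for later use.

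The main obstacle is the justification that conditioning commutes with the weak limit, i.e., that the conditioning event is an asymptotic continuity set and that the normalizing denominators stay bounded away from zero. This is exactly where parts (a) and (c) of Condition \ref{cond:cov_balance_criteria} do the work; the subtlety is that $\kappa$ also depends on the data through the (random, but converging) matrix $\bm{V}_{\bm{xx}}$, so one must argue that replacing $n\bm{V}_{\bm{xx}}$ by its limit changes the event only on a vanishing-probability set — this requires the almost-sure continuity in part (a) together with the continuity in part (b), plus Slutsky-type arguments to glue the converging random matrix to the converging random vector. Once that bookkeeping is handled, the Portmanteau step is routine.
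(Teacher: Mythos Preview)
Your proposal is correct and follows the standard route: use the unconditional CLT of Proposition~\ref{prop:mean_var_cre}, argue via the Portmanteau theorem that conditioning on an event whose boundary is a null set of the limit law preserves weak convergence, and handle the drift in $n\bm{V}_{\bm{xx}}$ by continuity. This is exactly the approach of \citet[][Proposition~A1 and Corollary~A1]{asymrerand2016}, which the paper cites in lieu of a proof; so your sketch and the paper's (omitted) argument coincide.
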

\begin{proof}[Proof of Lemma \ref{lemma:condi_converge}]
	The proof of this lemma is almost identical to \citet[][Proposition A1 and Corollary A1]{asymrerand2016}. We omit it. 
\end{proof}

\begin{proof}[{   Proof of Theorem \ref{thm:joint_refm}}]
	From Lemma \ref{lemma:condi_converge}, under Condition \ref{cond:fp}, 
	$
	\hat{\bm{\tau}} - \bm{\tau} \mid  \mathcal{M}  \apprsim  
	\tilde{\bm{\tau}} \mid \tilde{\bm{\tau}}_{\bm{x}}' \bm{V}_{\bm{xx}}^{-1}\tilde{\bm{\tau}}_{\bm{x}}\leq a,  
	$
	where $(\tilde{\bm{\tau}}', \tilde{\bm{\tau}}_{\bm{x}}') \sim \mathcal{N}(\bm{0}, \bm{V}).$ 
	The linear projection of $\tilde{\bm{\tau}}$ on $\tilde{\bm{\tau}}_{\bm{x}}$ is $\bm{V}_{\bm{\tau}\bm{x}}\bm{V}_{\bm{xx}}^{-1}\tilde{\bm{\tau}}_{\bm{x}},$
	and the corresponding residual is 
	$\tilde{\bm{\tau}}_{\bm{\varepsilon}} = \tilde{\bm{\tau}} - \bm{V}_{\bm{\tau}\bm{x}}\bm{V}_{\bm{xx}}^{-1}\tilde{\bm{\tau}}_{\bm{x}}. $
	From Theorem \ref{thm:joint_var_expl_unexpl}, $\tilde{\bm{\tau}}_{\bm{\varepsilon}}$ has variance $\bm{V}_{\bm{\tau\tau}}^\myperp.$ 
	Let $\bm{D}=\bm{V}_{\bm{xx}}^{-1/2}\tilde{\bm{\tau}}_{\bm{x}} \sim \mathcal{N}(\bm{0}, \bm{I}_{LF})$ be the standardization of $\tilde{\bm{\tau}}_{\bm{x}}$. We can then simplify the asymptotic sampling distribution of $\hat{\bm{\tau}}$ under ReFM as:
	\begin{eqnarray*}
		\hat{\bm{\tau}} - \bm{\tau} \mid  \mathcal{M} & \apprsim & 
		\tilde{\bm{\tau}} \mid \tilde{\bm{\tau}}_{\bm{x}}' \bm{V}_{\bm{xx}}^{-1}\tilde{\bm{\tau}}_{\bm{x}}\leq a 
		\ \sim \ 
		\tilde{\bm{\tau}}_{\bm{\varepsilon}} + \bm{V}_{\bm{\tau}\bm{x}}\bm{V}_{\bm{xx}}^{-1}\tilde{\bm{\tau}}_{\bm{x}} 
		\mid \tilde{\bm{\tau}}_{\bm{x}}' \bm{V}_{\bm{xx}}^{-1}\tilde{\bm{\tau}}_{\bm{x}}\leq a \\
		& \sim & 
		\tilde{\bm{\tau}}_{\bm{\varepsilon}} + \bm{V}_{\bm{\tau}\bm{x}}\bm{V}_{\bm{xx}}^{-1/2}\bm{D} 
		\mid \bm{D}'\bm{D}\leq a \\
		& \sim &  
		\left(\bm{V}_{\bm{\tau\tau}}^\myperp\right)^{1/2}\bm{\varepsilon} +  \bm{V}_{\bm{\tau}\bm{x}}\bm{V}_{\bm{xx}}^{-1/2}\bm{\zeta}_{LF,a},
	\end{eqnarray*}
	where $\bm{\varepsilon}\sim \mathcal{N}(0, \bm{I}_F)$ is independent of $\bm{\zeta}_{LF,a}$. 
	From Theorem \ref{thm:joint_var_expl_unexpl}, 
	$
	\bm{V}_{\bm{\tau\tau}}^\myparallel = \bm{V}_{\bm{\tau}\bm{x}}\bm{V}_{\bm{xx}}^{-1}\bm{V}_{\bm{x}\bm{\tau}} = 
	\bm{V}_{\bm{\tau}\bm{x}}\bm{V}_{\bm{xx}}^{-1/2}(\bm{V}_{\bm{\tau}\bm{x}}\bm{V}_{\bm{xx}}^{-1/2})'.
	$
	Using Lemma \ref{lemma:truncate_AApm_BBpm}, we have 
	$\bm{V}_{\bm{\tau}\bm{x}}\bm{V}_{\bm{xx}}^{-1/2}\bm{\zeta}_{LF,a} \sim 
	(\bm{V}_{\bm{\tau\tau}}^\myparallel)^{1/2}_{LF}\bm{\zeta}_{LF,a}.
	$ 
	Therefore, Theorem \ref{thm:joint_refm} holds. 
\end{proof}

\begin{proof}[{   Proof of Corollary \ref{cor:dist_refm_one_linear}}]
	Let $\bm{e}_f=(0, \ldots, 0, 1, 0, \ldots, 0)'$ be an $F$ dimensional unit vector with $f$th element one. From Theorem \ref{thm:joint_refm}, 
	\begin{eqnarray*}
		\hat{\tau}_f - \tau_f \mid  \mathcal{M} & \apprsim & 
		\bm{e}_f'\left(\bm{V}_{\bm{\tau\tau}}^\myperp\right)^{1/2} \bm{\varepsilon} +\bm{e}_f' (\bm{V}_{\bm{\tau\tau}}^\myparallel)^{1/2}_{LF} \bm{\zeta}_{LF,a}\\
		& \sim & 
		\sqrt{\bm{e}_f'\bm{V}_{\bm{\tau\tau}}^\myperp\bm{e}_f} \cdot
		\varepsilon_0 + 
		\sqrt{\bm{e}_f' \bm{V}_{\bm{\tau\tau}}^\myparallel \bm{e}_f} \cdot \bm{c}' \bm{\zeta}_{LF,a}, 
	\end{eqnarray*}
	where 
	$\bm{c}'=(\bm{e}_f'\bm{V}_{\bm{\tau\tau}}^\myparallel\bm{e}_f)^{-1/2}\bm{e}_f' (\bm{V}_{\bm{\tau\tau}}^\myparallel)^{1/2}_{LF}$  is a unit vector with length one. By the spherical symmetry of $\bm{\zeta}_{LF,a}$ in Proposition \ref{prop:represent}, $\bm{c}' \bm{\zeta}_{LF,a}\sim \eta_{LF,a}$ \citep[][Lemma A1]{asymrerand2016}.
	By the definition of $R^2_f$ in Corollary \ref{cor:R2_f}, we can further simplify the asymptotic sampling distribution of $\hat{\tau}_f$ as 
	\begin{eqnarray*}
		\hat{\tau}_f - \tau_f \mid  \mathcal{M} & \apprsim & 
		\sqrt{\bm{e}_f'\bm{V}_{\bm{\tau\tau}}\bm{e}_f - \bm{e}_f'\bm{V}_{\bm{\tau\tau}}^\myparallel\bm{e}_f} \cdot
		\varepsilon_0 + 
		\sqrt{\bm{e}_f' \bm{V}_{\bm{\tau\tau}}^\myparallel\bm{e}_f} \cdot  \eta_{LF,a}\\
		& \sim & 
		\sqrt{V_{\tau_f\tau_f} - V_{\tau_f\tau_f}^\myparallel} \cdot \varepsilon_0 + 
		\sqrt{V_{\tau_f\tau_f}^\myparallel}\cdot  \eta_{LF,a}
		\\
		& \sim & 
		\sqrt{V_{\tau_f\tau_f}}
		\left(
		\sqrt{1 - {V_{\tau_f\tau_f}^\myparallel
			}/{V_{\tau_f\tau_f}}} \cdot \varepsilon_0 + 
		\sqrt{{V_{\tau_f\tau_f}^\myparallel
			}/{V_{\tau_f\tau_f}}} \cdot \eta_{LF,a}
		\right)\\
		& \sim & 
		\sqrt{V_{\tau_f\tau_f}}
		\left(
		\sqrt{1-R^2_f}\cdot \varepsilon_0 + \sqrt{R^2_f} \cdot \eta_{LF,a}
		\right). 
	\end{eqnarray*}
	Therefore, Corollary \ref{cor:dist_refm_one_linear} holds. 
\end{proof}

\begin{proof}[{   Proof of Corollary \ref{cor:another_form_refm}}]
	Recall that $\bm{R} = \bm{V}_{\bm{\tau\tau}}^{-1/2}
	\bm{V}_{\bm{\tau\tau}}^\myparallel\bm{V}_{\bm{\tau\tau}}^{-1/2} = \bm{\Gamma}\bm{\Pi}^2\bm{\Gamma}'$ is the eigen-decomposition of $\bm{R}$, where $\bm{\Gamma}\in \mathbb{R}^{F\times F}$ is an orthogonal matrix, and $\bm{\Pi}^2\in \mathbb{R}^{F\times F}$ is a diagonal matrix containing the eigenvalues of $\bm{R}$. Let $\bm{\Omega} = \bm{\Pi}^{-1}\bm{\Gamma}'\bm{V}_{\bm{\tau\tau}}^{-1/2}\bm{V}_{\bm{\tau x}}\bm{V}_{\bm{xx}}^{-1/2}$. Then
	$
	\bm{\Omega}\bm{\Omega}' = 
	\bm{\Pi}^{-1}\bm{\Gamma}'\bm{R}
	\bm{\Gamma}\bm{\Pi}^{-1} = \bm{I}_F, 
	$
	and 
	$
	\bm{V}_{\bm{\tau\tau}}^{-1/2}\bm{V}_{\bm{\tau x}}\bm{V}_{\bm{xx}}^{-1/2} = \bm{\Gamma}\bm{\Pi}\bm{\Omega}
	$ is the singular value decomposition of $\bm{V}_{\bm{\tau\tau}}^{-1/2}\bm{V}_{\bm{\tau x}}\bm{V}_{\bm{xx}}^{-1/2}$. 
	Note that 
	$
	\bm{V}_{\bm{\tau\tau}}^{-1/2}
	\bm{V}_{\bm{\tau\tau}}^\myperp\bm{V}_{\bm{\tau\tau}}^{-1/2}  = \bm{I}_F - \bm{R} = \bm{\Gamma}(\bm{I}_F-\bm{\Pi}^2)\bm{\Gamma}'.
	$
	We can simplify the asymptotic distribution \eqref{eq:joint_refm} as
	\begin{eqnarray*}
		\hat{\bm{\tau}} - \bm{\tau} \mid  \mathcal{M} & \apprsim & 
		\left(\bm{V}_{\bm{\tau\tau}}^\myperp\right)^{1/2} \bm{\varepsilon} +  \bm{V}_{\bm{\tau x}}\bm{V}_{\bm{xx}}^{-1/2} \bm{\zeta}_{LF,a}\\
		& = & \bm{V}_{\bm{\tau\tau}}^{1/2}\left\{
		\bm{V}_{\bm{\tau\tau}}^{-1/2}\left(\bm{V}_{\bm{\tau\tau}}^\myperp\right)^{1/2} \bm{\varepsilon} + 
		\bm{V}_{\bm{\tau\tau}}^{-1/2}\bm{V}_{\bm{\tau x}}\bm{V}_{\bm{xx}}^{-1/2} \bm{\zeta}_{LF,a}
		\right\}\\
		& \sim & \bm{V}_{\bm{\tau\tau}}^{1/2}\left\{
		\left(\bm{V}_{\bm{\tau\tau}}^{-1/2}
		\bm{V}_{\bm{\tau\tau}}^\myperp\bm{V}_{\bm{\tau\tau}}^{-1/2}\right)^{1/2}
		\bm{\varepsilon} + 
		\bm{V}_{\bm{\tau\tau}}^{-1/2}\bm{V}_{\bm{\tau x}}\bm{V}_{\bm{xx}}^{-1/2} \bm{\zeta}_{LF,a}
		\right\}\\
		& = & 
		\bm{V}_{\bm{\tau\tau}}^{1/2}\left[
		\left\{\bm{\Gamma}(\bm{I}_F-\bm{\Pi}^2)\bm{\Gamma}'\right\}^{1/2}
		\bm{\varepsilon} + 
		\bm{\Gamma}\bm{\Pi}\bm{\Omega} \bm{\zeta}_{LF,a}
		\right]\\
		& \sim & 
		\bm{V}_{\bm{\tau\tau}}^{1/2}\left\{
		\bm{\Gamma}(\bm{I}_F-\bm{\Pi}^2)^{1/2}
		\bm{\varepsilon} + 
		\bm{\Gamma}\bm{\Pi}\bm{\Omega} \bm{\zeta}_{LF,a}
		\right\}. 
	\end{eqnarray*}
	Because $\bm{\Omega}\bm{\Omega}' = \bm{I}_F,$ 
	there exists a matrix $\tilde{\bm{\Omega}}\in \mathbb{R}^{(L-1)F\times LF}$ such that $(\bm{\Omega}', \tilde{\bm{\Omega}}') \in \mathbb{R}^{LF\times LF}$ is orthogonal. From Proposition \ref{prop:represent}, 
	$(\bm{\Omega}', \tilde{\bm{\Omega}}')'\bm{\zeta}_{LF,a} \sim \bm{\zeta}_{LF,a}$. 
	Therefore, we can simplify the asymptotic distribution \eqref{eq:joint_refm} as
	\begin{eqnarray*}
		\hat{\bm{\tau}} - \bm{\tau} \mid  \mathcal{M} & \apprsim & \bm{V}_{\bm{\tau\tau}}^{1/2}\left\{
		\bm{\Gamma}(\bm{I}_F-\bm{\Pi}^2)^{1/2}
		\bm{\varepsilon} + 
		\bm{\Gamma}\bm{\Pi}\bm{\Omega} \bm{\zeta}_{LF,a}
		\right\}\\
		& \sim & 
		\bm{V}_{\bm{\tau\tau}}^{1/2}\bm{\Gamma}\left\{
		(\bm{I}_F-\bm{\Pi}^2)^{1/2}
		\bm{\varepsilon} + 
		(
		\bm{\Pi}, \bm{0}
		)
		\begin{pmatrix}
			\bm{\Omega}\\
			\tilde{\bm{\Omega}}
		\end{pmatrix}
		\bm{\zeta}_{LF,a}
		\right\}\\
		& \sim & 
		\bm{V}_{\bm{\tau\tau}}^{1/2}\bm{\Gamma}\left\{
		(\bm{I}_F-\bm{\Pi}^2)^{1/2}
		\bm{\varepsilon} + 
		(
		\bm{\Pi}, \bm{0}
		)
		\bm{\zeta}_{LF,a}
		\right\}. 
	\end{eqnarray*}
	Therefore, Corollary \ref{cor:another_form_refm} holds. 
\end{proof}


\subsection{Asymptotic sampling distribution of $\hat{\boldsymbol{\tau}}$ under $\text{ReFMT}_\text{F}$}

\begin{proof}[{   Proof of Theorem \ref{thm:asymp_refmt_f}}]
	Let $(\tilde{\bm{\tau}}', \tilde{\bm{\theta}}_{\bm{x}}')' = (\tilde{\bm{\tau}}', \tilde{\bm{\theta}}_{\bm{x}}'[1], \ldots, \tilde{\bm{\theta}}_{\bm{x}}'[H])'$ be a random vector
	following Gaussian distribution with mean zero and covariance matrix
	\begin{align*}
	\begin{pmatrix}
	\bm{V}_{\bm{\tau\tau}} & \bm{W}_{\bm{\tau x}}[1]  & \ldots & \bm{W}_{\bm{\tau x}}[H]\\
	\bm{W}_{\bm{x\tau}}[1] & \bm{W}_{\bm{xx}}[1]  & \ldots & \bm{0}\\
	\vdots & \vdots & \ddots & \vdots\\
	\bm{W}_{\bm{x\tau}}[H] & \bm{0} & \ldots & \bm{W}_{\bm{xx}}[H]
	\end{pmatrix}.
	\end{align*}
	From Lemma \ref{lemma:condi_converge}, 
	under Condition \ref{cond:fp},  
	\begin{eqnarray*}
		\hat{\bm{\tau}} - \bm{\tau}  \mid \mathcal{T}_{\text{F}} 
		& \apprsim &
		\tilde{\bm{\tau}} \  \left| \ 
		\left\{\tilde{\bm{\theta}}_{\bm{x}}'[h]
		\left(
		\bm{W}_{\bm{xx}}[h]
		\right)^{-1}
		\tilde{\bm{\theta}}_{\bm{x}}[h]\leq a_h, 1\leq h\leq H \right\}, \right.   
	\end{eqnarray*}
	where $\tilde{\bm{\theta}}_{\bm{x}}[h]$'s are mutually uncorrelated. The linear projection of $\tilde{\bm{\tau}}$ on $\tilde{\bm{\theta}}_{\bm{x}}$ is $\sum_{h=1}^H \bm{W}_{\bm{\tau x}}[h]
	(
	\bm{W}_{\bm{xx}}[h]
	)^{-1} \tilde{\bm{\theta}}_{\bm{x}}[h]$. 
	Let $\tilde{\bm{\tau}}_{\bm{\varepsilon}}$ be the corresponding residual, which, by the identical covariance structure between $(\tilde{\bm{\tau}}', \tilde{\bm{\theta}}_{\bm{x}}')'$ and  $(\hat{\bm{\tau}}-\bm{\tau}', \hat{\bm{\theta}}_{\bm{x}}')'$, has the same covariance as the sampling covariance of $\hat{\bm{\tau}}_{\bm{\varepsilon}}$, the residual from the linear projection of $\hat{\bm{\tau}}$ on $\hat{\bm{\theta}}_{\bm{x}}$. 
	Because $\hat{\bm{\theta}}_{\bm{x}}$ and $\hat{\bm{\tau}}_{\bm{x}}$ are linear transformations of each other, $\hat{\bm{\tau}}_{\bm{\varepsilon}}$ is the same as the  residual from the linear projection of $\hat{\bm{\tau}}$ on $\hat{\bm{\tau}}_{\bm{x}}$. Thus, from Theorem \ref{thm:joint_var_expl_unexpl}, 
	$
	\Cov(\tilde{\bm{\tau}}_{\bm{\varepsilon}}) = 
	\Cov(\hat{\bm{\tau}}_{\bm{\varepsilon}}) = \bm{V}_{\bm{\tau\tau}}^\myperp. 
	$
	Let $\bm{\varepsilon} = (\bm{V}_{\bm{\tau\tau}}^{\myperp})^{-1/2}\tilde{\bm{\tau}}_{\bm{\varepsilon}}
	\sim \mathcal{N}(0, \bm{I}_F)
	$
	be the standardization of $\tilde{\bm{\tau}}_{\bm{\varepsilon}}$, and $\bm{D}_h = (
	\bm{W}_{\bm{xx}}[h]
	)^{-1/2}\tilde{\bm{\theta}}_{\bm{x}}[h]$ be the standardization of $\tilde{\bm{\theta}}_{\bm{x}}[h]$. We have
	\begin{eqnarray*}
		& & \hat{\bm{\tau}} - \bm{\tau}  \mid \mathcal{T}_{\text{F}} \\
		& \apprsim &
		\tilde{\bm{\tau}} \ \left| \ 
		\left\{
		\tilde{\bm{\theta}}_{\bm{x}}'[h]
		\left(
		\bm{W}_{\bm{xx}}[h]
		\right)^{-1}
		\tilde{\bm{\theta}}_{\bm{x}}[h]\leq a_h, h=1,\ldots, H
		\right\}\right.
		\\
		& \sim & 
		\tilde{\bm{\tau}}_{\bm{\varepsilon}} + \sum_{h=1}^H \bm{W}_{\bm{\tau x}}[h]
		(
		\bm{W}_{\bm{xx}}[h]
		)^{-1} \tilde{\bm{\theta}}_{\bm{x}}[h] \  \left| \ 
		\left\{
		\tilde{\bm{\theta}}_{\bm{x}}'[h]
		\left(
		\bm{W}_{\bm{xx}}[h]
		\right)^{-1}
		\tilde{\bm{\theta}}_{\bm{x}}[h]\leq a_h, 1\leq h\leq H
		\right\} \right.\\
		& \sim & 
		(\bm{V}_{\bm{\tau\tau}}^{\myperp})^{1/2}\bm{\varepsilon} + \sum_{h=1}^H \bm{W}_{\bm{\tau x}}[h]
		(
		\bm{W}_{\bm{xx}}[h]
		)^{-1/2} \bm{D}_h  \ \left| \ 
		\left\{
		\bm{D}_h' \bm{D}_h\leq a_h, 1\leq h\leq H
		\right\}\right.
		\\
		& \sim & 
		(\bm{V}_{\bm{\tau\tau}}^{\myperp})^{1/2}\bm{\varepsilon} + \sum_{h=1}^H \bm{W}_{\bm{\tau x}}[h]
		(
		\bm{W}_{\bm{xx}}[h]
		)^{-1/2} \bm{\zeta}_{LF_h, a_h}. 
	\end{eqnarray*}
	Because 
	$
	\bm{W}_{\bm{\tau \tau}}^\myparallel[h]
	= 
	\bm{W}_{\bm{\tau x}}[h]
	(
	\bm{W}_{\bm{xx}}[h]
	)^{-1/2}
	\{\bm{W}_{\bm{\tau x}}[h]
	(
	\bm{W}_{\bm{xx}}[h]
	)^{-1/2}\}'
	$ 
	by definition,
	Theorem \ref{thm:asymp_refmt_f} holds by Lemma \ref{lemma:truncate_AApm_BBpm}. 
\end{proof}

\begin{proof}[{   Proof of Corollary \ref{cor:dist_remt}}]
	Recall that $\bm{e}_f$ is a unit vector with the $f$th coordinate being one. 
	From Theorem \ref{thm:asymp_refmt_f}, the asymptotic sampling distribution of $\hat{\tau}_f$ under $\text{ReFMT}_\text{F}$ is 
	\begin{eqnarray*}
		& & \hat{\tau}_f - \tau_f \mid \mathcal{T}_{\text{F}} \\
		& \apprsim &
		\bm{e}_f'
		\left(
		\bm{V}_{\bm{\tau}\bm{\tau}}^\myperp \right)^{1/2} \bm{\varepsilon} + 
		\sum_{h=1}^H 
		\bm{e}_f'
		\left(\bm{W}_{\bm{\tau\tau}}^\myparallel[h] \right)^{1/2}_{LF_h}
		\bm{\zeta}_{LF_h,a_h}\\
		& \sim & 
		\sqrt{
			\bm{e}_f'\bm{V}_{\bm{\tau\tau}}^\myperp\bm{e}_f
		}\cdot
		\varepsilon_0 + 
		\sum_{h=1}^H 
		\sqrt{\bm{e}_f' \bm{W}_{\bm{\tau\tau}}^\myparallel[h] \bm{e}_f} \cdot
		\bm{c}_h' \bm{\zeta}_{LF_h,a_h}\\
		& \sim & 
		\sqrt{V_{\tau_f\tau_f}} \left(
		\sqrt{1 - V_{\tau_f\tau_f}^\myparallel/V_{\tau_f\tau_f}}\cdot
		\varepsilon_0 + 
		\sum_{h=1}^H 
		\sqrt{W_{\tau_f\tau_f}^\myparallel[h]/V_{\tau_f\tau_f}} \cdot \bm{c}_h' \bm{\zeta}_{LF_h,a_h}\right)
	\end{eqnarray*}
	where $\bm{c}_h' = (\bm{e}_f' \bm{W}_{\bm{\tau\tau}}^\myparallel[h] \bm{e}_f)^{-1/2}
	\bm{e}_f'
	\left(\bm{W}_{\bm{\tau\tau}}^\myparallel[h] \right)^{1/2}_{LF_h}$ is a unit vector with length one. 
	By the definitions of $R^2_f$ and the $\rho_{f}^2[h]$'s, and the spherical symmetry of the $\bm{\zeta}_{LF_h,a_h}$'s from Proposition \ref{prop:represent}, 
	\begin{eqnarray*}
		\hat{\tau}_f - \tau_f \mid  \mathcal{T}_{\text{F}} 
		& \apprsim & 
		\sqrt{V_{\tau_f\tau_f}}
		\left(
		\sqrt{1-R_f^2} \cdot \varepsilon_0 + \sum_{h=1}^H \sqrt{\rho_{f}^2[h]} \cdot \eta_{LF_h,a_h}
		\right).
	\end{eqnarray*}
	Therefore, Corollary \ref{cor:dist_remt} holds. 
\end{proof}

\subsection{Asymptotic sampling distribution of $\hat{\boldsymbol{\tau}}$ under $\text{ReFMT}_\text{CF}$}
\begin{proof}[{   Proof of Theorem \ref{thm:asym_refmt_cf}}]
	For each tier $\mathcal{S}_j$, 
	let $\hat{\bm{\delta}}_{\bm{e}}[j]$ be the concatenation of $\hat{\bm{\theta}}_{\bm{e}[t]}[h]$ with $(t,h)\in \mathcal{S}_j$, and $\hat{\bm{\delta}}_{\bm{e}}' = (\hat{\bm{\delta}}_{\bm{e}}'[1], \ldots, \hat{\bm{\delta}}_{\bm{e}}'[J])$. From Proposition \ref{prop:var_cov_tier_factor_covariate}, under the CRFE, $(\hat{\bm{\tau}}'-\bm{\tau}', \hat{\bm{\delta}}_{\bm{e}}')'$ has sampling mean zero and sampling covariance matrix
	\begin{equation}\label{eq:cov_refmt_cf}
	\begin{pmatrix}
	\bm{V}_{\bm{\tau\tau}} & \bm{U}_{\bm{\tau e}}[1] & \ldots & \bm{U}_{\bm{\tau e}}[J]\\
	\bm{U}_{\bm{e\tau}}[1] & \bm{U}_{\bm{ee}}[1] & \ldots & \bm{0}\\
	\vdots & \vdots & \ddots & \vdots\\
	\bm{U}_{\bm{e\tau}}[J] & \bm{0} & \ldots & \bm{U}_{\bm{ee}}[J]
	\end{pmatrix}. 
	\end{equation}
	Let $(\tilde{\bm{\tau}}', \tilde{\bm{\delta}}_{\bm{e}}')'$ be a Gaussian random vector with mean zero and covariance matrix \eqref{eq:cov_refmt_cf}. 
	From Lemma \ref{lemma:condi_converge}, under Condition \ref{cond:fp}, 
	\begin{eqnarray*}
		\hat{\bm{\tau}}-\bm{\tau} \mid \mathcal{T}_{\text{CF}}
		& \apprsim & 
		\tilde{\bm{\tau}} 
		\ \left| \ 
		\left\{ \tilde{\bm{\delta}}_{\bm{e}}'[j](\bm{U}_{\bm{ee}}[j])^{-1}\tilde{\bm{\delta}}_{\bm{e}}[j] \leq a_j, j=1,\ldots,J
		\right\}.
		\right. 
	\end{eqnarray*}
	Using the same logic as the proof of Theorem \ref{thm:asymp_refmt_f}, we can simplify the asymptotic sampling distribution of $\hat{\bm{\tau}}$ under $\text{ReFMT}_\text{CF}$ as
	\begin{eqnarray*}
		\hat{\bm{\tau}} - \bm{\tau}  \mid \mathcal{T}_{\text{CF}} 
		& \apprsim &
		\left(\bm{V}_{\bm{\tau}\bm{\tau}}^\myperp\right)^{1/2} \bm{\varepsilon} + 
		\sum_{j=1}^{J}
		\bm{U}_{\bm{\tau e}}[j]
		\left(
		\bm{U}_{\bm{ee}}[j]
		\right)^{-1/2}
		\bm{\zeta}_{\lambda_j,a_j}.
	\end{eqnarray*}
	Because by definition  
	$\bm{U}_{\bm{\tau e}}[j]
	(
	\bm{U}_{\bm{ee}}[j]
	)^{-1/2}
	\{\bm{U}_{\bm{\tau e}}[j]
	(
	\bm{U}_{\bm{ee}}[j]
	)^{-1/2}\}'
	= \bm{U}_{\bm{\tau\tau}}^\myparallel[j],
	$
	from Lemma \ref{lemma:truncate_AApm_BBpm}, 
	Theorem \ref{thm:asym_refmt_cf} holds. 
\end{proof}

\begin{proof}[{   Proof of Corollary \ref{cor:single_refmt_cf}}]
	Recall that $\bm{e}_f$ is a unit vector with the $f$th coordinate being one. 
	From Theorem \ref{thm:asym_refmt_cf}, the asymptotic sampling distribution of $\hat{\tau}_f$ under $\text{ReFMT}_\text{CF}$ is 
	\begin{eqnarray*}
		& & \hat{\tau}_f - \tau_f \mid \mathcal{T}_{\text{CF}} \\
		& \apprsim &
		\bm{e}_f'
		\left(
		\bm{V}_{\bm{\tau}\bm{\tau}}^\myperp \right)^{1/2} \bm{\varepsilon} + 
		\sum_{j=1}^J
		\bm{e}_f'
		\left(\bm{U}_{\bm{\tau\tau}}^\myparallel[j]\right)^{1/2}_{\lambda_j}
		\bm{\zeta}_{\lambda_j,a_j}\\
		& \sim & 
		\sqrt{\bm{e}_f'\bm{V}_{\bm{\tau\tau}}^\myperp\bm{e}_f} \cdot \varepsilon_0 + 
		\sum_{j=1}^{J}
		\sqrt{\bm{e}_f'\bm{U}_{\bm{\tau\tau}}^\myparallel[j] \bm{e}_f} \cdot \bm{c}_j' \bm{\zeta}_{\lambda_j,a_j}
		\\
		& \sim & 
		\sqrt{V_{\tau_f\tau_f}} \left(
		\sqrt{
			1 - V_{\tau_f\tau_f}^\myparallel/V_{\tau_f\tau_f}
		}\cdot
		\varepsilon_0 + 
		\sum_{j=1}^J
		\sqrt{U_{\tau_f\tau_f}^\myparallel[j]/V_{\tau_f\tau_f}}
		\cdot \bm{c}_j' \bm{\zeta}_{\lambda_j,a_j}\right), 
	\end{eqnarray*}
	where $\bm{c}_j' = (\bm{e}_f'\bm{U}_{\bm{\tau\tau}}^\myparallel[j] \bm{e}_f)^{-1/2}
	\bm{e}_f'
	\left(\bm{U}_{\bm{\tau\tau}}^\myparallel[j]\right)^{1/2}_{\lambda_j}$ is a unit vector with length one. 
	By the definitions of $R^2_f$ and $\beta_{f}^2[j]$'s, and the spherical symmetry of $\bm{\zeta}_{\lambda_j,a_j}$'s from Proposition \ref{prop:represent}, 
	\begin{eqnarray*}
		\hat{\tau}_f - \tau_f \mid \mathcal{T}_{\text{CF}}
		& \apprsim & 
		\sqrt{V_{\tau_f\tau_f}}
		\left(
		\sqrt{1-R^2_f} \cdot \varepsilon_0 + \sum_{j=1}^J \sqrt{ \beta_{f}^2[j] }
		\cdot \eta_{\lambda_j,a_j}\right).
	\end{eqnarray*}
	Therefore, Corollary \ref{cor:single_refmt_cf} holds. 
\end{proof}

\section{Reduction in asymptotic sampling covariances}\label{app:red_cov}

We use $\Var_{\text{a}}$ and $\Cov_{\text{a}}$ to denote the variance and covariance of the asymptotic distributions of sequences of random variables and random vectors, respectively.

\begin{proof}[{   Proof of Theorem \ref{thm:red_var_rem}}]
	First, we calculate the reduction in the asymptotic sampling covariance of $\hat{\bm{\tau}}$. 
	From Proposition \ref{prop:mean_var_cre},   
	$
	\Cov_{\text{a}}\{\sqrt{n}(\hat{\bm{\tau}} - \bm{\tau})\} = \lim_{n\rightarrow\infty} n\bm{V}_{\bm{\tau\tau}} = \lim_{n\rightarrow\infty}n\bm{V}_{\bm{\tau\tau}}^\myperp + \lim_{n\rightarrow\infty}n\bm{V}_{\bm{\tau\tau}}^\myparallel  .
	$
	For notational simplicity, we omit the limiting signs. 
	From Theorem \ref{thm:joint_refm} and Proposition \ref{prop:represent}, 
	\begin{eqnarray*}
		& & \Cov_{\text{a}}\{\sqrt{n}(\hat{\bm{\tau}} - \bm{\tau}) \mid \mathcal{M}\}\\
		& = & 
		\Cov\left\{\left(n\bm{V}_{\bm{\tau\tau}}^\myperp\right)^{1/2} \bm{\varepsilon}\right\} + n\left(\bm{V}_{\bm{\tau\tau}}^\myparallel\right)^{1/2}_{LF} \Cov\left(\bm{\zeta}_{LF,a}  \right)
		\left\{\left(\bm{V}_{\bm{\tau\tau}}^\myparallel\right)^{1/2}_{LF}\right\}'
		\\
		& = & 
		n\bm{V}_{\bm{\tau\tau}}^\myperp + v_{LF,a} \cdot
		n\left(\bm{V}_{\bm{\tau\tau}}^\myparallel\right)^{1/2}_{LF}
		\left\{\left(\bm{V}_{\bm{\tau\tau}}^\myparallel\right)^{1/2}_{LF}\right\}'\\
		& = & n\bm{V}_{\bm{\tau\tau}}^\myperp + v_{LF,a} \cdot n\bm{V}_{\bm{\tau\tau}}^\myparallel.
	\end{eqnarray*}
	Therefore, the reduction in asymptotic sampling covariance is  $(1-v_{LF,a})n\bm{V}_{\bm{\tau\tau}}^{\myparallel}$. 
	
	Second, we consider the PRIASV of $\hat{\tau}_f$. 
	From Proposition \ref{prop:mean_var_cre} and Corollary \ref{cor:dist_refm_one_linear}, the asymptotic sampling variance of $\hat{\tau}_f$ are 
	\begin{align*}
	\Var_{\text{a}}\left\{
	\sqrt{n}(\hat{\tau}_f-\tau_f)
	\right\} & =  
	nV_{\tau_f\tau_f}, \\
	\Var_{\text{a}}\left\{
	\sqrt{n}(\hat{\tau}_f-\tau_f)
	\mid \mathcal{M}
	\right\} & = 
	nV_{\tau_f\tau_f} (1-R_f^2 + R_f^2 \cdot v_{LF,a}) \\
	& = nV_{\tau_f\tau_f}\left\{
	1 - (1-v_{LF,a})R_f^2
	\right\}. 
	\end{align*}
	Therefore, the PRIASV of $\hat{\tau}_f$ is $(1-v_{LF,a})R^2_f.$ 
\end{proof}

\begin{proof}[{   Proof of Theorem \ref{thm:red_var_remft_f}}]
	First, we calculate the reduction in the asymptotic sampling covariance of $\hat{\bm{\tau}}$. 
	From Theorem \ref{thm:asymp_refmt_f} and Proposition \ref{prop:represent}, 
	\begin{align*}
	& \quad \ \Cov_{\text{a}}\{
	\sqrt{n}(
	\hat{\bm{\tau}} - \bm{\tau}
	) \mid \mathcal{T}_{\text{F}}
	\}\\
	& = n\bm{V}_{\bm{\tau\tau}}^{\myperp} + n
	\sum_{h=1}^H 
	\left(\bm{W}_{\bm{\tau\tau}}^\myparallel[h] \right)^{1/2}_{LF_h}
	\Cov(\bm{\zeta}_{LF_h,a_h})
	\left\{\left(\bm{W}_{\bm{\tau\tau}}^\myparallel[h] \right)^{1/2}_{LF_h}\right\}'\\
	& = n\bm{V}_{\bm{\tau\tau}}^{\myperp} + 
	n\sum_{h=1}^H v_{LF_h, a_h} 
	\bm{W}_{\bm{\tau \tau}}^\myparallel[h]. 
	\end{align*}
	Because $\hat{\bm{\tau}}_{\bm{x}}$ and $\hat{\bm{\theta}}_{\bm{x}}$ are linear transformations of each other,  the sampling covariances of $\hat{\bm{\tau}}$ explained by $\hat{\bm{\tau}}_{\bm{x}}$ and $\hat{\bm{\theta}}_{\bm{x}}$ are the same, i.e., 
	$
	\bm{V}_{\bm{\tau\tau}}^\myparallel = \sum_{h=1}^H  
	\bm{W}_{\bm{\tau \tau}}^\myparallel[h]. 
	$
	Thus,  
	\begin{align*}
	\Cov_{\text{a}}\{
	\sqrt{n}(
	\hat{\bm{\tau}} - \bm{\tau}
	)
	\}
	& = n\bm{V}_{\bm{\tau\tau}}^{\myperp} + 
	n\bm{V}_{\bm{\tau\tau}}^\myparallel = 
	n\bm{V}_{\bm{\tau\tau}}^{\myperp} + 
	n\sum_{h=1}^H  
	\bm{W}_{\bm{\tau \tau}}^\myparallel[h].
	\end{align*}
	Therefore, the reduction in asymptotic sampling covariance of $\hat{\bm{\tau}}$ is 
	\begin{eqnarray*}
		\Cov_{\text{a}}\{
		\sqrt{n}(
		\hat{\bm{\tau}} - \bm{\tau}
		)
		\} -
		\Cov_{\text{a}}\{
		\sqrt{n}(
		\hat{\bm{\tau}} - \bm{\tau}
		) \mid \mathcal{T}_{\text{F}}
		\} & = n \sum_{h=1}^{H}(1-v_{LF_h, a_h})\bm{W}_{\bm{\tau \tau}}^\myparallel[h]. 
	\end{eqnarray*}

	Second, we consider the PRIASV of $\hat{\tau}_f$. 
	From Proposition \ref{prop:mean_var_cre} 
	and Corollary \ref{cor:dist_remt}, the asymptotic sampling variance of $\hat{\tau}_f$ under the CRFE and $\text{ReFMT}_\text{F}$ are
	\begin{align*}
	\Var_{\text{a}}\left\{
	\sqrt{n}(\hat{\tau}_f-\tau_f)
	\right\} & = nV_{\tau_f\tau_f}, \\
	\Var_{\text{a}}\left\{
	\sqrt{n}(\hat{\tau}_f-\tau_f)
	\mid \mathcal{T}_{\text{F}}
	\right\} & = 
	nV_{\tau_f\tau_f}\left(
	1 - R_f^2 + \sum_{h=1}^{H}\rho^2_f[h] v_{LF_h, a_h}
	\right). 
	\end{align*}
	By definition of $\rho^2_f[h]$ and the fact that 
	$
	\bm{V}_{\bm{\tau\tau}}^\myparallel = \sum_{h=1}^H  
	\bm{W}_{\bm{\tau \tau}}^\myparallel[h], 
	$
	we have 
	$
	R_f^2 = \sum_{h=1}^H \rho_f^2[h]. 
	$
	Therefore, the PRIASV of $\hat{\tau}_f$ under $\text{ReFMT}_\text{F}$ is 
	\begin{align*}
	&~\frac{
		nV_{\tau_f\tau_f} - nV_{\tau_f\tau_f}\left(
		1 - R_f^2 + \sum_{h=1}^{H}\rho^2_f[h] v_{LF_h, a_h}
		\right)
	}{nV_{\tau_f\tau_f}} \\
	=&~  
	R_f^2 - \sum_{h=1}^{H}\rho^2_f[h] v_{LF_h, a_h} 
	= \sum_{h=1}^H (1-v_{LF_h, a_h})\rho_{f}^2[h]. 
	\end{align*}
\end{proof}

\begin{proof}[{   Proof of Theorem \ref{thm:red_var_refmt_cf}}]
	Note that the $
	\{\hat{\bm{\theta}}_{\bm{e}[t]}[h]\}_{(t,h)\in \mathcal{S}_j}$'s are from a block-wise Gram--Schmidt orthogonalization of $\hat{\bm{\tau}}_{\bm{x}}$. The proof is similar to that of Theorem \ref{thm:red_var_remft_f}. We omit it. 
\end{proof}

\section{Peakedness of the asymptotic sampling distributions}\label{app:peak}

\subsection{Lemmas and propositions for peakedness}

Recall that we say a random vector $\bm{\phi} \in \mathbb{R}^m$ is more peaked than another random vector $\bm{\psi} \in \mathbb{R}^m$ and write as $\bm{\phi} \succ \bm{\psi}$, if $P(\bm{\phi}\in \mathcal{K})\geq P(\bm{\psi}\in \mathcal{K})$ for every symmetric convex set $\mathcal{K}\subset \mathbb{R}^m$.

\begin{lemma}\label{lemma:peak_linear}
	If two $m$ dimensional symmetric random vectors $\bm{\phi}_1$ and $\bm{\phi}_2$ satisfy $\bm{\phi}_1 \succ \bm{\phi}_2$, then for any matrix $\bm{C}\in \mathbb{R}^{p\times m}$,  $\bm{C}\bm{\phi}_1 \succ \bm{C}\bm{\phi}_2$. 
\end{lemma}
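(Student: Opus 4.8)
The plan is to reduce the general matrix $\bm{C}\in\mathbb{R}^{p\times m}$ to the case of an invertible square matrix, for which the claim is a direct consequence of the change-of-variables formula for preimages of symmetric convex sets, and then handle the rank-deficient cases by the same devices already used in the proof of Lemma~\ref{lemma:ccu_linear} (permuting rows, isolating a maximal linearly independent block, and appending zero coordinates). First I would record the elementary observation that if $\bm{\phi}_1\succ\bm{\phi}_2$ in $\mathbb{R}^m$ and $\bm{\Gamma}\in\mathbb{R}^{m\times m}$ is invertible, then for every symmetric convex set $\mathcal{K}\subset\mathbb{R}^m$ the preimage $\bm{\Gamma}^{-1}\mathcal{K}=\{\bm{x}:\bm{\Gamma}\bm{x}\in\mathcal{K}\}$ is again symmetric and convex, so that
$$
P(\bm{\Gamma}\bm{\phi}_1\in\mathcal{K})=P(\bm{\phi}_1\in\bm{\Gamma}^{-1}\mathcal{K})\geq P(\bm{\phi}_2\in\bm{\Gamma}^{-1}\mathcal{K})=P(\bm{\Gamma}\bm{\phi}_2\in\mathcal{K}),
$$
hence $\bm{\Gamma}\bm{\phi}_1\succ\bm{\Gamma}\bm{\phi}_2$; note $\bm{\Gamma}\bm{\phi}_i$ remains symmetric because $\bm{\phi}_i$ is.

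Next I would treat the case where $\bm{C}$ has full row rank $p\leq m$. Choose $\bar{\bm{C}}\in\mathbb{R}^{(m-p)\times m}$ so that $(\bm{C}',\bar{\bm{C}}')$ is invertible; by the square-matrix case just established, $(\bm{C}',\bar{\bm{C}}')'\bm{\phi}_1\succ(\bm{C}',\bar{\bm{C}}')'\bm{\phi}_2$. It then remains to show that peakedness passes to a subvector (marginal), i.e. that projecting onto the first $p$ coordinates preserves $\succ$: this is immediate since for a symmetric convex $\mathcal{K}\subset\mathbb{R}^p$, the cylinder $\mathcal{K}\times\mathbb{R}^{m-p}\subset\mathbb{R}^m$ is symmetric and convex, and $P(\text{first }p\text{ coords of }\bm{\psi}\in\mathcal{K})=P(\bm{\psi}\in\mathcal{K}\times\mathbb{R}^{m-p})$. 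Applying this with $\bm{\psi}_i=(\bm{C}',\bar{\bm{C}}')'\bm{\phi}_i$ yields $\bm{C}\bm{\phi}_1\succ\bm{C}\bm{\phi}_2$. I should also note that appending zero coordinates preserves $\succ$, by the same cylinder argument: if $\bm{\psi}_1\succ\bm{\psi}_2$ in $\mathbb{R}^p$ then $(\bm{\psi}_1',\bm{0}')'\succ(\bm{\psi}_2',\bm{0}')'$ in $\mathbb{R}^{p+k}$.

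Finally, for a general $\bm{C}$ of rank $p_1<p$, I would mimic the third step of the proof of Lemma~\ref{lemma:ccu_linear} verbatim: pick a permutation matrix $\bm{W}$ so that $(\bm{WC})'=(\bm{C}_1',\bm{C}_2')$ with $\bm{C}_1\in\mathbb{R}^{p_1\times m}$ of full row rank and $\bm{C}_2=\bm{\Gamma}\bm{C}_1$ for some $\bm{\Gamma}$, so that
$$
\bm{C}\bm{\phi}_i=\bm{W}^{-1}\begin{pmatrix}\bm{I}_{p_1}&\bm{0}\\\bm{\Gamma}&\bm{I}_{p-p_1}\end{pmatrix}\begin{pmatrix}\bm{C}_1\bm{\phi}_i\\\bm{0}\end{pmatrix}.
$$
By the full-row-rank case, $\bm{C}_1\bm{\phi}_1\succ\bm{C}_1\bm{\phi}_2$; appending zeros preserves $\succ$; and multiplying by the invertible matrix $\bm{W}^{-1}\bigl(\begin{smallmatrix}\bm{I}&\bm{0}\\\bm{\Gamma}&\bm{I}\end{smallmatrix}\bigr)$ preserves $\succ$ by the square-matrix case. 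This chains to $\bm{C}\bm{\phi}_1\succ\bm{C}\bm{\phi}_2$. The only mild subtlety — and the step I would be most careful about — is checking that every intermediate random vector is still \emph{symmetric}, so that Definition~\ref{def:peak} applies at each invocation of $\succ$; this holds because linear images and subvectors of symmetric vectors are symmetric, but it must be stated. Everything else is routine preimage bookkeeping; alternatively, the whole lemma can simply be cited from \citet[][Lemma 7.2]{dharmadhikari1988}.
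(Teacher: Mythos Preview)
Your proof is correct. The paper itself does not give an argument at all: it simply cites \citet[Lemma~7.2]{dharmadhikari1988}, which is exactly the alternative you mention in your last sentence. So there is nothing to compare on the paper's side beyond the reference.

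Your self-contained argument is sound. The invertible case via preimages, the full-row-rank reduction via cylinders $\mathcal{K}\times\mathbb{R}^{m-p}$, and the rank-deficient reduction mirroring Lemma~\ref{lemma:ccu_linear} all go through. One cosmetic remark: when you write that appending zero coordinates preserves $\succ$ ``by the same cylinder argument,'' it is really a \emph{slice} argument rather than a cylinder one --- for a symmetric convex $\mathcal{K}\subset\mathbb{R}^{p+k}$ you use that $\{\bm{x}\in\mathbb{R}^p:(\bm{x}',\bm{0}')'\in\mathcal{K}\}$ is symmetric convex. The conclusion is unaffected. Your note about checking symmetry of the intermediate vectors is appropriate and easy, as you say.
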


\begin{lemma}\label{lemma:peak_sum}
	Let $\bm{\psi}, \bm{\phi}_1$ and $\bm{\phi}_2$ be three independent $m$ dimensional symmetric random vectors. If $\bm{\psi}$ is central symmetric unimodal and $\bm{\phi}_1 \succ \bm{\phi}_2$, then $\bm{\psi}+\bm{\phi}_1 \succ \bm{\psi}+\bm{\phi}_2$. 
\end{lemma}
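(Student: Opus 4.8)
\textbf{Proof plan for Lemma \ref{lemma:peak_sum}.}

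The plan is to show directly from the definition of peakedness that for every symmetric convex set $\mathcal{K} \subset \mathbb{R}^m$, we have $P(\bm{\psi} + \bm{\phi}_1 \in \mathcal{K}) \geq P(\bm{\psi} + \bm{\phi}_2 \in \mathcal{K})$. The natural first step is to condition on $\bm{\psi}$ and use independence: writing $P(\bm{\psi} + \bm{\phi}_i \in \mathcal{K}) = \int P(\bm{\phi}_i \in \mathcal{K} - \bm{y}) \, d\mu_{\bm{\psi}}(\bm{y})$, it would suffice to compare the integrands pointwise. This pointwise comparison, however, is false in general because $\mathcal{K} - \bm{y}$ need not be symmetric. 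So instead I would condition on $\bm{\phi}_1$ and $\bm{\phi}_2$ only through the structure supplied by the hypothesis $\bm{\phi}_1 \succ \bm{\phi}_2$, which requires a different packaging of the problem.

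The cleaner route is to invoke the representation of central convex unimodal distributions. Since $\bm{\psi}$ is central convex unimodal, by Definition \ref{def:ccu} its distribution lies in the closed convex hull of uniform distributions on symmetric convex bodies; by a standard argument (taking mixtures and weak limits, as used in the proofs of Lemmas \ref{lemma:ccu_comb_zero} and \ref{lemma:ccu_linear}) it is enough to prove the claim when $\bm{\psi}$ is uniform on a single symmetric convex body $\mathcal{C}$, and then pass to mixtures and weak limits at the end. With $\bm{\psi}$ uniform on $\mathcal{C}$, I would condition on $\bm{\phi}_i$: then $P(\bm{\psi} + \bm{\phi}_i \in \mathcal{K}) = \mathbb{E}\left[\,\mathrm{vol}\big(\mathcal{C} \cap (\mathcal{K} - \bm{\phi}_i)\big)\right] / \mathrm{vol}(\mathcal{C})$. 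Define $g(\bm{x}) = \mathrm{vol}\big(\mathcal{C} \cap (\mathcal{K} - \bm{x})\big)$. The key structural fact I would establish is that $g$ is a symmetric ($g(\bm{x}) = g(-\bm{x})$, using symmetry of both $\mathcal{C}$ and $\mathcal{K}$) and quasi-concave function of $\bm{x}$, the latter following from the Brunn--Minkowski inequality applied to the convex sets $\mathcal{C} \cap (\mathcal{K} - \bm{x})$: for $\lambda \in (0,1)$, $\lambda\big(\mathcal{C} \cap (\mathcal{K} - \bm{x}_1)\big) + (1-\lambda)\big(\mathcal{C} \cap (\mathcal{K} - \bm{x}_2)\big) \subseteq \mathcal{C} \cap (\mathcal{K} - \lambda \bm{x}_1 - (1-\lambda)\bm{x}_2)$, so $g$ is $1/m$-concave on its support, hence quasi-concave. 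Consequently the superlevel sets $\{g \geq t\}$ are symmetric convex sets for every $t > 0$. Then
\begin{align*}
\mathbb{E}[g(\bm{\phi}_1)] = \int_0^\infty P\big(g(\bm{\phi}_1) \geq t\big)\, dt = \int_0^\infty P\big(\bm{\phi}_1 \in \{g \geq t\}\big)\, dt \geq \int_0^\infty P\big(\bm{\phi}_2 \in \{g \geq t\}\big)\, dt = \mathbb{E}[g(\bm{\phi}_2)],
\end{align*}
where the inequality uses $\bm{\phi}_1 \succ \bm{\phi}_2$ on each symmetric convex set $\{g \geq t\}$. This gives the claim for $\bm{\psi}$ uniform on $\mathcal{C}$; taking finite mixtures preserves the inequality by linearity, and weak limits preserve it because peakedness comparisons pass to weak limits for fixed symmetric convex $\mathcal{K}$ with $P(\bm{\phi}_i \in \partial \mathcal{K}) = 0$ (and one can reduce to such $\mathcal{K}$, or approximate $\mathcal{K}$ by slightly shrunk/enlarged copies).

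The main obstacle is the quasi-concavity of $g$ and the care needed at the boundary: Brunn--Minkowski gives concavity of $g^{1/m}$ only on the set where $g > 0$, so I must check that this set is itself convex and symmetric (it is, being the Minkowski difference $\mathcal{K} \ominus (-\mathcal{C}) = \mathcal{C} \oplus \ldots$ type set, again convex and symmetric) so that the superlevel sets genuinely are symmetric convex. A secondary technical point is the reduction to uniform-on-a-body $\bm{\psi}$: I would note that this lemma is essentially Theorem 3.4 of \citet{dharmadhikari1988} (or the original result of \citet{sherman1955}), so in the actual write-up it may be cleanest to cite that and only sketch the Brunn--Minkowski argument, as the paper does for its other peakedness lemmas. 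Everything else — the conditioning, the layer-cake integration, and the mixture/weak-limit closure — is routine.
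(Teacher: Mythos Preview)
Your argument is correct and is essentially the classical Sherman--Anderson proof: reduce to $\bm{\psi}$ uniform on a symmetric convex body, use Brunn--Minkowski to show $g(\bm{x})=\mathrm{vol}(\mathcal{C}\cap(\mathcal{K}-\bm{x}))$ has symmetric convex superlevel sets, and apply the layer-cake representation together with $\bm{\phi}_1\succ\bm{\phi}_2$. Your handling of the boundary (the support of $g$ is symmetric convex, and quasi-concavity is vacuous where $g=0$) is also fine.

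The paper, however, does not prove this lemma at all: it disposes of Lemmas \ref{lemma:peak_linear} and \ref{lemma:peak_sum} together with a one-line citation to \citet[Lemma 7.2 and Theorem 7.5]{dharmadhikari1988}. So your proposal is strictly more informative --- you are supplying the content behind that citation. You already anticipated this (your closing remark suggests citing and sketching), and that is exactly what the paper does, except it cites Theorem 7.5 rather than Theorem 3.4; you may want to double-check the theorem number if you keep the reference. If you want your write-up to match the paper's level of detail, the single citation suffices; if you want a self-contained supplement, your Brunn--Minkowski sketch is the right one to include.
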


\begin{proof}
	[Proof of Lemmas \ref{lemma:peak_linear} and \ref{lemma:peak_sum}]
	See \citet[][Lemma 7.2 and Theorem 7.5]{dharmadhikari1988}.
\end{proof}

The following lemma states that truncating a standard Gaussian random vector within a ball makes it more peaked.
Although the result seems intuitive, the proof below is a little tedious due to some technical reasons.

\begin{lemma}\label{lemma:zeta_peak_epsilon}
	Let $\bm{\varepsilon} \sim \mathcal{N}(\bm{0}, \bm{I}_m)$ and $\bm{\zeta}_{m,a} \sim \bm{\varepsilon} \mid \bm{\varepsilon}'\bm{\varepsilon}\leq a$. Then $\bm{\zeta}_{m,a} \succ \bm{\varepsilon}$. 
\end{lemma}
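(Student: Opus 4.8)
The plan is to prove that $\bm{\zeta}_{m,a} \succ \bm{\varepsilon}$ by showing that for every symmetric convex set $\mathcal{K} \subset \mathbb{R}^m$ we have $P(\bm{\zeta}_{m,a} \in \mathcal{K}) \geq P(\bm{\varepsilon} \in \mathcal{K})$. Write $\mathcal{B} \equiv \mathcal{B}_m(\sqrt{a})$ for the closed ball of radius $\sqrt{a}$, so that $\bm{\zeta}_{m,a}$ has the law of $\bm{\varepsilon}$ conditioned on $\{\bm{\varepsilon} \in \mathcal{B}\}$ and $p_a \equiv P(\bm{\varepsilon} \in \mathcal{B}) = P(\chi^2_m \leq a) \in (0,1)$. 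Then $P(\bm{\zeta}_{m,a} \in \mathcal{K}) = P(\bm{\varepsilon} \in \mathcal{K} \cap \mathcal{B}) / p_a$, so the claim is equivalent to
\begin{align*}
P(\bm{\varepsilon} \in \mathcal{K} \cap \mathcal{B}) \;\geq\; p_a \, P(\bm{\varepsilon} \in \mathcal{K}).
\end{align*}
Writing $P(\bm{\varepsilon} \in \mathcal{K}) = P(\bm{\varepsilon} \in \mathcal{K} \cap \mathcal{B}) + P(\bm{\varepsilon} \in \mathcal{K} \setminus \mathcal{B})$, this rearranges to the equivalent inequality
\begin{align*}
(1 - p_a)\, P(\bm{\varepsilon} \in \mathcal{K} \cap \mathcal{B}) \;\geq\; p_a\, P(\bm{\varepsilon} \in \mathcal{K} \setminus \mathcal{B}),
\end{align*}
i.e., $P(\bm{\varepsilon} \in \mathcal{K} \cap \mathcal{B})\, P(\bm{\varepsilon} \notin \mathcal{B}) \geq P(\bm{\varepsilon} \in \mathcal{K} \setminus \mathcal{B})\, P(\bm{\varepsilon} \in \mathcal{B})$. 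Rewriting the right side using $P(\bm{\varepsilon} \in \mathcal{B}) = P(\bm{\varepsilon} \in \mathcal{K} \cap \mathcal{B}) + P(\bm{\varepsilon} \in \mathcal{B} \setminus \mathcal{K})$ reduces everything to the single clean inequality
\begin{align*}
P(\bm{\varepsilon} \in \mathcal{K} \cap \mathcal{B})\, P(\bm{\varepsilon} \notin \mathcal{K}, \bm{\varepsilon} \notin \mathcal{B}) \;\geq\; P(\bm{\varepsilon} \in \mathcal{K}, \bm{\varepsilon} \notin \mathcal{B})\, P(\bm{\varepsilon} \notin \mathcal{K}, \bm{\varepsilon} \in \mathcal{B}).
\end{align*}

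The way I would establish this last inequality is via a positive-association / FKG-type argument for the standard Gaussian measure $\gamma_m$ restricted to the two events $A = \{\bm{\varepsilon} \in \mathcal{K}\}$ and $B = \{\bm{\varepsilon} \in \mathcal{B}\}$: the inequality above is exactly $\gamma_m(A \cap B)\gamma_m(A^c \cap B^c) \geq \gamma_m(A \cap B^c)\gamma_m(A^c \cap B)$. This would follow if $\mathbf{1}_A$ and $\mathbf{1}_B$ were positively correlated under $\gamma_m$, i.e. $\gamma_m(A \cap B) \geq \gamma_m(A)\gamma_m(B)$ — a fact that holds because both $\mathcal{K}$ and $\mathcal{B}$ are symmetric convex bodies and the Gaussian measure satisfies the Gaussian correlation inequality (Royen's theorem); the equivalence of the two displayed forms was checked in the reduction above. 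I expect the main obstacle to be presenting this cleanly without over-relying on heavy machinery: an alternative, more elementary route avoiding the Gaussian correlation inequality is to integrate radially. Since $\bm{\varepsilon}$ is spherically symmetric, condition on the radius $r = \|\bm{\varepsilon}\|$, which has a density $f(r)$ supported on $[0,\infty)$, and let $g(r) = \sigma(\{\bm{u} : r\bm{u} \in \mathcal{K}\})$ be the normalized surface measure of the directions landing in $\mathcal{K}$ at radius $r$. Because $\mathcal{K}$ is convex and symmetric, $g(r)$ is nonincreasing in $r$ (a star-shaped/monotonicity property of the slices of a symmetric convex body). Then $P(\bm{\varepsilon} \in \mathcal{K} \cap \mathcal{B}) = \int_0^{\sqrt a} g(r) f(r)\,dr$ and $P(\bm{\varepsilon} \in \mathcal{K}) = \int_0^\infty g(r) f(r)\,dr$, and the desired inequality
\begin{align*}
\frac{\int_0^{\sqrt a} g(r) f(r)\,dr}{\int_0^{\sqrt a} f(r)\,dr} \;\geq\; \frac{\int_0^{\infty} g(r) f(r)\,dr}{\int_0^{\infty} f(r)\,dr}
\end{align*}
is a standard consequence of $g$ being nonincreasing (the conditional average of a nonincreasing function over a lower truncation of its argument exceeds its overall average — a Chebyshev-type correlation inequality applied to $f$-weighted measure).

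So the key steps, in order, are: (i) reduce $\bm{\zeta}_{m,a} \succ \bm{\varepsilon}$ to the radial-truncation inequality displayed above; (ii) use spherical symmetry of $\bm{\varepsilon}$ to factor $P(\bm{\varepsilon} \in \mathcal{K})$ through the radial density $f$ and the angular fraction $g(r)$; (iii) prove $g(r)$ is nonincreasing using convexity and central symmetry of $\mathcal{K}$ — namely, if $r_1 < r_2$ and $r_2 \bm{u} \in \mathcal{K}$, then $r_1 \bm{u} = \tfrac{r_1}{r_2}(r_2 \bm{u}) + (1 - \tfrac{r_1}{r_2})\bm{0} \in \mathcal{K}$ since $\bm{0} \in \mathcal{K}$ (as $\mathcal{K}$ symmetric convex contains the origin), hence the set of admissible directions only shrinks as $r$ grows; (iv) invoke the Chebyshev correlation inequality for the nonincreasing function $g$ against the probability measure with density $f$ to conclude. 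The main obstacle is really step (iii)–(iv) edge cases: $\mathcal{K}$ need not be bounded or have nonempty interior, and $g$ could be identically zero or the integrals degenerate; I would handle this by first reducing (by the definition of peakedness and a limiting argument, as done elsewhere in the paper for central convex unimodality) to $\mathcal{K}$ a symmetric convex body, for which $g$ is a bona fide nonincreasing function on $[0,\infty)$ taking values in $[0,1]$, and the inequality is then immediate and strict unless $g$ is a.e. constant on the relevant range.
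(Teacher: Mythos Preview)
Your proposal is correct, and the elementary radial route you outline in steps (ii)--(iv) is essentially the paper's own proof: the paper defines $G(r)=P(\bm{\varepsilon}\in\mathcal{K}\mid\|\bm{\varepsilon}\|_2=r)=P(\bm{\phi}\in r^{-1}\mathcal{K})$ for $\bm{\phi}$ uniform on the sphere, shows $G$ is nonincreasing by the same star-shaped argument you give in (iii), and then compares $E[G(\chi_m)\mid\chi_m^2\le a]$ with $E[G(\chi_m)]$ via exactly the ``conditional average of a nonincreasing function over a lower truncation exceeds the overall average'' inequality you call Chebyshev-type. The only differences are cosmetic: the paper skips your algebraic FKG rearrangement and goes directly to the radial conditioning, and it does not mention the Gaussian correlation inequality --- your Royen shortcut is valid but is much heavier machinery than needed here.
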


\begin{proof}[Proof of Lemma \ref{lemma:zeta_peak_epsilon}]
	For any symmetric convex set $\mathcal{K}$, let $\|\bm{\varepsilon}\|_2=(\bm{\varepsilon}'\bm{\varepsilon})^{1/2}$ be the $l_2$-norm of $\bm{\varepsilon}$, and 
	$
	G(r) = P
	(
	\bm{\varepsilon} \in \mathcal{K} \mid \|\bm{\varepsilon}\|_2 = r
	)
	$ be the conditional probability that $\bm{\varepsilon}$ is in $\mathcal{K}$ given the length of $\bm{\varepsilon}$. Let $\bm{\phi}$ be a random vector uniformly distributed on $m$ dimensional unit sphere. By the spherical symmetry of $\bm{\varepsilon},$ we can simplify $G(r)$ as 
	\begin{align*}
	G(r) & = P
	\left(
	\|\bm{\varepsilon}\|_2 \cdot 
	\frac{\bm{\varepsilon}}{\|\bm{\varepsilon}\|_2} 
	\in \mathcal{K} 
	\ \bigl\vert \ 
	\|\bm{\varepsilon}\|_2 = r
	\right) = 
	P
	(
	r\bm{\phi} \in \mathcal{K}
	)
	= 
	P
	(
	\bm{\phi} \in r^{-1}\mathcal{K}
	),
	\end{align*}
	where $r^{-1}\mathcal{K} = \{r^{-1}\bm{x}: \bm{x} \in \mathcal{K}\}$. 
	For any $r_1\geq r_2\geq 0$, if $\tilde{\bm{x}} \in r_1^{-1}\mathcal{K}$, then $r_1\tilde{\bm{x}} \in \mathcal{K}$. By the symmetric convexity of $\mathcal{K}$, we then have $r_2\tilde{\bm{x}} = r_2/r_1 \cdot (r_1\tilde{\bm{x}}) \in \mathcal{K}$, i.e., $\tilde{\bm{x}} \in r_2^{-1}\mathcal{K}$. Thus, $r_1^{-1}\mathcal{K} \subset r_2^{-1}\mathcal{K}$, which further implies $G(r_1) \leq G(r_2)$. Therefore, $G(r)$ is a nonincreasing function of $r\in [0, \infty)$. 
	We can represent the probabilities that $\bm{\zeta}_{m,a}$ and $\bm{\varepsilon}$ belong to $\mathcal{K}$, respectively, as follows:
	\begin{align*}
	P(\bm{\zeta}_{m,a} \in \mathcal{K}) & = 
	P(\bm{\varepsilon} \in \mathcal{K} \mid \bm{\varepsilon}'\bm{\varepsilon}\leq a) = 
	\E\left\{P(\bm{\varepsilon} \in \mathcal{K} \mid \bm{\varepsilon}'\bm{\varepsilon}\leq a, \|\bm{\varepsilon}\|_2)
	\mid \bm{\varepsilon}'\bm{\varepsilon}\leq a
	\right\} \\
	& = \E\left\{
	G(\|\bm{\varepsilon}\|_2) \mid \bm{\varepsilon}'\bm{\varepsilon}\leq a
	\right\}
	= \E\left\{
	G(\chi_m) \mid \chi^2_m \leq a
	\right\},
	\end{align*}
	and 
	$
	P(\bm{\varepsilon} \in \mathcal{K})  = \E\{
	G(\chi_m)
	\}.
	$
	By the monotone nonincreasing property of $G(r)$, we have
	\begin{align*}
	P(\bm{\varepsilon} \in \mathcal{K}) & = \E\left\{
	G(\chi_m)
	\right\} \\
	& = P(\chi^2_m \leq a)\E\left\{
	G(\chi_m) \mid \chi^2_m \leq a
	\right\} + 
	P(\chi^2_m > a)\E\left\{
	G(\chi_m) \mid \chi^2_m > a
	\right\}\\
	& \leq   
	P(\chi^2_m \leq a)\E\left\{
	G(\chi_m) \mid \chi^2_m \leq a
	\right\} + 
	P(\chi^2_m > a)
	G(\sqrt{a})  \\
	& \leq 
	P(\chi^2_m \leq a)\E\left\{
	G(\chi_m) \mid \chi^2_m \leq a
	\right\} + 
	P(\chi^2_m > a)
	\E\left\{
	G(\chi_m) \mid \chi^2_m \leq a
	\right\}\\
	& = \E\left\{
	G(\chi_m) \mid \chi^2_m \leq a
	\right\} = P(\bm{\zeta}_{m,a} \in \mathcal{K}). 
	\end{align*}
	Therefore, Lemma \ref{lemma:zeta_peak_epsilon} holds. 
\end{proof}

\begin{proof}[{   Proof of Proposition \ref{prop:peak_cov}}]
	From Lemma \ref{lemma:peak_linear}, for any vector $\bm{c}\in \mathbb{R}^m$, $\bm{c}'\bm{\phi}\succ \bm{c}'\bm{\psi}$. Thus, 
	\begin{align*}
	\Var(\bm{c}' \bm{\phi}) & = \E\left\{\left(\bm{c}' \bm{\phi}\right)^2 \right\} = \int_{0}^{\infty} \left[1 - P\left\{
	\left(\bm{c}' \bm{\phi}\right)^2 \leq t
	\right\} \right] \text{d}t \\
	& = 
	\int_{0}^{\infty} \left\{ 1 - P\left(
	\bm{c}' \bm{\phi}\in [-t, t]
	\right) \right\} \text{d}t\\
	& \leq \int_{0}^{\infty} \left\{ 1 - P\left(
	\bm{c}' \bm{\psi}\in [-t, t]
	\right) \right\} = \Var(\bm{c}' \bm{\psi}).
	\end{align*}
	Because the above inequality holds for any $\bm{c}$, $\Cov(\bm{\phi})\leq \Cov(\bm{\psi})$. 
\end{proof}

\subsection{Propositions for simultaneous diagonalization}

\begin{proof}[{   Proof of Proposition \ref{prop:add_simu_diag}}]
	Recall that $\bm{\Psi}$ is the matrix such that $\coef_q = \bm{\Psi} \orthocoef_q$. 
	By the construction of the $\bm{c}_q$'s, 
	\begin{align*}
	\tilde{\bm{C}}
	& =
	2^{-2(K-1)}
	\sum_{q=1}^{Q}
	n_q^{-1}(\orthocoef_q\orthocoef'_q) 
	\equiv
	\text{diag}\left\{ \tilde{\bm{C}}[1,1], \ldots, \tilde{\bm{C}}[H,H] \right\}
	\end{align*}
	is a block diagonal matrix, and $\tilde{\bm{B}} = \bm{\Psi} \tilde{\bm{C}} \bm{\Psi}'$. Partition $\tilde{\bm{C}}$ into $\tilde{\bm{C}} = (\tilde{\bm{C}}[,1], \ldots, \tilde{\bm{C}}[,H])$.
	From Proposition \ref{prop:var_cov_tier_factor}, under the additivity, we have 
	\begin{align*}
	\bm{W}_{\bm{\tau x}}[h] & = 2^{-2(K-1)}\sum_{q=1}^{Q}
	n_q^{-1}(\coef_q\orthocoef'_q[h]) \otimes \bm{S}_{1, \bm{x}} = 
	2^{-2(K-1)}\sum_{q=1}^{Q}
	n_q^{-1}(\bm{\Psi}\orthocoef_q\orthocoef'_q[h]) \otimes \bm{S}_{1, \bm{x}}\\
	& = 
	\left(\bm{\Psi}
	\tilde{\bm{C}}[,h]\right)
	\otimes \bm{S}_{1, \bm{x}}, 
	\\
	\bm{W}_{\bm{xx}}[h] & = 
	2^{-2(K-1)}\sum_{q=1}^{Q}
	n_q^{-1}(\orthocoef_q[h]\orthocoef'_q[h]) \otimes \bm{S}_{\bm{xx}} = 
	\tilde{\bm{C}}[h,h] \otimes \bm{S}_{\bm{xx}}.
	\end{align*}
	Thus, under the additivity, for each $1\leq h\leq H$,  $\bm{W}_{\bm{\tau\tau}}^\myparallel[h]$ reduces to 
	\begin{align*}
	\bm{W}_{\bm{\tau\tau}}^\myparallel[h] & = \bm{W}_{\bm{\tau x}}[h] \bm{W}_{\bm{xx}}^{-1}[h] \bm{W}_{\bm{x\tau}}[h] \\
	& = 
	\left\{
	\bm{\Psi}
	\tilde{\bm{C}}[,h]\left(\tilde{\bm{C}}[h,h]\right)^{-1}(\tilde{\bm{C}}[,h])'\bm{\Psi}'
	\right\}
	\otimes (\bm{S}_{1,\bm{x}}\bm{S}_{\bm{xx}}^{-1}\bm{S}_{\bm{x},1})\\
	& =S_{11}^\myparallel  \cdot \bm{\Psi} \cdot
	\textup{diag}(\bm{0}, \ldots, \bm{0}, \tilde{\bm{C}}[h,h], \bm{0}, \ldots, \bm{0})
	\cdot
	\bm{\Psi}'\\  
	& =S_{11}^\myparallel  \cdot \bm{\Psi} \cdot \tilde{\bm{C}}^{1/2} \cdot 
	\textup{diag}(\bm{0}, \ldots, \bm{0}, \bm{I}_{F_h}, \bm{0}, \ldots, \bm{0})
	\cdot 
	\tilde{\bm{C}}^{1/2} \cdot
	\bm{\Psi}'
	\end{align*}
	Under the additivity, 
	$
	\bm{V}_{\bm{\tau\tau}} = 2^{-2(K-1)} \sum_{q=1}^{Q} n_q^{-1} \coef_q\coef_q' \cdot S_{11}
	= S_{11}  \cdot \tilde{\bm{B}}.
	$ 
	Thus, 
	\begin{align*}
	& \quad \ \bm{V}_{\bm{\tau}\bm{\tau}}^{-1/2} \bm{W}_{\bm{\tau\tau}}^\myparallel[h] \bm{V}_{\bm{\tau}\bm{\tau}}^{-1/2} \\
	& = 
	S_{11}^{-1/2} \tilde{\bm{B}}^{-1/2} 
	\left\{
	S_{11}^\myparallel \cdot \bm{\Psi} \cdot \tilde{\bm{C}}^{1/2} \cdot 
	\textup{diag}(\bm{0}, \ldots, \bm{0}, \bm{I}_{F_h}, \bm{0}, \ldots, \bm{0})
	\cdot 
	\tilde{\bm{C}}^{1/2} \cdot
	\bm{\Psi}'
	\right\}
	\cdot S_{11}^{-1/2}  \tilde{\bm{B}}^{-1/2} 
	\\
	& = S_{11}^\myparallel/S_{11} \cdot \tilde{\bm{B}}^{-1/2} \bm{\Psi} \tilde{\bm{C}}^{1/2} \cdot
	\textup{diag}(\bm{0}, \ldots, \bm{0}, \bm{I}_{F_h}, \bm{0}, \ldots, \bm{0})
	\cdot \tilde{\bm{C}}^{1/2}
	\bm{\Psi}'  \tilde{\bm{B}}^{-1/2} \\
	& =  \bm{\Gamma} \cdot
	\textup{diag}(\bm{0}, \ldots, \bm{0}, S_{11}^\myparallel/S_{11} \cdot \bm{I}_{F_h}, \bm{0}, \ldots, \bm{0})
	\cdot \bm{\Gamma}', 
	\end{align*}
	where $\bm{\Gamma} = \tilde{\bm{B}}^{-1/2} \bm{\Psi} \tilde{\bm{C}}^{1/2}$.  Because 
	$$
	\bm{\Gamma}\bm{\Gamma}'  = \tilde{\bm{B}}^{-1/2} \bm{\Psi} \tilde{\bm{C}}^{1/2}\tilde{\bm{C}}^{1/2}
	\bm{\Psi}'  \tilde{\bm{B}}^{-1/2} = \tilde{\bm{B}}^{-1/2}
	\left(
	\bm{\Psi} \tilde{\bm{C}}
	\bm{\Psi}'
	\right)
	\tilde{\bm{B}}^{-1/2} = \tilde{\bm{B}}^{-1/2}
	\tilde{\bm{B}}
	\tilde{\bm{B}}^{-1/2} 
	= \bm{I}_F,
	$$
	we have that 
	$\bm{\Gamma}$ is an orthogonal matrix. 
	Note that 
	$\textup{diag}(\bm{0}, \ldots, \bm{0}, S_{11}^\myparallel/S_{11}\cdot\bm{I}_{F_h}, \bm{0}, \ldots, \bm{0})$ is a diagonal matrix with exactly $F_h$ nonzero
	elements, which are all equal to $S_{11}^\myparallel/S_{11}$. 
	Therefore, Condition \ref{cond:simultaneous_ortho} and Proposition \ref{prop:add_simu_diag} hold. 
\end{proof}

\begin{proof}[{   Proof of Proposition \ref{prop:add_simu_diag_refmt_cf}}]
	Recall $\bm{\Psi}$ is the matrix such that $\coef_q = \bm{\Psi} \orthocoef_q$.
	From the proof of Proposition \ref{prop:add_simu_diag}, 
	$
	\tilde{\bm{C}} = 2^{-2(K-1)}\sum_{q=1}^{Q}n_q^{-1} (\bm{c}_q\bm{c}_q')
	$
	is a block diagonal matrix, and $\tilde{\bm{B}} = \bm{\Psi} \tilde{\bm{C}} \bm{\Psi}'$.  
	From Proposition \ref{prop:var_cov_tier_factor_covariate}, under the additivity, 
	$\bm{W}_{\bm{\tau e}[t]}[h]$ and $\bm{W}_{\bm{e}[t]\bm{e}[t]}[h]$ reduce to 
	\begin{align*} 
	\bm{W}_{\bm{\tau e}[t]}[h] & = 2^{-2(K-1)}
	\left\{\bm{\Psi}
	\sum_{q=1}^{Q}
	n_q^{-1}(\orthocoef_q\orthocoef'_q[h]) \right\} \otimes \bm{S}_{1, \bm{e}[t]} = 
	\left( \bm{\Psi} \tilde{\bm{C}}[,h] \right)
	\otimes \bm{S}_{1, \bm{e}[t]}
	\\
	\bm{W}_{\bm{e}[t]\bm{e}[t]}[h] & = 
	2^{-2(K-1)}\sum_{q=1}^{Q}
	n_q^{-1}(\orthocoef_q[h]\orthocoef'_q[h]) \otimes \bm{S}_{\bm{e}[t]\bm{e}[t]}
	=  
	\tilde{\bm{C}}[h,h] \otimes \bm{S}_{\bm{e}[t]\bm{e}[t]},
	\end{align*}
	which then implies
	\begin{align*}
	\bm{W}_{\bm{e}[t]\bm{e}[t]}^\myparallel[h] & \equiv 
	\bm{W}_{\bm{\tau e}[t]}[h]\bm{W}_{\bm{e}[t]\bm{e}[t]}^{-1}[h]
	\bm{W}_{\bm{\tau e}[t]}'[h]\\
	& = \left\{
	\bm{\Psi} \tilde{\bm{C}}[,h]
	\left(\tilde{\bm{C}}[h,h]\right)^{-1}
	\left(\tilde{\bm{C}}[,h]\right)'\bm{\Psi}'
	\right\}
	\otimes 
	\left(
	\bm{S}_{1, \bm{e}[t]} \bm{S}_{\bm{e}[t]\bm{e}[t]}^{-1}
	\bm{S}_{\bm{e}[t],1}
	\right)\\
	& =  \left(
	\bm{S}_{1, \bm{e}[t]} \bm{S}_{\bm{e}[t]\bm{e}[t]}^{-1}
	\bm{S}_{\bm{e}[t],1}
	\right) \cdot
	\bm{\Psi}  \cdot
	\textup{diag}(\bm{0}, \ldots, \bm{0}, \tilde{\bm{C}}[h,h], \bm{0}, \ldots, \bm{0})
	\cdot
	\bm{\Psi}'\\
	& =  S_{11}^\myparallel[t] \cdot
	\bm{\Psi}  \cdot
	\textup{diag}(\bm{0}, \ldots, \bm{0}, \tilde{\bm{C}}[h,h], \bm{0}, \ldots, \bm{0})
	\cdot
	\bm{\Psi}'\\
	& = 
	S_{11}^\myparallel[t] \cdot
	\bm{\Psi}  \cdot \tilde{\bm{C}}^{1/2} \cdot 
	\textup{diag}(\bm{0}, \ldots, \bm{0}, \bm{I}_{F_h}, \bm{0}, \ldots, \bm{0})
	\cdot
	\tilde{\bm{C}}^{1/2}
	\cdot
	\bm{\Psi}', 
	\end{align*} 
	where 
	$S_{11}^\myparallel[t] = \bm{S}_{1, \bm{e}[t]} \bm{S}_{\bm{e}[t]\bm{e}[t]}^{-1}
	\bm{S}_{\bm{e}[t],1}$ 
	is the finite population variance of the linear projection of potential outcome $Y(1)$ on orthogonalized covariates in tier $t$. 
	Because $\bm{V}_{\bm{\tau}\bm{\tau}}$ reduces to 
	$
	\bm{V}_{\bm{\tau}\bm{\tau}} = S_{11} \tilde{\bm{B}}
	$
	under the additivity, 
	we have 
	\begin{align*}
	& \quad \ \bm{V}_{\bm{\tau}\bm{\tau}}^{-1/2}\bm{W}_{\bm{e}[t]\bm{e}[t]}^\myparallel[h]
	\bm{V}_{\bm{\tau}\bm{\tau}}^{-1/2} \\
	& = S_{11}^\myparallel[t]/S_{11}  \cdot \tilde{\bm{B}}^{-1/2}
	\left\{
	\bm{\Psi}  \tilde{\bm{C}}^{1/2} \cdot 
	\textup{diag}(\bm{0}, \ldots, \bm{0}, \bm{I}_{F_h}, \bm{0}, \ldots, \bm{0})
	\cdot
	\tilde{\bm{C}}^{1/2}
	\bm{\Psi}'
	\right\} \tilde{\bm{B}}^{-1/2}
	\\& = S_{11}^\myparallel[t]/S_{11} \cdot 
	\tilde{\bm{B}}^{-1/2}\bm{\Psi}  \tilde{\bm{C}}^{1/2}
	\cdot 
	\textup{diag}(\bm{0}, \ldots, \bm{0}, \bm{I}_{F_h}, \bm{0}, \ldots, \bm{0})
	\cdot
	\tilde{\bm{C}}^{1/2}
	\bm{\Psi}'\tilde{\bm{B}}^{-1/2}
	\\
	& = 
	\bm{\Gamma}
	\bm{\Omega}_{[t]}[h]
	\bm{\Gamma}',
	\end{align*}
	where $\bm{\Omega}_{[t]}[h] = \textup{diag}(\bm{0}, \ldots, \bm{0}, S_{11}^\myparallel[t]/S_{11} \cdot \bm{I}_{F_h}, \bm{0}, \ldots, \bm{0})$ is a diagonal matrix, 
	and $\bm{\Gamma} = \tilde{\bm{B}}^{-1/2}\bm{\Psi}  \tilde{\bm{C}}^{1/2}$ is an orthogonal matrix. 
	By definition, we then have 
	\begin{align*}
	& \quad \ \bm{V}_{\bm{\tau\tau}}^{-1/2}
	\bm{U}_{\bm{\tau\tau}}^\myparallel[j]
	\bm{V}_{\bm{\tau\tau}}^{-1/2}\\
	& = 
	\bm{V}_{\bm{\tau\tau}}^{-1/2}
	\left(
	\sum_{(t,h)\in \mathcal{S}_j}\bm{W}_{\bm{e}[t]\bm{e}[t]}^\myparallel[h]
	\right)
	\bm{V}_{\bm{\tau\tau}}^{-1/2}
	= \sum_{(t,h)\in \mathcal{S}_j}
	\bm{V}_{\bm{\tau\tau}}^{-1/2}
	\bm{W}_{\bm{e}[t]\bm{e}[t]}^\myparallel[h]
	\bm{V}_{\bm{\tau\tau}}^{-1/2}\\
	& = \sum_{(t,h)\in \mathcal{S}_j}
	\bm{\Gamma} \bm{\Omega}_{[t]}[h] \bm{\Gamma}'
	= 
	\bm{\Gamma}
	\left( \sum_{(t,h)\in \mathcal{S}_j}\bm{\Omega}_{[t]}[h] \right) \bm{\Gamma}',
	\quad (1\leq j\leq J)
	\end{align*}
	where $\sum_{(t,h)\in \mathcal{S}_j}\bm{\Omega}_{[t]}[h]$ is a diagonal matrix. 
	Therefore, Condition \ref{cond:simultaneous_ortho_refmt_cf} 
	and Proposition \ref{prop:add_simu_diag_refmt_cf} 
	hold. 
\end{proof}

\subsection{Peakedness under ReFM}

\begin{proof}[{   Proof of Theorem \ref{thm:red_qr_rem}}]
	Let $\bm{D}\sim \mathcal{N}(\bm{0}, \bm{I}_{LF})$. From Lemma \ref{lemma:zeta_peak_epsilon}, $\bm{\zeta}_{LF,a} \succ \bm{D}$. 
	From Lemma \ref{lemma:peak_linear}, 
	$(\bm{V}_{\bm{\tau \tau}}^\myparallel)^{1/2}_{LF}\bm{\zeta}_{LF,a} \succ (\bm{V}_{\bm{\tau \tau}}^\myparallel)^{1/2}_{LF}\bm{D}$. 
	From Proposition \ref{prop:ccu_refm}, $\bm{\varepsilon}$ is central convex unimodal. From Lemma \ref{lemma:ccu_linear}, $(\bm{V}_{\bm{\tau\tau}}^\myperp)^{1/2}\bm{\varepsilon}$ is also central convex unimodal. 
	Then, from Lemma \ref{lemma:peak_sum}, 
	\begin{align*}
	(\bm{V}_{\bm{\tau\tau}}^\myperp)^{1/2}\bm{\varepsilon} + (\bm{V}_{\bm{\tau \tau}}^\myparallel)^{1/2}_{LF}\bm{\zeta}_{LF,a} 
	\ \succ \ 
	(\bm{V}_{\bm{\tau\tau}}^\myperp)^{1/2}\bm{\varepsilon} +  (\bm{V}_{\bm{\tau \tau}}^\myparallel)^{1/2}_{LF}\bm{D} 
	\ \sim \ 
	\mathcal{N}(\bm{0},\bm{V}_{\bm{\tau\tau}}). 
	\end{align*}
	Therefore, Theorem \ref{thm:red_qr_rem} holds. 
\end{proof}

To prove Theorem \ref{thm:red_qr_rem_nond_ccorr}, we need the following three lemmas. 

\begin{lemma}\label{lemma:order_rho_mono}
	Let $\varepsilon, \eta\sim \mathcal{N}(0,1)$ be two independent standard Gaussian random variables. For any 
	$1 \geq \rho \geq \tilde{\rho} \geq 0$, $a\geq 0$ and $c\geq 0$, 
	\begin{align*}
	P\left(
	|\sqrt{1-\rho^2} \cdot \varepsilon_0 + \rho \cdot \eta| \leq c 
	\ \bigl\vert \ 
	\eta^2 \leq a
	\right)
	\geq 
	P\left( 
	|\sqrt{1-\tilde{\rho}^2} \cdot \varepsilon_0 + \tilde{\rho} \cdot \eta| \leq c
	\ \bigl\vert \ 
	\eta^2 \leq a
	\right). 
	\end{align*}
\end{lemma}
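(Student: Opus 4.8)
The plan is to read the displayed inequality as a statement about the peakedness order and to prove monotonicity in $\rho$ by a single Gaussian--splitting step followed by an elementary one--dimensional estimate. Since $\varepsilon_0$ is independent of $\eta$, the conditional law of $\sqrt{1-\rho^2}\,\varepsilon_0+\rho\,\eta$ given $\eta^2\le a$ is exactly that of $X_\rho:=\sqrt{1-\rho^2}\,\varepsilon_0+\rho\,\eta_a$, where I write $\eta_a$ for a random variable with the law of $\eta\mid\eta^2\le a$ (assume $a>0$, the only relevant case) and take $\varepsilon_0\perp\eta_a$. Both $X_\rho$ and $X_{\tilde\rho}$ are symmetric about $0$, so, recalling that in $\mathbb{R}$ the symmetric convex sets are precisely the symmetric intervals $[-c,c]$, the lemma is equivalent to $X_\rho\succ X_{\tilde\rho}$ whenever $1\ge\rho\ge\tilde\rho\ge 0$. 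If $\rho=0$ then $\tilde\rho=0$ and the statement is trivial, so from now on $\rho>0$.

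Next I would split off the Gaussian component. Because $\rho^2\ge\tilde\rho^2$, a sum of independent Gaussians gives $\sqrt{1-\tilde\rho^2}\,\varepsilon\sim\sqrt{1-\rho^2}\,\varepsilon'+\sqrt{\rho^2-\tilde\rho^2}\,\varepsilon''$ for independent standard Gaussians $\varepsilon',\varepsilon''$; hence, using that peakedness depends only on marginal laws, for mutually independent copies $\varepsilon_0',\varepsilon_0'',\eta_a',\eta_a''$ one has
\[
X_\rho\ \sim\ \sqrt{1-\rho^2}\,\varepsilon_0'+\rho\,\eta_a',\qquad
X_{\tilde\rho}\ \sim\ \sqrt{1-\rho^2}\,\varepsilon_0'+\big(\sqrt{\rho^2-\tilde\rho^2}\,\varepsilon_0''+\tilde\rho\,\eta_a''\big).
\]
The common summand $\sqrt{1-\rho^2}\,\varepsilon_0'$ is Gaussian, hence central convex unimodal by Lemma \ref{lemma:logconcave}, and is independent of each of the remaining two (symmetric) pieces, so by Lemma \ref{lemma:peak_sum} it suffices to show $\rho\,\eta_a'\succ\sqrt{\rho^2-\tilde\rho^2}\,\varepsilon_0''+\tilde\rho\,\eta_a''$. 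Dividing through by $\rho$ and invoking scale--invariance of peakedness (Lemma \ref{lemma:peak_linear}), with $s:=\tilde\rho/\rho\in[0,1]$, this reduces to the single claim
\[
(\star)\qquad \eta_a\ \succ\ \sqrt{1-s^2}\,\varepsilon_0+s\,\eta_a\qquad(s\in[0,1],\ \varepsilon_0\perp\eta_a),
\]
understood as a statement about the two marginal laws.

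To finish I would prove $(\star)$ by a direct computation. Let $\varepsilon_0,\eta$ be i.i.d.\ $\mathcal{N}(0,1)$ and put $W:=\sqrt{1-s^2}\,\varepsilon_0+s\,\eta$; then $W\sim\mathcal{N}(0,1)$, the law of $\eta\mid\eta^2\le a$ is that of $\eta_a$, and the law of $W\mid\eta^2\le a$ is that of $\sqrt{1-s^2}\,\varepsilon_0+s\,\eta_a$. For $c\ge 0$, clearing the common denominator $P(\eta^2\le a)$, the inequality $(\star)$ becomes $P(|\eta|\le\min(c,\sqrt{a}))\ge P(|W|\le c,\ |\eta|\le\sqrt{a})$. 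If $c\le\sqrt{a}$, the left side equals $P(|\eta|\le c)=P(|W|\le c)$ by equality of marginals, and this dominates the right side because $\{|W|\le c,\ |\eta|\le\sqrt{a}\}\subseteq\{|W|\le c\}$; if $c\ge\sqrt{a}$, the left side equals $P(|\eta|\le\sqrt{a})$, which dominates the right side because $\{|W|\le c,\ |\eta|\le\sqrt{a}\}\subseteq\{|\eta|\le\sqrt{a}\}$. Hence $(\star)$ holds for all $c$, and unwinding the reduction proves the lemma. The one genuinely creative step is the reduction in the second paragraph --- spotting that one should peel off a common $\sqrt{1-\rho^2}$--scaled Gaussian and then rescale to land on $(\star)$; the verification of $(\star)$ itself, and the bookkeeping needed to apply Lemma \ref{lemma:peak_sum} with honestly independent copies, are routine.
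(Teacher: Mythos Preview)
Your proof is correct. The reduction via the Gaussian split and Lemma~\ref{lemma:peak_sum} is clean, and the verification of $(\star)$ is particularly slick: by realizing $\eta_a$ and $\sqrt{1-s^2}\,\varepsilon_0+s\,\eta_a$ jointly as $\eta$ and $W=\sqrt{1-s^2}\,\varepsilon_0+s\,\eta$ conditional on $|\eta|\le\sqrt a$, you exploit that $W$ has the same \emph{unconditional} marginal as $\eta$, while the conditioning event $|\eta|\le\sqrt a$ can only constrain $W$ further. The case split $c\lessgtr\sqrt a$ then makes the comparison trivial.

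The paper does not actually prove this lemma; it simply cites \citet[Lemma~A3]{asymrerand2016} and observes that the statement is a special case of \citet[Theorem~2.1]{dasgupta1972}. Your argument is therefore a genuinely self-contained alternative. The Das~Gupta et~al.\ route is far more general (elliptically contoured densities, arbitrary symmetric convex sets in $\mathbb{R}^n$), whereas your approach is tailored to the one-dimensional Gaussian/truncated-Gaussian convolution and uses only tools already developed in this supplement (Lemmas~\ref{lemma:peak_linear}, \ref{lemma:peak_sum}, \ref{lemma:logconcave}). For the purposes of the present paper that is a net gain: you remove an external dependency at essentially no cost, and the argument dovetails with the surrounding peakedness machinery rather than appealing to a black box.
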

\begin{proof}[Proof of Lemma \ref{lemma:order_rho_mono}]
	It follows directly from \citet[][Lemma A3]{asymrerand2016}, and is also a special case of \citet[][Theorem 2.1]{dasgupta1972}.
\end{proof}

The following lemma extends the above lemma to the multivariate case.

\begin{lemma}\label{lemma:nondecreasing_eigenvalue}
	Let $\bm{\varepsilon}$ and $\bm{\eta}$ be two independent $m$ dimensional standard Gaussian random vectors, $(\rho_1, \ldots, \rho_m)$ be $m$ constants in $[0,1]$, and $\bm{\Delta}$ be a diagonal matrix with diagonal elements $(\rho_1, \ldots, \rho_m)$. 
	For any $r\geq 0$, the probability 
	$$
	P\left\{
	(\bm{I}_m - \bm{\Delta}^2)^{1/2}
	\bm{\varepsilon} + 
	\bm{\Delta}\bm{\eta} \in \mathcal{B}_m(r)
	\mid \bm{\eta}'\bm{\eta} \leq a
	\right\}
	$$
	is nondecreasing in $(\rho_1, \ldots, \rho_m)$. Specifically, for any constants $(\rho_1, \ldots, \rho_m)$ and $(\tilde{\rho}_1, \ldots, \tilde{\rho}_m)$ in $[0,1]$, if 
	$\rho_j \geq \tilde{\rho}_j$ for  $1\leq j\leq m$, then for any $r\geq 0$, 
	\begin{eqnarray}\label{eq:nondecreasing_eigenvalue}
	& & P\left\{ 
	(\bm{I}_F - \bm{\Delta}^2)^{1/2}
	\bm{\varepsilon} + 
	\bm{\Delta}\bm{\eta}
	\in \mathcal{B}_m(r)
	\mid \bm{\eta}'\bm{\eta} \leq a
	\right\} \nonumber\\
	& \geq & 
	P\left\{ 
	(\bm{I}_F - \tilde{\bm{\Delta}}^2)^{1/2}
	\bm{\varepsilon} + 
	\tilde{\bm{\Delta}}\bm{\eta}
	\in \mathcal{B}_m(r)
	\mid \bm{\eta}'\bm{\eta} \leq a
	\right\}.
	\end{eqnarray}
	where $\bm{\Delta} = \textup{diag}(\rho_1, \ldots, \rho_m)$ and $\tilde{\bm{\Delta}} = \textup{diag}(\tilde{\rho}_1, \ldots, \tilde{\rho}_m)$. 
\end{lemma}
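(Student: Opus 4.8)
The plan is to reduce the claim to the one-dimensional monotonicity in Lemma~\ref{lemma:order_rho_mono} by altering the parameters $\rho_1,\dots,\rho_m$ one coordinate at a time. Both the ball $\mathcal{B}_m(r)$ and the joint law of $(\bm{\varepsilon},\bm{\eta})$ are invariant under a simultaneous permutation of the $m$ coordinates together with the corresponding entries of $\bm{\Delta}$, so it is enough to show that the stated probability is nondecreasing in $\rho_1$ when $\rho_2,\dots,\rho_m$ are frozen; given that, a telescoping chain $(\rho_1,\rho_2,\dots,\rho_m)\to(\tilde\rho_1,\rho_2,\dots,\rho_m)\to\cdots\to(\tilde\rho_1,\dots,\tilde\rho_m)$ delivers \eqref{eq:nondecreasing_eigenvalue}. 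Throughout we use that $a>0$, so that the conditioning event has positive probability.

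For the single-coordinate step I would split off the first coordinate. Write $\bm{\varepsilon}=(\varepsilon_1,\bm{\varepsilon}_{-1})$, $\bm{\eta}=(\eta_1,\bm{\eta}_{-1})$, $\bm{\Delta}=\textup{diag}(\rho_1,\rho_2,\dots,\rho_m)$ and $\bm{\Delta}_{-1}=\textup{diag}(\rho_2,\dots,\rho_m)$, so that the first coordinate of $(\bm{I}_m-\bm{\Delta}^2)^{1/2}\bm{\varepsilon}+\bm{\Delta}\bm{\eta}$ equals $U_1\equiv\sqrt{1-\rho_1^2}\,\varepsilon_1+\rho_1\eta_1$, while the remaining block $\bm{U}_{-1}\equiv(\bm{I}_{m-1}-\bm{\Delta}_{-1}^2)^{1/2}\bm{\varepsilon}_{-1}+\bm{\Delta}_{-1}\bm{\eta}_{-1}$ is a function of $(\bm{\varepsilon}_{-1},\bm{\eta}_{-1})$ alone, hence free of $\rho_1$ and of $(\varepsilon_1,\eta_1)$. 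The event $\bm{\eta}'\bm{\eta}\le a$ becomes $\{\bm{\eta}_{-1}'\bm{\eta}_{-1}\le a\}\cap\{\eta_1^2\le a-\bm{\eta}_{-1}'\bm{\eta}_{-1}\}$, and membership in $\mathcal{B}_m(r)$ becomes $\{U_1^2\le r^2-\|\bm{U}_{-1}\|_2^2\}$. Conditioning on $(\bm{\varepsilon}_{-1},\bm{\eta}_{-1})$ — whose distribution does not involve $\rho_1$ — and using independence of $(\varepsilon_1,\eta_1)$, the numerator $P\{(\bm{I}_m-\bm{\Delta}^2)^{1/2}\bm{\varepsilon}+\bm{\Delta}\bm{\eta}\in\mathcal{B}_m(r),\ \bm{\eta}'\bm{\eta}\le a\}$ equals
\[
\E\Big[1\{\bm{\eta}_{-1}'\bm{\eta}_{-1}\le a\}\cdot P(\eta_1^2\le a')\cdot P\big(|U_1|\le\sqrt{(r^2-\|\bm{U}_{-1}\|_2^2)_+}\ \big|\ \eta_1^2\le a'\big)\Big],
\]
where $a'=a-\bm{\eta}_{-1}'\bm{\eta}_{-1}\ge0$ on the event of integration, and where the case $\|\bm{U}_{-1}\|_2>r$ is handled automatically since then the last factor reduces to $P(U_1=0\mid\eta_1^2\le a')=0$. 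The factor $P(\eta_1^2\le a')$ is free of $\rho_1$; by Lemma~\ref{lemma:order_rho_mono} the factor $P(|U_1|\le c\mid\eta_1^2\le a')$ is nondecreasing in $\rho_1$ for each fixed $c\ge0$ and $a'\ge0$; and the outer expectation is taken against a measure independent of $\rho_1$. Dividing by the $\rho_1$-free normalizer $P(\bm{\eta}'\bm{\eta}\le a)$ shows the conditional probability is nondecreasing in $\rho_1$, which completes the single-coordinate step and hence, by the iteration, the lemma.

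The step I expect to require the most care is the handling of the non-product conditioning event $\bm{\eta}'\bm{\eta}\le a$: one must simultaneously carry the residual budget $a-\|\bm{\eta}_{-1}\|_2^2$ into Lemma~\ref{lemma:order_rho_mono} as its truncation threshold and carry $\|\bm{U}_{-1}\|_2^2$ out of the squared radius of the ball, verifying measurability and that the conditional law of $(\varepsilon_1,\eta_1)$ given $(\bm{\varepsilon}_{-1},\bm{\eta}_{-1})$ is unchanged. Once this bookkeeping is in place, the remaining ingredients — the univariate lemma and the coordinatewise permutation/telescoping argument — are routine, and the degenerate case $\rho_j=1$ (a vanishing diagonal entry of $(\bm{I}_m-\bm{\Delta}^2)^{1/2}$) causes no additional difficulty since $U_1$ remains atomless conditionally on $\{\eta_1^2\le a'\}$.
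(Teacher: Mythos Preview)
Your proposal is correct and follows essentially the same argument as the paper: both reduce to changing one coordinate at a time (by symmetry, the first), condition on $(\bm{\varepsilon}_{-1},\bm{\eta}_{-1})$ so that the remaining block $\bm{U}_{-1}$ and the residual budget $a'=a-\|\bm{\eta}_{-1}\|_2^2$ are fixed, and then invoke the univariate Lemma~\ref{lemma:order_rho_mono} on the first coordinate before integrating out. The only cosmetic difference is that the paper writes the conditioning $\{\bm{\eta}'\bm{\eta}\le a,\bm{\varepsilon}_{-1},\bm{\eta}_{-1}\}$ directly and defines the one-dimensional section $\mathcal{B}(r,\bm{\varepsilon}_{-1},\bm{\eta}_{-1},\rho_2,\dots,\rho_m)$ of the ball explicitly, whereas you carry the joint probability and divide by the $\rho_1$-free normalizer; the content is the same.
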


\begin{proof}[{   Proof of Lemma \ref{lemma:nondecreasing_eigenvalue}}]
	To prove Lemma \ref{lemma:nondecreasing_eigenvalue}, it suffices to prove that for any constants $(\rho_1, \ldots, \rho_m)$ and $(\tilde{\rho}_1, \ldots, \tilde{\rho}_m)$ in $[0,1]$, if there exists $1\leq k\leq m$ such that $\rho_k \geq \tilde{\rho}_k$ and $\rho_j = \tilde{\rho}_j$ for $j\neq k$, then \eqref{eq:nondecreasing_eigenvalue} holds for any $r\geq 0$. 
	By symmetry, we consider only the case with $k=1$. 
	Let $\bm{\eta}\sim \mathcal{N}(\bm{0}, \bm{I}_m)$ independent of $\bm{\varepsilon}$, and 
	$\bm{\varepsilon}_{-1}=(\varepsilon_2, \ldots, \varepsilon_m)$ and 
	$\bm{\eta}_{-1}=(\eta_2, \ldots, \eta_m)$
	be the subvectors of $\bm{\varepsilon}$ and $\bm{\eta}$ excluding the first coordinates. 
	Define 
	\begin{align*}
	\mathcal{B}(r,\bm{\varepsilon}_{-1},\bm{\eta}_{-1},\rho_2, \ldots, \rho_m) & = 
	\left\{
	x: 
	\begin{pmatrix}
	x\\
	\sqrt{1-\rho_2^2} \cdot \varepsilon_2 + \rho_2 \eta_2\\
	\vdots\\
	\sqrt{1-\rho_m^2} \cdot \varepsilon_m + \rho_m \eta_m
	\end{pmatrix}\in \mathcal{B}_m(r)
	\right\} \subset \mathbb{R},
	\end{align*}
	which is either an empty set or a symmetric closed interval on the real line. 
	For any $r\geq 0$, $\bm{\varepsilon}_{-1}$ and $\bm{\eta}_{-1}$, 
	\begin{align}\label{eq:conditional_nondecreasing_eigenvalue}
	& \quad \  P\left\{ 
	(\bm{I}_F - \bm{\Delta}^2)^{1/2}
	\bm{\varepsilon} + 
	\bm{\Delta}\bm{\eta}
	\in \mathcal{B}_m(r)
	\mid \bm{\eta}'\bm{\eta} \leq a, \bm{\varepsilon}_{-1}, \bm{\eta}_{-1}
	\right\} 
	\nonumber
	\\
	& = 
	P\left\{ 
	\sqrt{1-\rho_1^2} \cdot \varepsilon_1 + \rho_1 \cdot \eta_1
	\in \mathcal{B}(r,\bm{\varepsilon}_{-1},\bm{\eta}_{-1},\rho_2, \ldots, \rho_m)
	\mid \eta_1^2 \leq a - \bm{\eta}_{-1}'\bm{\eta}_{-1}, \bm{\varepsilon}_{-1}, \bm{\eta}_{-1}
	\right\}
	\nonumber
	\\
	& \geq 
	P\left\{ 
	\sqrt{1-\tilde{\rho}_1^2} \cdot \varepsilon_1 + \tilde{\rho}_1 \cdot \eta_1
	\in \mathcal{B}(r,\bm{\varepsilon}_{-1},\bm{\eta}_{-1},\tilde{\rho}_2, \ldots, \tilde{\rho}_m)
	\mid \eta_1^2 \leq a - \bm{\eta}_{-1}'\bm{\eta}_{-1}, \bm{\varepsilon}_{-1}, \bm{\eta}_{-1}
	\right\}
	\nonumber
	\\
	& = 
	P\left\{ 
	(\bm{I}_F - \tilde{\bm{\Delta}}^2)^{1/2}
	\bm{\varepsilon} + 
	\tilde{\bm{\Delta}}\bm{\eta}
	\in \mathcal{B}_m(r)
	\mid \bm{\eta}'\bm{\eta} \leq a, \bm{\varepsilon}_{-1}, \bm{\eta}_{-1}
	\right\}, 
	\end{align}
	where the second last inequality follows from Lemma \ref{lemma:order_rho_mono}. 
	Taking expectations of both sides of \eqref{eq:conditional_nondecreasing_eigenvalue}, we 
	obtain \eqref{eq:nondecreasing_eigenvalue}. 
\end{proof}

\begin{lemma}\label{lemma:general_form_refm}
	Let $\bm{B}_0$ and $\bm{B}_1$ be two $m\times m$ positive semi-definite matrix,  
	$\bm{\varepsilon}\sim \mathcal{N}(\bm{0}, \bm{I}_m)$ be a standard Gaussian random vector, and $\bm{\zeta}_{p,a}\sim \bm{D}\mid \bm{D}'\bm{D}\leq a$ be a truncated Gaussian random vector, where $p\geq m$ and $a\geq 0$. Define 
	$
	\bm{\phi} \sim \left( \bm{B}_0 \right)^{1/2} \bm{\varepsilon} + 
	\left( \bm{B}_1 \right)^{1/2}_p\bm{\zeta}_{p,a}. 
	$
	If $\bm{B}=\bm{B}_0+\bm{B}_1$ is positive definite, then for any $\alpha\in (0,1),$ the threshold $c_{1-\alpha}$ for $1-\alpha$ quantile region $\{\bm{\mu}: \bm{\mu}'\bm{B}^{-1}\bm{\mu}\leq c_{1-\alpha} \}$ of $\bm{\phi}$ depends only on $(m,p,a)$ and the eigenvalues of $\bm{B}^{-1/2}\bm{B}_1\bm{B}^{-1/2}$, and is nonincreasing in these eigenvalues. 
\end{lemma}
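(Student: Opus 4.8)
\textbf{Proof plan for Lemma \ref{lemma:general_form_refm}.}
The plan is to reduce the general statement to a canonical diagonal form, and then invoke Lemma \ref{lemma:nondecreasing_eigenvalue}. First I would standardize by $\bm{B}^{1/2}$: since $\bm{\phi}'\bm{B}^{-1}\bm{\phi} = (\bm{B}^{-1/2}\bm{\phi})'(\bm{B}^{-1/2}\bm{\phi})$, the event $\{\bm{\phi}'\bm{B}^{-1}\bm{\phi}\leq c\}$ equals $\{\bm{B}^{-1/2}\bm{\phi}\in \mathcal{B}_m(\sqrt{c})\}$, so it suffices to study the ball probabilities of $\bm{\psi}\equiv \bm{B}^{-1/2}\bm{\phi} \sim (\bm{B}^{-1/2}\bm{B}_0\bm{B}^{-1/2})^{1/2}\bm{\varepsilon} + \bm{B}^{-1/2}(\bm{B}_1)^{1/2}_p\bm{\zeta}_{p,a}$. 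Writing $\bm{R}\equiv \bm{B}^{-1/2}\bm{B}_1\bm{B}^{-1/2}$, note $\bm{B}^{-1/2}\bm{B}_0\bm{B}^{-1/2} = \bm{I}_m - \bm{R}$. Let $\bm{R} = \bm{\Gamma}\bm{\Delta}^2\bm{\Gamma}'$ be its eigen-decomposition with $\bm{\Gamma}$ orthogonal and $\bm{\Delta}^2 = \textup{diag}(\rho_1^2,\ldots,\rho_m^2)$; these $\rho_j^2$ are exactly the eigenvalues of $\bm{R}$, all in $[0,1]$ because $\bm{0}\leq \bm{B}_1\leq \bm{B}$. Since $\bm{R} = (\bm{B}^{-1/2}(\bm{B}_1)^{1/2}_p)(\bm{B}^{-1/2}(\bm{B}_1)^{1/2}_p)'$, Lemma \ref{lemma:truncate_AApm_BBpm} lets me replace $\bm{B}^{-1/2}(\bm{B}_1)^{1/2}_p$ by any matrix $\bm{M}\in\mathbb{R}^{m\times p}$ with $\bm{M}\bm{M}' = \bm{R}$ without changing the distribution of $\bm{M}\bm{\zeta}_{p,a}$; I would take $\bm{M} = \bm{\Gamma}(\bm{\Delta},\bm{0}_{m\times(p-m)})$.

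Next, using the rotational invariance of $\bm{\varepsilon}$ and the spherical symmetry of $\bm{\zeta}_{p,a}$ (Proposition \ref{prop:represent}), I can strip off the orthogonal factor $\bm{\Gamma}$ and an orthogonal block acting on $\bm{\zeta}_{p,a}$. Concretely, $\bm{\psi} \sim \bm{\Gamma}\left\{ (\bm{I}_m-\bm{\Delta}^2)^{1/2}\bm{\varepsilon} + (\bm{\Delta},\bm{0})\bm{\zeta}_{p,a}\right\}$, and since $\mathcal{B}_m(r)$ is rotation-invariant, $P(\bm{\psi}\in\mathcal{B}_m(r)) = P\{(\bm{I}_m-\bm{\Delta}^2)^{1/2}\bm{\varepsilon} + (\bm{\Delta},\bm{0})\bm{\zeta}_{p,a}\in\mathcal{B}_m(r)\}$. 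Writing $\bm{\zeta}_{p,a}\sim \bm{D}\mid \bm{D}'\bm{D}\leq a$ with $\bm{D} = (\bm{D}_{(1)}',\bm{D}_{(2)}')'$, $\bm{D}_{(1)}\in\mathbb{R}^m$, the term $(\bm{\Delta},\bm{0})\bm{D} = \bm{\Delta}\bm{D}_{(1)}$, and one integrates out $\bm{D}_{(2)}$: conditionally on $\bm{D}_{(2)}$, $\bm{D}_{(1)}$ is standard Gaussian truncated to $\bm{D}_{(1)}'\bm{D}_{(1)}\leq a - \bm{D}_{(2)}'\bm{D}_{(2)}$. Hence the ball probability is a mixture over the truncation radius $a' = a - \bm{D}_{(2)}'\bm{D}_{(2)}$ of quantities of the form $P\{(\bm{I}_m-\bm{\Delta}^2)^{1/2}\bm{\varepsilon} + \bm{\Delta}\bm{\eta}\in\mathcal{B}_m(r)\mid \bm{\eta}'\bm{\eta}\leq a'\}$ with $\bm{\eta}\sim\mathcal{N}(\bm{0},\bm{I}_m)$. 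By Lemma \ref{lemma:nondecreasing_eigenvalue}, each such term is nondecreasing in each $\rho_j$ (equivalently each $\rho_j^2$) for every fixed $r$ and $a'$, so the mixture is too, and therefore $r\mapsto P(\bm{\psi}\in\mathcal{B}_m(r))$ is, for each fixed $r$, a nondecreasing function of the eigenvalues of $\bm{R}$.

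Finally I would translate this monotonicity of tail probabilities into monotonicity of the quantile threshold. For each eigenvalue vector $\bm{\rho}^2 = (\rho_1^2,\ldots,\rho_m^2)$, the map $r\mapsto F_{\bm{\rho}^2}(r)\equiv P(\bm{\psi}\in\mathcal{B}_m(r))$ is a continuous nondecreasing distribution function (the distribution of $\bm{\psi}$ has a density since it is a nondegenerate Gaussian convolved with a compactly supported absolutely continuous law, using that $\bm{B}$ is positive definite so $\bm{I}_m - \bm{\Delta}^2$ is invertible on the relevant subspace — more carefully, $(\bm{I}_m-\bm{\Delta}^2)^{1/2}\bm{\varepsilon}$ may be degenerate when some $\rho_j^2 = 1$, but then the corresponding coordinate is carried by $\bm{\Delta}\bm{\eta}$ which has a density, so $\bm{\psi}$ still has a density). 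The threshold is $c_{1-\alpha}(\bm{\rho}^2) = \{F_{\bm{\rho}^2}^{-1}(1-\alpha)\}^2$. If $\bm{\rho}^2$ increases componentwise to $\tilde{\bm{\rho}}^2$, then $F_{\tilde{\bm{\rho}}^2}(r)\geq F_{\bm{\rho}^2}(r)$ for all $r$, which forces $F_{\tilde{\bm{\rho}}^2}^{-1}(1-\alpha)\leq F_{\bm{\rho}^2}^{-1}(1-\alpha)$, i.e.\ $c_{1-\alpha}$ is nonincreasing in each eigenvalue; and since the whole construction depended on the data only through $m$, $p$, $a$ and the $\rho_j^2$'s, $c_{1-\alpha}$ depends only on $(m,p,a)$ and these eigenvalues. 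The main obstacle is the bookkeeping in the reduction to diagonal form — getting the two independent orthogonal invariances (one for $\bm{\varepsilon}$, one for $\bm{\zeta}_{p,a}$) to cooperate via Lemma \ref{lemma:truncate_AApm_BBpm} and Proposition \ref{prop:represent}, and carefully handling the $p>m$ case by integrating out the extra coordinates of $\bm{D}$ so that Lemma \ref{lemma:nondecreasing_eigenvalue} (stated for equal dimensions) applies; the monotonicity itself is then immediate from Lemma \ref{lemma:nondecreasing_eigenvalue}.
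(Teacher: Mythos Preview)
Your proposal is correct and follows essentially the same route as the paper: standardize by $\bm{B}^{-1/2}$, diagonalize $\bm{R}=\bm{B}^{-1/2}\bm{B}_1\bm{B}^{-1/2}$ via $\bm{\Gamma}\bm{\Delta}^2\bm{\Gamma}'$, use Lemma \ref{lemma:truncate_AApm_BBpm} and the spherical symmetry of $\bm{\zeta}_{p,a}$ to reduce to the form $(\bm{I}_m-\bm{\Delta}^2)^{1/2}\bm{\varepsilon}+(\bm{\Delta},\bm{0})\bm{\zeta}_{p,a}$, split $\bm{D}=(\bm{\eta}',\bm{\xi}')'$ and condition on $\bm{\xi}$ so that Lemma \ref{lemma:nondecreasing_eigenvalue} applies with truncation level $a-\bm{\xi}'\bm{\xi}$. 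Your extra care about continuity of the distribution function for the quantile inversion is a nice addition that the paper leaves implicit.
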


\begin{proof}[Proof of Lemma \ref{lemma:general_form_refm}]
	Let $\bm{B}^{-1/2}\bm{B}_1\bm{B}^{-1/2}=\bm{\Gamma}\bm{\Delta}^2\bm{\Gamma}'$ be the eigen-decomposition of $\bm{B}^{-1/2}\bm{B}_1\bm{B}^{-1/2}$, where 
	$\bm{\Gamma}\in \mathbb{R}^{m\times m}$ is an orthogonal matrix, and 
	$\bm{\Delta}^2=\textup{diag}(\rho_1^2, \ldots, \rho_m^2)$ is a diagonal matrix. 
	From Lemma \ref{lemma:truncate_AApm_BBpm}, 
	\begin{align*}
	\bm{\Gamma}'\bm{B}^{-1/2}\bm{\phi} & \sim 
	\bm{\Gamma}'\bm{B}^{-1/2}\left( \bm{B}_0 \right)^{1/2} \bm{\varepsilon} + 
	\bm{\Gamma}'\bm{B}^{-1/2}\left( \bm{B}_1 \right)^{1/2}_p\bm{\zeta}_{p,a}\\
	& \sim 
	\left( \bm{\Gamma}'\bm{B}^{-1/2}\bm{B}_0\bm{B}^{-1/2}\bm{\Gamma} \right)^{1/2}
	\bm{\varepsilon} + 
	\left( \bm{\Gamma}'\bm{B}^{-1/2} \bm{B}_1 \bm{B}^{-1/2}\bm{\Gamma} \right)^{1/2}_p\bm{\zeta}_{p,a}\\
	& \sim 
	\left\{\bm{\Gamma}'\left( \bm{I}_m - \bm{\Gamma}\bm{\Delta}^2\bm{\Gamma}' \right)\bm{\Gamma}\right\}^{1/2} 
	\bm{\varepsilon}
	+ 
	\left(\bm{\Gamma}' \bm{\Gamma}\bm{\Delta}^2\bm{\Gamma}' \bm{\Gamma}\right)^{1/2}_p \bm{\zeta}_{p,a}\\
	& \sim 
	\left( \bm{I}_m- \bm{\Delta}^2\right)^{1/2} \bm{\varepsilon}+ 
	\left(\bm{\Delta}^2\right)^{1/2}_p \bm{\zeta}_{p,a}\\
	& \sim 
	\left( \bm{I}_m- \bm{\Delta}^2\right)^{1/2} \bm{\varepsilon}+ 
	\left(\bm{\Delta}, \bm{0}_{m\times (p-m)} \right)\bm{\zeta}_{p,a}
	\end{align*}
	Let $\bm{\eta} \sim \mathcal{N}(\bm{0}, \bm{I}_m)$, 
	$\bm{\xi} \sim \mathcal{N}(\bm{0}, \bm{I}_{p-m})$, 
	and $(\bm{\eta}, \bm{\xi})$ be independent.  
	From the definition of $\bm{\zeta}_{p,a}$, for any $r^2\geq 0$, 
	\begin{align}\label{eq:general_form_refm_convex_K_simple} 
	& \quad \ P\left(
	\bm{\phi}'\bm{B}^{-1}\bm{\phi}\leq r^2
	\right) \nonumber\\
	& = 
	P\left\{
	\bm{\Gamma}'\bm{B}^{-1/2}\bm{\phi} \in \mathcal{B}_m(r)
	\right\}\nonumber\\
	& = 
	P\left\{
	\left( \bm{I}_m- \bm{\Delta}^2\right)^{1/2}\bm{\varepsilon} + 
	\left(\bm{\Delta}, \bm{0}_{m\times (p-m)} \right)\bm{\zeta}_{p,a} \in \mathcal{B}_m(r)
	\right\}
	\nonumber
	\\
	& = P\left\{
	\left. 
	\left(\bm{I}_F - \bm{\Delta}^2\right)^{1/2}\bm{\varepsilon} + \left(\bm{\Delta}, \bm{0}\right) 
	\begin{pmatrix}
	\bm{\eta}\\
	\bm{\xi}
	\end{pmatrix}
	\in  \mathcal{B}_m(r) \ 
	\right| \ \bm{\eta}'\bm{\eta} + \bm{\xi}'\bm{\xi}\leq a
	\right\}
	\nonumber
	\\
	& =  P\left\{ 
	(\bm{I}_m - \bm{\Delta}^2)^{1/2}
	\bm{\varepsilon} + 
	\bm{\Delta}\bm{\eta}
	\in \mathcal{B}_m(r)
	\mid \bm{\eta}'\bm{\eta} + \bm{\xi}'\bm{\xi}\leq a
	\right\}
	\nonumber
	\\
	& =  
	\E\left[
	P\left\{ 
	(\bm{I}_m - \bm{\Delta}^2)^{1/2}
	\bm{\varepsilon} + 
	\bm{\Delta}\bm{\eta}
	\in \mathcal{B}_m(r)
	\mid \bm{\eta}'\bm{\eta} \leq a - \bm{\xi}'\bm{\xi}, \bm{\xi}
	\right\}
	\mid 
	\bm{\eta}'\bm{\eta} + \bm{\xi}'\bm{\xi}\leq a
	\right]. 
	\end{align}
	From Lemma \ref{lemma:nondecreasing_eigenvalue}, for any given $\bm{\xi}$, the conditional probability, 
	$$P\left\{ 
	(\bm{I}_m - \bm{\Delta}^2)^{1/2}
	\bm{\varepsilon} + 
	\bm{\Delta}\bm{\eta}
	\in \mathcal{B}_m(r)
	\mid \bm{\eta}'\bm{\eta} \leq a - \bm{\xi}'\bm{\xi}, \bm{\xi}
	\right\}$$
	is nondecreasing in the diagonal elements of $\bm{\Delta}^2$. Therefore, 
	the quantity in \eqref{eq:general_form_refm_convex_K_simple} 
	depends only on $(m,p,a)$ and the eigenvalues of $\bm{B}^{-1/2}\bm{B}_1\bm{B}^{-1/2}$, and is nondecreasing in these eigenvalues. 
	Therefore, Lemma \ref{lemma:general_form_refm} holds. 
\end{proof}

\begin{proof}[{   Proof of Theorem \ref{thm:red_qr_rem_nond_ccorr}}]
	It  follows directly from  Lemma \ref{lemma:general_form_refm}.  
\end{proof}

\begin{proof}[{   Comment on 
		linear transformations of $\hat{\bm{\tau}}$ under ReFM}]
	From Theorem \ref{thm:red_qr_rem} and Lemma \ref{lemma:peak_linear}, for any $\bm{C}\in \mathbb{R}^{p\times F}$ with $p\leq F$, the asymptotic sampling distribution of $\bm{C}\hat{\bm{\tau}}$ under ReFM is more peaked than that under the CRFE. From Theorem \ref{thm:joint_refm} and Lemma \ref{lemma:truncate_AApm_BBpm}, 
	\begin{eqnarray*}
		\bm{C}(\hat{\bm{\tau}} - \bm{\tau}) \mid  \mathcal{M} 
		& \apprsim & 
		\bm{C}
		\left(\bm{V}_{\bm{\tau\tau}}^\myperp\right)^{1/2} \bm{\varepsilon} +  
		\bm{C}
		\left(\bm{V}_{\bm{\tau\tau}}^\myparallel\right)^{1/2}_{LF}
		\bm{\zeta}_{LF,a}\\
		& \apprsim & 
		\left(\bm{C}\bm{V}_{\bm{\tau\tau}}^\myperp\bm{C}'\right)^{1/2} \bm{\xi} +  
		\left(\bm{C}\bm{V}_{\bm{\tau\tau}}^\myparallel \bm{C}' \right)^{1/2}_{LF}
		\bm{\zeta}_{LF,a},
	\end{eqnarray*}
	where $\bm{\xi}\sim \mathcal{N}(\bm{0}, \bm{I}_p)$. 
	From Lemma \ref{lemma:general_form_refm}, if $\bm{C}\bm{V}_{\bm{\tau\tau}}\bm{C}'$ is invertible, then 
	for any $\alpha\in (0,1),$
	the threshold $c_{1-\alpha}$ for $1-\alpha$ quantile region $\{\bm{\mu}: \bm{\mu}' (\bm{C}\bm{V}_{\bm{\tau\tau}}\bm{C}')^{-1}\bm{\mu}\leq c_{1-\alpha}\}$ of the asymptotic sampling distribution of $\bm{C}\hat{\bm{\tau}}$ 
	depends only on $(p,LF,a)$ and the canonical correlation 
	between $\bm{C}\hat{\bm{\tau}}$ and $\hat{\bm{\tau}}_{\bm{x}}$, 
	and is nonincreasing in these canonical correlations. 
\end{proof}

\begin{proof}[{   Proof of Corollary \ref{cor:red_in_qr_single_remf}}]
	Because $\hat{\tau}_f$ is a one dimensional linear transformation of $\hat{\bm{\tau}}$, Corollary \ref{cor:red_in_qr_single_remf} follows directly from the above comment on general lower dimensional linear transformations of $\hat{\bm{\tau}}$ under ReFM. 
\end{proof}

\subsection{Peakedness under $\text{ReFMT}_\text{F}$}

\begin{proof}[{   Proof of Theorem \ref{thm:quant_region_refmt_f}}]
	For each $1\leq h\leq H$, let $\bm{D}_h \sim \mathcal{N}(\bm{0}, \bm{I}_{LF_h})$, and $(\bm{\varepsilon}, \bm{D}_1, \ldots, \bm{D}_H)$ be jointly independent. 
	From Proposition \ref{prop:ccu_refm}, both $\bm{\varepsilon}$ and $\bm{\zeta}_{LF_h, a_h}$ are central convex unimodal. 
	From Lemma \ref{lemma:ccu_linear}, both 
	$(\bm{V}_{\bm{\tau}\bm{\tau}}^\myperp)^{1/2} \bm{\varepsilon}$ and 
	$
	(
	\bm{W}_{\bm{\tau\tau}}^\myparallel[h]
	)^{1/2}_{LF_h}
	\bm{\zeta}_{LF_h, a_h}$ are central convex unimodal. Thus, by Lemma \ref{lemma:ccu_sum}, 
	\begin{align*}
	\left(
	\bm{V}_{\bm{\tau}\bm{\tau}}^\myperp \right)^{1/2} \bm{\varepsilon} + 
	\sum_{h=2}^H 
	(
	\bm{W}_{\bm{\tau\tau}}^\myparallel[h]
	)^{1/2}_{LF_h}
	\bm{\zeta}_{LF_h, a_h}
	\end{align*}
	is central convex unimodal. 
	From Lemma \ref{lemma:zeta_peak_epsilon}, $\bm{\zeta}_{LF_1, a_1} \succ \bm{D}_h$, which, based on Lemma \ref{lemma:peak_linear}, further implies that 
	$
	(
	\bm{W}_{\bm{\tau\tau}}^\myparallel[1]
	)^{1/2}_{LF_1}
	\bm{\zeta}_{LF_1, a_1} \succ (
	\bm{W}_{\bm{\tau\tau}}^\myparallel[1]
	)^{1/2}_{LF_1}\bm{D}_1.
	$
	Thus, from Lemma \ref{lemma:peak_sum}, 
	\begin{eqnarray*}
		& & \left(
		\bm{V}_{\bm{\tau}\bm{\tau}}^\myperp \right)^{1/2} \bm{\varepsilon} + 
		\sum_{h=1}^H 
		(
		\bm{W}_{\bm{\tau\tau}}^\myparallel[h]
		)^{1/2}_{LF_h}
		\bm{\zeta}_{LF_h, a_h} \\
		& \succ &  
		\left(
		\bm{V}_{\bm{\tau}\bm{\tau}}^\myperp \right)^{1/2} \bm{\varepsilon}  + (
		\bm{W}_{\bm{\tau\tau}}^\myparallel[1]
		)^{1/2}_{LF_1}
		\bm{D}_1 + 
		\sum_{h=2}^H 
		(
		\bm{W}_{\bm{\tau\tau}}^\myparallel[h]
		)^{1/2}_{LF_h}\bm{\zeta}_{LF_h, a_h}\\
		& \sim & 
		\left(
		\bm{V}_{\bm{\tau}\bm{\tau}}^\myperp + \bm{W}_{\bm{\tau\tau}}^{\myparallel}[1] \right)^{1/2} \bm{\varepsilon} + \sum_{h=2}^H 
		(
		\bm{W}_{\bm{\tau\tau}}^\myparallel[h]
		)^{1/2}_{LF_h}\bm{\zeta}_{LF_h, a_h}. 
	\end{eqnarray*}
	Because $(
	\bm{V}_{\bm{\tau}\bm{\tau}}^\myperp + \bm{W}_{\bm{\tau\tau}}^{\myparallel}[1] )^{1/2} \bm{\varepsilon} + \sum_{h=3}^H 
	(
	\bm{W}_{\bm{\tau\tau}}^\myparallel[h]
	)^{1/2}_{LF_h}\bm{\zeta}_{LF_h, a_h}$ is central convex unimodal, and 
	$
	(
	\bm{W}_{\bm{\tau\tau}}^\myparallel[2]
	)^{1/2}_{LF_2}\bm{\zeta}_{LF_2, a_2} \succ 
	(
	\bm{W}_{\bm{\tau\tau}}^\myparallel[2]
	)^{1/2}_{LF_2}\bm{D}_{2}, 
	$ 
	we have
	\begin{eqnarray*}
		& & \left(
		\bm{V}_{\bm{\tau}\bm{\tau}}^\myperp \right)^{1/2} \bm{\varepsilon} + 
		\sum_{h=1}^H 
		(
		\bm{W}_{\bm{\tau\tau}}^\myparallel[h]
		)^{1/2}_{LF_h}
		\bm{\zeta}_{LF_h, a_h} \\
		& \succ & 
		\left(
		\bm{V}_{\bm{\tau}\bm{\tau}}^\myperp + \bm{W}_{\bm{\tau\tau}}^{\myparallel}[1] \right)^{1/2} \bm{\varepsilon} + \sum_{h=2}^H 
		(
		\bm{W}_{\bm{\tau\tau}}^\myparallel[h]
		)^{1/2}_{LF_h}\bm{\zeta}_{LF_h, a_h}\\
		& \succ & 
		\left(
		\bm{V}_{\bm{\tau}\bm{\tau}}^\myperp + \bm{W}_{\bm{\tau\tau}}^{\myparallel}[1] \right)^{1/2} \bm{\varepsilon} 
		+
		(
		\bm{W}_{\bm{\tau\tau}}^\myparallel[2]
		)^{1/2}_{LF_2}\bm{D}_{2}
		+ \sum_{h=3}^H 
		(
		\bm{W}_{\bm{\tau\tau}}^\myparallel[h]
		)^{1/2}_{LF_h}\bm{\zeta}_{LF_h, a_h}\\
		& \sim & 
		\left(
		\bm{V}_{\bm{\tau}\bm{\tau}}^\myperp + \bm{W}_{\bm{\tau\tau}}^{\myparallel}[1]
		+ \bm{W}_{\bm{\tau\tau}}^{\myparallel}[2]
		\right)^{1/2} \bm{\varepsilon} 
		+ \sum_{h=3}^H 
		(
		\bm{W}_{\bm{\tau\tau}}^\myparallel[h]
		)^{1/2}_{LF_h}\bm{\zeta}_{LF_h, a_h}. 
	\end{eqnarray*}
	Implementing the above procedure iteratively, we finally have
	$$
	\left(
	\bm{V}_{\bm{\tau}\bm{\tau}}^\myperp \right)^{1/2} \bm{\varepsilon} + 
	\sum_{h=1}^H 
	(
	\bm{W}_{\bm{\tau\tau}}^\myparallel[h]
	)^{1/2}_{LF_h}
	\bm{\zeta}_{LF_h, a_h} \succ 
	\left(
	\bm{V}_{\bm{\tau}\bm{\tau}}^\myperp + \sum_{h=1}^{H} \bm{W}_{\bm{\tau\tau}}^{\myparallel}[h]
	\right)^{1/2} \bm{\varepsilon} 
	\sim \mathcal{N}(\bm{0}, \bm{V}_{\bm{\tau\tau}}),
	$$
	where the last formula follows from
	$
	\bm{V}_{\bm{\tau\tau}}^\myparallel = \sum_{h=1}^H  
	\bm{W}_{\bm{\tau \tau}}^\myparallel[h]
	$
	in the 
	proof of Theorem \ref{thm:red_var_remft_f}.  
\end{proof}

To prove Theorem \ref{thm:quant_region_refmt_f_monotone_joint}, we need the following three lemmas.

\begin{lemma}\label{lemma:order_rho_mono_tier}
	Let $\varepsilon_0 \sim \mathcal{N}(0,1)$, $\eta_{k_t,a_t}\sim D_{t1}\mid \bm{D}_t'\bm{D}_t\leq a_t$, where $\bm{D}_t=(D_{t1},\ldots,D_{tk_t})\sim \mathcal{N}(\bm{0},\bm{I}_{k_t})$, $a_t$ is a nonnegative constant that can be infinity, 
	and $(\varepsilon_0, \eta_{k_1,a_1}, \eta_{k_2,a_2}, \ldots, \eta_{k_T,a_T})$ are jointly independent.
	Let $\{\rho_t\}_{t=1}^{T+1}$ and $\{\tilde{\rho}_t\}_{t=1}^{T+1}$ be
	two nonnegative constant sequences  satisfying $\sum_{t=1}^{T+1} \rho_t^2=\sum_{t=1}^{T+1} {\tilde{\rho}_t}^2= 1.$ If 
	$\rho_{t} \geq \tilde{\rho}_{t}$ for all $1\leq t\leq T$, 
	then for any $c\geq 0$,
	\begin{align*}
	& P\left(
	| \rho_{T+1}\varepsilon_0 + \sum_{t=1}^\Tau \rho_t \eta_{k_t,a_t} | \leq c
	\right)\geq
	P\left(
	|\tilde{\rho}_{T+1}\varepsilon_0 + \sum_{t=1}^\Tau \tilde{\rho}_t \eta_{k_t,a_t} | \leq c 
	\right).
	\end{align*}
\end{lemma}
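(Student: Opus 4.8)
The plan is to reduce the $(T+1)$-term comparison to a single ``elementary'' comparison in which only one truncated coefficient and the Gaussian coefficient change, and then settle that elementary case by conditioning down to the one-dimensional statement in Lemma~\ref{lemma:order_rho_mono}. Throughout write $W_{\bm\rho}=\rho_{T+1}\varepsilon_0+\sum_{t=1}^{T}\rho_t\,\eta_{k_t,a_t}$; since symmetric convex subsets of $\mathbb{R}$ are exactly the intervals $[-c,c]$, the assertion is precisely $W_{\bm\rho}\succ W_{\tilde{\bm\rho}}$ in the sense of Definition~\ref{def:peak}.

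\emph{Step 1 (interpolating chain).} Because $\rho_t\ge\tilde\rho_t$ for $1\le t\le T$ and $\sum_{t=1}^{T+1}\rho_t^2=\sum_{t=1}^{T+1}\tilde\rho_t^2=1$, we have $\rho_{T+1}\le\tilde\rho_{T+1}$. For $0\le j\le T$ define $\bm\rho^{(j)}$ by $\rho^{(j)}_t=\rho_t$ for $t\le j$, $\rho^{(j)}_t=\tilde\rho_t$ for $j<t\le T$, and $\rho^{(j)}_{T+1}=\big(1-\sum_{t=1}^{j}\rho_t^2-\sum_{t=j+1}^{T}\tilde\rho_t^2\big)^{1/2}$; the radicand is at least $1-\sum_{t=1}^{T}\rho_t^2=\rho_{T+1}^2\ge0$, so $\bm\rho^{(j)}$ is admissible, $\bm\rho^{(0)}=\tilde{\bm\rho}$, and $\bm\rho^{(T)}=\bm\rho$. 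Consecutive sequences $\bm\rho^{(j-1)},\bm\rho^{(j)}$ agree except in coordinates $j$ and $T+1$, with $\rho^{(j)}_j\ge\rho^{(j-1)}_j$, $\rho^{(j)}_{T+1}\le\rho^{(j-1)}_{T+1}$, and $(\rho^{(j)}_j)^2+(\rho^{(j)}_{T+1})^2=(\rho^{(j-1)}_j)^2+(\rho^{(j-1)}_{T+1})^2$. By transitivity of $\succ$ it suffices to prove $W_{\bm\rho^{(j)}}\succ W_{\bm\rho^{(j-1)}}$ for each $j$.

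\emph{Steps 2--3 (the elementary comparison).} Fix such a $j$, set $s=j$, and abbreviate $\bm\rho=\bm\rho^{(j)}$, $\tilde{\bm\rho}=\bm\rho^{(j-1)}$. Decompose $W_{\bm\rho}=U+V_{\bm\rho}$ with $U=\sum_{t\le T,\;t\ne s}\rho_t\,\eta_{k_t,a_t}$ (coefficients common to $\bm\rho,\tilde{\bm\rho}$) and $V_{\bm\rho}=\rho_{T+1}\varepsilon_0+\rho_s\,\eta_{k_s,a_s}$, so also $W_{\tilde{\bm\rho}}=U+V_{\tilde{\bm\rho}}$, and $U$ is independent of $(\varepsilon_0,\eta_{k_s,a_s})$. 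Each $\bm\zeta_{k_t,a_t}$ has a log-concave density (an indicator of a ball times a Gaussian density), hence so does its first coordinate $\eta_{k_t,a_t}$ by the marginal property of log-concave densities, hence so does $U$ as a convolution of such; by Lemma~\ref{lemma:logconcave}, $U$ is central convex unimodal, and it is symmetric. Since $V_{\bm\rho},V_{\tilde{\bm\rho}}$ are symmetric, Lemma~\ref{lemma:peak_sum} reduces the goal to $V_{\bm\rho}\succ V_{\tilde{\bm\rho}}$. Let $\sigma^2=\rho_{T+1}^2+\rho_s^2=\tilde\rho_{T+1}^2+\tilde\rho_s^2$ (if $\sigma=0$ there is nothing to prove), put $\alpha=\rho_s/\sigma\in[0,1]$ and $\tilde\alpha=\tilde\rho_s/\sigma$, so $\alpha\ge\tilde\alpha\ge0$, $\rho_{T+1}/\sigma=\sqrt{1-\alpha^2}$, $\tilde\rho_{T+1}/\sigma=\sqrt{1-\tilde\alpha^2}$; we must show $P(|\sqrt{1-\alpha^2}\,\varepsilon_0+\alpha\,\eta_{k_s,a_s}|\le c)\ge P(|\sqrt{1-\tilde\alpha^2}\,\varepsilon_0+\tilde\alpha\,\eta_{k_s,a_s}|\le c)$ for all $c\ge0$. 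If $a_s=\infty$ then $\eta_{k_s,a_s}\sim\mathcal N(0,1)$ and both sides equal $P(|\mathcal N(0,1)|\le c)$; otherwise write $\bm D_s=(D_{s1},\bm D_{s,-1})$, note that conditionally on $\bm D_{s,-1}=\bm d$ (which has $\|\bm d\|_2^2\le a_s$ a.s.\ under the truncation), $\eta_{k_s,a_s}$ is distributed as $D_{s1}\mid D_{s1}^2\le a_s-\|\bm d\|_2^2$ while $\varepsilon_0$ is unaffected, apply Lemma~\ref{lemma:order_rho_mono} conditionally with threshold $a_s-\|\bm d\|_2^2$ and $\rho=\alpha\ge\tilde\rho=\tilde\alpha$, and take expectations over the (common) conditional law of $\bm D_{s,-1}$. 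This gives $V_{\bm\rho}\succ V_{\tilde{\bm\rho}}$, hence $W_{\bm\rho^{(j)}}\succ W_{\bm\rho^{(j-1)}}$, hence the lemma.

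\emph{Main obstacle.} No step is deep; the only points requiring care are the bookkeeping in Step~1 (checking each intermediate $\bm\rho^{(j)}$ has a nonnegative final coordinate, which is exactly where $\rho_t\ge\tilde\rho_t$ is used) and the verification that the common remainder $U$ is central convex unimodal so that Lemma~\ref{lemma:peak_sum} applies, together with the degenerate cases $T=1$ (so $U\equiv0$), $\sigma=0$, and $a_s=\infty$, which must be disposed of separately.
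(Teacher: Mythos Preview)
Your argument is correct. The paper itself does not prove this lemma: its entire proof reads ``It follows from \citet[][Lemma A10]{asymrerand2016},'' so there is no in-paper approach to compare against. Your interpolation chain (changing one truncated coefficient and the Gaussian coefficient at a time), followed by isolating the changing pair via Lemma~\ref{lemma:peak_sum} and then conditioning on $\bm D_{s,-1}$ to reduce to the one-dimensional Lemma~\ref{lemma:order_rho_mono}, is exactly the natural route and is presumably what the cited reference does.

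Two minor remarks. First, in Step~2 you justify the central convex unimodality of $U$ via log-concavity and the Pr\'ekopa marginal property; within the paper's framework it is slightly cleaner (and avoids an external fact) to argue directly: each $\bm\zeta_{k_t,a_t}$ is central convex unimodal by Proposition~\ref{prop:ccu_refm}, so each $\rho_t\,\eta_{k_t,a_t}$ is central convex unimodal by Lemma~\ref{lemma:ccu_linear}, and hence $U$ is central convex unimodal by Lemma~\ref{lemma:ccu_sum}. Second, Lemma~\ref{lemma:peak_sum} is stated for three jointly independent vectors, whereas your $V_{\bm\rho}$ and $V_{\tilde{\bm\rho}}$ share the same $(\varepsilon_0,\eta_{k_s,a_s})$; this is harmless because peakedness is a distributional statement, so one may pass to an independent copy of $V_{\tilde{\bm\rho}}$ without affecting the conclusion. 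Neither point is a gap.
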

\begin{proof}[Proof of Lemma \ref{lemma:order_rho_mono_tier}]
	It follows from \citet[][Lemma A10]{asymrerand2016}. 
\end{proof}

The following lemma extends the above lemma to the multivariate case. 
We introduce the following condition for a set of matrices with the same number of rows.  
\begin{condition}\label{cond:cond_sparse_matrix_tier}
	The $H$ matrices $\{\bm{\Delta}_h\in \mathbb{R}^{m\times p_h}\}_{h=1}^H$ satisfy that 
	\begin{itemize}
		\item[(a)] there is at most one nonzero element at each column and each row of $\bm{\Delta}_h$ for $1\leq h\leq H$;
		\item[(b)] the elements of $\bm{\Delta}_h$ are all nonnegative for  $1\leq h\leq H$;
		\item[(c)] $\sum_{h=1}^{H}\bm{\Delta}_h\bm{\Delta}_h' \in \mathbb{R}^{m\times m}$ has all elements less than or equal to 1 (note that (a) implies that $\sum_{h=1}^{H}\bm{\Delta}_h\bm{\Delta}_h' \in \mathbb{R}^{m\times m}$ is a diagonal matrix). 
	\end{itemize}
\end{condition}

\begin{lemma}\label{lemma:matrix_tier_monotone}
	Let $(\bm{\varepsilon},\bm{\zeta}_{p_1,a_1}, \ldots, \bm{\eta}_{p_H,a_H})$ be $H+1$ independent random vectors, where $\bm{\varepsilon}\sim \mathcal{N}(\bm{0}, \bm{I}_m)$, 
	$\bm{\zeta}_{p_h,a_h}\sim \bm{D}_h \mid \bm{D}_h'\bm{D}_h\leq a_h$ is a truncated Gaussian random vector with $a_h\geq 0$ and $\bm{D}_h\sim \mathcal{N}(\bm{0}, \bm{I}_{p_h})$. 
	Assume $\{\bm{\Delta}_h\in \mathbb{R}^{m\times p_h}\}_{h=1}^H$ satisfy Condition \ref{cond:cond_sparse_matrix_tier}. 
	For any $r\geq 0$, $1\leq h\leq H$, and $1\leq k\leq m$ the probability 
	$$
	P\left\{
	\left(\bm{I}_m - \sum_{h=1}^{H}\bm{\Delta}_h\bm{\Delta}_h'\right)^{1/2}
	\bm{\varepsilon} + 
	\sum_{h=1}^{H}
	\bm{\Delta}_h\bm{\zeta}_{p_h,a_h} \in \mathcal{B}_m(r)
	\right\}
	$$
	is nondecreasing in the nonzero elements of the $\bm{\Delta}_h$'s, that is, for any two sets of matrices $\bm{\Delta}_h$'s and  $\tilde{\bm{\Delta}}_h$'s satisfying Condition \ref{cond:cond_sparse_matrix_tier} with the positions of possible nonzero elements being the same, 
	if all elements of $\bm{\Delta}_h$'s are larger than or equal to $\tilde{\bm{\Delta}}_h$'s, then 
	\begin{eqnarray}\label{eq:nondecreasing_eigenvalue_tier}
	& & P\left\{ 
	\left(\bm{I}_m - \sum_{h=1}^{H}\bm{\Delta}_h\bm{\Delta}_h'\right)^{1/2}
	\bm{\varepsilon} + 
	\sum_{h=1}^{H}
	\bm{\Delta}_h\bm{\zeta}_{p_h,a_h} \in \mathcal{B}_m(r)
	\right\} 
	\nonumber
	\\
	& \geq & 
	P\left\{ 
	\left(\bm{I}_m - \sum_{h=1}^{H}\tilde{\bm{\Delta}}_h\tilde{\bm{\Delta}}_h'\right)^{1/2}
	\bm{\varepsilon} + 
	\sum_{h=1}^{H}
	\tilde{\bm{\Delta}}_h\bm{\zeta}_{p_h,a_h} \in \mathcal{B}_m(r)
	\right\}.
	\end{eqnarray}
\end{lemma}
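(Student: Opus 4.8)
\textbf{Proof plan for Lemma \ref{lemma:matrix_tier_monotone}.}
The plan is to reduce the multivariate claim to the scalar inequality of Lemma \ref{lemma:order_rho_mono_tier} by conditioning so as to disentangle the coordinates. The key structural fact coming from Condition \ref{cond:cond_sparse_matrix_tier}(a) is that each $\bm{\Delta}_h$ has at most one nonzero entry per row and per column; hence for each fixed row index $k\in\{1,\dots,m\}$, the $k$th coordinate of $\sum_h \bm{\Delta}_h \bm{\zeta}_{p_h,a_h}$ involves, from each block $h$, at most one coordinate of $\bm{\zeta}_{p_h,a_h}$, and the diagonal matrix $\sum_h \bm{\Delta}_h\bm{\Delta}_h'$ has $k$th diagonal entry $\delta_k^2 \le 1$ equal to the sum of the squares of those at-most-$H$ nonzero entries. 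Thus coordinate $k$ of the random vector in question has exactly the form $\sqrt{1-\delta_k^2}\,\varepsilon_{0} + \sum_{h}\rho_{kh}\eta_{p_h,a_h}^{(k)}$ appearing in Lemma \ref{lemma:order_rho_mono_tier}, where $\eta_{p_h,a_h}^{(k)}$ is the relevant coordinate of $\bm{\zeta}_{p_h,a_h}$.

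First I would set up the conditioning. Following the argument of Lemma \ref{lemma:nondecreasing_eigenvalue}, I would prove the monotonicity one nonzero entry at a time: it suffices to show \eqref{eq:nondecreasing_eigenvalue_tier} when $\bm{\Delta}_h$'s and $\tilde{\bm{\Delta}}_h$'s agree except in a single entry, say the one in row $k_0$, block $h_0$, and that entry is larger for $\bm{\Delta}$. By permuting coordinates I may assume $k_0 = 1$. I would then condition on everything except (i) the first coordinate $\varepsilon_1$ of $\bm{\varepsilon}$ and (ii) the single coordinate $\xi$ of $\bm{\zeta}_{p_{h_0},a_{h_0}}$ that feeds into row $1$ — while carefully tracking the conditioning event $\bm{D}_{h_0}'\bm{D}_{h_0}\le a_{h_0}$, which after conditioning on the other coordinates of $\bm{D}_{h_0}$ becomes a constraint $\xi^2 \le a_{h_0} - (\text{the rest})$ exactly as in the proof of Lemma \ref{lemma:nondecreasing_eigenvalue}. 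Conditional on all the other variables, the event $\{\cdot \in \mathcal{B}_m(r)\}$ becomes the event that $\sqrt{1-\delta_1^2}\,\varepsilon_1 + \rho_{1h_0}\xi + (\text{a constant}) \in I$ for a symmetric interval $I = I(r,\text{rest}) \subset \mathbb{R}$ (symmetric because the other blocks' contributions to the ball are unchanged and the ball is symmetric), and Lemma \ref{lemma:order_rho_mono_tier} — or rather its two-term special case, which is Lemma \ref{lemma:order_rho_mono} — gives the conditional inequality, since raising $\rho_{1h_0}$ while lowering $\sqrt{1-\delta_1^2}$ to keep the sum of squares $\le 1$ (padding with a dummy independent Gaussian if the sum is strictly less than $1$) increases the probability. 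Taking expectations over the conditioned variables yields \eqref{eq:nondecreasing_eigenvalue_tier} for the single-entry change; iterating over all nonzero entries gives the general statement. The nondecreasing-in-each-nonzero-element claim (the first sentence of the displayed conclusion) is then immediate.

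The main obstacle I anticipate is bookkeeping rather than a genuinely new idea: I must verify that after conditioning, the constraint from $\bm{D}_{h_0}'\bm{D}_{h_0}\le a_{h_0}$ really does reduce to an upper bound on $\xi^2$ alone (this is fine because the other coordinates of $\bm{D}_{h_0}$ are being conditioned on, and they are independent of the coordinates of the other blocks and of $\bm{\varepsilon}$), and that the ``target region'' for the pair $(\varepsilon_1,\xi)$ is a symmetric interval so that Lemma \ref{lemma:order_rho_mono} applies verbatim — symmetry holds because $\mathcal{B}_m(r)$ is a symmetric convex body and fixing all coordinates but one slices it to a symmetric interval. One subtlety worth flagging: when $\delta_1^2 < 1$ strictly but the single entry being varied does not account for all of $1-\delta_1^2$, I reinterpret the leftover weight plus the $\varepsilon$-weight as the ``$\rho_{T+1}$'' slot of Lemma \ref{lemma:order_rho_mono_tier} (a Gaussian term), which is legitimate since a mixture of independent Gaussians is Gaussian; this is the only place the statement of Lemma \ref{lemma:order_rho_mono_tier} (allowing an arbitrary number of truncated-Gaussian terms plus one free Gaussian term, with the sum of squared weights equal to $1$) is used in its full generality rather than just the two-term case.
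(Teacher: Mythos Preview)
Your overall plan is right, but the conditioning step as written has a real gap. You condition on everything except $\varepsilon_1$ and the single coordinate $\xi$ of block $h_0$ feeding row~1. But for $H>1$ the other blocks $h\neq h_0$ may also contribute to row~1 (Condition~\ref{cond:cond_sparse_matrix_tier} does not forbid several $\bm{\Delta}_h$'s from having a nonzero entry in the same row). After your conditioning those contributions become a fixed constant $c=\sum_{h\neq h_0}\rho_{1h}D_{h,j(h)}$, and the conditional event is
\[
\sqrt{1-\delta_1^2}\,\varepsilon_1+\rho_{1h_0}\xi\in I-c,
\]
where $I=[-R,R]$ is symmetric but $I-c$ is not. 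Lemmas \ref{lemma:order_rho_mono} and \ref{lemma:order_rho_mono_tier} are stated for symmetric targets and genuinely fail otherwise: take $a_{h_0}=0$ so $\xi\equiv 0$; then the conditional probability is $P\bigl(\sqrt{1-\delta_1^2}\,\varepsilon_1\in I-c\bigr)$, which for $|c|>R$ is strictly positive when $\rho_{1h_0}=0$ but tends to $0$ as $\rho_{1h_0}^2\uparrow 1-\sum_{h\neq h_0}\rho_{1h}^2$. So the pointwise conditional inequality you want does not hold. Your ``reinterpret the leftover weight as the free-Gaussian slot'' remark does not fix this: after conditioning, those $D_{h,j(h)}$ are constants, not Gaussians; and if you do not condition, they are \emph{truncated} Gaussians, so they belong in the truncated slots of Lemma~\ref{lemma:order_rho_mono_tier}, not the free one.

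The repair is exactly what the paper does: do not condition on the row-1-feeding coordinate of \emph{any} block. Permute columns so the row-1 entry of each $\bm{\Delta}_h$ sits in column~1, condition on $\bm{\varepsilon}_{-1}$ and $\bm{D}_{h,-1}$ for every $h$, and leave $\varepsilon_1$ and all $D_{h1}$ free. The first coordinate is then $\sqrt{1-\delta_1^2}\,\varepsilon_1+\sum_{h=1}^H \Delta_{h1}D_{h1}$ with each $D_{h1}$ subject to $D_{h1}^2\le a_h-\|\bm{D}_{h,-1}\|^2$; there is no constant shift, the sum of squared weights is exactly $1$, and Lemma~\ref{lemma:order_rho_mono_tier} applies with $T=H$ truncated terms. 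The paper varies the whole first row at once, but your one-entry-at-a-time iteration would work equally well with this corrected conditioning (set $\tilde\rho_t=\rho_t$ for $t\neq h_0$ in Lemma~\ref{lemma:order_rho_mono_tier}). The only substantive change you need is to leave all row-1 variables unconditioned so that the target interval stays symmetric.
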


\begin{proof}[Proof of Lemma \ref{lemma:matrix_tier_monotone}]
	It suffices to prove that for any two sets of matrices $\bm{\Delta}_h$'s and  $\tilde{\bm{\Delta}}_h$'s satisfying Condition \ref{cond:cond_sparse_matrix_tier} with the positions of possible nonzero elements being the same, 
	if there exists $1\leq k\leq m$ such that for all $1\leq h\leq H$, 
	(a) $\bm{\Delta}_h$ and $\tilde{\bm{\Delta}}_h$ differ only in the $k$th row, and (b) the elements in the $k$th row of $\bm{\Delta}_h$ are larger than or equal to that of $\tilde{\bm{\Delta}}_h$, then \eqref{eq:nondecreasing_eigenvalue_tier} holds for any $r\geq 0$. 
	First, by symmetry, we consider only the case with $k=1$. 
	Second,  without loss of generality, we assume that the possible nonzero elements in the first rows of $\bm{\Delta}_h$'s and $\tilde{\bm{\Delta}}_h$'s are all in the first columns. 
	This is because permuting the columns of $\bm{\Delta}_h$ will not change the distribution of $\bm{\Delta}_h\bm{\zeta}_{p_h,a_h}$, a fact implied by the spherical symmetry of $\bm{\zeta}_{p_h,a_h}$.

	Let $\Delta_{h1}$ be the $(1,1)$th element of $\bm{\Delta}_h$, and $\bm{\Delta}_{h,-1}$ be the submatrix of $\bm{\Delta}_h$ excluding the first column and the first row. 
	Define similarly $\tilde{\Delta}_{h1}$ and $\tilde{\bm{\Delta}}_{h,-1}$. 
	Let  
	$\bm{\varepsilon}_{-1}=(\varepsilon_2, \ldots, \varepsilon_m)$ and 
	$\bm{D}_{h,-1}=(D_{h2}, \ldots, D_{h,p_h})$
	be the subvectors of $\bm{\varepsilon}$ and $\bm{D}_h$ excluding the first elements.  
	We define a subset in $\mathbb{R}$ as follows: 
	\begin{eqnarray*}
		& & \mathcal{B}(r,\bm{\varepsilon}_{-1},\bm{D}_{h,-1},\bm{\Delta}_{h,-1}) \\
		& = & 
		\left\{
		x: 
		\begin{pmatrix}
			x\\
			\left(\bm{I}_{m-1} - \sum_{h=1}^{H}\bm{\Delta}_{h,-1}\bm{\Delta}_{h,-1}'\right)^{1/2}
			\bm{\varepsilon}_{-1} + 
			\sum_{h=1}^{H}
			\bm{\Delta}_{h,-1}\bm{D}_{h,-1}
		\end{pmatrix}\in \mathcal{B}_m(r)
		\right\},
	\end{eqnarray*}
	which depends on $\bm{\varepsilon}_{-1}, \bm{D}_{h,-1}$ and $\bm{\Delta}_{h,-1}$ for all $1\leq h\leq H$, 
	and is either an empty set or a symmetric closed interval on the real line. 
	For any $r\geq 0$ and $(\bm{\varepsilon}_{-1},\bm{D}_{1,-1}, \ldots, \bm{D}_{H,-1})$,  
	{\small
		\begin{align}\label{eq:conditional_nondecreasing_eigenvalue_tier_proof}
		& \quad \  P\left\{
		\left(\bm{I}_m - \sum_{h=1}^{H}\bm{\Delta}_h\bm{\Delta}_h'\right)^{1/2}
		\bm{\varepsilon} + 
		\sum_{h=1}^{H}
		\bm{\Delta}_h\bm{D}_{h} \in \mathcal{B}_m(r)
		\mid 
		\bm{D}_{h}'\bm{D}_{h}\leq a_h, \bm{\varepsilon}_{-1}, \bm{D}_{h,-1}, \forall h
		\right\}
		\nonumber\\
		& =  
		P\left\{
		\left(1-\sum_{h=1}^{H}\Delta_{h1}^2\right)^{1/2} \varepsilon_1 + 
		\sum_{h=1}^{H} \Delta_{h1} D_{h1} \in \mathcal{B}(r,\bm{\varepsilon}_{-1},\bm{D}_{h,-1},\bm{\Delta}_{h,-1})
		\mid 
		D_{h1}^2 \leq a_h - \bm{D}_{h,-1}'\bm{D}_{h,-1},
		\bm{\varepsilon}_{-1}, \bm{D}_{h,-1}
		\right\}
		\nonumber\\
		& \geq 
		P\left\{
		\left(1-\sum_{h=1}^{H}\tilde{\Delta}_{h1}^2\right)^{1/2} \varepsilon_1 + 
		\sum_{h=1}^{H} \tilde{\Delta}_{h1} D_{h1} \in \mathcal{B}(r,\bm{\varepsilon}_{-1},\bm{D}_{h,-1},\tilde{\bm{\Delta}}_{h,-1})
		\mid 
		D_{h1}^2 \leq a_h - \bm{D}_{h,-1}'\bm{D}_{h,-1},
		\bm{\varepsilon}_{-1}, \bm{D}_{h,-1}
		\right\}
		\nonumber
		\\
		& = 
		P\left\{
		\left(\bm{I}_m - \sum_{h=1}^{H}\tilde{\bm{\Delta}}_h \tilde{\bm{\Delta}}_h'\right)^{1/2}
		\bm{\varepsilon} + 
		\sum_{h=1}^{H}
		\tilde{\bm{\Delta}}_h\bm{D}_{h} \in \mathcal{B}_m(r)
		\mid 
		\bm{D}_{h}'\bm{D}_{h}\leq a_h, \bm{\varepsilon}_{-1}, \bm{D}_{h,-1}, \forall h
		\right\}, 
		\end{align}}%
	where the second last inequality follows from Lemma \ref{lemma:order_rho_mono_tier}. 
	Taking expectations of both sides of \eqref{eq:conditional_nondecreasing_eigenvalue_tier_proof}, we 
	obtain \eqref{eq:nondecreasing_eigenvalue_tier}. 
\end{proof}

The following lemma extends Lemma \ref{lemma:general_form_refm} to general case with $H\geq 1$. Moreover, even when $H=1$, Lemma \ref{lemma:general_mono_ccorr} is still more general then Lemma \ref{lemma:general_form_refm} by only requiring $p_1\geq \text{rank}(\bm{B}_1)$, instead of $p_1\geq m$ in Lemma \ref{lemma:general_form_refm}. 

\begin{lemma}\label{lemma:general_mono_ccorr}
	Let $(\bm{B}_0, \bm{B}_1, \ldots, \bm{B}_H)$ be $H+1$ positive semi-definite matrices in $\mathbb{R}^{m\times m}$ with ranks $(\gamma_0, \gamma_1, \ldots, \gamma_H)$, $\bm{\varepsilon}\sim \mathcal{N}(\bm{0}, \bm{I}_m)$ be a standard Gaussian random vector, and $\bm{\zeta}_{p_h,a_h}\sim \bm{D}_h \mid \bm{D}_h'\bm{D}_h\leq a_h$ be a truncated Gaussian random vector, where $\bm{D}_h\sim \mathcal{N}(\bm{0}, \bm{I}_{p_h})$ and $p_h\geq \gamma_h$  ($h=1,2,\ldots,H$). Define 
	$
	\bm{\phi} \sim (\bm{B}_0)^{1/2} \bm{\varepsilon} + \sum_{h=1}^{H} (\bm{B}_h)^{1/2}_{p_h}\bm{\zeta}_{p_h,a_h}.
	$
	If $\bm{B} = \sum_{h=0}^{H}\bm{B}_h$ is invertible, and there exists an orthogonal matrix $\bm{\Gamma}$ such that for all $1\leq h\leq H$,
	$
	\bm{\Gamma}' 
	\bm{B}^{-1/2}
	\bm{B}_h
	\bm{B}^{-1/2} \bm{\Gamma}=\bm{\Omega}_h^2 
	$
	is a diagonal matrix, then the threshold $c_{1-\alpha}$ for $1-\alpha$ quantile region 
	$
	\{\bm{\mu}: \bm{\mu}'\bm{B}^{-1}\bm{\mu} \leq c_{1-\alpha}
	\}
	$
	of $\bm{\phi}$ depends only on  $m$, the $p_h$'s, the $a_h$'s, and the eigenvalues of $\bm{B}^{-1/2}\bm{B}_h\bm{B}^{-1/2}$, and is nonincreasing in these eigenvalues. 
\end{lemma}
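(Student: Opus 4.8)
The plan is to follow the template of the proof of Lemma~\ref{lemma:general_form_refm}, reducing the multi-term combination to the diagonal form handled by Lemma~\ref{lemma:matrix_tier_monotone} by means of the simultaneous diagonalizability hypothesis.

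First I would standardize. Put $\bm{\psi} = \bm{\Gamma}'\bm{B}^{-1/2}\bm{\phi}$. Since $\bm{\varepsilon}\sim\mathcal{N}(\bm{0},\bm{I}_m)$ satisfies $\bm{A}\bm{\varepsilon}\sim\mathcal{N}(\bm{0},\bm{A}\bm{A}')$ and $\bm{A}\bm{\zeta}_{p,a}$ depends on $\bm{A}$ only through $\bm{A}\bm{A}'$ (Lemma~\ref{lemma:truncate_AApm_BBpm}), and because $\bm{\Gamma}'\bm{B}^{-1/2}\bm{B}_h\bm{B}^{-1/2}\bm{\Gamma} = \bm{\Omega}_h^2$ for $1\leq h\leq H$ while $\sum_{h=0}^{H}\bm{B}^{-1/2}\bm{B}_h\bm{B}^{-1/2} = \bm{I}_m$ forces $\bm{\Gamma}'\bm{B}^{-1/2}\bm{B}_0\bm{B}^{-1/2}\bm{\Gamma} = \bm{I}_m - \sum_{h=1}^{H}\bm{\Omega}_h^2$, one obtains
\[
\bm{\psi}\ \sim\ \Big(\bm{I}_m - \sum_{h=1}^{H}\bm{\Omega}_h^2\Big)^{1/2}\bm{\varepsilon} + \sum_{h=1}^{H}\bm{\Delta}_h\bm{\zeta}_{p_h,a_h},
\]
where $\bm{\Delta}_h\in\mathbb{R}^{m\times p_h}$ is any matrix with $\bm{\Delta}_h\bm{\Delta}_h' = \bm{\Omega}_h^2$. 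Writing $\bm{\Omega}_h^2 = \textup{diag}(\omega_{h1}^2,\ldots,\omega_{hm}^2)$, which has $\gamma_h\leq p_h$ nonzero entries indexed by $j_1<\cdots<j_{\gamma_h}$, I would take $\bm{\Delta}_h$ with $(\bm{\Delta}_h)_{j_\ell,\ell} = \omega_{h,j_\ell}$ for $1\leq\ell\leq\gamma_h$ and all other entries zero; then each $\bm{\Delta}_h$ has at most one nonnegative nonzero entry in each row and in each column, and $\sum_{h=1}^{H}\bm{\Delta}_h\bm{\Delta}_h' = \sum_{h=1}^{H}\bm{\Omega}_h^2 \leq \bm{I}_m$ has all diagonal entries at most $1$ since $\bm{B}_0\geq\bm{0}$. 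Thus $\{\bm{\Delta}_h\}_{h=1}^{H}$ satisfies Condition~\ref{cond:cond_sparse_matrix_tier}, and for every $r\geq 0$,
\[
P\big(\bm{\phi}'\bm{B}^{-1}\bm{\phi}\leq r^2\big) = P\big(\bm{\psi}\in\mathcal{B}_m(r)\big) = P\Big\{\Big(\bm{I}_m - \sum_{h=1}^{H}\bm{\Delta}_h\bm{\Delta}_h'\Big)^{1/2}\bm{\varepsilon} + \sum_{h=1}^{H}\bm{\Delta}_h\bm{\zeta}_{p_h,a_h}\in\mathcal{B}_m(r)\Big\}.
\]

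Second, I would invoke Lemma~\ref{lemma:matrix_tier_monotone}: the right-hand side above, as a function of $m$, the $p_h$'s, the $a_h$'s, and the nonzero entries $\omega_{h,j_\ell}$ of the $\bm{\Delta}_h$'s (equivalently the eigenvalues of $\bm{B}^{-1/2}\bm{B}_h\bm{B}^{-1/2}$, which are the diagonal entries of $\bm{\Omega}_h^2$), depends only on those quantities and is nondecreasing in each $\omega_{h,j_\ell}$. Finally I would pass to the quantile: $r\mapsto P(\bm{\phi}'\bm{B}^{-1}\bm{\phi}\leq r^2)$ is continuous and strictly increasing where it lies in $(0,1)$ — the law of $\bm{\phi}$ is absolutely continuous on $\mathbb{R}^m$, being a convolution of independent pieces whose affine supports span $\mathbb{R}^m$ since $\bm{B}$ is invertible — so $c_{1-\alpha}$ is the well-defined solution of $P(\bm{\phi}'\bm{B}^{-1}\bm{\phi}\leq c_{1-\alpha}) = 1-\alpha$, and being the inverse of a function that increases pointwise in each eigenvalue, it is nonincreasing in each eigenvalue of $\bm{B}^{-1/2}\bm{B}_h\bm{B}^{-1/2}$.

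The main obstacle is not analytic — Lemma~\ref{lemma:matrix_tier_monotone} already does the heavy lifting — but bookkeeping: checking that the thin square roots $\bm{\Delta}_h$ genuinely meet Condition~\ref{cond:cond_sparse_matrix_tier} and keep ``the positions of possible nonzero elements the same'' as the eigenvalues range over the admissible region $\{\sum_h\omega_{hj}^2\leq 1\}$, together with the mild regularity (continuity and strict monotonicity of the radial distribution function) that transfers monotonicity from the probability to the quantile. If one wanted the nonzero eigenvalues placed arbitrarily rather than in the particular pattern above, one would additionally permute columns using the spherical symmetry of $\bm{\zeta}_{p_h,a_h}$ (Proposition~\ref{prop:represent}), exactly as in the proof of Lemma~\ref{lemma:matrix_tier_monotone}.
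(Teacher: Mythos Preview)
Your proposal is correct and follows essentially the same route as the paper: standardize via $\bm{\Gamma}'\bm{B}^{-1/2}$, build thin square roots $\bm{\Delta}_h$ of the diagonal matrices $\bm{\Omega}_h^2$ that satisfy Condition~\ref{cond:cond_sparse_matrix_tier}, and then apply Lemma~\ref{lemma:matrix_tier_monotone}. Your construction of $\bm{\Delta}_h$ differs cosmetically from the paper's (the paper splits into the cases $p_h\geq m$ and $p_h<m$, whereas you give a unified construction), and you add an explicit remark on passing monotonicity from the radial distribution function to the quantile, which the paper leaves implicit; otherwise the arguments are the same.
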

\begin{proof}[Proof of Lemma \ref{lemma:general_mono_ccorr}]
	By definition, 
	\begin{align*} 
	\bm{\Gamma}' 
	\bm{B}^{-1/2}\bm{B}_0
	\bm{B}^{-1/2} \bm{\Gamma}=
	\bm{\Gamma}' \left(
	\bm{I}_m - \sum_{h=1}^{H}\bm{B}^{-1/2}\bm{B}_h
	\bm{B}^{-1/2}
	\right)
	\bm{\Gamma}
	=
	\bm{I}_m - \sum_{h=1}^{H} \bm{\Omega}_h^2. 
	\end{align*}
	Thus, from Lemma \ref{lemma:truncate_AApm_BBpm}, 
	the distribution of $\bm{\phi}$ satisfies 
	\begin{eqnarray*}
		\bm{\Gamma}'\bm{B}^{-1/2} \bm{\phi}& \sim & \bm{\Gamma}'\bm{B}^{-1/2}
		\left\{
		(\bm{B}_0)^{1/2} \bm{\varepsilon} + \sum_{h=1}^{H} (\bm{B}_h)^{1/2}_{p_h}\bm{\zeta}_{p_h,a_h}
		\right\}\\
		& \sim & 
		\left(
		\bm{\Gamma}'\bm{B}^{-1/2}\bm{B}_0  \bm{B}^{-1/2}\bm{\Gamma} \right)^{1/2}
		\bm{\varepsilon} + \sum_{h=1}^H
		\left(\bm{\Gamma}'\bm{B}^{-1/2}\bm{B}_h 
		\bm{B}^{-1/2}\bm{\Gamma}
		\right)^{1/2}_{p_h}
		\bm{\zeta}_{p_h, a_h}\\
		& \sim & 
		\left(
		\bm{I}_m - 
		\sum_{h=1}^{H}\bm{\Omega}_h^2
		\right)^{1/2}
		\bm{\varepsilon} + \sum_{h=1}^H
		\left(
		\bm{\Omega}_h^2
		\right)^{1/2}_{p_h}
		\bm{\zeta}_{p_h, a_h}. 
	\end{eqnarray*}
	For each $1\leq h\leq H$, if $p_h \geq m$, we further define 
	$
	\tilde{\bm{\Omega}}_h = (\bm{\Omega}_h, \bm{0}_{m\times (p_h-m)});
	$
	otherwise, $p_h < m$, $\bm{B}_h$ has rank at most $p_h$ and thus $\bm{B}^{-1/2}\bm{B}_h\bm{B}^{-1/2}$ has at most $p_h$ nonzero eigenvalues, we further define $
	\tilde{\bm{\Omega}}_h = \bm{\Omega}_h[, \mathcal{I}_h]
	$
	as the submatrix of $\bm{\Omega}_h$ consisting of the $|\mathcal{I}_h|=p_h$ columns with possible nonzero eigenvalues. 
	Thus, by the construction of $\tilde{\bm{\Omega}}_h$'s, we have 
	$\tilde{\bm{\Omega}}_h\in \mathbb{R}^{m\times p_h}$ and 
	$\tilde{\bm{\Omega}}_h\tilde{\bm{\Omega}}_h' = \bm{\Omega}_h^2$, i.e., 
	$
	\tilde{\bm{\Omega}}_h = (\bm{\Omega}_h^2)^{1/2}_{p_h}. 
	$
	Therefore, we can further simply the distribution of  $\bm{\Gamma}'\bm{B}^{-1/2} \bm{\phi}$ as 
	\begin{eqnarray*}
		\bm{\Gamma}'\bm{B}^{-1/2} \bm{\phi} 
		& \sim & 
		\left(
		\bm{I}_m - 
		\sum_{h=1}^{H}\tilde{\bm{\Omega}}_h\tilde{\bm{\Omega}}_h'
		\right)^{1/2}
		\bm{\varepsilon} + \sum_{h=1}^H
		\tilde{\bm{\Omega}}_h 
		\bm{\zeta}_{p_h, a_h}. 
	\end{eqnarray*} 
	For any $r\geq 0$,  we have
	\begin{align}\label{eq:gen_from_refm_t_f_convex_K}
	P(\bm{\phi}'\bm{B}^{-1}\bm{\phi}\leq r^2)
	& =   
	P\left\{
	\bm{\Gamma}'\bm{B}^{-1/2} \bm{\phi} 
	\in  \mathcal{B}_m(r)
	\right\} \nonumber\\
	& =  P\left\{
	\left(
	\bm{I}_m - 
	\sum_{h=1}^{H}\bm{\Omega}_h^2
	\right)^{1/2}
	\bm{\varepsilon} + \sum_{h=1}^H
	\left(
	\bm{\Omega}_h^2
	\right)^{1/2}_{p_h}
	\bm{\zeta}_{p_h, a_h}
	\in  \mathcal{B}_m(r)
	\right\} \nonumber
	\\
	& = 
	P\left\{
	\left(
	\bm{I}_m - 
	\sum_{h=1}^{H}\tilde{\bm{\Omega}}_h\tilde{\bm{\Omega}}_h'
	\right)^{1/2}
	\bm{\varepsilon} + \sum_{h=1}^H
	\tilde{\bm{\Omega}}_h 
	\bm{\zeta}_{p_h, a_h}
	\in  \mathcal{B}_m(r)
	\right\}. 
	\end{align}
	Because the matrices $\tilde{\bm{\Omega}}_h$'s satisfy Condition \ref{cond:cond_sparse_matrix_tier}, 
	from Lemma \ref{lemma:matrix_tier_monotone}, 
	the quantity in \eqref{eq:gen_from_refm_t_f_convex_K} 
	depends only on  $m$, $p_h$'s, $a_h$'s, and the eigenvalues of $\bm{B}^{-1/2}\bm{B}_h\bm{B}^{-1/2}$'s, and is nondecreasing in these eigenvalues. 
	Therefore, Lemma \ref{lemma:general_mono_ccorr} holds. 
\end{proof}

\begin{proof}[{   Proof of Theorem \ref{thm:quant_region_refmt_f_monotone_joint}}]
	It follows directly from Lemma \ref{lemma:general_mono_ccorr}. 
\end{proof}

\begin{proof}[{   Comment on 
		linear transformations of $\hat{\bm{\tau}}$ under $\text{ReFMT}_\text{F}$}]
	From Theorem \ref{thm:quant_region_refmt_f} and Lemma \ref{lemma:peak_linear}, for any $\bm{C}\in \mathbb{R}^{p\times F}$ with $p\leq F$, the asymptotic sampling distribution of $\bm{C}\hat{\bm{\tau}}$ under $\text{ReFMT}_\text{F}$ is more peaked than that under the CRFE. From Theorem \ref{thm:asymp_refmt_f} and Lemma \ref{lemma:truncate_AApm_BBpm}, 
	\begin{eqnarray*}
		\bm{C}(\hat{\bm{\tau}} - \bm{\tau})  \mid \mathcal{T}_{\text{F}} 
		& \apprsim & \bm{C}\left( \bm{V}_{\bm{\tau}\bm{\tau}}^\myperp \right)^{1/2} \bm{\varepsilon} + \sum_{h=1}^H \bm{C}
		\left(\bm{W}_{\bm{\tau\tau}}^\myparallel[h] \right)^{1/2}_{LF_h}
		\bm{\zeta}_{LF_h, a_h}\\
		& \apprsim & 
		\left(\bm{C}\bm{V}_{\bm{\tau\tau}}^\myperp\bm{C}'\right)^{1/2} \bm{\xi} +  
		\sum_{h=1}^H
		\left(\bm{C}\bm{W}_{\bm{\tau\tau}}^\myparallel[h] \bm{C}' \right)^{1/2}_{LF_h}
		\bm{\zeta}_{LF_h,a_h},
	\end{eqnarray*}
	where $\bm{\xi}\sim \mathcal{N}(\bm{0}, \bm{I}_p)$. 
	Based on Lemma \ref{lemma:general_mono_ccorr}, we can know that if the condition in Lemma \ref{lemma:general_mono_ccorr} holds for $\bm{B}_h =\bm{C}\bm{W}_{\bm{\tau\tau}}^\myparallel[h] \bm{C}'$,  
	then for any $\alpha\in (0,1)$, the threshold $c_{1-\alpha}$ for $1-\alpha$ quantile region $\{\bm{\mu}: \bm{\mu}' (\bm{C}\bm{V}_{\bm{\tau\tau}}\bm{C}')^{-1}\bm{\mu}\leq c_{1-\alpha}\}$ of the asymptotic sampling distribution of $\bm{C}\hat{\bm{\tau}}$ 
	depends only on $p, LF_h$'s, $a_h$'s, and the canonical correlation 
	between $\bm{C}\hat{\bm{\tau}}$ and $\hat{\bm{\theta}}_{\bm{x}}[h]$'s, 
	and is nonincreasing in these canonical correlations.
\end{proof}

\begin{proof}[{   Proof of Corollary \ref{cor:quant_region_refmt_f_monotone_marg}}]
	Because $\hat{\tau}_f$ is a one dimensional linear transformation of $\hat{\bm{\tau}}$, Corollary \ref{cor:quant_region_refmt_f_monotone_marg} follows directly from the above comment on general lower dimensional linear transformations of $\hat{\bm{\tau}}$ under $\text{ReFMT}_\text{F}$. 
\end{proof}

The proofs of Theorems \ref{thm:quant_region_refmt_cf}, \ref{thm:quant_region_refmt_cf_monotone_joint}, and Corollary \ref{cor:red_qr_refmt_cf} under $\text{ReFMT}_\text{CF}$, as well as the comment on lower dimensional linear transformations of $\hat{\bm{\tau}}$, are almost the same as those under $\text{ReFMT}_\text{F}$ and thus omitted. 

\section{Asymptotic conservativeness in inference}\label{app:inference}

\subsection{Asymptotic conservativeness of sampling covariance estimators}\label{sec:proof_conservative_variance}
We need the following two lemmas to prove the asymptotic conservativeness of the sampling covariance estimators. 
\begin{lemma}\label{lemma:cov_AB}
	Under either ReFM, $\text{ReFMT}_\text{F}$, or $\text{ReFMT}_\text{CF}$, 
	if Condition \ref{cond:fp} holds, then for any $1\leq r,k\leq Q$, $1\leq l,m \leq L$, and any $(A_i,B_i)$ equal to $(Y_i(r), Y_i(k)), (Y_i(r), x_{il})$ or $(x_{il}, x_{im})$,
	\begin{align*}
	s_{AB}(q)-S_{AB} = o_p(1), \quad (q=1,2,\ldots,Q)
	\end{align*}
	where 
	$s_{AB}(q)$ is the sample covariance between the $A_i$'s and the $B_i$'s under treatment combination $q$, and $S_{AB}$ is the corresponding finite population covariance.

\end{lemma}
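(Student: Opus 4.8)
The plan is to prove the claim first under the CRFE and then transfer it to each rerandomized design by a conditioning argument. Throughout, observe that both $s_{AB}(q)$ and $S_{AB}$ are invariant under translations $A_i\mapsto A_i-c$, $B_i\mapsto B_i-d$, so without loss of generality we may assume the finite population means of $A_i$ and $B_i$ are zero; Condition \ref{cond:fp}(iii) then reads $\max_i A_i^2/n\to 0$ (and likewise for $B_i$), and Condition \ref{cond:fp}(ii) says the finite population variances $S_{AA}$ and $S_{BB}$ have finite limits.

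First I would handle the CRFE. Under the CRFE the units with $Z_i=q$ form a simple random sample of size $n_q$ from the $n$ units. Writing $W_i=A_iB_i$ and letting $\hat{\bar{A}}(q),\hat{\bar{B}}(q),\hat{\bar{W}}(q)$ be the corresponding within-group sample means, a direct expansion gives $s_{AB}(q)=\{n_q/(n_q-1)\}\{\hat{\bar{W}}(q)-\hat{\bar{A}}(q)\hat{\bar{B}}(q)\}$, while $S_{AB}=\{n/(n-1)\}\bar{W}$ with $\bar{W}=n^{-1}\sum_i W_i$. For a simple random sample the variance of a sample mean is at most $n_q^{-1}$ times the finite population variance of the sampled variable; since $n_q/n$ has a positive limit, it suffices that the finite population variances of $A_i$, $B_i$, and $W_i$ are $o(n)$. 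For $A_i$ and $B_i$ this is immediate from Condition \ref{cond:fp}(ii). For $W_i$, after centering, $\sum_i (W_i-\bar{W})^2\le \sum_i A_i^2B_i^2\le (\max_i B_i^2)\sum_i A_i^2=(\max_i B_i^2)(n-1)S_{AA}$, so $n^{-1}$ times the finite population variance of $W_i$ is $O\{(\max_i B_i^2/n)\,S_{AA}\}\to 0$ by Conditions \ref{cond:fp}(ii)--(iii). Chebyshev's inequality then gives $\hat{\bar{A}}(q)=o_p(1)$, $\hat{\bar{B}}(q)=o_p(1)$, and $\hat{\bar{W}}(q)-\bar{W}=o_p(1)$; combining these with $n_q/(n_q-1)\to 1$ and the boundedness of $\bar{W}$ yields $s_{AB}(q)-S_{AB}=o_p(1)$ under the CRFE.

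Next I would transfer this to rerandomization. Let $\mathcal{A}$ denote the acceptance event, i.e., $\mathcal{M}$, $\mathcal{T}_{\text{F}}$, or $\mathcal{T}_{\text{CF}}$; the sampling distribution under the corresponding design equals the conditional distribution given $\mathcal{A}$ under the CRFE, and the asymptotic acceptance probability $p_a=\lim P(\mathcal{A})$ is strictly positive in each case (it equals $P(\chi^2_{LF}\le a)$, $\prod_h P(\chi^2_{LF_h}\le a_h)$, or $\prod_j P(\chi^2_{\lambda_j}\le a_j)$, as recorded earlier in the paper). Hence, for any $\epsilon>0$,
$$
P\{|s_{AB}(q)-S_{AB}|>\epsilon \mid \mathcal{A}\}\le \frac{P\{|s_{AB}(q)-S_{AB}|>\epsilon\}}{P(\mathcal{A})}\ \to\ \frac{0}{p_a}=0,
$$
where the numerator vanishes by the CRFE result. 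This gives $s_{AB}(q)-S_{AB}=o_p(1)$ under each of ReFM, $\text{ReFMT}_\text{F}$, and $\text{ReFMT}_\text{CF}$, for every $q$.

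The main obstacle is the product term in the CRFE step: bounding the finite population variance of $W_i=A_iB_i$ needs more than the bounded second moments furnished by Condition \ref{cond:fp}(ii), and this is exactly where the $\max/n\to 0$ part of Condition \ref{cond:fp}(iii), together with the translation-invariance reduction, does the work. The transfer step is routine once the acceptance probability is known to be bounded away from zero, which the paper has already established.
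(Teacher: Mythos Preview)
Your proposal is correct. The paper itself omits the proof entirely, saying only that it is similar to Lemma~A15 in \citet{asymrerand2016}; the argument there is exactly the two-step route you give---first establish $s_{AB}(q)-S_{AB}=o_p(1)$ under the CRFE via simple-random-sampling variance bounds (using Condition~\ref{cond:fp}(iii) to control the product $A_iB_i$), then transfer to the rerandomized designs by the elementary conditioning bound $P(\cdot\mid\mathcal{A})\le P(\cdot)/P(\mathcal{A})$ together with $P(\mathcal{A})\to p_a>0$---so you have written out precisely what the paper leaves to the citation.
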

\begin{proof}[Proof of Lemma \ref{lemma:cov_AB}]
	The proof is similar to the proof of Lemma A15 in \citet{asymrerand2016}. We omit it.
\end{proof}

\begin{lemma}\label{lemma:unbiased_sample_var}
	Under either ReFM, $\text{ReFMT}_\text{F}$, or $\text{ReFMT}_\text{CF}$, 
	if Condition \ref{cond:fp} holds,  then 
	for each $1\leq q\leq Q$, 
	\begin{align*}
	s_{qq}^{\myperp}-S_{qq}^{\myperp} = o_p(1), \ \ 
	\bm{s}_{q,\bm{x}}-\bm{S}_{q,\bm{x}} = o_p(1),\ \ 
	\bm{s}_{\bm{xx}}(q)-\bm{S}_{\bm{xx}} = o_p(1).
	\end{align*}
\end{lemma}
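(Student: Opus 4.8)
The statement in Lemma \ref{lemma:unbiased_sample_var} asserts that the sample residual variance $s_{qq}^{\myperp}$, the sample covariance $\bm{s}_{q,\bm{x}}$ between observed outcomes and covariates, and the sample covariance $\bm{s}_{\bm{xx}}(q)$ of covariates within treatment group $q$ are all consistent for their finite population counterparts $S_{qq}^{\myperp}$, $\bm{S}_{q,\bm{x}}$, and $\bm{S}_{\bm{xx}}$ under any of the three rerandomization schemes.

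\textbf{Plan of proof.} The strategy is to reduce everything to Lemma \ref{lemma:cov_AB}, which already gives $s_{AB}(q) - S_{AB} = o_p(1)$ for all the relevant pairs $(A_i, B_i)$. First I would observe that $\bm{s}_{q,\bm{x}} = o_p(1)$-close to $\bm{S}_{q,\bm{x}}$ is an immediate entrywise consequence of Lemma \ref{lemma:cov_AB} applied with $(A_i, B_i) = (Y_i(q), x_{il})$ for each coordinate $1 \leq l \leq L$; likewise $\bm{s}_{\bm{xx}}(q) - \bm{S}_{\bm{xx}} = o_p(1)$ entrywise follows by taking $(A_i, B_i) = (x_{il}, x_{im})$ for each pair $(l,m)$, and since these are finitely many entries ($L$ and $L^2$ respectively, with $L$ fixed), entrywise convergence gives convergence of the whole vector/matrix in probability. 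For $s_{qq}^{\myperp}$, I would recall its definition $s_{qq}^{\myperp} = s_{qq} - \bm{s}_{q,\bm{x}}\bm{s}_{\bm{xx}}^{-1}(q)\bm{s}_{\bm{x},q}$, and the corresponding population identity $S_{qq}^{\myperp} = S_{qq} - \bm{S}_{q,\bm{x}}\bm{S}_{\bm{xx}}^{-1}\bm{S}_{\bm{x},q}$, which holds because $Y^{\myperp}_i(q)$ is the residual from the finite population linear projection of $Y_i(q)$ on $\bm{x}_i$. Then the result follows from the continuous mapping theorem: $s_{qq} \to S_{qq}$, $\bm{s}_{q,\bm{x}} \to \bm{S}_{q,\bm{x}}$, $\bm{s}_{\bm{xx}}(q) \to \bm{S}_{\bm{xx}}$ in probability, and since $\bm{S}_{\bm{xx}}$ is non-degenerate by Condition \ref{cond:fp}(ii), matrix inversion is continuous in a neighborhood of $\bm{S}_{\bm{xx}}$, so $\bm{s}_{\bm{xx}}^{-1}(q) \to \bm{S}_{\bm{xx}}^{-1}$ in probability, and the product $\bm{s}_{q,\bm{x}}\bm{s}_{\bm{xx}}^{-1}(q)\bm{s}_{\bm{x},q} \to \bm{S}_{q,\bm{x}}\bm{S}_{\bm{xx}}^{-1}\bm{S}_{\bm{x},q}$.

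\textbf{Main obstacle.} The genuinely substantive work is all contained in Lemma \ref{lemma:cov_AB}, whose proof the excerpt defers to an argument parallel to \citet{asymrerand2016}; that is where one must handle the fact that under rerandomization the treatment assignment is conditioned on the balance event, so the within-group samples are no longer simple random samples and one must show that conditioning on $\mathcal{M}$ (or $\mathcal{T}_{\text{F}}$, $\mathcal{T}_{\text{CF}}$), an event of asymptotically positive probability, does not destroy the law of large numbers for these second-moment quantities. Given Lemma \ref{lemma:cov_AB}, the present lemma is essentially bookkeeping: collecting entrywise statements into vector/matrix statements and one application of the continuous mapping theorem using the non-degeneracy of $\bm{S}_{\bm{xx}}$. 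I would therefore present the proof as: (i) cite Lemma \ref{lemma:cov_AB} for the three basic convergences; (ii) note that $L$ being fixed promotes entrywise to joint convergence; (iii) invoke the continuous mapping theorem together with Condition \ref{cond:fp}(ii) to handle the inverse and the quadratic form defining $s_{qq}^{\myperp}$.
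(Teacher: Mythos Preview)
Your proposal is correct and follows essentially the same approach as the paper's proof: invoke Lemma \ref{lemma:cov_AB} to get consistency of $s_{qq}$, $\bm{s}_{q,\bm{x}}$, and $\bm{s}_{\bm{xx}}(q)$, and then use the explicit formulas $s_{qq}^{\myperp} = s_{qq} - \bm{s}_{q,\bm{x}}\bm{s}_{\bm{xx}}^{-1}(q)\bm{s}_{\bm{x},q}$ and $S_{qq}^{\myperp} = S_{qq} - \bm{S}_{q,\bm{x}}\bm{S}_{\bm{xx}}^{-1}\bm{S}_{\bm{x},q}$ together with the continuous mapping theorem. The paper's proof is slightly terser (it does not spell out the entrywise-to-joint step or name the continuous mapping theorem), but the logic is identical.
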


\begin{proof}[Proof of Lemma \ref{lemma:unbiased_sample_var}]
	From Lemma \ref{lemma:cov_AB},  
	$s_{qq}$, $\bm{s}_{q,\bm{x}}$ and $\bm{s}_{\bm{xx}}(q)$ are consistent for $S_{qq}, \bm{S}_{q,\bm{x}}$ and $\bm{S}_{\bm{xx}}$, respectively. 
	Because  
	$
	s_{qq}^{\myperp} = s_{qq} - \bm{s}_{q,\bm{x}}\bm{s}_{\bm{xx}}^{-1}(q)\bm{s}_{\bm{x}, q}
	$
	and 
	$
	S_{qq}^{\myperp} = S_{qq} - \bm{S}_{q,\bm{x}}\bm{S}_{\bm{xx}}^{-1}\bm{S}_{\bm{x}, q}, 
	$
	we know that $s_{qq}^{\myperp}$ is also consistent for $S_{qq}^{\myperp}$, and therefore Lemma \ref{lemma:unbiased_sample_var} holds. 
\end{proof}

Define
$
\tilde{\bm{V}}_{\bm{\tau}\bm{\tau}}^\myperp \equiv
2^{-2(K-1)} \sum_{q=1}^{Q} n_q^{-1} \coef_q\coef_q' \cdot S_{qq}^{\myperp} \geq \bm{V}_{\bm{\tau}\bm{\tau}}^\myperp. 
$
Under ReFM, 
from Lemma \ref{lemma:unbiased_sample_var}, $\hat{\bm{V}}_{\bm{\tau\tau}}^\myperp$ is consistent for $\tilde{\bm{V}}_{\bm{\tau\tau}}^\myperp$, and $\hat{\bm{V}}_{\bm{\tau x}}\bm{V}_{\bm{xx}}^{-1/2}$ is consistent for $\bm{V}_{\bm{\tau x}}\bm{V}_{\bm{xx}}^{-1/2}$. Therefore, the sampling covariance estimator is asymptotically conservative. 
Under $\text{ReFMT}_\text{F}$ or $\text{ReFMT}_\text{CF}$, $\hat{\bm{V}}_{\bm{\tau\tau}}^\myperp$ is also consistent for $\tilde{\bm{V}}_{\bm{\tau\tau}}^\myperp$, and the estimated coefficients of the $\bm{\zeta}_{LF_h,a_h}$'s
or 
$\bm{\zeta}_{\lambda_j,a_j}$'s are consistent for the true ones. 
Therefore, the sampling covariance estimators under $\text{ReFMT}_\text{F}$ and $\text{ReFMT}_\text{CF}$ are also asymptotically conservative.

\subsection{Asymptotic conservativeness of the confidence sets}

We need the following lemma to prove Theorem \ref{thm:conser_conf_set_refm}, 
\begin{lemma}\label{lemma:peak_two_normal}
	Let $\bm{V}_1$ and $\bm{V}_2$ be two positive semi-definite matrices in $\mathbb{R}^{m\times m}$ satisfying that $\bm{V}_1\leq \bm{V}_2$,  and $\bm{\varepsilon}_1$ and $\bm{\varepsilon}_2$ be two Gaussian random vectors with mean zero and covariance matrices $\bm{V}_1$ and $\bm{V}_2$. Then $\bm{\varepsilon}_1 \succ \bm{\varepsilon}_2$. 
\end{lemma}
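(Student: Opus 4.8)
\textbf{Proof proposal for Lemma \ref{lemma:peak_two_normal}.}

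The plan is to reduce the multivariate peakedness comparison to a chain of one-dimensional comparisons via a convolution/mixture argument, using the fact that a mean-zero Gaussian vector is central convex unimodal and that the class of central convex unimodal distributions is closed under convolution. First I would write $\bm{V}_2 = \bm{V}_1 + \bm{D}$, where $\bm{D} = \bm{V}_2 - \bm{V}_1$ is positive semi-definite by hypothesis. Then $\bm{\varepsilon}_2$ has the same distribution as $\bm{\varepsilon}_1 + \bm{\delta}$, where $\bm{\delta} \sim \mathcal{N}(\bm{0}, \bm{D})$ is independent of $\bm{\varepsilon}_1$. So the claim becomes: adding an independent mean-zero Gaussian vector makes a symmetric random vector less peaked, i.e., $\bm{\varepsilon}_1 \succ \bm{\varepsilon}_1 + \bm{\delta}$.

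The key step is then the following: for any symmetric convex set $\mathcal{K} \subset \mathbb{R}^m$, I claim $P(\bm{\varepsilon}_1 + \bm{\delta} \in \mathcal{K}) \leq P(\bm{\varepsilon}_1 \in \mathcal{K})$. Conditioning on $\bm{\delta}$, write
\begin{align*}
P(\bm{\varepsilon}_1 + \bm{\delta} \in \mathcal{K}) = \E_{\bm{\delta}}\left\{ P(\bm{\varepsilon}_1 \in \mathcal{K} - \bm{\delta}) \right\},
\end{align*}
and I would pair up the values $\bm{\delta}$ and $-\bm{\delta}$, which have the same density by symmetry of $\bm{\delta}$, to get
\begin{align*}
P(\bm{\varepsilon}_1 + \bm{\delta} \in \mathcal{K}) = \E_{\bm{\delta}}\left\{ \tfrac{1}{2} P(\bm{\varepsilon}_1 \in \mathcal{K} - \bm{\delta}) + \tfrac{1}{2} P(\bm{\varepsilon}_1 \in \mathcal{K} + \bm{\delta}) \right\}.
\end{align*}
Since $\bm{\varepsilon}_1$ has a symmetric, central convex unimodal distribution, $\mu(\mathcal{K}) \geq \tfrac12 \mu(\mathcal{K}-\bm{\delta}) + \tfrac12\mu(\mathcal{K}+\bm{\delta})$ for the law $\mu$ of $\bm{\varepsilon}_1$; this is precisely the statement that $\bm{v} \mapsto P(\bm{\varepsilon}_1 \in \mathcal{K} + \bm{v})$ is a symmetric, quasi-concave (indeed concave-dominated-at-center) function, which holds because central convex unimodal laws are mixtures of uniform distributions on symmetric convex bodies and the inequality is elementary (Anderson-type) for each such uniform law. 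Taking expectation over $\bm{\delta}$ gives the claim. Alternatively, I can cite this directly: it is essentially \citet[][Theorem 7.5]{dharmadhikari1988} / Lemma \ref{lemma:peak_sum} with $\bm{\phi}_1 = \bm{\phi}_2 = \bm{0}$ (the degenerate point mass at zero is trivially more peaked than itself, hence $\bm{\delta} + \bm{0} \succ \bm{\delta} + \bm{0}$ is uninformative) — so more precisely I would invoke the Anderson inequality for log-concave (Gaussian) densities, or note that this is the standard fact that convolution with a symmetric unimodal kernel is peakedness-decreasing.

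The main obstacle is handling the degenerate case cleanly: if $\bm{V}_1$ (or $\bm{V}_2$) is singular, the Gaussian law is supported on a proper subspace and does not have a density on $\mathbb{R}^m$, so the Anderson-inequality machinery must be applied on the appropriate subspace or through the mixture-of-uniform-bodies characterization of central convex unimodality, which is coordinate-free and handles degeneracy automatically. I would therefore phrase the argument entirely through the central convex unimodality of $\bm{\varepsilon}_1$ (which holds in the degenerate case too, by Lemma \ref{lemma:logconcave} applied on the support subspace together with Lemma \ref{lemma:ccu_linear}) and the symmetry of $\bm{\delta}$, avoiding any appeal to densities on the full space. The rest is the short conditioning-and-symmetrization computation above.
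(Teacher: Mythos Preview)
Your decomposition $\bm{\varepsilon}_2 \sim \bm{\varepsilon}_1 + \bm{\delta}$ with $\bm{\delta} \sim \mathcal{N}(\bm{0},\bm{V}_2-\bm{V}_1)$ independent of $\bm{\varepsilon}_1$ is exactly what the paper does, and your conditioning-plus-Anderson argument is correct (in fact the symmetrization is unnecessary: Anderson's inequality already gives $P(\bm{\varepsilon}_1\in\mathcal{K}-\bm{\delta})\leq P(\bm{\varepsilon}_1\in\mathcal{K})$ for every fixed $\bm{\delta}$, not just on average over $\pm\bm{\delta}$). However, the paper dispatches the whole thing in one line by the very application of Lemma~\ref{lemma:peak_sum} that you dismissed as ``uninformative'': you tried $\bm{\phi}_1=\bm{\phi}_2=\bm{0}$, but the correct choices are $\bm{\psi}=\bm{\varepsilon}_1$ (central convex unimodal by Proposition~\ref{prop:ccu_refm} and Lemma~\ref{lemma:ccu_linear}), $\bm{\phi}_1=\bm{0}$, and $\bm{\phi}_2=\bm{\delta}$. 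The premise $\bm{0}\succ\bm{\delta}$ is trivial because every nonempty symmetric convex set contains the origin, so $P(\bm{0}\in\mathcal{K})=1\geq P(\bm{\delta}\in\mathcal{K})$; Lemma~\ref{lemma:peak_sum} then yields $\bm{\varepsilon}_1=\bm{\varepsilon}_1+\bm{0}\succ\bm{\varepsilon}_1+\bm{\delta}\sim\bm{\varepsilon}_2$ immediately. This route also sidesteps your degeneracy concern, since neither Lemma~\ref{lemma:peak_sum} nor the central convex unimodality of $\bm{\varepsilon}_1$ (obtained as a linear image of a standard Gaussian via Lemma~\ref{lemma:ccu_linear}) requires a density on the full space.
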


\begin{proof}[Proof of Lemma \ref{lemma:peak_two_normal}]
	Let $\bm{\varepsilon}_3 \sim \mathcal{N}(\bm{0}, \bm{V}_2-\bm{V}_1)$ be independent of $\bm{\varepsilon}_1$. From Proposition \ref{prop:ccu_refm} and Lemma \ref{lemma:ccu_linear}, $\bm{\varepsilon}_1$ is central convex unimodal. Because $\bm{0}\succ \bm{\varepsilon}_3$ and $\bm{\varepsilon}_3+\bm{\varepsilon}_1\sim \bm{\varepsilon}_2$, from Lemma \ref{lemma:peak_sum}, $\bm{\varepsilon}_1 \sim \bm{0}+\bm{\varepsilon}_1\succ \bm{\varepsilon}_3+\bm{\varepsilon}_1\sim \bm{\varepsilon}_2$. 
	Lemma \ref{lemma:peak_two_normal} holds.
\end{proof}

\begin{proof}[{   Proof of Theorem \ref{thm:conser_conf_set_refm}}]
	The proof of the asymptotic conservativeness of covariance estimator for $\bm{C}\hat{\bm{\tau}}$ follows directly from the discussion in \ref{sec:proof_conservative_variance}, and thus we consider only the asymptotic conservativeness of confidence sets for $\bm{C}{\bm{\tau}}$ here. 
	Let $ {\mathcal{L}}$, $\tilde{\mathcal{L}}$ and $ \hat{\mathcal{L}}$ be three $F$ dimensional random vectors following the asymptotic sampling distribution of $\hat{\bm{\tau}}-\bm{\tau}$ under ReFM,  the asymptotic sampling distribution with $\bm{V}_{\bm{\tau\tau}}^\myperp$ replaced by $\tilde{\bm{V}}_{\bm{\tau\tau}}^\myperp$, and 
	the estimated asymptotic sampling distribution: 
	\begin{eqnarray*}
		{\mathcal{L}}
		& \sim & \left(\bm{V}_{\bm{\tau\tau}}^\myperp\right)^{1/2} \bm{\varepsilon} +  \left(\bm{V}_{\bm{\tau\tau}}^\myparallel\right)^{1/2}_{LF} \bm{\zeta}_{LF,a}
		\ \sim \ 
		\left(\bm{V}_{\bm{\tau\tau}}^\myperp\right)^{1/2} \bm{\varepsilon} +  \bm{V}_{\bm{\tau x}}\bm{V}_{\bm{xx}}^{-1/2} \bm{\zeta}_{LF,a},
		\\
		\tilde{\mathcal{L}}
		& \sim & \left(\bm{V}_{\bm{\tau\tau}}^\myperp\right)^{1/2} \bm{\varepsilon} +  \left(\bm{V}_{\bm{\tau\tau}}^\myparallel\right)^{1/2}_{LF} \bm{\zeta}_{LF,a}
		\ \sim \ 
		\left(\tilde{\bm{V}}_{\bm{\tau\tau}}^\myperp\right)^{1/2} \bm{\varepsilon} +  \bm{V}_{\bm{\tau x}}\bm{V}_{\bm{xx}}^{-1/2} \bm{\zeta}_{LF,a},
		\\
		\hat{\mathcal{L}}
		& \sim & 
		\left(\hat{\bm{V}}_{\bm{\tau\tau}}^\myperp\right)^{1/2} \bm{\varepsilon} +  \hat{\bm{V}}_{\bm{\tau x}}\bm{V}_{\bm{xx}}^{-1/2} \bm{\zeta}_{LF,a}. 
	\end{eqnarray*}
	Under Condition \ref{cond:fp}, from the discussion in Section \ref{sec:proof_conservative_variance}, by Slutsky's theorem,  $ \hat{\mathcal{L}}\apprsim \tilde{\mathcal{L}}$. Note that $(\bm{V}_{\bm{\tau\tau}}^\myperp)^{1/2} \bm{\varepsilon}\succ (\tilde{\bm{V}}_{\bm{\tau\tau}}^\myperp)^{1/2} \bm{\varepsilon}$ from Lemma \ref{lemma:peak_two_normal}, and $\bm{V}_{\bm{\tau x}}\bm{V}_{\bm{xx}}^{-1/2} \bm{\zeta}_{LF,a}$ is central convex unimodal from Proposition \ref{prop:ccu_refm} and Lemma \ref{lemma:ccu_linear}. From Lemma \ref{lemma:peak_sum}, $ {\mathcal{L}}\succ \tilde{\mathcal{L}}$. Above all, 
	$
	{\mathcal{L}} \succ 
	\tilde{\mathcal{L}} \apprsim  \hat{\mathcal{L}}.
	$
	
	From Slutsky's theorem,  
	$
	(\bm{C}\tilde{\bm{V}}_{\bm{\tau\tau}}^\myperp \bm{C}')^{-1/2}\bm{C}\tilde{\mathcal{L}} \apprsim (\bm{C}\hat{\bm{V}}_{\bm{\tau\tau}}^\myperp \bm{C}')^{-1/2}\bm{C} \hat{\mathcal{L}}. 
	$
	From the continuous mapping theorem, 
	$
	(\bm{C}\tilde{\mathcal{L}})'
	(\bm{C}\tilde{\bm{V}}_{\bm{\tau\tau}}^\myperp \bm{C}')^{-1}\bm{C}\tilde{\mathcal{L}} \apprsim
	(\bm{C} \hat{\mathcal{L}})' (\bm{C}\hat{\bm{V}}_{\bm{\tau\tau}}^\myperp \bm{C}')^{-1}\bm{C} \hat{\mathcal{L}}. 
	$
	Thus, the $1-\alpha$ quantile of $(\bm{C} \hat{\mathcal{L}})' (\bm{C}\hat{\bm{V}}_{\bm{\tau\tau}}^\myperp \bm{C}')^{-1}\bm{C} \hat{\mathcal{L}}$, $\hat{c}_{1-\alpha}$, is consistent for the $1-\alpha$ quantile of 
	$(\bm{C}\tilde{\mathcal{L}})'
	(\bm{C}\tilde{\bm{V}}_{\bm{\tau\tau}}^\myperp \bm{C}')^{-1}\bm{C}\tilde{\mathcal{L}}$, $\tilde{c}_{1-\alpha}$. 
	Because 
	$ {\mathcal{L}} \succ 
	\tilde{\mathcal{L}}$, and the set of form  
	$\{\bm{\mu}:(\bm{C}\bm{\mu})'
	(\bm{C}\tilde{\bm{V}}_{\bm{\tau\tau}}^\myperp \bm{C}')^{-1}\bm{C}\bm{\mu}\leq c\}$ 
	is symmetric convex, 
	$ {c}_{1-\alpha}$, 
	the $1-\alpha$ quantile of 
	$(\bm{C} {\mathcal{L}})'
	(\bm{C}\tilde{\bm{V}}_{\bm{\tau\tau}}^\myperp \bm{C}')^{-1}\bm{C} {\mathcal{L}}$, is smaller than or equal to 
	$\tilde{c}_{1-\alpha}$. Above all, $\hat{c}_{1-\alpha}$ is consistent for $\tilde{c}_{1-\alpha}\geq  {c}_{1-\alpha}$. Therefore, the $1-\alpha$ confidence set for $\bm{\tau}$ is asymptotically conservative.
	When 
	$\bm{S}_{\bm{\tau\tau}}^\myperp=o(1)$, 
	we have ${\bm{V}}_{\bm{\tau\tau}}^\myperp - \tilde{\bm{V}}_{\bm{\tau\tau}}^\myperp = o(1)$, which implies $ {\mathcal{L}} \apprsim 
	\tilde{\mathcal{L}}$. Thus, $\tilde{c}_{1-\alpha}=  {c}_{1-\alpha}+o(1)$, and the $1-\alpha$ confidence set for $\bm{\tau}$  becomes asymptotically exact. 
\end{proof}

Note that both 
$
\sum_{h=1}^H 
\bm{W}_{\bm{\tau x}}[h]
(
\bm{W}_{\bm{xx}}[h]
)^{-1/2}
\bm{\zeta}_{LF_h, a_h}
$
and 
$
\sum_{j=1}^{J}
\bm{U}_{\bm{\tau e}}[j]
(
\bm{U}_{\bm{ee}}[j]
)^{-1/2}
\bm{\zeta}_{\lambda_j,a_j}
$
are central convex unimodal. The proofs for 
the asymptotic conservativeness of symmetric convex confidence sets 
of form $
\bm{C}\hat{\bm{\tau}} + \mathcal{O}(\bm{C}\hat{\bm{V}}_{\bm{\tau\tau}}^\myperp\bm{C}', c)
$
for $\bm{C\tau}$
under $\text{ReFMT}_\text{F}$ and $\text{ReFMT}_\text{CF}$ are almost the same as ReFM. 
Thus we omit the proofs of Theorems \ref{thm:conserv_cs_remft_f} and \ref{thm:conserv_cs_remft_cf}. 

Moreover, under for ReFM, $\text{ReFMT}_\text{F}$, or $\text{ReFMT}_\text{CF}$, 
we consider $1-\alpha$ confidence set $\bm{C}\hat{\bm{\tau}} + \tilde{\mathcal{O}}$ for $\bm{C\tau}$, where $\tilde{\mathcal{O}}$ can depend on  
$(\hat{\bm{V}}_{\bm{\tau\tau}}^\myperp, \hat{\bm{V}}_{\bm{\tau x}}, \bm{V}_{\bm{xx}})$ and 
satisfies that $P( \hat{\mathcal{L}} \in \tilde{\mathcal{O}}) = 1-\alpha$. 
If $\tilde{\mathcal{O}}$ is a symmetric convex set, then the confidence set is generally asymptotically conservative, and the proof is similar to the proof of Theorem \ref{thm:conser_conf_set_refm}.

\end{document}